\newtheorem{thm}{Theorem}[section]
\newtheorem{lem}[thm]{Lemma}
\newtheorem{rem}[thm]{Remark}
\newcommand{\equalscolon}{=:}
\newcommand{\colonequals}{:=}
\newlength{\squareLen}
\newlength{\cortexRad}
\newlength{\membraneRad}
\newlength{\linkerFootRad}
\newlength{\legendMargin}
\newlength{\legendHeight}
\pgfmathsetlength{\membraneRad}{\cortexRad+.25cm}
\pgfmathsetlength{\linkerFootRad}{\cortexRad+1mm}
\newcommand{\phaseFieldPT}{\varphi}
\renewcommand{\phaseFieldProto}{\phaseFieldPT}
\newcommand{\surfaceProto}{\Phi}
\newcommand{\scalarFunProto}{q}
\newcommand{\vecFunProto}{\vec{q}}
\newcommand{\matFunProto}{Q}
\newcommand{\surfEnergy}[2][]{\ONEARGOP{#1{\mathcal{S}}}{#2}}
\newcommand{\surfEnergyOne}[1]{\surfEnergy[\schemeSubscr{\interfOne{}}]{#1}}
\newcommand{\surfEnergyTwo}[1]{\surfEnergy[\schemeSubscr{\interfTwo{}}]{#1}}
\newcommand{\pfSurfEnergyProto}[1]{
    \def\mod##1{\schemeSubscr{\surfaceProto}{\schemeSupscr{\phaseFieldParam}{##1}}}
    \surfEnergy[\mod]{#1}
}
\newcommand{\pfSurfEnergyOne}[1]{
    \def\mod##1{\schemeSubscr{\interfOne{}}{\schemeSupscr{\phaseFieldParam}{##1}}}
    \surfEnergy[\mod]{#1}
}
\newcommand{\pfSurfEnergyTwo}[1]{
    \def\mod##1{\schemeSubscr{\interfTwo{}}{\schemeSupscr{\phaseFieldParam}{##1}}}
    \surfEnergy[\mod]{#1}
}
\newcommand{\totalEnergyFunct}[2][]{\ONEARGOP{#1{U}}{#2}}
\renewcommand{\ginzburgLandauEnergyFunct}[3][]{%
  	\def\tmp{#3}
    \ifx\tmp\empty
    	\ifx\indicatePhaseField\undefined
	  		\ONEARGOP{#1{\mathcal{G}}}{#2}    	 
        \else
	  		\ONEARGOP{#1{\mathcal{G}_{\phaseFieldParam}}}{#2}    	 
        \fi
    \else
    	\ifx\indicatePhaseField\undefined
	  		\ONEARGOP{#1{\mathcal{G}_{#3}}}{#2}
        \else
	  		\ONEARGOP{#1{\mathcal{G}_{\phaseFieldParam,#3}}}{#2}
        \fi
    \fi
}
\newcommand{\refBndCond}{%
    \eqref{equ:modelling:phase field:resulting PDEs:boundary conditions:%
    velocity},  %
    \eqref{equ:modelling:phase field:resulting PDEs:boundary conditions:%
    phase field},  %
    \eqref{equ:modelling:phase field:resulting PDEs:boundary conditions:%
    fluxes},  %
    \eqref{equ:modelling:phase field:resulting PDEs:boundary conditions:%
    species}%
}
\renewcommand{\membrPhaseFieldSym}{\phi}
\renewcommand{\cortexPhaseFieldSym}{\psi}
\newcommand{\couplingIntCortex}{C_{\interfTwo{}}}
\newcommand{\couplingIntMembr}{C_{\interfOne{}}}
\newcommand{\pfCouplingIntCortex}[1][]{#1{C}_{#1{\cortexPhaseFieldSym}}}
\newcommand{\pfCouplingIntMembr}[1][]{#1{C}_{#1{\membrPhaseFieldSym}}}
\newcommand{\refAss}[1]{Assumption~\ref{#1}}
\newcommand{\solTimeInt}{[0,\finTime]}
\def\schemeNormExt#1{\bar{#1}^{\normal{}}}
\newcommand{\fastVar}{z}
\newcommand{\projVar}{s}
\newcommand{\tanhIntConst}{Z}
\newcommand{\pfVelocityOuter}[2]{\pfVelocity[\schemeOuterExp]{#1}{#2}}
\newcommand{\pfSpeciesOneOuter}[2]{\pfSpeciesOne[\schemeOuterExp]{#1}{#2}}
\newcommand{\outerRegion}{\domain_0}
\newcommand{\minExpInd}{-N}
\newcommand{\absMinExpInd}{N}
\newcommand{\maxExpInd}{n}
\newcommand{\tangParamVar}{s}
\newcommand{\interfProto}[1]{\ONEARGFUN{S}{#1}}
\begin{document}

\title{Sharp interface analysis of a diffuse interface model for cell blebbing with linker dynamics}
\author[1]{Philipp Nöldner}
\author[2]{Martin Burger}
\author[3]{Harald Garcke}

\address[1,2]{\orgdiv{Department Mathematik}, \orgname{FAU Erlangen-Nürnberg}, \orgaddress{Cauerstr. 11, 91058 Erlangen, \country{Germany}}}
\address[3]{\orgdiv{Fakult\"at  für Mathematik}, \orgname{Universität Regensburg}, \orgaddress{\state{93040 Regensburg}, \country{Germany}}}

\abstract[Summary]{%
We investigate the convergence of solutions of a recently proposed diffuse interface/phase field model for cell blebbing by means of matched asymptotic expansions.
It is a biological phenomenon that increasingly attracts attention by both experimental and theoretical communities. Key to understanding the process of cell blebbing mechanically are proteins that link the cell cortex and the cell membrane. Another important model component is the bending energy of the cell membrane and cell cortex which accounts for differential equations up to sixth order.
Both aspects pose interesting mathematical challenges that will be addressed in this work like showing non-singularity formation for  the pressure at boundary layers, deriving equations for asymptotic series coefficients of uncommonly high order, and dealing with a highly coupled system of equations.}

\keywords{Cell Blebbing, Sharp Interface Analysis, Phase Fields, Fluid-Structure Interaction}

\maketitle

\footnotetext{}

\section{Introduction}
	The phenomenon of cell blebbing is connected with various biological processes such as locomotion
    of primordial germ or cancer cells, the programmed cell death (apoptosis), or cell division.
    Its importance has been recognised and emphasized in the last decade  \cite{CP2008, PR2013, Othm2018}, 
    and attracts more and more interest.
    Cell blebbing results from chemical reactions that cause the selection of sites on the cell cortex,
    which lies underneath the cell membrane,
    where it contracts. This contraction causes the fluid
    inside the cell (the cytosol) to be pushed towards the cell membrane, which is then stretched out and moved away
    from the cell cortex. The cell membrane is pinned to the cell cortex via linker proteins. Only if 
    a sufficient amount of protein bonds can be broken, the membrane can
    freely develop a protrusion that is called a bleb.

    Besides experimental studies \cite{LGR2019} there are also many endeavours to understand cell blebbing from a theoretical
    perspective, cf. \cite{S2021, LCM2012, SG2013, SCLG2015, ACBS2015, AC2016, SDN2020, WBP2020}. While all these modelling
    approaches concentrate on selected aspects of the whole process, a full 3D model that brings together the linker proteins,
    their surface diffusion, and the fluid-structure interaction has only recently been proposed
    in \cite{WBFG2021}:
    the authors derive a phase field model in which cell cortex and cell membrane are defined by two
    coupled phase fields, with phase field parameter $\phaseFieldParam$,
    that interact with the cytosol. The coupling of the phase fields reflects the linker proteins
    connecting both surfaces and brings in new interesting mathematical challenges
    such as well-posedness of equations on evolving `diffuse manifolds' (the linker protein densities
    on the cell cortex undergo changes due to surface diffusion and bond breaking),
    developing numerical schemes for solving non-linear, sixth-order phase field equations,
    and answering the question what model is reached in the limit $\phaseFieldParam \to 0$.
    
    This article is aimed at investigating the last problem
    and showing that the phase field model of \cite{WBFG2021} formally approximates a sharp interface model
    that has also been derived by physical first principles \cite{W2021}. For that we will use
    the method of formal asymptotic analysis. 
    The techniques we employ are similar to those applied for the asymptotic analysis of 
    related phase field models like \cite{CF1986}, the Stokes-Allen-Cahn system in \cite{AL2018}, or the Willmore $L^2$-flow \cite{FL2021}.
    Another related asymptotic analysis is that of \cite{Wa2008} for minimisers of the Canham-Helfrich energy.

    We start by briefly recalling the phase field model from \cite{WBFG2021} and show the sharp interface
    system that is expected in the limit.
    After we have introduced the notation and gained some understanding of the system of 
    partial differential equations, we introduce foundations of the technique we use to pass to the limit 
    $ \phaseFieldParam \to 0 $. The major part of this paper follows, which is to plug in series expansions 
    of the solutions of the phase field model in powers of $ \phaseFieldParam $. Via separation of scales, we
    are able to derive equations for the leading order summands of the series. Using these findings, we can finally
    pass to the limit in  the equations of the phase field model
    and find the sharp interface system of equations that we initially reviewed.

    \paragraph{Preliminaries}
    	We denote the $n$-dimensional Lebesgue measure by $ \lebesgueM{n} $ and 
        the Hausdorff measure of Hausdorff dimension $ m $ by $ \hausdorffM{m} $. 
        Recall that for a two-dimensional submanifold $ \Gamma \subset \reals^3 $ with a smooth global chart
        $ \varphi\colon \Gamma \rightarrow \reals^2 $, and for a summable function $ f\colon\Gamma \rightarrow \reals $,
        it holds by definition
        $$
        	\integral{\Gamma}{}{f}{\hausdorffM{2}}
            =
            \integral{\varphi(\Gamma)}{}{f\concat\varphi^{-1} \jacobian{\varphi^{-1}}}{\lebesgueM{2}},
        $$
        where
        $ \jacobian{u} = \sqrt{\det\left(\transposed{\grad{}{}{}u}\grad{}{}{}u\right)} $
        is the Jacobian of $ u=\varphi^{-1} $.


        Let $ \Gamma \subseteq \reals^3 $ be a sufficiently smooth submanifold.
        We denote by $ \tubNeighbourhood{\delta}{\Gamma} = \set{ x \in \reals^3 }{ \dist{\Gamma}{x} < \delta } $
        the tubular neighbourhood around $\Gamma$,
        where $ \dist{\Gamma}{x} $ is the distance of $ x $ to $ \Gamma $ defined via the orthogonal projection;
        by $ \sdist{\Gamma}{x} $ we denote the signed distance.
        If we partition
        $ \tubNeighbourhood{\delta}{\Gamma} = \bigcup_{r\in(-\delta,\delta)} \Gamma_r $, where
        $ \Gamma_r = \set{x\in\tubNeighbourhood{\delta}{\Gamma}}{\sdist{\Gamma}{x} = r} $, we may define extensions 
        of quantities defined on $ \Gamma $ into $ \tubNeighbourhood{\delta}{\Gamma} $
        (cf.~\cite[Sec.~14.6]{GT2001}).
        The extended principal curvatures are defined as
        \begin{equation*}
            \begin{split}
                \pCurv[\bar]{i}{}{}\colon \tubNeighbourhood{\delta}{\Gamma} & \rightarrow \reals,
                \\
                x & \mapsto \pCurv{\Gamma_{\sdist{\Gamma}{x}},i}{}{x},
            \end{split}
        \end{equation*}
        where $ \pCurv{S,i}{}{x} $ is the $i$th principal curvature of the surface $ S $.
        Accordingly, the mean curvature is
        \begin{equation*}
            \begin{split}
                \extMeanCurv{}{}\colon\tubNeighbourhood{\delta}{\Gamma} & \rightarrow \reals,
                \\
                x & \mapsto \meanCurv{\Gamma_{\sdist{\Gamma}{x}}}{x}.
            \end{split}
        \end{equation*}
        Another extension method we will encounter is the \emph{normal extension} of a quantity $ f\colon\Gamma\rightarrow V $, 
        for a set $ V $,
        meaning that the quantity is extended constantly in normal direction. We denote 
        those extensions by $ \schemeNormExt{f} $.

        The surface gradient, $ \grad{\Gamma}{}{}f \vert_p $, of a function $ \funSig{f}{\Gamma}{\reals} $ in a point 
        $ p \in \Gamma $ is the vector
        $$
        	\grad{\Gamma}{}{}f\vert_p = \mathbb{P}_\Gamma(p) \grad{}{}{}\schemeNormExt{f}\vert_p,
        $$
        where $ \mathbb{P}_\Gamma(p) = \identityMatrix - \normal[\Gamma]{p}\tensorProd\normal[\Gamma]{p} $ 
        is the tangential projection onto $ \Gamma $. Other surface differential
        operators such as the divergence or the Jacobi-Matrix can be derived analogously.

        For a differentiable functional $ S \colon X \rightarrow [0,\infty) $ on a Banach space
        $ X $, the element $ \grad{}{Y}{} S(u) $, $ u \in X $, of a subspace $ Y \subseteq X $ that fulfills
        $$
        	\innerProd[Y]{\grad{}{Y}{}S(u)}{v} = S^\prime(u)v \quad\quad \forall v \in Y,
        $$
        where $ S^\prime(u) $ is the Gateaux-derivative of $ S $ in $u$, is called the $ Y $-gradient of $ S $.
        Consider, e.g., the functional 
        $ S(u) = \integral{B_1(0)}{}{\abs{\grad{}{}{}u(x)}^2}{\lebesgueM{3}(x)} $; its $ L^2$-gradient is
        $ \grad{}{L^2}{}S(u) = -\laplacian{}{}{}u $, whereas the $ H_0^1$-gradient takes the
        form $ \grad{}{H_0^1}{}S(u) = u $, and the $ H^{-1} $-gradient is
        $ \grad{}{H^{-1}}{}S(u) = \laplacian{}{2}{}u $.


\section{Modelling}
	Besides the numerical advantage of making topological changes such as pinch-offs (like when vesicles form out of the
    membrane) easy to handle, a phase field approach for modelling cell blebbing is also apt for bio-physical reasons:
    cell membranes are bilayers of lipid molecules which can be subject to undulations, and so the membrane
    is not strictly demarcated to the surrounding fluid. Depending on the scale we look at these membranes, the
    diameter of the lipid molecules involved, and the spacing between them, it may be desirable to model uncertainty in the
    lipid molecules' position and thus take them to be diffuse layers of some thickness $ \phaseFieldParam $. 
    Another pecularity when considering cell blebbing is experimental evidence
    \cite{LGR2019} that at sites where blebbing occurs, the cell membrane is folded multiple times
    providing for enough material to be unfolded, and is thus thicker than a typical biological membrane.

	Let us assume we observe the process of cell blebbing for a certain time $ \finTime \in (0,\infty) $
    in a domain $ \domain \subseteq \reals^3 $.    
    We consider two evolving diffuse interfaces---the cell membrane and the cell cortex---that can be defined as those subsets 
    of $ \domain $, on which
    phase fields $ \membrPhaseField{}{} $ (modelling the membrane) and $ \cortexPhaseField{}{} $ 
    (modelling the cell cortex) are close to zero, respectively. Additionally, there is a surrounding fluid with
    density $ \rho $, velocity $ \pfVelocity{}{} $, and pressure $ \pfPressure{}{} $.
    Also in the domain, but concentrated on the cell cortex, are linker proteins 
    with mass volume density $ \pfSpeciesOne{}{} $. They
    connect the cell membrane and the cell cortex. The linker proteins behave like springs, but may break if overstretched, so
    we introduce another density $ \pfSpeciesTwo{}{} $ which gives the mass of linkers per volume that are broken.
    This is important because `repairing mechanisms' of the cell take care of reconnecting those broken linkers 
    back to the cell membrane. A scheme in which the aforementioned quantities are all depicted together is given 
    in Figure~\ref{fig:phase field cell scheme}.
    \begin{figure}[h]
        \setlength{\squareLen}{8cm}
        \setlength{\cortexRad}{2cm}
        \setlength{\legendMargin}{3mm}
        \setlength{\legendHeight}{1cm}
        \pgfmathsetlength{\membraneRad}{\cortexRad+1cm}
        \pgfmathsetlength{\linkerFootRad}{\cortexRad+1mm}
      	\centering
        \begin{tikzpicture}[tips,scale=.7]
            \draw[fill=white] (0,0) -- (\squareLen,0) -- 
                (\squareLen,\squareLen) -- (0,\squareLen) -- (0,0);

            \draw[] (\squareLen/2,\squareLen/2) circle (\membraneRad+1mm);
            \draw[dotted,fill=yellow!70!red] (\squareLen/2,\squareLen/2) circle (\membraneRad);
            \draw[] (\squareLen/2,\squareLen/2) circle (\membraneRad-1mm);
            \pgfmathsetlengthmacro\ax{\squareLen/2 + cos(135)*(\membraneRad+.1cm)}
            \pgfmathsetlengthmacro\ay{\squareLen/2 + sin(135)*(\membraneRad+.1cm)}      
            \pgfmathsetlengthmacro\bx{\squareLen/2 + cos(135)*(\membraneRad-.1cm)}
            \pgfmathsetlengthmacro\by{\squareLen/2 + sin(135)*(\membraneRad-.1cm)} 
            \draw (\ax,\ay) node[anchor=south] {$ \membrPhaseField{t}{} $};
            \draw (\ax,\ay) node[anchor=north west] {$\eps$} --(\bx,\by);

            \draw[] (\squareLen/2,\squareLen/2) circle (\cortexRad+1mm);
            \draw[dotted,fill=yellow!40!red] (\squareLen/2,\squareLen/2) circle (\cortexRad);
            \draw[] (\squareLen/2,\squareLen/2) circle (\cortexRad-1mm);
            \pgfmathsetlengthmacro\ax{\squareLen/2 + cos(45)*(\cortexRad-.1cm)}
            \pgfmathsetlengthmacro\ay{\squareLen/2 + sin(45)*(\cortexRad-.1cm)}
            \pgfmathsetlengthmacro\bx{\squareLen/2 + cos(45)*(\cortexRad+.1cm)}
            \pgfmathsetlengthmacro\by{\squareLen/2 + sin(45)*(\cortexRad+.1cm)} 
            \draw (\ax,\ay) node[anchor=north east] {$ \cortexPhaseField{t}{} $};
            \draw (\ax,\ay) node[anchor=south west] {$\eps$} --(\bx,\by);

            \draw ($(1cm,\squareLen-0.5cm)$) node {
                $ \domain $
            };

            \draw (-1,\squareLen*0.25) node[anchor=west] {
              	$ \boundary{\domain} $
            };
        \end{tikzpicture}
        \caption{Illustration of the relationship of the two diffuse layers.
        The dotted lines indicate the centers of the
        transition layers of $ \membrPhaseFieldSym $ and $ \cortexPhaseFieldSym $.
        In the white region, both $ \membrPhaseField{}{} $ and $ \cortexPhaseField{}{} $ take values close to $ 1 $.
        In the light orange region, $ \cortexPhaseField{}{} $ takes values close to $ 1 $, but $ \membrPhaseField{}{} $ 
        has values close to $ -1 $.
        In the dark orange region, both $ \membrPhaseField{}{} $ and $ \cortexPhaseField{}{} $ 
        have values close to $ -1 $.}
        \label{fig:phase field cell scheme}
    \end{figure}    

    For deriving the phase field model, Onsager's variational principle \cite{O1931,QWS2006} is combined with a
    reaction-diffusion-like surface evolution equation for the active and inactive linker proteins.
    To establish a basic understanding of how a PDE system for cell blebbing can be obtained, let us mention
    the principle steps in the derivation.
    \begin{enumerate}
      \item Definition of an energy functional 
        $ \totalEnergyFunct{\pfVelocity{}{},\pfPressure{}{},\membrPhaseField{}{},\cortexPhaseField{}{}}{} $
        that is the sum of all kinds of energy of the cell:
        the ingredients are the kinetic energy of the fluid, 
        the surface and bending energy of the cell cortex and cell membrane,
        and a potential energy that accounts for the coupling of both membrane and cortex via the linker 
        proteins.
      \item Definition of appropriate boundary conditions (see below).
      \item Variation of $ \totalEnergyFunct{}{} $ plus a dissipation functional. With regard to the
        linker proteins, our process is assumed to be quasi-static, i.e., we assume
        the linker proteins to be given parameters of $ \totalEnergyFunct{}{} $ although their evolution
        is given by a reaction-diffusion-like surface equation.
      \item Extending the stationarity condition derived by the previous variation step, the aforementioned
        surface evolution equations
        for the linker proteins are added.
    \end{enumerate}

    \subsection{Phase field model}
        Several computations and formulae are the same for the phase field representing the cell membrane
        $ \membrPhaseField{}{} $ and that representing the cell cortex $ \cortexPhaseField{}{} $. For those,
        we always use the symbols
        $ \phaseFieldProto \in \{\membrPhaseFieldSym, \cortexPhaseFieldSym\} $, and
        $ \surfaceProto \in \{\interfOne{},\interfTwo{}\} $ to avoid copious repetition.

        In the phase field approach, we approximate two important geometrical quantities known from 
        the sharp interface perspective, namely the normal 
        \begin{align*}
          \normal[\surfaceProto]{} = \normal[\phaseFieldProto_\phaseFieldParam]{} + \landauBigo{\phaseFieldParam},\quad
          \normal[\phaseFieldProto_\phaseFieldParam]{}=\frac{\grad{}{}{}\phaseFieldProto_\phaseFieldParam}{\abs{\grad{}{}{}\phaseFieldProto_\phaseFieldParam}}
        \end{align*}
        (everywhere where $\phaseFieldProto_\phaseFieldParam\neq0$), and the mean curvature
        \begin{align*}
          \meanCurv{\surfaceProto}{} = \meanCurv{\phaseFieldProto_\phaseFieldParam}{} +
                                       \landauBigo{\phaseFieldParam},\quad
          \meanCurv{\phaseFieldProto_\phaseFieldParam}{} =
          \abs{\grad{}{}{}\phaseFieldProto_\phaseFieldParam}(-\phaseFieldParam\laplacian{}{}{}\phaseFieldProto_\phaseFieldParam + 
          \phaseFieldParam^{-1}\doubleWellPot[\schemeSupscr{\prime}]{\phaseFieldProto_\phaseFieldParam})
        \end{align*}
        with $ \doubleWellPot[]{\phaseFieldProto_\phaseFieldParam} = \frac{1}{4}\left(\phaseFieldProto_\phaseFieldParam^2 - 1\right)^2$.
        Having the velocity $ \velocity{}{} $ and density $ \fluidMassDens $ of the fluid, we may express the kinetic energy
        as 
        $$
            \frac{1}{2}\integral{\domain}{}{\fluidMassDens\abs{\velocity{}{}}^2}{\lebesgueM{3}}.
        $$
        Let us consider the following energies at a
        particular point in time $ t \in[0,\finTime]$, so we can ignore the time-dependency for now.
        The surface energy of the diffuse cell membrane with a surface tension proportional to $ \elComprMod_{\interfOne{}} $ 
        is given by the Ginzburg-Landau energy
        $$
            \ginzburgLandauEnergyFunct{\membrPhaseFieldSym}{\interfOne{}}
            =
            \elComprMod_{\interfOne{}}
            \integral{\domain}{}{
                \frac{\phaseFieldParam}{2}\abs{\grad{}{}{}\membrPhaseFieldSym}^2
                +
                \frac{1}{\phaseFieldParam}
                \doubleWellPot{\membrPhaseFieldSym}
            }{\lebesgueM{3}}
            =
            \elComprMod_{\interfOne{}}
            \integral{\domain}{}{
                \ginzburgLandauEnergyDens{\membrPhaseFieldSym}{}
            }{\lebesgueM{3}}
        $$
        with $ \ginzburgLandauEnergyDens{\membrPhaseFieldSym}{} = \frac{\phaseFieldParam}{2}
        \abs{\grad{}{}{}\membrPhaseFieldSym}^2 + \frac{1}{\phaseFieldParam}\doubleWellPot{\membrPhaseFieldSym} $.
        A well-established \cite{C1970,H1973,QLRW2009} model for the bending energy of a cell membrane
        with bending rigidity $ \bendRig_{\interfOne{}} $ and spontaneous mean curvature 
        $ \pfSpontMeanCurv^{\interfOne{}} $ is the phase field version of the Canham-Helfrich energy
        $$
            \pfGenWillmoreEnergyFunct{\membrPhaseFieldSym}{\interfOne{}}
            =
            \frac{\bendRig_{\interfOne{}}}{2\phaseFieldParam}
            \integral{\domain}{}{
                \left(
                    -\phaseFieldParam\laplacian{}{}{}\membrPhaseFieldSym 
                    + 
                    \left(
                        \frac{1}{\phaseFieldParam}
                        \membrPhaseFieldSym
                        +
                        \pfSpontMeanCurv^{\interfOne{}}
                    \right)
                    \left(
                        \membrPhaseFieldSym^2-1
                    \right)
                \right)^2
            }{\lebesgueM{3}}.
        $$    
        The spontaneous mean curvature corresponds to an intrinsic bending of the membrane which is typical 
        for biomembranes. The additional term in the energy introduced by that, however, does not introduce
        new theoretical challenges compared to using a Willmore functional, 
        which is why we will omit it for the sake of a straightforward presentation,
        i.e, $ \pfSpontMeanCurv^{\interfOne{}} = 0 $. 
        In this configuration, $ \pfGenWillmoreEnergyFunct{\membrPhaseFieldSym}{\interfOne{}} $
        is the phase field version of the Willmore energy.
        We simplify the situation for the cell cortex in that we assume it to be just a stiffer
        membrane thus employing the same types of energies just with different surface tension and
        bending rigidity.
        Both energies associated to membrane and cortex are summarised in the energy functionals
        $$
            \pfSurfEnergyProto{\phaseFieldProto} =
                \pfGenWillmoreEnergyFunct{\phaseFieldProto}{\surfaceProto}
                +
                \ginzburgLandauEnergyFunct{\phaseFieldProto}{\surfaceProto}.
        $$

        For the coupling of cell membrane and cell cortex, we account with a generalised Hookean spring energy.
        $$
            \pfCouplingEnergy{\membrPhaseField{}{},\cortexPhaseField{}{},\pfSpeciesOne{}{}}{} =
            \integral{\domain}{}{
                \ginzburgLandauEnergyDens{\membrPhaseFieldSym}{}(y)
                \frac{\linkersSpringConst}{2}\integral{\domain}{}{
                    \ginzburgLandauEnergyDens{\cortexPhaseFieldSym}{}(x)
                    \abs{x-y}^2
                    \pfSpeciesOne{t}{x}
                    \pfCouplingProb{x,y,\normal[\cortexPhaseField{}{}]{}}{}
                }{\lebesgueM{3}(x)}
            }{\lebesgueM{3}(y)},
        $$
        where
        $ \linkersSpringConst $ is a spring constant, and $ \pfCouplingProb{x,y,\normal[\cortexPhaseField{}{}]{}}{} $
        assigns to points $ x, y \in \domain $
        the particle-per-volume density of protein linkers connecting in direction $ x-y$.
        A possible choice is
        $$
            \pfCouplingProb{x,y,\normal[\cortexPhaseField{}{}]{}}{} =
            \tilde{\pfCouplingProb{}{}}\left(
                \frac{
                    (x-y)\cdot\normal[\cortexPhaseField{}{}]{x}
                }{
                    \abs{x-y}
                }
            \right),\quad
            \tilde{\pfCouplingProb{}{}}(r) =
            \hat{\pfCouplingProb{}{}} \exp\left(\frac{(r-1)^2}{s^2}\right)
        $$
        with $ s $ being a suitable standard deviation and $ \hat{\pfCouplingProb{}{}} $ an appropriate scaling factor.
        To outline the idea for this modelling choice, we first point out that $ \ginzburgLandauEnergyDens{\membrPhaseField{}{}}{} $
        can be pictured as a `smooth Dirac delta function' if $ \membrPhaseField{}{} $ is the so-called optimal profile
        $$
            x \mapsto \tanh\left( \frac{\sdist{\interfOne{}}{x}}{\phaseFieldParam\sqrt{2}} \right).
        $$ 
        The same holds for $ \ginzburgLandauEnergyDens{\cortexPhaseField{}{}}{} $, so that
        $$
            \integral{\domain}{}{\ginzburgLandauEnergyDens{\membrPhaseFieldSym}{} \cdot}{\lebesgueM{3}}
            \approx
            \frac{2\sqrt{2}}{3}\integral{\interfOne{t}}{}{\cdot}{\hausdorffM{2}}
        $$
        and
        $$
            \integral{\domain}{}{\ginzburgLandauEnergyDens{\cortexPhaseFieldSym}{} \cdot}{\lebesgueM{3}}
            \approx
            \frac{2\sqrt{2}}{3}\integral{\interfTwo{t}}{}{\cdot}{\hausdorffM{2}}
        $$
        approximate surface integrals for small $ \phaseFieldParam $. Thus,
        \begin{align*}
            \pfCouplingEnergy{\membrPhaseField{}{},\cortexPhaseField{}{},\pfSpeciesOne{}{}}{} &=
            \integral{\domain}{}{
                \ginzburgLandauEnergyDens{\membrPhaseFieldSym}{}(y)
                \frac{\linkersSpringConst}{2}\integral{\domain}{}{
                    \ginzburgLandauEnergyDens{\cortexPhaseFieldSym}{}(x)
                    \abs{x-y}^2
                    \pfSpeciesOne{t}{x}
                    \pfCouplingProb{x,y,\normal[\cortexPhaseField{}{}]{}}{}
                }{\lebesgueM{3}(x)}
            }{\lebesgueM{3}(y)} \\
            &\approx
            \integral{\interfOne{t}}{}{
                \frac{\linkersSpringConst}{2} 
                 \integral{\interfTwo{t}}{}{
                    \abs{x-y}^2
                    \pfSpeciesOne{t}{x}
                    \pfCouplingProb{x,y,\normal[\interfTwo{t}]{}}{}
                }{\hausdorffM{2}(x)}
            }{\hausdorffM{2}(y)}.
        \end{align*}

        Looking at the sharp interface equivalent of the coupling energy, we can identify
        \begin{enumerate}
          \item $ \frac{\linkersSpringConst}{2} \abs{x-y}^2 $ as a Hookean energy density, which is integrated 
            over the membrane and cortex, and weighted additionally by
          \item $ \pfCouplingProb{x,y,\normal[\interfTwo{t}]{}}{} $ 
            to incorporate the likeliness of the two spatial points $ x \in \interfTwo{} $,
            $ y \in\interfOne{} $ being connected, and
          \item the volume-density of linker particles $ \pfSpeciesOne{t}{x} $ actually linking.
        \end{enumerate}
        The Hookean energy ansatz accounts for the earlier mentioned assumption that the linker proteins
        behave like springs. Additionally, since 
        linkers might not be distributed homogeneously, 
        we should scale the coupling force by their actual amount, which explains 3.
        The necessity to consider a weight $ \pfCouplingProb{}{} $ might not be so obvious:
        it has not yet been agreed upon in the biological literature how to identify the pairs of 
        points $(x,y)\in\interfTwo{}\times\interfOne{}$ that are connected by protein linkers.
        That is why we allow the weight
        $ \pfCouplingProb{}{} $ to model a certain probability for this state.
        An easy way to describe such a probability is in terms of the angle between $ y-x $ and a gauge direction.
        As this gauge direction, we chose the cortex normal, which enters as the third argument
        of $ \pfCouplingProb{}{} $.

        \begin{rem}
            It shall be remarked that there are other choices for `smooth Dirac delta functions' like
            $ \frac{1}{\phaseFieldParam} \doubleWellPot{\phaseFieldProto} $, which is smoother and easier to handle
            analytically and numerically.
            It turns out, however, that for passing to the limit 
            $ \phaseFieldParam \to 0 $, the latter two choices are not appropriate. The reason for that 
            becomes clear when we compare the right hand side $ K $ of the momentum balance for the different
            choices of the integral weight: only for $ \ginzburgLandauEnergyDens{\phaseFieldProto}{} $, we have
            phase field counterparts in $ K $ for every term we expect in the sharp interface system
            as derived from physical first principles (cf. \cite{WBFG2021}). 
        \end{rem}

        Summing all potential energies, we obtain the Helmoltz free Energy of the cell as
        \begin{equation*}
            \freeEnergyFunct{\membrPhaseField{}{},\cortexPhaseField{}{},\pfSpeciesOne{}{}}{\phaseFieldParam}
            =
            \pfSurfEnergyOne{\membrPhaseField{}{}}
            +
            \pfSurfEnergyTwo{\cortexPhaseField{}{}}
            +
            \pfCouplingEnergy{
              \membrPhaseField{}{},\cortexPhaseField{}{},\pfSpeciesOne{}{}
            }{},
        \end{equation*}
        and the inner energy as
        \begin{equation*}
          \totalEnergyFunct{
            \pfVelocity{}{},\membrPhaseField{}{},\cortexPhaseField{}{},\pfSpeciesOne{}{}
          } = 
          \frac{1}{2}\integral{\domain}{}{\fluidMassDens\abs{\pfVelocity{}{}}^2}{\lebesgueM{3}}
          +
          \freeEnergyFunct{\membrPhaseField{}{},\cortexPhaseField{}{},\pfSpeciesOne{}{}}{\phaseFieldParam}.
        \end{equation*}

        Via Onsager's variational principle (cf.~\cite{WBFG2021}), 
        the following system of partial differential equations is then found as stationarity conditions
        \begin{subequations}
            \label{equ:modelling:phase field:resulting PDEs}
            \begin{equation}
                \label{equ:modelling:phase field resulting PDEs:momentum balance}
                \fluidMassDens(\pDiff{t}{}{}\pfVelocity{}{}
                +
                (\pfVelocity{}{}\cdot
                \grad{}{}{})
                \pfVelocity{}{})
                -
                \diver{}{}{
                    \viscosity
                    \left(
                        \grad{}{}{}\pfVelocity{}{}
                        +
                        \transposed{\grad{}{}{}\pfVelocity{}{}}
                    \right)
                    -
                    \pfPressure{}{}
                }
                = K,
            \end{equation}
            \begin{equation}
                \label{equ:modelling:phase field resulting PDEs:incompressibility}
                \diver{}{}{}\velocity{}{} = 0,
            \end{equation}
            \begin{equation}
                \label{equ:modelling:phase field resulting PDEs:first interface evolution}
                \pDiff{t}{}{}\symMembrPhaseField 
                +
                \symVelocity
                \cdot
                \grad{}{}{}
                \symMembrPhaseField
                =
                \diver{}{}{
                    \membrPhaseFieldMob{\symMembrPhaseField}
                    \left(
                    \grad{}{}{
                        \grad{\membrPhaseFieldSym}{L^2}{}
                        \pfSurfEnergyOne{\membrPhaseFieldSym}
                    }
                    +
                    \grad{}{}{
                        \grad{\symMembrPhaseField}{L^2}{}
                        \pfCouplingEnergy{\membrPhaseFieldSym,\cortexPhaseFieldSym,
                        \pfSpeciesOne{}{}}{}
                    }
                    \right)
                },
            \end{equation}
            \begin{equation}
                \label{equ:modelling:phase field:resulting PDEs:second interface evolution}
                \pDiff{t}{}{}\symCortexPhaseField 
                +
                \symVelocity
                \cdot
                \grad{}{}{}
                \symCortexPhaseField
                =
                \diver{}{}{
                    \cortexPhaseFieldMob{\symCortexPhaseField}
                    \left(
                    \grad{}{}{
                        \grad{\cortexPhaseFieldSym}{L^2}{}
                        \pfSurfEnergyTwo{\cortexPhaseFieldSym}
                    }
                    +
                    \grad{}{}{
                        \grad{\symCortexPhaseField}{L^2}{}
                        \pfCouplingEnergy{\membrPhaseFieldSym,\cortexPhaseFieldSym,
                        \pfSpeciesOne{}{}}{}
                    }
                    \right)
                },
            \end{equation}
            where
            \begin{align*}
                K &= 
                \grad{}{L^2}{}
                \pfSurfEnergyOne{\membrPhaseFieldSym}
                \grad{}{}{}\symMembrPhaseField
                +
                \grad{}{L^2}{}
                \pfSurfEnergyTwo{\cortexPhaseFieldSym}
                \grad{}{}{}\symCortexPhaseField
                \\
                &+\grad{\membrPhaseFieldSym}{L^2}{}
                \pfCouplingEnergy{\membrPhaseFieldSym,\cortexPhaseFieldSym,\pfSpeciesOne{}{}}{}
                \grad{}{}{}\membrPhaseFieldSym
                +\grad{\cortexPhaseFieldSym}{L^2}{}
                \pfCouplingEnergy{\membrPhaseFieldSym,\cortexPhaseFieldSym,\pfSpeciesOne{}{}}{}
                \grad{}{}{}\cortexPhaseFieldSym
                \\
                &-
                \integral{\domain}{}{
                    \ginzburgLandauEnergyDens{\membrPhaseField{}{}}{}(y)
                    \pDiff{\pfSpeciesOne{}{}}{}{} \pfCouplingEnergyDens{}{}
                    (\cdot,y,\pfSpeciesOne{}{},\normal[\cortexPhaseFieldSym]{})
                    \meanCurv{\cortexPhaseField{}{}}{}
                    \pfSpeciesOne{}{}
                    \normal[\cortexPhaseFieldSym]{}
                }{\lebesgueM{3}(y)}
                \\
                &-
                \integral{\domain}{}{
                    \ginzburgLandauEnergyDens{\membrPhaseField{}{}}{}(y)
                    \orthProjMat{\normal[\cortexPhaseFieldSym]{}}{}
                    \grad{}{}{\pDiff{\pfSpeciesOne{}{}}{}{} \pfCouplingEnergyDens{}{}
                    (\cdot,y,\pfSpeciesOne{}{},\normal[\cortexPhaseFieldSym]{})
                    }
                    \ginzburgLandauEnergyDens{\cortexPhaseFieldSym}{}
                    \pfSpeciesOne{}{}
                }{\lebesgueM{3}(y)}.
            \end{align*}
            The imposed boundary conditions are
            \begin{align}
                \label{equ:modelling:phase field:resulting PDEs:boundary conditions:%
                velocity}
                \restrFun{\pfVelocity{}{}}{\boundary{\domain}} &= 0,
                \\
                \label{equ:modelling:phase field:resulting PDEs:boundary conditions:%
                phase field}
                \restrFun{\pDiff{\normal{}}{}{}\membrPhaseField{}{}}{\boundary{\domain}} = 
                \restrFun{\pDiff{\normal{}}{}{}\cortexPhaseField{}{}}{\boundary{\domain}}
                &= 0,
                \\
                \label{equ:modelling:phase field:resulting PDEs:boundary conditions:%
                fluxes}
                \restrFun{\symMembrPhaseFieldFlux}{\boundary{\domain}}\cdot\normal{} =
                \restrFun{\symCortexPhaseFieldFlux}{\boundary{\domain}}\cdot\normal{} &= 0,
                \\
                \label{equ:modelling:phase field:resulting PDEs:boundary conditions:%
                species}
                \restrFun{\pfSpeciesOne{}{}}{\boundary{\domain}} = 
                \restrFun{\pfSpeciesTwo{}{}}{\boundary{\domain}} &= 0,
            \end{align}
            where 
            $$ 
                \symMembrPhaseFieldFlux =
                \grad{}{}{
                    \grad{\membrPhaseFieldSym}{L^2}{}
                    \pfSurfEnergyOne{\membrPhaseFieldSym}
                    +
                    \grad{\symMembrPhaseField}{L^2}{}
                    \pfCouplingEnergy{\membrPhaseFieldSym,\cortexPhaseFieldSym,
                    \pfSpeciesOne{}{}}{}
                },
            $$
            and 
            $$ 
                \symCortexPhaseFieldFlux = 
                \grad{}{}{
                    \grad{\cortexPhaseFieldSym}{L^2}{}
                    \pfSurfEnergyTwo{\cortexPhaseFieldSym}
                    +
                    \grad{\symCortexPhaseField}{L^2}{}
                    \pfCouplingEnergy{\membrPhaseFieldSym,\cortexPhaseFieldSym,
                    \pfSpeciesOne{}{}}{}
                }.
            $$

            In addition, we consider evolution equations for the active and inactive linkers 
            on the diffuse surface of the cell cortex:
            \begin{align}
                \label{equ:modelling:phase field:resulting PDEs:reaction-diffusion of species one}
                \ginzburgLandauEnergyDens{\cortexPhaseField{}{}}{}
                \pDiff{t}{}{}\pfSpeciesOne{}{}
                -
                \symVelocityNormal{\cortexPhaseField{}{}}
                \meanCurv{\cortexPhaseField{}{}}{}
                \pfSpeciesOne{}{}
                -
                \diver{}{}{
                    \ginzburgLandauEnergyDens{\cortexPhaseField{}{}}{}
                    \speciesOneDiffusiv{}
                    \grad{}{}{}\speciesOne{}{}
                }
                +
                \diver{}{}{
                    \ginzburgLandauEnergyDens{\cortexPhaseField{}{}}{}
                    \pfVelocity{}{}_\tau
                    \speciesOne{}{}
                }
                =\notag\\
                \ginzburgLandauEnergyDens{\cortexPhaseField{}{}}{}
                \reactionOne{\speciesOne{}{},\speciesTwo{}{},\membrPhaseField{}{},
                \normal[\cortexPhaseFieldSym]{}}{},
                \\
                \label{equ:modelling:phase field:resulting PDEs:reaction-diffusion of species two}
                \ginzburgLandauEnergyDens{\cortexPhaseField{}{}}{}
                \pDiff{t}{}{}\pfSpeciesTwo{}{}
                -
                \symVelocityNormal{\cortexPhaseField{}{}}
                \meanCurv{\cortexPhaseField{}{}}{}
                \pfSpeciesTwo{}{}
                -
                \diver{}{}{
                    \ginzburgLandauEnergyDens{\cortexPhaseField{}{}}{}
                    \speciesTwoDiffusiv{}
                    \grad{}{}{}\speciesTwo{}{}
                }
                +
                \diver{}{}{
                    \ginzburgLandauEnergyDens{\cortexPhaseField{}{}}{}
                    \pfVelocity{}{}_\tau
                    \speciesTwo{}{}
                }
                =\notag\\
                -\ginzburgLandauEnergyDens{\cortexPhaseField{}{}}{}
                \reactionOne{\speciesOne{}{},\speciesTwo{}{},\membrPhaseField{}{},      	
                \normal[\cortexPhaseFieldSym]{}}{},
            \end{align}
        \end{subequations}
        where
        \begin{align*}
            \reactionOne{\pfSpeciesOne{}{},\pfSpeciesTwo{}{},\membrPhaseField{}{},
            \normal[\cortexPhaseFieldSym]{}}{}
            =
            \repairRate \pfSpeciesTwo{}{}
            - 
            \pfSpeciesOne{}{}
            \discRate{ 
                \membrPhaseField{}{},
                \normal[\cortexPhaseFieldSym]{}
            }.
        \end{align*}
        The term $ \repairRate \pfSpeciesTwo{}{} $ is the effective reconnection rate, $ \repairRate \geq 0 $, 
        of the inactive linkers, and
        \[
            \pfSpeciesOne{}{}
            \discRate{ 
                \membrPhaseField{}{},
                \normal[\cortexPhaseFieldSym]{}
            }
        \]
        is the effective disconnection rate of the active linkers in relation to the membrane position in space
        and the orientation of the cortex given by its normal. 

        For a thorough discussion and further references, the reader may please refer to \cite{W2021}.
        In the following section, we describe steps one, two and four, 
        but leave out the lengthy calculations involved for step three.
        For the following discussion, however, we need
        the concrete expression for all the $ L^2 $-gradients of the energies, so we give them here without doing the calculations.
        Note that these calculations depend on the boundary conditions 
        \eqref{equ:modelling:phase field:resulting PDEs:boundary conditions:%
        velocity},
        \eqref{equ:modelling:phase field:resulting PDEs:boundary conditions:%
        phase field},
        \eqref{equ:modelling:phase field:resulting PDEs:boundary conditions:%
        fluxes}, and
        \eqref{equ:modelling:phase field:resulting PDEs:boundary conditions:%
        species}:
        \begin{subequations}
        \begin{align}
            \grad{\phaseFieldProto}{L^2}{}\pfSurfEnergyProto{} &=
            \grad{\phaseFieldProto}{L^2}{}
            \pfGenWillmoreEnergyFunct{}{}
            +
            \grad{\phaseFieldProto}{L^2}{}
            \ginzburgLandauEnergyFunct{}{},
            \\                                             
            \grad{\phaseFieldProto}{L^2}{}
            \ginzburgLandauEnergyFunct{}{}
            &=
            -\phaseFieldParam \laplacian{}{}{}\phaseFieldProto
            +
            \frac{1}{\phaseFieldParam}\doubleWellPot[\schemeSupscr{\prime}]{\phaseFieldProto}
            \equalscolon
            \genGlChemPot{\phaseFieldProto}{},
            \\
            \grad{\phaseFieldProto}{L^2}{}
            \pfGenWillmoreEnergyFunct{}{}
            &= -\laplacian{}{}{ 
                    \genGlChemPot{\phaseFieldProto}{}
               }
               +
               \genGlChemPot{\phaseFieldProto}{}
               \frac{1}{\phaseFieldParam^{2}}
              \doubleWellPot[\schemeSupscr{\prime\prime}]{\phaseFieldProto}{}.
        \end{align}
        For easier expression of the coupling energy gradients, we introduce
        \begin{align*}
            \pfCouplingIntCortex(t,y) &= \integral{\domain}{}{
                \ginzburgLandauEnergyDens{\symCortexPhaseField}{}(x)
                \pfCouplingEnergyDens{x,y}{\speciesOne{t}{x},
                  \normal[\cortexPhaseField{t}{\NOARG}]{x}}
            }{\lebesgueM{3}(x)},
            \\
            \pfCouplingIntMembr(t,x) &= \integral{\domain}{}{
                \ginzburgLandauEnergyDens{\membrPhaseFieldSym}{}(y)
                \pfCouplingEnergyDens{x,y}{\speciesOne{t}{x},
                  \normal[\cortexPhaseField{t}{\NOARG}]{x}}
            }{\lebesgueM{3}(y)}.
        \end{align*}
        Then,
        \begin{align}
            \label{equ:modelling:grad mempf coupling energy}
            \grad{\membrPhaseField{}{}}{L^2}{}
            \pfCouplingEnergy{}{}
            &=
            \genGlChemPot{\membrPhaseFieldSym}{0}(y)\pfCouplingIntCortex(y)
            -
            \integral{\domain}{}{
                \phaseFieldParam
                \ginzburgLandauEnergyDens{\cortexPhaseField{}{}}{}(x)
                \grad{y}{}{}\membrPhaseField{}{}
                \cdot
                \grad{y}{}{\pfCouplingEnergyDens{x,y}{\pfSpeciesOne{}{},\normal[\cortexPhaseFieldSym]{}}}
            }{\lebesgueM{3}(x)},
            \\
            \label{equ:modelling:grad cortpf coupling energy}
            \grad{\cortexPhaseField{}{}}{L^2}{}
            \pfCouplingEnergy{}{}
            &=
            \genGlChemPot{\cortexPhaseField{}{}}{0}(x)
            \pfCouplingIntMembr(x)
            -\integral{\domain}{}{
                \phaseFieldParam
                \ginzburgLandauEnergyDens{\membrPhaseField{}{}}{}(y)
                \grad{x}{}{
                    \pfCouplingEnergyDens{x,y}{\pfSpeciesOne{}{},\normal[\cortexPhaseFieldSym]{}}
                }
                \cdot
                \grad{x}{}{}\cortexPhaseField{}{}
            }{\lebesgueM{3}(y)}
            \\\nonumber
            &-
            \integral{\domain}{}{
                \ginzburgLandauEnergyDens{\membrPhaseField{}{}}{}(y)
                \diver{x}{}{
                    \ginzburgLandauEnergyDens{\cortexPhaseField{}{}}{}(x)
                    \transposed{
                    \grad{\normal{}}{}{}\pfCouplingEnergyDens{x,y}{\pfSpeciesOne{}{},
                    \normal[\cortexPhaseFieldSym]{}}
                    }
                    \frac{1}{\abs{\grad{}{}{}\cortexPhaseFieldSym}}
                    \orthProjMat{\normal[\cortexPhaseFieldSym]{}}{}
                }
            }{\lebesgueM{3}(y)}.
        \end{align}
        \end{subequations}
    
    Solutions of \eqref{equ:modelling:phase field:resulting PDEs}
    fulfil an energy inequality, cf. \cite{WBFG2021}.
    This energy inequality reads
    \begin{equation}
        \label{equ:formal asymptotics:energy inequ:pre}
        \begin{split}
            \diff{t}{}{}\freeEnergyFunct{\membrPhaseField{}{},\cortexPhaseField{}{},\pfSpeciesOne{}{}}{\phaseFieldParam}
            \leq
            &-
            \VLTNorm{\domain}{(3,3)}{\grad{}{}{}\pfVelocity{}{}}^2
            \\
            &-
            \membrPhaseFieldMob{\membrPhaseField{}{}}
            \LTNorm{\domain}{
                \grad{}{}{
                    \grad{\membrPhaseField{}{}}{L^2}{}
                    \pfSurfEnergyOne{\membrPhaseField{}{}}{}
                    +
                    \grad{\membrPhaseField{}{}}{L^2}{}
                    \pfCouplingEnergy{\membrPhaseField{}{},\cortexPhaseField{}{},\pfSpeciesOne{}{}}{}
                }
            }^2
            \\
            &-
            \cortexPhaseFieldMob{\cortexPhaseField{}{}}
            \LTNorm{\domain}{
                \grad{}{}{
                    \grad{\cortexPhaseField{}{}}{L^2}{}
                    \pfSurfEnergyTwo{\cortexPhaseField{}{}}{}
                    +
                    \grad{\cortexPhaseField{}{}}{L^2}{}
                    \pfCouplingEnergy{\membrPhaseField{}{},\cortexPhaseField{}{},\pfSpeciesOne{}{}}{}
                }
            }^2\\
            &+
            \integral{\domain}{}{
                \ginzburgLandauEnergyDens{\membrPhaseFieldSym}{}(y)
                \integral{\domain}{}{
                    \ginzburgLandauEnergyDens{\cortexPhaseFieldSym}{}(x)
                    \pDiff{\pfSpeciesOne{}{}}{}{}
                    \pfCouplingEnergyDens{}{}[
                    \pDiff{t}{}{}\pfSpeciesOne{}{}]
                }{\lebesgueM{3}(x)}
            }{\lebesgueM{3}(y)}
            \\
            &-\integral{\domain}{}{
                \meanCurv{\cortexPhaseFieldSym}{t,x}
                \pfSpeciesOne{t}{x}
                \symVelocityNormal{\cortexPhaseFieldSym}(t,x)
                \integral{\domain}{}{
                    \ginzburgLandauEnergyDens{\membrPhaseFieldSym}{}(y)
                    \pDiff{\pfSpeciesOne{}{}}{}{}
                    \pfCouplingEnergyDens{x,y,\pfSpeciesOne{}{},\normal[\cortexPhaseFieldSym]{}}{}
                }{\lebesgueM{3}(y)}
            }{\lebesgueM{3}(x)}\\
            &
            -
            \integral{\domain}{}{
            \integral{\domain}{}{
                \ginzburgLandauEnergyDens{\membrPhaseField{}{}}{}(y)
                \grad{}{}{\pDiff{\pfSpeciesOne{}{}}{}{} \pfCouplingEnergyDens{}{}
                (\cdot,y,\pfSpeciesOne{}{},\normal[\cortexPhaseFieldSym]{})
                }
                \cdot
                \symVelocityTangVec
                \ginzburgLandauEnergyDens{\cortexPhaseFieldSym}{}
                \pfSpeciesOne{}{}
            }{\lebesgueM{3}(y)}
            }{\lebesgueM{3}(x)}.
        \end{split}
    \end{equation}

    \subsection{Sharp interface model}
	We introduce two evolving, two-dimensional manifolds $ \interfOne{}_\finTime = \left(\interfOne{t}\right)_{t\in[0,\finTime]} $ for
    the cell membrane, and $ \interfTwo{}_\finTime = \left(\interfTwo{t}\right)_{t\in[0,\finTime]} $ for the cell cortex.
    These evolving manifolds can also be described as the level sets $ \interfOne{t} =\membrLvlSet{t}{0} $
    and $ \interfTwo{t} = \cortexLvlSet{t}{0} $ of functions $ \membrLvlSetFun{}{}\colon \domain \times [0,\finTime] \rightarrow
    \reals $ and $ \cortexLvlSetFun{}{}\colon \domain \times [0,\finTime] \rightarrow \reals $.
    The cell we consider is swimming in a fluid with pressure $ \pressure{}{} $ and velocity $ \velocity{}{} $.
    Additionally, we have the density $ \speciesOne{}{}\colon \interfTwo{}_\finTime \rightarrow
    \reals $ of linker proteins connecting cell membrane and cell cortex,
    which we call active linkers. Another density $ \speciesTwo{}{}\colon \interfTwo{}_\finTime \rightarrow \reals $ 
    is introduced to model the density of the
    disconnected or broken proteins, called inactive linkers; these no longer couple cell membrane and cell cortex, but may be 
    reconnected due to healing mechanisms inside the cell.
    \def\intDom{%
      \overset{\circ}{\domain}%
    }
    $\intDom = \domain\setminus(\interfOne{t}\cup\interfTwo{t}) $

    \begin{subequations}
        \label{equ:modelling:sharp interface system}
        \begin{align}
            \label{equ:modelling:sharp interface classical PDE model:momentum balance}
            \fluidMassDens(\pDiff{t}{}{}\velocity{}{} 
            +
            \left( \velocity{}{} \cdot \grad{}{}{} \right) \velocity{}{})
            - 
            \diver{}{}{} \cauchyStressTens{}{}
            &=
            0
            &\text{in}\; & \intDom
            \\
            \label{equ:modelling:sharp interface classical PDE model:mass cons}
            \diver{}{}{}\velocity{}{} &= 0
            &
            \text{in}\; & \intDom
            \\
            \velocity{t}{} &= 0
            &\text{on}\; & \boundary{\domain},
            \\
            \label{equ:modelling:sharp interface classical PDE model:no jump if one}
            \jump{\velocity{}{}}_{\interfOne{t}}
            &=
            0
            & \text{on}\; & \interfOne{t},
            \\
            \label{equ:modelling:sharp interface classical PDE model:no jump if two}
            \jump{\velocity{}{}}_{\interfTwo{t}}
            &=
            0
            & \text{on}\; & \interfTwo{t},
            \\\label{equ:modelling:sharp interface classical PDE model:normal stress jump if one}
            -\jump{ \cauchyStressTens{}{}\normal{}}
            &=
            \grad{\membrLvlSetFun{}{}}{L^2}{}
            \surfEnergyOne{}
            \grad{}{}{}\membrLvlSetFun{}{}
            -
            \left(\grad{y}{}{}\couplingIntCortex^0\cdot\normal[\interfOne{}]{}\right)
            \normal[\interfOne{}]{}
            +
            \meanCurv{\interfOne{}}{}
            \couplingIntCortex^0
            \normal[\interfOne{}]{}
            &
            \text{on}\; & \interfOne{t},
            \\\nonumber
            -\jump{ \cauchyStressTens{}{}\normal{}}
            &= 
            \grad{\cortexLvlSetFun{}{}}{L^2}{}
            \surfEnergyTwo{}
            \grad{}{}{}\cortexLvlSetFun{}{}
            -
            \left(\grad{x}{}{}\couplingIntMembr^0\cdot\normal[\interfTwo{}]{}\right)
            \normal[\interfTwo{}]{}
            +
            \meanCurv{\interfTwo{}}{}
            \couplingIntMembr^0
            \normal[\interfTwo{}]{}
            \\\nonumber
            &-
            \pDiff{\speciesOne{}{}}{}{}
            \couplingIntMembr^0
            \meanCurv{\interfTwo{}}{}
            \speciesOne{}{}
            \normal[\interfTwo{}]{}
            -
            \grad{\interfTwo{}}{}{
            \pDiff{\speciesOne{}{}}{}{}
            \couplingIntMembr^0}
            \speciesOne{}{}
            -
            \diver{\interfTwo{}}{}{
                \grad{\normal{}}{}{}\couplingIntMembr^0
            }
            \normal[\interfTwo{}]{}
            \\\label{equ:modelling:sharp interface classical PDE model:normal stress jump if two}
            &-
            \meanCurv{\interfTwo{}}{}
            \left(
            \grad{\normal{}}{}{}\couplingIntMembr^0
            \cdot
            \normal[\interfTwo{}]{}
            \right)
            \normal[\interfTwo{}]{}                
            &
            \text{on}\; & \interfTwo{t},
            \\\label{equ:modelling:sharp interface classical PDE: HJE membr}
            \pDiff{t}{}{} \membrLvlSetFun{}{}
            +
            \velocity{}{}
            \cdot
            \grad{}{}{}\membrLvlSetFun{}{}
            &=
            0
            &\text{in}\; & \domain,
            \\\label{equ:modelling:sharp interface classical PDE: HJE cortex}
            \pDiff{t}{}{} \cortexLvlSetFun{}{}
            +
            \velocity{}{}
            \cdot
            \grad{}{}{}\cortexLvlSetFun{}{}
            &=
            0
            &\text{in}\; & \domain,
            \\\label{equ:modelling:sharp interface classical PDE:species one evoluation}
            \pDiff{t}{}{}\speciesOne{}{} 
            -
            \meanCurv{}{}
            \symVelocityNormal{\cortexLvlSetFun{}{}}
            \speciesOne{}{}
            -
            \diver{\interfTwo{t}}{}{
              \speciesOneDiffusiv{}
              \grad{}{}{}\speciesOne{}{}
            }
            +
            \diver{\interfTwo{t}}{}{\speciesOne{}{}\symVelocityTangVec}
            &=
            \reactionOne{
              \speciesOne{}{},\speciesTwo{}{},
              \membrLvlSetFun{}{},\normal[\interfTwo{}]{}
            }{}
            &
            \text{on}\; & \interfTwo{t},
            \\
            \pDiff{t}{}{}\speciesTwo{}{} 
            -
            \meanCurv{}{}
            \symVelocityNormal{\cortexLvlSetFun{}{}}
            \speciesTwo{}{}
            - 
            \diver{\interfTwo{t}}{}{
              \speciesTwoDiffusiv{}
              \grad{}{}{}\speciesTwo{}{}
            }
            +
            \diver{\interfTwo{t}}{}{\speciesTwo{}{}\symVelocityTangVec}
            &=
            \reactionTwo{
              \speciesOne{}{},\speciesTwo{}{},
              \membrLvlSetFun{}{},\normal[\interfTwo{}]{}
            }{}
            &\text{on}\; & \interfTwo{t}.
        \end{align}
    \end{subequations}
\section{Formal asymptotic analysis}
    Having outlined the physical principles, we are going to analyse the
    sharp interface limit of the phase field model.
    Let us now turn to the main result of this paper: we will demonstrate, using the method
    of formal asymptotic expansions, that classical solutions of the system \eqref{equ:modelling:phase field:resulting PDEs}
    converge, for $ \phaseFieldParam \to 0 $, to solutions of \eqref{equ:modelling:sharp interface system}.
    For a thorough theoretical introduction into the subject of formal asymptotic expansions, we refer to 
    \cite{E1979}, whereas a more application-oriented perspective is taken in \cite{Ho1995}.

    \subsection{Interfacial coordinates}
    	For the following analysis, we will need a coordinate transformation typical for asymptotic
        analysis of phase field equations for which boundary layers are expected in the regions where the
        phase fields are close to zero.

        Let us denote a tubular neighbourhood of a 
        smooth, orientable hypersurface $ S \subseteq \reals^3 $ by $ \tubNeighbourhood{\ifCoordsDelta}{S} $.
        We require that $ \ifCoordsDelta \in (0,\infty) $ is small enough such that
        $ \tubNeighbourhood{\ifCoordsDelta}{\interfOne{t}} \cap
        \tubNeighbourhood{\ifCoordsDelta}{\interfTwo{t}} = \emptyset $ for all $ t\in[0,\finTime] $.
        The local boundary layer coordinates, or \emph{interfacial coordinates} (as they are most often
        termed in this context), with respect to $ S $ are defined by the map
        \begin{equation*}
            \begin{split}
                \interfCoords[S,\phaseFieldParam]{}{}&\colon
                \tubNeighbourhood{\delta}{S}
                \rightarrow
                S
                \times
                \reals,
                \\
                x &\mapsto 
                \left(
                    \orthProj{S}{x},
                    \frac{\sdist{S}{x}}{\phaseFieldParam} 
                \right).
            \end{split}
        \end{equation*}
        For two evolving manifolds $\interfOne{}_\finTime  $, $\interfTwo{}_\finTime $,
        we extent this definition to
        \begin{equation*}
            \begin{split}
                \interfCoords[\phaseFieldParam]{}{}&\colon
                \bigcup_{t\in\solTimeInt}
                \left\{t\right\}
                \times
                \left(
                \tubNeighbourhood{\delta}{\interfOne{t}}
                \cup
                \tubNeighbourhood{\delta}{\interfTwo{t}}
                \right)
                \rightarrow
                \bigcup_{t\in\solTimeInt}
                \left\{t\right\}
                \times
                \left(
                \interfOne{t}
                \cup
                \interfTwo{t}
                \right)
                \times
                \reals,
                \\
                (t,x) &\mapsto
                \begin{cases}
                    (t,\interfCoords[\interfOne{t},\phaseFieldParam]{\NOARG}{x}) 
                    & 
                    x \in \tubNeighbourhood{\delta}{\interfOne{t}} 
                    \\
                    (t,\interfCoords[\interfTwo{t},\phaseFieldParam]{\NOARG}{x}) 
                    & 
                    x \in \tubNeighbourhood{\delta}{\interfTwo{t}} 
                \end{cases},
            \end{split}
        \end{equation*}
        and then set 
        $$
            \interfCoords[S_\finTime,\phaseFieldParam]{}{}
            =
            \restrFun{\interfCoords[\phaseFieldParam]{}{}}{
                \bigcup_{t\in\solTimeInt}
                \left\{t\right\}
                \times
                S(t)
            }
        $$
        for $ S(t) \in \left\{\interfOne{t},\interfTwo{t}\right\} $.
        We always consider $ \ifCoordsDelta $ small enough such that
        the interfacial coordinate transformations are well-defined.
        Generally, for a function $ f $ on 
        $ \bigcup_{t\in[0,\finTime]}\{t\}\times\left(\tubNeighbourhood{\ifCoordsDelta}{\interfOne{t}} 
        \cup \tubNeighbourhood{\ifCoordsDelta}{\interfTwo{t}}\right)$,
        we define
        \begin{equation*}
            \schemeLocalFun{f}\concat \interfCoords[\phaseFieldParam]{t}{x}
            =
            f(t,x).
        \end{equation*}
        The function $ \schemeLocalFun{f} $ depends on three arguments:
        The first is time, the second a point on one of the manifolds $ \interfOne{t} $ or $ \interfTwo{t} $,
        and the third a real number from 
        $ \left(-\frac{\delta}{\phaseFieldParam},\frac{\delta}{\phaseFieldParam}\right) $.
        The latter is occasionally referred to as `fast variable' and derivatives with respect
        to this variable are denoted by $ \simpleDeriv{\prime}{(\cdot)} $; derivatives with respect
        to the first variable are denoted by $ \pDiff{s}{}{\cdot} $.

        The following (standard) formulae will be important later.
        \begin{lem}[cf.~{\cite[Sec.~14.6]{GT2001}}]
            \label{lemma:formal asymptotics:expansions}
            Let $ S \subseteq \reals^3 $ be a real, orientable, and sufficiently smooth
            submanifold and $ \tubNeighbourhood{\delta}{S} $, $ \delta \in \reals_{>0} $,
            a tubular neighbourhood
            on which all the following extended functions are defined. 
            For all $ x \in \tubNeighbourhood{\ifCoordsDelta}{S} $,
            it holds
            \begin{subequations}
                \begin{equation}
                    \label{it:lemma:formal asymptotics:expansions:mean curv}
                    \begin{split}
                        \extMeanCurv{x}
                        &=
                        \sum^2_{i=1} 
                        \frac{
                            \pCurv[\schemeNormExt]{S,i}{x}{}
                        }{
                            1
                            -
                            \sdist{S}{x}\pCurv[\schemeNormExt]{S,i}{x}{}
                        }
                        \\
                        &=
                        \sum_{i=1}^2 \pCurv[\schemeNormExt]{S,i}{x}{}
                        +
                        \sdist{S}{x}\pCurv[\schemeNormExt]{S,i}{x}{}^2
                        +
                        \landauBigo{\sdist{S}{x}^2}
                        \\
                        &=
                        \sum_{i=1}^2
                        \left(
                            \pCurv[\hat]{S,i}{}{}
                            +
                            \phaseFieldParam\fastVar
                            \pCurv[\hat]{S,i}{}{}^2
                            +
                            \landauBigo{\phaseFieldParam^2}
                        \right)
                        \concat
                        \interfCoords[S,\phaseFieldParam]{\NOARG}{\orthProj{S}{x}},
                    \end{split}
                \end{equation}
                \begin{equation}
                    \label{it:lemma:formal asymptotics:expansions:mean curv grad}
                    \begin{split}
                        \grad[x]{}{}{\extMeanCurv{}}\cdot\extNormal{x}
                        &=
                        \sum_{i=1}^2 
                        \frac{
                            \pCurv[\schemeNormExt]{S,i}{x}{}^2
                        }{
                            (1-\sdist{S}{x}\pCurv[\schemeNormExt]{S,i}{x}{})^2
                        }
                        \\
                        &=
                        \sum_{i=1}^2 
                        \pCurv[\schemeNormExt]{S,i}{x}{}^2
                        +
                        2
                        \sdist{S}{x}
                        \pCurv[\schemeNormExt]{S,i}{x}{}^3
                        +
                        \landauBigo{\sdist{S}{x}^2}
                        \\
                        &=
                        \left(
                            \sum_{i=1}^2 
                            \pCurv[\hat]{S,i}{}{}^2
                            +
                            2
                            \phaseFieldParam
                            \fastVar
                            \pCurv[\hat]{S,i}{}{}^3
                            +
                            \landauBigo{\phaseFieldParam^2}
                        \right)
                        \concat
                        \interfCoords[S,\phaseFieldParam]{\NOARG}{\orthProj{S}{x}}.
                    \end{split}
                \end{equation}
                \begin{equation}
                    \label{item:formal asymptotics:expanding the L2 gradient of the canham-helfrich energy:%
                      mean curvature normal deriv}
                    \grad[x]{}{}{\extMeanCurv{}} \cdot \extNormal{x}
                    =
                    \extMeanCurv{x}^2 - 2 \extGaussCurv{x}.
                \end{equation}
                \begin{equation}
                    \label{item:formal asymptotics:expanding the L2 gradient of the canham-helfrich energy:%
                      mean curvature hessian normal part}
                    \grad[x]{}{2}{\extMeanCurv{}} 
                    \frobProd 
                    \extNormal{x} \tensorProd \extNormal{x}
                    =
                    2 \extMeanCurv{x}\left(\extMeanCurv{x}^2 - 3 \extGaussCurv{x}\right).
                \end{equation}
            \end{subequations}
        \end{lem}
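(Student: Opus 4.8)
The plan is to reduce the entire lemma to the classical eigenvalue formula for the Hessian of the signed distance function and then to differentiate along normal lines and apply Newton's identities. The geometric input is the structure of $\sdist{S}{}$ on $\tubNeighbourhood{\ifCoordsDelta}{S}$: one has $\grad{}{}{}\sdist{S}{} = \extNormal{}$, $\abs{\grad{}{}{}\sdist{S}{}} \equiv 1$, and consequently $\grad[x]{}{2}{\sdist{S}{}}\,\extNormal{x} = 0$, i.e. the normal is a null eigenvector of the distance Hessian. By \cite[Sec.~14.6]{GT2001}, in a principal frame of $S$ at the foot point $\orthProj{S}{x}$ the remaining eigenvalues of $\grad[x]{}{2}{\sdist{S}{}}$ are $\pCurv[\schemeNormExt]{S,i}{x}{}/(1 - \sdist{S}{x}\,\pCurv[\schemeNormExt]{S,i}{x}{})$, where $\pCurv[\schemeNormExt]{S,i}{x}{}$ is the normal extension of the $i$-th principal curvature of $S$; these are exactly the principal curvatures of the level surface $S_{\sdist{S}{x}} = \set{y}{\sdist{S}{y} = \sdist{S}{x}}$ at $x$, and their sum is $\extMeanCurv{x}$, which gives the first identity. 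Its two expansions then follow by inserting the geometric series $1/(1-t) = 1 + t + \landauBigo{t^2}$ and substituting $\sdist{S}{x} = \phaseFieldParam\fastVar$ together with $\pCurv[\schemeNormExt]{S,i}{x}{} = \pCurv[\hat]{S,i}{}{}\concat\interfCoords[S,\phaseFieldParam]{\NOARG}{\orthProj{S}{x}}$.

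For the two normal-direction derivatives I would restrict $\extMeanCurv{}$ to a fixed normal segment $r \mapsto x(r) = \orthProj{S}{x} + r\,\extNormal{x}$, along which $\extMeanCurv{x(r)} = \sum_{i=1}^2 \pCurv[\schemeNormExt]{S,i}{x}{}/(1 - r\,\pCurv[\schemeNormExt]{S,i}{x}{})$. Since $\extNormal{}$ is constant along this segment --- equivalently, since $\grad[x]{}{2}{\sdist{S}{}}\,\extNormal{x} = 0$ --- the segment is the affine normal line and each $\pCurv[\schemeNormExt]{S,i}{}{}$ is constant along it, so that for a smooth $g$ one has $\frac{d}{dr}g(x(r)) = \grad{}{}{}g\cdot\extNormal{}$ and $\frac{d^2}{dr^2}g(x(r)) = \grad[x]{}{2}{g}\frobProd\extNormal{x}\tensorProd\extNormal{x}$, with no acceleration term. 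Differentiating the expression for $\extMeanCurv{x(r)}$ once in $r$ yields $\grad[x]{}{}{\extMeanCurv{}}\cdot\extNormal{x} = \sum_i \pCurv[\schemeNormExt]{S,i}{x}{}^2/(1 - \sdist{S}{x}\,\pCurv[\schemeNormExt]{S,i}{x}{})^2$, i.e. the second identity, whose expansion again comes from the geometric series; differentiating twice yields $\grad[x]{}{2}{\extMeanCurv{}}\frobProd\extNormal{x}\tensorProd\extNormal{x} = \sum_i 2\,\pCurv[\schemeNormExt]{S,i}{x}{}^3/(1 - \sdist{S}{x}\,\pCurv[\schemeNormExt]{S,i}{x}{})^3$.

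The last two identities are then pure symmetric-function algebra in the level-surface principal curvatures $\kappa_i \colonequals \frac{\pCurv[\schemeNormExt]{S,i}{x}{}}{1 - \sdist{S}{x}\,\pCurv[\schemeNormExt]{S,i}{x}{}}$, which satisfy $\extMeanCurv{x} = \kappa_1 + \kappa_2$ and $\extGaussCurv{x} = \kappa_1\kappa_2$. The first normal derivative becomes $\grad[x]{}{}{\extMeanCurv{}}\cdot\extNormal{x} = \kappa_1^2 + \kappa_2^2 = \extMeanCurv{x}^2 - 2\extGaussCurv{x}$, which is the third identity; and the second becomes $\grad[x]{}{2}{\extMeanCurv{}}\frobProd\extNormal{x}\tensorProd\extNormal{x} = 2(\kappa_1^3 + \kappa_2^3) = 2\extMeanCurv{x}(\extMeanCurv{x}^2 - 3\extGaussCurv{x})$, using $\kappa_1^3 + \kappa_2^3 = (\kappa_1+\kappa_2)^3 - 3\kappa_1\kappa_2(\kappa_1+\kappa_2)$, which is the fourth. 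The only \emph{genuinely geometric} ingredient is the parallel-surface eigenvalue formula of \cite[Sec.~14.6]{GT2001}; everything after it is differentiation of a rational function in $r$ and two Newton identities. The main point to get right --- and the natural place for sign slips --- is the orientation bookkeeping: one must fix the sign convention for $\sdist{S}{}$ and its induced normal so that the nonzero eigenvalues of $\grad[x]{}{2}{\sdist{S}{}}$ are the principal curvatures of $S_{\sdist{S}{x}}$ and not their negatives, and one must verify that restricting to a straight normal segment genuinely converts the normal directional derivatives into $r$-derivatives free of acceleration terms.
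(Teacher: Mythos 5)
The paper gives no proof of this lemma, delegating it entirely to the citation of \cite[Sec.~14.6]{GT2001}, and your argument is exactly the standard computation that reference contains: the eigenvalue formula for the distance-function Hessian on parallel surfaces, differentiation along the (straight, acceleration-free) normal line, the geometric series for the $\phaseFieldParam\fastVar$-expansions, and the two Newton identities $\kappa_1^2+\kappa_2^2 = H^2-2K$ and $\kappa_1^3+\kappa_2^3 = H(H^2-3K)$ applied to the level-surface curvatures. Your proof is correct, and your closing caveat about fixing the sign convention so that the nonzero eigenvalues of the distance Hessian are the level-surface principal curvatures (not their negatives) is precisely the right thing to check against the paper's convention $\extMeanCurv{} = \sum_i \pCurv[\schemeNormExt]{S,i}{}{}/(1-\sdist{S}{}\pCurv[\schemeNormExt]{S,i}{}{})$.
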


    \subsection{Assumptions on the solution} 
        Typically, formal asymptotic theories rely on non-trivial properties on the solution of the 
        system under investigation, \eqref{equ:modelling:phase field:resulting PDEs} in our case. 
        A rigorous justification requires treatment of its own and is not in the scope of this work.
        We shall restrict ourselves to clearly formulating the properties we need in form of assumptions,
        and rather focus on the relation of the quantities of a solution of \eqref{equ:modelling:phase field:resulting PDEs} 
        that assure a sensible behaviour in the limit.
        These assumptions can serve as a hint what needs to be investigated when a mathematical proof is to be given.
        \begin{enumerate}
            \item \label{it:formal asymptotics:ass:existence of solutions:ex}
            For every $ \phaseFieldParam > 0 $ the system
            \eqref{equ:modelling:phase field:resulting PDEs}
            with boundary conditions
            \refBndCond,
            and initial data 
            $
                \membrPhaseField{0}{}
            $,
            $ \cortexPhaseField{0}{} $,
            $ \pfSpeciesOne{0}{} $,
            which converge in $ \sobolevHSet{2}{\domain}\times\sobolevHSet{2}{\domain}\times
            \sobolevHSet{1}{\domain} $ for $ \phaseFieldParam\searrow0 $ and form a recovery 
            sequence of $ \freeEnergyFunct{}{} $,
            has a classical solution
            $$
                \left(
                    \pfVelocity{}{},
                    \pfPressure{}{},
                    \membrPhaseField{}{},
                    \cortexPhaseField{}{},
                    \pfSpeciesOne{}{},
                    \pfSpeciesTwo{}{}
                \right)
            $$
            on $ \domain_\finTime = \solTimeInt\times\domain $
            for some time $ \finTime> 0 $ being independent of $ \phaseFieldParam $. 
            Throughout this work, we choose the mobilities of the phase field 
            to be a power of $ \phaseFieldParam $: $ \membrPhaseFieldMob{\membrPhaseFieldSym}
            = \cortexPhaseFieldMob{\cortexPhaseFieldSym} = \phaseFieldParam^{\alpha} $ for 
            $ \alpha \in \reals_{>0} $.
            \item \label{it:formale asymptotics:ass:existence of solutions:interf}
            Additionally, there shall be
            two-dimensional, orientable, smoothly evolving 
            manifolds 
            $$ 
                \interfOne{t} = \set{x\in\domain}{\membrPhaseField{t}{x}=0},
                \quad\quad
                \interfTwo{t} = \set{x\in\domain}{\cortexPhaseField{t}{x}=0},
            $$ 
            which both enclose open
            sets $ \enclIFVol{\interfOne{t}} $ and $ \enclIFVol{\interfTwo{t}} $.
            The corresponding outer domains are defined such that
            $ \extIFVol{\interfOne{t}} = \domain \setminus \enclIFVol{\interfOne{t}}
            \setminus \interfOne{t} $ and $ \extIFVol{\interfTwo{t}} = \domain \setminus
            \enclIFVol{\interfTwo{t}} \setminus \interfTwo{t} $.
            It shall hold,
            $ \lim_{\phaseFieldParam\searrow0} \membrPhaseField{0}{} = -1 $ pointwise on $ \enclIFVol{\interfOne{}} $
            and 
            $ \lim_{\phaseFieldParam\searrow0} \membrPhaseField{0}{} = 1 $ pointwise on $ \extIFVol{\interfOne{}} $,
            and analogously for $ \cortexPhaseFieldSym $ and $ \interfTwo{} $.
            \item \label{it:formal asymptotics:ass:existence of solutions:non-inters}
            For sufficiently small $ \finTime $, it shall hold $
            \interfOne{t} \cap \interfTwo{t} = \emptyset $ for all $ t \in [0,\finTime] $.
            \item
            The components of every classical solution to  \eqref{equ:modelling:phase field:resulting PDEs}
            shall have a regular asymptotic expansion 
            in every compact subset $ U $ of 
            $ \outerRegion = \domain \setminus \interfOne{t} \setminus \interfTwo{t} $,
            i.e.,
            for every
            $ 
                q \in \{
                    \membrPhaseField{}{},\cortexPhaseField{}{},
                    \pfVelocity{}{},\pfPressure{}{},
                    \pfSpeciesOne{}{},\pfSpeciesTwo{}{}
                \} 
            $,
            it holds
            \begin{equation}
                \label{equ:formal asymptotics:ass:boundary layer:exp}
                \restrFun{q}{U}(t,x) = 
                \sum_{i=0}^\maxExpInd \schemeOuterExp{q}_i(t,x) \phaseFieldParam^i + \landauSmallO{\phaseFieldParam^n},
                \quad\quad
            \end{equation}
            for some $\maxExpInd\in\nats_0 $.
            All $ \schemeOuterExp{q}_i $ shall be as smooth as $ q $.
            We call these series \emph{outer expansions of $ q $}.
            This implies that a boundary layer
            is to be expected at most at $ \interfOne{t}\cup\interfTwo{t}$.
            \item 
            If $ q = \membrPhaseFieldSym $, \eqref{equ:formal asymptotics:ass:boundary layer:exp}
            shall even hold for
            all $ U \Subset \outerRegion\cup \interfTwo{} $ and 
            if $ q = \cortexPhaseFieldSym $, for all $ U \Subset \outerRegion\cup \interfOne{} $. Thus,
            every phase field is expected to have only one boundary layer.
            \item \label{it:formal asymptotics:ass:time deriv species}
              The species densities' evolution is irrelevant outside the 
            diffuse layers around $ \interfOne{}_\finTime $, $ \interfTwo{}_\finTime $. 
            We thus consider them to be asymptotically constant in time away from the diffuse layers:
            For every $ U \Subset \outerRegion $, it holds
            $ 
                \pDiff{t}{}{\restrFun{\pfSpeciesOne{}{}}{U}} \in 
                \landauBigo{\phaseFieldParam^2}, 
            $
            which is equivalent to claiming
            $ \pDiff{t}{}{}{ {\pfSpeciesOneOuter{}{}}_0} = 0 = \pDiff{t}{}{} {\pfSpeciesOneOuter{}{}}_1 $.
            \item \label{it:formal asymptotics:inner expansion struct:general}
            The components of every classical solution to  \eqref{equ:modelling:phase field:resulting PDEs}
            shall have a regular asymptotic expansion
            in $ \tubNeighbourhood{\ifCoordsDelta}{S} $, 
            $ S\in\{\interfOne{t}, \interfTwo{t}\} $,
            after transformation into local coordinates:
            For all
            $ 
                q 
                \in \{
                    \membrPhaseField{}{},\cortexPhaseField{}{},
                    \pfVelocity{}{},
                    \pfPressure{}{},
                    \pfSpeciesOne{}{},\pfSpeciesTwo{}{}
                \} 
            $, it holds
            $
                    \restrFun{q}{\interfOne{t}\cup\interfTwo{t}}
                    =
                    \schemeLocalFun{q}\concat\interfCoords[\phaseFieldParam]{}{}
            $ such that
            \begin{equation*}
                \begin{split}
                    \schemeLocalFun{q}(t,\projVar,\fastVar) 
                    = 
                    \sum_{k=\minExpInd}^\maxExpInd
                    \phaseFieldParam^k 
                    \schemeInnerExp{\schemeLocalFun{q}}_k(t,\projVar,\fastVar)
                    +
                    \landauSmallO{\phaseFieldParam^\maxExpInd}
                \end{split}
            \end{equation*}
            for $ \absMinExpInd, \maxExpInd\in\nats_0 $, where all
            $ 
                \schemeInnerExp{\schemeLocalFun{q}}_k
            $ shall be integrable in $ \fastVar $ and as smooth as $ q $.
            We call these series \emph{inner expansions of $ q $}.
            \item \label{it:formal asymptotics:inner expansion struct:cond for phase fields}
            Physically, the phase fields model the volume fraction of phases. Thus, 
            they should always take values between $ -1 $ and $ 1 $, independent of how small $ \phaseFieldParam $
            may be. Hence, for $ q \in\{ \membrPhaseFieldSym, \cortexPhaseFieldSym \} $, we assume
            $ \schemeInnerExp{\schemeLocalFun{q}}_\ell = 0 $ for all $ \ell \in \{\minExpInd,\dots,-1\} $.
            \item For the species density $ \pfSpeciesOne{}{} $, we additionally require 
            that blow-ups are of order at most $ -1 $, i.e.,
            $ \schemeInnerExp{\schemeLocalFun{\pfSpeciesOne{}{}}_\ell} = 0 $ for 
            all $ \ell \in \{\minExpInd,\dots,-2\} $.
            The reason why we cannot naturally expect boundedness here is that $ \pfSpeciesOne{}{} $ does
            not give the volume fraction, but the number of particles per volume of the active linkers.
        \end{enumerate}
        We will often have to compute differential operators of functions that are expressed in interfacial
        coordinates:
        \begin{rem}
            \label{lemma:formal asymptotics:transformation diff op}
            For a sufficiently smooth function
            $ 
                \funSig{\scalarFunProto}{
                    \mathcal{S}_\finTime
                    \times\reals
                }{\reals} 
            $ on an evolving manifold $ \mathcal{S}_\finTime = \bigcup_{t\in[0,\finTime]} \{t\}\times
            S_t $, 
            and $ t^*\in[0,\finTime] $, $ x^*\in\domain $, 
            it holds
            \begin{align}
                \label{equ:appendix:formally matched asymptotics:%
                gradient rescaled distance}
                \grad{x}{}{ \scalarFunProto 
                \concat 
                \interfCoords[\mathcal{S}_\finTime,\phaseFieldParam]{}{} }(t^*,x^*)
                &=
                \phaseFieldParam^{-1}
                \simpleDeriv{\prime}{
                    \scalarFunProto 
                }
                (\interfCoords[\mathcal{S}_\finTime,\phaseFieldParam]{t^*}{x^*})
                \extNormal{t^*,x^*}
                +
                \grad{
                    S(t^*)_{\sdist{}{x^*}}
                }{}{}
                \scalarFunProto 
                (\interfCoords[\mathcal{S}_\finTime,\phaseFieldParam]{t^*}{x^*}),
                \\
                \label{equ:appendix:formally matched asymptotics:%
                laplacian rescaled distance}
                \laplacian{x}{}{ 
                    \scalarFunProto \concat \interfCoords[\mathcal{S}_\finTime,\phaseFieldParam]{}{} 
                } (t^*,x^*)
                &=
                \phaseFieldParam^{-2}
                \simpleDeriv{\prime\prime}{
                    \scalarFunProto
                }
                (\interfCoords[\mathcal{S}_\finTime,\phaseFieldParam]{t^*}{x^*})
                -
                \phaseFieldParam^{-1}
                \simpleDeriv{\prime}{
                    \scalarFunProto
                }
                (\interfCoords[\mathcal{S}_\finTime,\phaseFieldParam]{t^*}{x^*})
                \extMeanCurv{t^*,x^*}
                \\\nonumber
                &+
                \laplacian{S(t^*)_{\sdist{}{x^*}}}{}{}
                \scalarFunProto 
                (\interfCoords[\mathcal{S}_\finTime,\phaseFieldParam]{t^*}{x^*}),
                \\
                \label{equ:formally matched asymptotics:time derivative rescaled distance}
                \pDiff{t}{}{q\concat\interfCoords[\mathcal{S}_\finTime,\phaseFieldParam]{}{}}(t^*,x^*)
                &=
                -
                \phaseFieldParam^{-1}
                \shapeTransfVelNormal^{S}(t^*,x^*)
                \simpleDeriv{\prime}{\scalarFunProto}
                (\interfCoords[\mathcal{S}_\finTime,\phaseFieldParam]{t^*}{x^*})
                +
                \pDiff{t}{}{}\scalarFunProto(\interfCoords[\mathcal{S}_\finTime,\phaseFieldParam]{t^*}{x^*}).
            \end{align}
            For $ \funSig{\vecFunProto}{\mathcal{S}_\finTime\times\reals}{\reals^n} $,
            it holds
            \begin{align}
                \label{equ:formally matched asymptotics:div rescaled distance}
                \diver{x}{}{
                    \vecFunProto\concat\interfCoords[\mathcal{S}_\finTime,\phaseFieldParam]{}{}
                }(t^*,x^*)
                &=
                \phaseFieldParam^{-1}\simpleDeriv{\prime}{\vecFunProto}
                (\interfCoords[\mathcal{S}_\finTime,\phaseFieldParam]{t^*}{x^*})
                \cdot
                \extNormal{t^*,x^*}
                +
                \diver{S(t^*)_{\sdist{}{x^*}}}{}{}\vecFunProto
                (\interfCoords[\mathcal{S}_\finTime,\phaseFieldParam]{t^*}{x^*}),
                \\
                \label{equ:formally matched asymptotics:jac rescaled distance}
                \grad{x}{}{\vecFunProto\concat\interfCoords[\mathcal{S}_\finTime,\phaseFieldParam]{}{}}
                (t^*,x^*)
                &=
                \phaseFieldParam^{-1}
                \simpleDeriv{\prime}{\vecFunProto}
                (\interfCoords[\mathcal{S}_\finTime,\phaseFieldParam]{t^*}{x^*})
                \tensorProd
                \extNormal{t^*,x^*}            
                +
                \grad{S(t^*)_{\sdist{}{x^*}}}{}{}
                \vecFunProto
                (\interfCoords[\mathcal{S}_\finTime,\phaseFieldParam]{t^*}{x^*}),
                \\
                \label{equ:formally matched asymptotics:laplacian vec fun rescaled distance}
                \laplacian{x}{}{\vecFunProto\concat\interfCoords[\mathcal{S}_\finTime,\phaseFieldParam]{}{}}
                (t^*,x^*)
                &=
                \phaseFieldParam^{-2}
                \simpleDeriv{\prime\prime}{\vecFunProto}
                (\interfCoords[\mathcal{S}_\finTime,\phaseFieldParam]{t^*}{x^*})
                -
                \phaseFieldParam^{-1}\simpleDeriv{\prime}{(\vecFunProto)}
                \concat\interfCoords[\mathcal{S}_\finTime,\phaseFieldParam]{t^*}{x^*}
                \extMeanCurv{t^*,x^*}
                \\\nonumber
                &+
                \laplacian{S(t^*)_{\sdist{}{x^*}}}{}{}
                \vecFunProto(\interfCoords[\mathcal{S}_\finTime,\phaseFieldParam]{t^*}{x^*}).
            \end{align}
            For $ \funSig{\matFunProto}{\mathcal{S}_\finTime\times\reals}{\reals^{(n,n)}} $, it holds
            \begin{align}
                \label{equ:formally matched asymptotics:div mat rescaled distance}
                \diver{x}{}{
                    \matFunProto\concat\interfCoords[\mathcal{S}_\finTime,\phaseFieldParam]{}{}
                }(t^*,x^*)
                =
                \phaseFieldParam^{-1}
                \simpleDeriv{\prime}{\matFunProto}
                (\interfCoords[\mathcal{S}_\finTime,\phaseFieldParam]{t^*}{x^*})
                \extNormal{t^*,x^*}
                +
                \diver{S(t^*)_{\sdist{}{x^*}}}{}{}
                \matFunProto
                (\interfCoords[\mathcal{S}_\finTime,\phaseFieldParam]{t^*}{x^*}).
            \end{align}
        \end{rem}
        Let us further exercise some smaller expansions.
        \begin{lem}
            \label{lemma:formal asymptotics:expansion of the L2 gradient of the coupling energy:%
            helpful expansion one}
            For $ \phaseFieldProto \in \{\cortexPhaseFieldSym,\membrPhaseFieldSym\} $
            the following expansions hold
            \begin{align}
                \label{equ:formal asymptotics:transformation interf coords:phase field grad}
                \abs{\grad{}{}{}\phaseFieldProto}
                &=
                \normal{}\cdot\grad{}{}{}\phaseFieldProto
                =
                \left(
                \phaseFieldParam^{-1}\simpleDeriv{\prime}{\schemeLocalFun{\phaseFieldProto}_0}
                +
                \simpleDeriv{\prime}{\schemeLocalFun{\phaseFieldProto}_1}
                +
                \phaseFieldParam
                \simpleDeriv{\prime}{\schemeLocalFun{\phaseFieldProto}_2} 
                \right)
                \concat\interfCoords[\phaseFieldParam]{}{}
                +
                \landauBigo{\phaseFieldParam^2},
                \\
                \doubleWellPot{\phaseFieldProto}
                &=
                \doubleWellPot{\phaseFieldProto_0}
                +
                \phaseFieldParam
                \doubleWellPot[\schemeSupscr{\prime}]{\phaseFieldProto_0}
                \phaseFieldProto_1
                +
                \phaseFieldParam^2
                \left(
                    \doubleWellPot[\schemeSupscr{\prime}]{\phaseFieldProto_0}
                    \phaseFieldProto_2
                    +
                    \doubleWellPot[\schemeSupscr{\prime\prime}]{\phaseFieldProto_0}
                    \phaseFieldProto_1^2
                \right)
                +
                \landauBigo{\phaseFieldParam^3},
                \\
                \label{equ:formal asymptotics:expansion of the L2 gradient of the coupling energy:%
                helping expansions:ginzburg landau density}
                \ginzburgLandauEnergyDens{\phaseFieldProto}{}
                &=
                \phaseFieldParam^{-1}
                \left(
                    \frac{1}{2}
                    \left(\simpleDeriv{\prime}{\schemeLocalFun{\phaseFieldProto}_0}\right)^2
                    \concat
                    \interfCoords[\phaseFieldParam]{}{}
                    +
                    \doubleWellPot{\phaseFieldProto_0}
                \right)
                +
                2
                (\simpleDeriv{\prime}{\schemeLocalFun{\phaseFieldProto}_0}
                \simpleDeriv{\prime}{\schemeLocalFun{\phaseFieldProto}_1}
                )
                \concat
                \interfCoords[\phaseFieldParam]{}{}
                +
                \doubleWellPot[\schemeSupscr{\prime}]{\phaseFieldProto_0}
                \phaseFieldProto_1
                +
                \landauBigo{\phaseFieldParam},
                \\
                \label{equ:formal asymptotics:expansion of the L2 gradient of the coupling energy:%
                helping expansions:phase field inv grad mod}
                \abs{\grad{}{}{}\phaseFieldProto}^{-1}
                &=
                \phaseFieldParam
                (\simpleDeriv{\prime}{\schemeLocalFun{\phaseFieldProto}_0})^{-1}
                \concat
                \interfCoords[\phaseFieldParam]{}{}
                +
                \phaseFieldParam^2
                \frac{
                    \simpleDeriv{\prime}{\schemeLocalFun{\phaseFieldProto}_1}
                    \concat
                    \interfCoords[\phaseFieldParam]{}{}
                }{
                    \left(\simpleDeriv{\prime}{\schemeLocalFun{\phaseFieldProto}_0}\right)^2
                    \concat
                    \interfCoords[\phaseFieldParam]{}{}
                }
                +
                \landauBigo{\phaseFieldParam^3}.
            \end{align}
            If $ \phaseFieldProto$ is the optimal profile at leading order, i.e.,
            $
                \simpleDeriv{\prime\prime}{\schemeLocalFun{\phaseFieldProto}_0}\concat
                \interfCoords[\phaseFieldParam]{}{}
                - 
                \doubleWellPot[\schemeSupscr{\prime}]{\phaseFieldProto_0}
                =
                0,
            $
            we further have
            \begin{equation}
                \label{equ:formal asymptotics:expansion of the L2 gradient of the coupling energy:%
                helping expansions:phase field mean curv}
                \meanCurv{\phaseFieldProto}{}
                =
                \phaseFieldParam^{-1}
                \simpleDeriv{\prime}{\schemeLocalFun{\phaseFieldProto}_0}\concat\interfCoords[\phaseFieldParam]{}{}
                \left(
                    \simpleDeriv{\prime}{\schemeLocalFun{\phaseFieldProto}_0}
                    \concat \interfCoords[\phaseFieldParam]{}{}
                    \extMeanCurv{}{}
                    +
                    \simpleDeriv{\prime\prime}{\schemeLocalFun{\phaseFieldProto}_1}
                    \concat \interfCoords[\phaseFieldParam]{}{}
                    +
                    \doubleWellPot[\schemeSupscr{\prime\prime}]{\phaseFieldProto_0}
                    \phaseFieldProto_1
                \right)
                +
                \landauBigo{1}.
            \end{equation}
        \end{lem}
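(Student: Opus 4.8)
All five identities are established by the same mechanical scheme: express the Euclidean differential operators occurring in each quantity through the interfacial-coordinate transformation rules of Remark~\ref{lemma:formal asymptotics:transformation diff op}, insert the inner expansion ansatz, and collect powers of $\phaseFieldParam$. Since item~\ref{it:formal asymptotics:inner expansion struct:cond for phase fields} forbids negative powers in the inner series of a phase field, we may write $\schemeLocalFun{\phaseFieldProto}=\phaseFieldProto_0+\phaseFieldParam\phaseFieldProto_1+\phaseFieldParam^2\phaseFieldProto_2+\landauBigo{\phaseFieldParam^3}$, and it is this absence of a singular head that keeps every leading order finite. The only genuinely non-bookkeeping inputs are the orthogonal splitting of $\grad{}{}{}\phaseFieldProto$ into a normal and a tangential part and, for the curvature identity, the cancellation of an otherwise singular term.

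For \eqref{equ:formal asymptotics:transformation interf coords:phase field grad} I would start from the gradient rule \eqref{equ:appendix:formally matched asymptotics:gradient rescaled distance}, which splits $\grad{}{}{}\phaseFieldProto$ into the normal contribution $\phaseFieldParam^{-1}\simpleDeriv{\prime}{\schemeLocalFun{\phaseFieldProto}}\,\normal{}$ and the surface gradient of $\schemeLocalFun{\phaseFieldProto}$ along the level set through the point. Contracting with $\normal{}$ annihilates the tangential piece and gives $\normal{}\cdot\grad{}{}{}\phaseFieldProto=\phaseFieldParam^{-1}\simpleDeriv{\prime}{\schemeLocalFun{\phaseFieldProto}}$ exactly; inserting the inner series produces the three displayed terms. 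To replace this by $\abs{\grad{}{}{}\phaseFieldProto}$ up to $\landauBigo{\phaseFieldParam^2}$ I would use that the leading profile $\phaseFieldProto_0$ is tangentially constant, so the surface-gradient part is $\landauBigo{\phaseFieldParam}$ and, being orthogonal to the $\landauBigo{\phaseFieldParam^{-1}}$ normal part, enters the Euclidean norm only at relative order $\phaseFieldParam^4$; the sign is fixed by orienting $\normal{}$ so that $\simpleDeriv{\prime}{\phaseFieldProto_0}>0$. The potential identity is the plain Taylor expansion of $\doubleWellPot{\phaseFieldProto}$ about $\phaseFieldProto_0$ in the increment $\phaseFieldParam\phaseFieldProto_1+\phaseFieldParam^2\phaseFieldProto_2$, truncated after second order. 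Substituting these two expansions into $\ginzburgLandauEnergyDens{\phaseFieldProto}{}=\frac{\phaseFieldParam}{2}\abs{\grad{}{}{}\phaseFieldProto}^2+\frac{1}{\phaseFieldParam}\doubleWellPot{\phaseFieldProto}$ and regrouping yields the density expansion, whose $\phaseFieldParam^{-1}$ coefficient is the combination $\frac12(\simpleDeriv{\prime}{\schemeLocalFun{\phaseFieldProto}_0})^2+\doubleWellPot{\phaseFieldProto_0}$. Finally \eqref{equ:formal asymptotics:expansion of the L2 gradient of the coupling energy:helping expansions:phase field inv grad mod} is obtained by factoring $\phaseFieldParam^{-1}\simpleDeriv{\prime}{\schemeLocalFun{\phaseFieldProto}_0}$ out of the first identity and expanding the reciprocal of the remaining bracket as a geometric series.

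For the mean-curvature identity \eqref{equ:formal asymptotics:expansion of the L2 gradient of the coupling energy:helping expansions:phase field mean curv} I would insert the Laplacian rule \eqref{equ:appendix:formally matched asymptotics:laplacian rescaled distance} together with the expansions of $\abs{\grad{}{}{}\phaseFieldProto}$ and of $\doubleWellPot[\schemeSupscr{\prime}]{\phaseFieldProto}$ just obtained into the phase-field curvature $\meanCurv{\phaseFieldProto}{}=\abs{\grad{}{}{}\phaseFieldProto}\bigl(-\phaseFieldParam\laplacian{}{}{}\phaseFieldProto+\phaseFieldParam^{-1}\doubleWellPot[\schemeSupscr{\prime}]{\phaseFieldProto}\bigr)$. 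A priori the bracket carries a $\phaseFieldParam^{-1}$ term with coefficient $\doubleWellPot[\schemeSupscr{\prime}]{\phaseFieldProto_0}-\simpleDeriv{\prime\prime}{\schemeLocalFun{\phaseFieldProto}_0}$, which when multiplied by the $\phaseFieldParam^{-1}$ head of $\abs{\grad{}{}{}\phaseFieldProto}$ would make the curvature blow up at order $\phaseFieldParam^{-2}$. This is the one real obstacle, and it is exactly what the optimal-profile hypothesis $\simpleDeriv{\prime\prime}{\schemeLocalFun{\phaseFieldProto}_0}\concat\interfCoords[\phaseFieldParam]{}{}-\doubleWellPot[\schemeSupscr{\prime}]{\phaseFieldProto_0}=0$ kills: the bracket then starts at order $\phaseFieldParam^0$, its leading value being assembled from the geometric contribution $\simpleDeriv{\prime}{\schemeLocalFun{\phaseFieldProto}_0}\extMeanCurv{}{}$ in the Laplacian rule and the next-order profile and potential terms $\simpleDeriv{\prime\prime}{\schemeLocalFun{\phaseFieldProto}_1}$ and $\doubleWellPot[\schemeSupscr{\prime\prime}]{\phaseFieldProto_0}\phaseFieldProto_1$. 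Multiplying by $\phaseFieldParam^{-1}\simpleDeriv{\prime}{\schemeLocalFun{\phaseFieldProto}_0}$ then gives the displayed $\phaseFieldParam^{-1}$ leading order and relegates all remaining contributions to $\landauBigo{1}$; keeping the relative signs of the Laplacian and potential terms straight is the only point demanding care.
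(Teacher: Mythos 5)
Your proposal is correct and follows essentially the same route as the paper: transform to interfacial coordinates via the rules of Remark~\ref{lemma:formal asymptotics:transformation diff op}, insert the inner expansions, collect powers of $\phaseFieldParam$, and invoke the leading-order optimal profile equation to cancel the would-be $\phaseFieldParam^{-2}$ singularity in the curvature. You in fact supply slightly more detail than the paper, which proves only the last three identities explicitly; your justification that the tangential gradient contributes to the Euclidean norm only at negligible relative order (because the leading profile is tangentially constant) is exactly the point the paper leaves implicit in asserting $\abs{\grad{}{}{}\phaseFieldProto}=\normal{}\cdot\grad{}{}{}\phaseFieldProto$.
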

    \begin{proof}
        Ad~\eqref{equ:formal asymptotics:expansion of the L2 gradient of the coupling energy:%
        helping expansions:ginzburg landau density}:
        We use \eqref{equ:formal asymptotics:transformation interf coords:phase field grad}
        to compute
        \begin{equation*}
            \ginzburgLandauEnergyDens{\rho}{}
            =
            \phaseFieldParam^{-1}
            \left(
                \frac{1}{2}
                \left(\simpleDeriv{\prime}{\schemeLocalFun{\rho}_0}\right)^2
                \concat
                \interfCoords[\phaseFieldParam]{}{}
                +
                \doubleWellPot{\rho_0}
            \right)
            +
            2
            (\simpleDeriv{\prime}{\schemeLocalFun{\rho}_0}
            \simpleDeriv{\prime}{\schemeLocalFun{\rho}_1}
            )
            \concat
            \interfCoords{}{}
            +
            \doubleWellPot[\schemeSupscr{\prime}]{\rho_0}
            \rho_1
            +
            \landauBigo{\phaseFieldParam}.
        \end{equation*}

        Ad~\eqref{equ:formal asymptotics:expansion of the L2 gradient of the coupling energy:%
        helping expansions:phase field inv grad mod}:
        Observe,
        \begin{equation}
            \label{equ:formal asymptotics:expanding L2 grad coupling energy cortex:inv grad phase field cortex}
            \abs{\grad{}{}{\rho_0+\phaseFieldParam r_1}}^{-1}
            =
            \abs{\grad{}{}{}\rho_0}^{-1}
            -
            \phaseFieldParam
            \frac{
                \grad{}{}{}\rho_{0}
                \cdot
                \grad{}{}{}r_1
            }{\abs{\grad{}{}{}\rho_0}^3}
            +
            \landauBigo{\phaseFieldParam^3}.
        \end{equation}
        Note further that
        $
            \abs{\grad{}{}{}\rho_0}^{-1}
            =
            (
                \grad{}{}{}\rho_0
                \cdot
                \normal{}
            )^{-1}
            =
            (
                \phaseFieldParam^{-1}
                \simpleDeriv{\prime}{\schemeLocalFun{\rho}_0}
                \concat
                \interfCoords[\phaseFieldParam]{}{}
            )^{-1}
            =
            \phaseFieldParam
            (
                \simpleDeriv{\prime}{\schemeLocalFun{\rho}_0}
                \concat
                \interfCoords[\phaseFieldParam]{}{}
            )^{-1}            
        $
        and
        $ 
            \grad{}{}{}r_1 
            = 
            \phaseFieldParam^{-1}
            \simpleDeriv{\prime}{\schemeLocalFun{\rho}_1}\concat\interfCoords[\phaseFieldParam]{}{}
            \normal{}
            +
            \landauBigo{1}
        $
        so that
        \begin{equation*}
            \phaseFieldParam
            \frac{
                \grad{}{}{}\rho_0
                \cdot
                \grad{}{}{}r_1
            }{\abs{\grad{}{}{}\rho_0}^3}
            =
            \frac{
                \phaseFieldParam^{-1}
                \simpleDeriv{\prime}{\schemeLocalFun{\rho}_0}\concat\interfCoords[\phaseFieldParam]{}{}
                \simpleDeriv{\prime}{\schemeLocalFun{\rho}_1}\concat\interfCoords[\phaseFieldParam]{}{}
                +
                \landauBigo{1}
            }{\abs{\grad{}{}{}\rho_0}^3}                
            \in
            \landauBigo{\phaseFieldParam^2}.
        \end{equation*}

        Ad~\eqref{equ:formal asymptotics:expansion of the L2 gradient of the coupling energy:%
        helping expansions:phase field mean curv}:
        Expand
        \begin{equation*}
            \begin{split}
                \meanCurv{\rho}{}
                &=
                \abs{\grad{}{}{}\rho}(
                    -\phaseFieldParam\laplacian{}{}{}\rho
                    +
                    \phaseFieldParam^{-1}\doubleWellPot[\schemeSupscr{\prime}]{\rho}
                )
                \\
                &\overset{(1)}{=}
                (
                    \phaseFieldParam^{-1}\simpleDeriv{\prime}{\schemeLocalFun{\rho}_0}
                    \concat \interfCoords[\phaseFieldParam]{}{}
                    +
                    \simpleDeriv{\prime}{\schemeLocalFun{\rho}_1}
                    \concat \interfCoords[\phaseFieldParam]{}{}
                    +
                    \landauBigo{\phaseFieldParam}
                )
                (
                    -\phaseFieldParam^{-1}
                    \simpleDeriv{\prime\prime}{\schemeLocalFun{\rho}_0}
                    \concat \interfCoords[\phaseFieldParam]{}{}
                    +
                    \simpleDeriv{\prime}{\schemeLocalFun{\rho}_0}
                    \concat \interfCoords[\phaseFieldParam]{}{}
                    \extMeanCurv{}{}
                    +
                    \simpleDeriv{\prime\prime}{\schemeLocalFun{\rho}_1}
                    \concat \interfCoords[\phaseFieldParam]{}{}
                    +
                    \phaseFieldParam^{-1}\doubleWellPot[\schemeSupscr{\prime}]{\rho_0}
                    +
                    \doubleWellPot[\schemeSupscr{\prime\prime}]{\schemeLocalFun{\rho}_0}
                    \rho_1
                    +
                    \landauBigo{\phaseFieldParam}
                )
                \\
                &\overset{(2)}{=}
                \phaseFieldParam^{-1}
                \simpleDeriv{\prime}{\schemeLocalFun{\rho}_0}\concat\interfCoords[\phaseFieldParam]{}{}
                \left(
                    \simpleDeriv{\prime}{\schemeLocalFun{\rho}_0}
                    \concat \interfCoords[\phaseFieldParam]{}{}
                    \extMeanCurv{}{}
                    +
                    \simpleDeriv{\prime\prime}{\schemeLocalFun{\rho}_1}
                    \concat \interfCoords[\phaseFieldParam]{}{}
                    +
                    \doubleWellPot[\schemeSupscr{\prime\prime}]{\rho_0}
                    \rho_1
                \right)
                +\landauBigo{1},
            \end{split}
        \end{equation*}
        where for $ (1) $ we employ
        \eqref{equ:appendix:formally matched asymptotics:laplacian rescaled distance},
        and for $ (2) $ the optimal profile equation.
    \end{proof}
    A common principle, which we will make use of in the following multiple times, is summarised in the following
    \begin{lem}
        \label{lemma:addendum formal asymptotics:concentration}
        Let $ \Gamma \subseteq \domain $ be a smooth hypersurface.
        Let $ p \in \lebesgueSet{1}{\reals} $ with
        $$
            \sup_{\abs{t}>s} \abs{p(t)t} \leq \frac{C}{s^m}
        $$
        for some $ C \in [0,\infty) $ and $ m \in (0,\infty) $,
        $ f_\phaseFieldParam \in \diffSet{}{\domain} $,
        and for all sequences $ x_\phaseFieldParam  \specConv{\phaseFieldParam\to 0} x $, it holds
        $ f_\phaseFieldParam(x_\phaseFieldParam) \specConv{\phaseFieldParam\to 0}
        f(x) $ with $ \norm[\lebesgueSet{\infty}{\domain}]{f_\phaseFieldParam } < M $ for some $ M \in (0,\infty) $
        being independent of $\phaseFieldParam $. Then,
        $$
            \phaseFieldParam^{-1}
            \integral{\domain}{}{
                p\left(
                    \frac{\sdist{\Gamma}{x}}{\phaseFieldParam}
                \right)
                f_\phaseFieldParam(x)
            }{\lebesgueM{3}(x)}
            \specConv{\phaseFieldParam\to 0}
            \integral{-\infty}{\infty}{
                p(s)
            }{\lebesgueM{1}(s)}
            \integral{\Gamma}{}{
                f(x)
            }{\hausdorffM{2}(x)}.
        $$
    \end{lem}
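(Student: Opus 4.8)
The plan is to localize the integral to a thin tube around $\Gamma$, pass to tubular (interfacial) coordinates so that the concentration of $p$ becomes a purely one-dimensional phenomenon transverse to $\Gamma$, and then exchange limit and integral by dominated convergence. First I would fix a tube radius $\delta>0$ on which $\sdist{\Gamma}{\cdot}$ is smooth and split $\domain=\tubNeighbourhood{\delta}{\Gamma}\cup(\domain\setminus\tubNeighbourhood{\delta}{\Gamma})$.

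The contribution of the complement is where the prefactor $\phaseFieldParam^{-1}$ is dangerous, and this is precisely what the decay hypothesis on $p$ handles. On $\domain\setminus\tubNeighbourhood{\delta}{\Gamma}$ one has $\abs{\sdist{\Gamma}{x}}\geq\delta$, hence $\abs{\sdist{\Gamma}{x}/\phaseFieldParam}\geq\delta/\phaseFieldParam$; applying the hypothesis with $s=\delta/(2\phaseFieldParam)$ and using $\abs{\sdist{\Gamma}{x}/\phaseFieldParam}^{-1}\le\phaseFieldParam/\delta$ gives
\[
    \abs{p(\sdist{\Gamma}{x}/\phaseFieldParam)}
    \leq
    \frac{C}{(\delta/(2\phaseFieldParam))^m}\,\frac{\phaseFieldParam}{\delta}
    =
    \frac{2^m C}{\delta^{m+1}}\,\phaseFieldParam^{m+1}.
\]
Together with $\norm[\lebesgueSet{\infty}{\domain}]{f_\phaseFieldParam}<M$ and $\lebesgueM{3}(\domain)<\infty$, the rescaled complement integral is $\phaseFieldParam^{-1}\landauBigo{\phaseFieldParam^{m+1}}=\landauBigo{\phaseFieldParam^{m}}\to0$, so only the tube contributes.

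On the tube I would use the tubular-neighbourhood diffeomorphism $(y,r)\mapsto y+r\,\normal[\Gamma]{y}$ (cf.~\cite{GT2001}), under which $\sdist{\Gamma}{\cdot}$ becomes $r$ and the volume element factorizes,
\[
    d\lebesgueM{3}
    =
    \Bigl(\prod_{i=1}^2\bigl(1-r\,\pCurv{\Gamma,i}{}{y}\bigr)\Bigr)\,
    d\hausdorffM{2}(y)\,dr
    \equalscolon
    J(y,r)\,d\hausdorffM{2}(y)\,dr.
\]
Substituting the fast variable $r=\phaseFieldParam\sigma$ (so that $dr=\phaseFieldParam\,d\sigma$ absorbs the $\phaseFieldParam^{-1}$) then turns the tube integral into
\[
    \phaseFieldParam^{-1}
    \integral{\tubNeighbourhood{\delta}{\Gamma}}{}{p(\sdist{\Gamma}{x}/\phaseFieldParam)\,f_\phaseFieldParam(x)}{\lebesgueM{3}(x)}
    =
    \integral{\Gamma}{}{\integral{-\delta/\phaseFieldParam}{\delta/\phaseFieldParam}{p(\sigma)\,f_\phaseFieldParam\bigl(y+\phaseFieldParam\sigma\,\normal[\Gamma]{y}\bigr)\,J(y,\phaseFieldParam\sigma)}{\lebesgueM{1}(\sigma)}}{\hausdorffM{2}(y)}.
\]

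Finally I would let $\phaseFieldParam\to0$ in this expression by dominated convergence, after rewriting the inner integral over all of $\reals$ with the factor $\mathbb{1}_{\{\abs{\sigma}<\delta/\phaseFieldParam\}}$. For fixed $(y,\sigma)$ the argument $x_\phaseFieldParam:=y+\phaseFieldParam\sigma\,\normal[\Gamma]{y}\specConv{\phaseFieldParam\to0}y$, so the hypothesis $f_\phaseFieldParam(x_\phaseFieldParam)\to f(y)$ applies; moreover $J(y,\phaseFieldParam\sigma)\to J(y,0)=1$ and $\mathbb{1}_{\{\abs{\sigma}<\delta/\phaseFieldParam\}}\to1$, so the integrand converges pointwise to $p(\sigma)f(y)$. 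For the majorant, compactness of $\Gamma$---which holds for the closed interfaces $\interfOne{t},\interfTwo{t}$ to which the lemma is applied---bounds the principal curvatures and hence $\abs{J}\le C'$ uniformly for $\abs{r}<\delta$; with $\norm[\lebesgueSet{\infty}{\domain}]{f_\phaseFieldParam}<M$ the integrand is dominated by $M C'\abs{p(\sigma)}$, which is integrable over $\Gamma\times\reals$ since $p\in\lebesgueSet{1}{\reals}$ and $\hausdorffM{2}(\Gamma)<\infty$. Dominated convergence and Fubini then give
\[
    \integral{\Gamma}{}{\integral{-\infty}{\infty}{p(\sigma)\,f(y)}{\lebesgueM{1}(\sigma)}}{\hausdorffM{2}(y)}
    =
    \integral{-\infty}{\infty}{p(\sigma)}{\lebesgueM{1}(\sigma)}\,
    \integral{\Gamma}{}{f(y)}{\hausdorffM{2}(y)},
\]
which is the assertion. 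I expect the only genuinely delicate point to be producing this single $\phaseFieldParam$-independent integrable majorant: it must simultaneously tame the expanding $\sigma$-range, the Jacobian $J$, and the whole sequence $(f_\phaseFieldParam)$ at once. The decay of $p$ disposes of the outer region, while $p\in\lebesgueSet{1}{\reals}$ together with compactness of $\Gamma$ supplies the domination inside the tube.
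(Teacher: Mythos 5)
Your proof is correct. The paper states this lemma without proof, but your argument --- splitting off the far field via the decay hypothesis on $p(t)t$, passing to tubular coordinates with the Jacobian $\prod_i(1-r\kappa_i)$, rescaling $r=\phaseFieldParam\sigma$ to absorb the factor $\phaseFieldParam^{-1}$, and closing with dominated convergence using $p\in L^1$, the uniform bound $M$ on $f_\phaseFieldParam$, and boundedness of the curvatures --- is exactly the coarea-formula mechanism the paper invokes whenever it applies this lemma, so it matches the intended (unwritten) argument.
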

    After the preliminaries are fixed, we shall proceed by analysing the asymptotic behaviour of the solution
    of \eqref{equ:modelling:phase field:resulting PDEs}.
    \subsection{Outer expansion}
        \label{sec:formal asymptotics:outer expansion:phase fields}
        We start with investigating the solutions' behaviour away from the boundary layer, i.e. on a set $ \domain_\delta =
        \domain \setminus \left( \tubNeighbourhood{\delta}{\interfOne{}{}} \cup \tubNeighbourhood{\delta}{\interfTwo{}{}} \right) $ for
        some $ \delta > 0 $.
        Let $ \phaseFieldPT \in \{\membrPhaseField{}{}, \cortexPhaseField{}{}\} $ for the following 
        considerations.

        Due to the recovery sequence property of the initial data
        postulated in \refAss{it:formal asymptotics:ass:existence of solutions:ex},
        $$
            \freeEnergyFunct{
                \pfVelocity{0}{},
                \membrPhaseField{0}{},
                \cortexPhaseField{0}{},
                \pfSpeciesOne{0}{}
            }{\phaseFieldParam}
            \in \landauBigo{1}.
        $$
        Further, the 
        sufficiently fast decay of the species densities' time derivative, see
        \refAss{it:formal asymptotics:ass:time deriv species} imply
        $$
            \integral{\domain_\delta}{}{
                \ginzburgLandauEnergyDens{\membrPhaseFieldSym}{}(y)
                \integral{\domain_\delta}{}{
                    \ginzburgLandauEnergyDens{\cortexPhaseFieldSym}{}(x)
                    \pDiff{\pfSpeciesOne{}{}}{}{}
                    \pfCouplingEnergyDens{}{}[
                    \pDiff{t}{}{}\pfSpeciesOne{}{}]
                }{\lebesgueM{3}(x)}
            }{\lebesgueM{3}(y)}
            \in\landauBigo{1}.
        $$
        From \eqref{equ:modelling:phase field resulting PDEs:first interface evolution}
        and \eqref{equ:modelling:phase field:resulting PDEs:second interface evolution},
        we also obtain
        $$ 
            \phaseFieldParam^{\alpha}
            \laplacian{}{}{
                \grad{\phaseFieldProto}{L^2}{}
                \pfSurfEnergyOne{\phaseFieldProto}
                +
                \grad{\phaseFieldProto}{L^2}{}
                \pfCouplingEnergy{}{}
            }
            \in \landauBigo{1},
        $$
        so for $ \alpha < 1 $, 
        $ 
            \grad{\phaseFieldProto}{L^2}{}
            \pfSurfEnergyOne{\phaseFieldProto}
            +
            \grad{\phaseFieldProto}{L^2}{}
            \pfCouplingEnergy{}{} \in \landauBigo{1}.
        $ (Bringing $ \phaseFieldParam^{\alpha} $ to the right,
        all leading order terms of 
        $\laplacian{}{}{
            \grad{\phaseFieldProto}{L^2}{}
            \pfSurfEnergyOne{\phaseFieldProto}
            +
            \grad{\phaseFieldProto}{L^2}{}
            \pfCouplingEnergy{}{}
        }$
        from order $ -3 $ to $ -1 $ have no match on the right hand side and thus have to be zero
        following the separation of scales argument. 
        Using that the Neumann boundary conditions 
        \eqref{equ:modelling:phase field:resulting PDEs:boundary conditions:%
        fluxes} do not depend on $ \phaseFieldParam $, we can thus conclude that all these terms are of 
        order zero.)
        Comparing the right hand side of 
        \eqref{equ:modelling:phase field resulting PDEs:momentum balance} with its left hand side, 
        we conclude
        \begin{equation*}
            \begin{split}
            \integral{\domain_\delta}{}{
                \meanCurv{\cortexPhaseFieldSym}{x}
                \pfSpeciesOne{t}{x}
                \symVelocityNormal{\cortexPhaseFieldSym}(t,x)
                \integral{\domain_\delta}{}{
                    \ginzburgLandauEnergyDens{\membrPhaseFieldSym}{}(y)
                    \pDiff{\pfSpeciesOne{}{}}{}{}
                    \pfCouplingEnergyDens{x,y,\pfSpeciesOne{}{},\normal[\cortexPhaseFieldSym]{}}{}
                }{\lebesgueM{3}(y)}
            }{\lebesgueM{3}(x)}
            +
            \\
            \integral{\domain_\delta}{}{
            \integral{\domain_\delta}{}{
                \ginzburgLandauEnergyDens{\membrPhaseField{}{}}{}(y)
                \grad{}{}{\pDiff{\pfSpeciesOne{}{}}{}{} \pfCouplingEnergyDens{}{}
                (\cdot,y,\pfSpeciesOne{}{},\normal[\cortexPhaseFieldSym]{})
                }
                \cdot
                \symVelocityTangVec
                \ginzburgLandauEnergyDens{\cortexPhaseFieldSym}{}
                \pfSpeciesOne{}{}
              }{\lebesgueM{3}(y)}        
              }{\lebesgueM{3}(x)}
            \in\landauBigo{1}.
            \end{split}
        \end{equation*}

        It thus follows from \eqref{equ:formal asymptotics:energy inequ:pre} that
        $$
            \esssup_{t\in\solTimeInt} 
            \freeEnergyFunct{
                \membrPhaseField{}{},
                \cortexPhaseField{}{},
                \speciesOne{}{}
            }{\phaseFieldParam} \in \landauBigo{1}.
        $$
        Therefore, 
        $ 
            \integral{\domain_\delta}{}{
                \frac{\phaseFieldParam}{2}
                \abs{\grad{}{}{}\rho}^2
                +
                \phaseFieldParam^{-1}
                \doubleWellPot{\rho}
            }{\lebesgueM{3}}
        \in \landauBigo{1} $
        for all $ t \in [0,\finTime] $,
        and we conclude
        $
            \doubleWellPot{\phaseFieldProto}
            \in\landauBigo{\phaseFieldParam}
        $.
        Inserting the outer expansion of $ \phaseFieldProto $ into $ \doubleWellPot{\phaseFieldProto} $
        brings
        \begin{equation*}
            \doubleWellPot{\phaseFieldProto} = (\left(\schemeOuterExp{\phaseFieldProto}_0\right)^2-1)^2 
            + \landauBigO{\phaseFieldParam}.
        \end{equation*}
        Hence, it must hold 
        $$
           \restrFun{(\left(\schemeOuterExp{\phaseFieldProto}_0\right)^2-1)^2}{\domain_\ifCoordsDelta} = 0
        $$
        for any $ \ifCoordsDelta > 0 $. This further implies
        $
            \restrFun{\schemeOuterExp{\phaseFieldProto}_0}{\domain_\ifCoordsDelta}(t,\cdot) \in \{-1,1\}
        $
        for all $ t\in[0,\finTime] $.
        For the initial data of $ \phaseFieldProto $, we have 
        (cf.~\refAss{it:formale asymptotics:ass:existence of solutions:interf})
        $ \schemeOuterExp{\phaseFieldProto}_0(0,\cdot) = -1 $ in $ \enclIFVol{S} $ and
        $ 1 $ in $ \extIFVol{S} $, $ S \in \{\interfOne{},\interfTwo{}\} $,
        so that we can argue by continuity in time that 
        \begin{equation}
            \restrFun{\schemeOuterExp{\phaseFieldProto}_0}{\enclIFVol{S}
            \setminus \tubNeighbourhood{\ifCoordsDelta}{S}}(t,\cdot) = -1 
            \;\text{and}\;
            \restrFun{\schemeOuterExp{\phaseFieldProto}_0}{\extIFVol{S}\setminus
            \tubNeighbourhood{\ifCoordsDelta}{S}}(t,\cdot) = 1
            \;\text{for all}\; t \in [0,\finTime],
        \end{equation}
        which is the essential result of this paragraph.
	\subsection{Inner expansion}
        As there is no danger of confusion, we drop the subscript $ \phaseFieldParam $ on the physical quantities.
        \undef\indicatePhaseField
    	Let us first not that the result of the previous paragraph can be combined with the principle of 
        asymtptotic matching on the phase fields such that we obtain
        \begin{equation}
            \label{equ:formal asymptotics:lim matching principle:membr phase field:outer}
            \begin{split}
            \left(
            \lim_{\fastVar\nearrow\infty}\schemeInnerExp{\schemeLocalFun{\membrPhaseFieldSym}}_0
            (\cdot,\cdot,\fastVar)
            \right)
            \concat\interfCoords[\phaseFieldParam]{t}{x}
            &=
            \lim_{ \substack{ x\to\interfOne{} \\ x\in\extIFVol{\interfOne{}} } }
            \schemeOuterExp{\membrPhaseFieldSym}_0(t,x) = 1
            \end{split}
        \end{equation}
        for $ x \in \tubNeighbourhood{\ifCoordsDelta}{\interfOne{}} \cap \extIFVol{\interfOne{}} $,
        i.e., $ \sdist{\interfOne{}}{x} > 0 $. Analogously,
        \begin{equation}
            \label{equ:formal asymptotics:lim matching principle:membr phase field:inner}
            \begin{split}
            \left(
            \lim_{\fastVar\searrow-\infty}\schemeInnerExp{\schemeLocalFun{\membrPhaseFieldSym}}_0
            (\cdot,\cdot,\fastVar)
            \right)
            \concat\interfCoords[\phaseFieldParam]{t}{x}
            &=
            \lim_{ \substack{ x\to\interfOne{} \\ x\in\enclIFVol{\interfOne{}} } }
            \schemeOuterExp{\membrPhaseFieldSym}_0(t,x) = -1
            \end{split}
        \end{equation}
        for $ x \in \tubNeighbourhood{\ifCoordsDelta}{\interfOne{}} \cap \enclIFVol{\interfOne{}} $
        and mutatis mutandis for $ \cortexPhaseFieldSym $.

        \begin{rem}
          	An immediate consequence of the matching principle and the assumption that
            $ \schemeOuterExp{q}_{\ell} = 0 $ for all $ \ell\in\{\minExpInd,\dots,-1\} $ of the outer expansion
            is
            \begin{equation}
                \label{equ:formal asymptotics:inner expansion:negative order z limit}
                \lim_{z\to\pm\infty}\schemeLocalFun{q}_{\ell} = 0 \quad\quad \text{for\;all\;}\ell\in\{\minExpInd,\dots,-1\}
            \end{equation}
            of the inner expansion. This also holds for all derivatives as long as they exist.
        \end{rem}
    \subsection{Properties of $ \schemeLocalFun{\pfVelocity{}{}} $ and 
      	$ \schemeLocalFun{\pfPressure{}{}} $ to leading order}
    	\label{sec:formal asymptotics:inner expansion:properties of vel to leading order}
    	Let $ S \in \{\interfOne{},\interfTwo{}\} $. To obtain insight on the higher 
        order coefficients in the expansion of the velocity and the pressure, we 
        exploit the structure of the Navier-Stokes equations 
        \eqref{equ:modelling:phase field resulting PDEs:momentum balance},
        \eqref{equ:modelling:phase field resulting PDEs:incompressibility}
        following \cite[p.~486, Section~A.1.2]{AL2018}.
        
        Due to
        \refAss{it:formal asymptotics:inner expansion struct:cond for phase fields},
        $ \grad{\cortexPhaseFieldSym}{L^2}{}\pfGenWillmoreEnergyFunct{}{} \in 
        \landauBigo{\phaseFieldParam^{-3}} $.
        Thus, for $ N \geq 3 $, we have from 
        \eqref{equ:modelling:phase field resulting PDEs:momentum balance},
        at order $ \phaseFieldParam^{\minExpInd-2} $,
        \begin{equation*}
        	-\viscosity
            \simpleDeriv{\prime\prime}{\schemeLocalFun{\pfVelocity{}{}}_{\minExpInd}}
            \concat\interfCoords[\phaseFieldParam]{}{} 
            =
            0.
        \end{equation*}
        With \eqref{equ:formal asymptotics:inner expansion:negative order z limit}, it
        further follows
        $ \simpleDeriv{\prime}{\schemeLocalFun{\pfVelocity{}{}}_{\minExpInd}} = 0 $. 
        From
        $ \simpleDeriv{\prime}{\schemeLocalFun{\pfVelocity{}{}}_{\minExpInd}} = 0 $
        with \eqref{equ:formal asymptotics:inner expansion:negative order z limit}, we conclude 
        analogously
        $ \schemeLocalFun{\pfVelocity{}{}}_{\minExpInd} = 0 $.        
        
        At order $ \phaseFieldParam^{\minExpInd-1} $, the equation is
        \begin{equation}
            \label{equ:formal asymptotics:properties velocity leading order:navier stokes:%
            new leading order}
            -\viscosity
        	\simpleDeriv{\prime\prime}{\schemeLocalFun{\pfVelocity{}{}}_{\minExpInd+1}}
            \concat\interfCoords[\phaseFieldParam]{}{} 
            +
            \simpleDeriv{\prime}{\schemeLocalFun{\pfPressure{}{}}_{\minExpInd}}
            \concat\interfCoords[\phaseFieldParam]{}{} 
            \extNormal{}
            =
            0.
        \end{equation}
        From \eqref{equ:modelling:phase field resulting PDEs:incompressibility} we have,
        using 
        Remark~\ref{lemma:formal asymptotics:transformation diff op}%
        \eqref{equ:formally matched asymptotics:div rescaled distance}, to leading order
        $ \phaseFieldParam^{\minExpInd} $:
        \begin{equation}
            \label{equ:formal asymptotics:properties velocity leading order:norm comp deriv}
        	\simpleDeriv{\prime}{\schemeLocalFun{\pfVelocity{}{}}_{\minExpInd+1}}
            \concat \interfCoords[\phaseFieldParam]{}{}
            \cdot \extNormal{}
            = 0.
        \end{equation}
        Multiplying 
        \eqref{equ:formal asymptotics:properties velocity leading order:navier stokes:%
        new leading order}
        by $ \extNormal{}$, we find with 
        \eqref{equ:formal asymptotics:properties velocity leading order:norm comp deriv}
        \begin{equation}
            \label{equ:formal asymptotics:properties velocity leading order:pressure deriv zero}
            \simpleDeriv{\prime}{\schemeLocalFun{\pfPressure{}{}}_{\minExpInd}}
            \concat\interfCoords[\phaseFieldParam]{}{} 
            =
            0.
        \end{equation}
        In turn, inserting \eqref{equ:formal asymptotics:properties velocity leading order:%
        pressure deriv zero}
        back into \eqref{equ:formal asymptotics:properties velocity leading order:navier stokes:%
        new leading order},
        we obtain 
        $ \simpleDeriv{\prime\prime}{\schemeLocalFun{\pfVelocity{}{}}_{\minExpInd+1}} = 0 $, and 
        with \eqref{equ:formal asymptotics:inner expansion:negative order z limit}
        further
        $ \simpleDeriv{\prime}{\schemeLocalFun{\pfVelocity{}{}}_{\minExpInd+1}} = 0 $. 
        From
        $ \simpleDeriv{\prime}{\schemeLocalFun{\pfVelocity{}{}}_{\minExpInd+1}} = 0 $
        with \eqref{equ:formal asymptotics:inner expansion:negative order z limit}, we conclude 
        analogously
        $ \schemeLocalFun{\pfVelocity{}{}}_{\minExpInd+1} = 0 $. Arguing verbatim with
        \eqref{equ:formal asymptotics:inner expansion:negative order z limit},
        \eqref{equ:formal asymptotics:properties velocity leading order:pressure deriv zero}
        implies $ \schemeLocalFun{\pfPressure{}{}}_{\minExpInd} = 0 $.

        Repeating the arguments of the previous paragraph, we may from now on  assume w.l.o.g.
        $ \schemeLocalFun{\pfVelocity{}{}}_{\ell} = 0 $ for all $ \ell \leq-3 $
        and
        $ \schemeLocalFun{\pfPressure{}{}}_{\ell} = 0 $ for all $ \ell \leq-4 $.

        At order $ \phaseFieldParam^{-4} $, we have an additional right hand side term
        \begin{equation*}
            \phaseFieldParam^{-4}
            \left(
                -
              	\viscosity
                \simpleDeriv{\prime\prime}{
              	\schemeLocalFun{\pfVelocity{}{}}_{-2}
            }
            \concat\interfCoords[\phaseFieldParam]{}{} 
            +
            \simpleDeriv{\prime}{\schemeLocalFun{\pfPressure{}{}}_{-3}}
            \concat\interfCoords[\phaseFieldParam]{}{} 
            \extNormal{}
            \right)
            =
            \phaseFieldParam^{-1}
            \left(
              	\grad{\membrPhaseFieldSym}{L^2}{}\freeEnergyFunct{}{}
                \simpleDeriv{\prime}{\schemeLocalFun{\membrPhaseFieldSym}_0}
                \concat\interfCoords[\phaseFieldParam]{}{}
                \extNormal{}
                +
              	\grad{\cortexPhaseFieldSym}{L^2}{}\freeEnergyFunct{}{}
                \simpleDeriv{\prime}{\schemeLocalFun{\cortexPhaseFieldSym}_0}
                \concat\interfCoords[\phaseFieldParam]{}{}
                \extNormal{}
            \right)
            -
            \pDiff{\pfSpeciesOne{}{}}{}{}\pfCouplingIntMembr
            \meanCurv{\schemeInnerExp{\cortexPhaseFieldSym}_0}{}
            \normal[\schemeInnerExp{\cortexPhaseFieldSym}_0]{}
            \schemeInnerExp{\pfSpeciesOneIdx{-1}{}{}}.
        \end{equation*}
        Multiplying again by $ \extNormal{} $ and noting that due to the previous considerations
        $ \simpleDeriv{\prime}{\schemeLocalFun{\pfVelocity{}{}}_{-2}}
        \concat \interfCoords[\phaseFieldParam]{}{}
        \cdot\extNormal{} = 0 $ \eqref{equ:formal asymptotics:properties velocity leading order:norm comp deriv},
        we have
        \begin{align*}
            \phaseFieldParam^{-4}
            \simpleDeriv{\prime}{\schemeLocalFun{\pfPressure{}{}}_{-3}}
            \concat\interfCoords[\phaseFieldParam]{}{} 
            =
            \phaseFieldParam^{-1}
            \left(
              	\grad{\membrPhaseFieldSym}{L^2}{}\freeEnergyFunct{}{}
                \simpleDeriv{\prime}{\schemeLocalFun{\membrPhaseFieldSym}_0}
                \concat\interfCoords[\phaseFieldParam]{}{}
                +
              	\grad{\cortexPhaseFieldSym}{L^2}{}\freeEnergyFunct{}{}
                \simpleDeriv{\prime}{\schemeLocalFun{\cortexPhaseFieldSym}_0}
                \concat\interfCoords[\phaseFieldParam]{}{}
            \right)
            -
            \pDiff{\pfSpeciesOne{}{}}{}{}\pfCouplingIntMembr
            \meanCurv{\schemeInnerExp{\cortexPhaseFieldSym}_0}{}
            \normal[\schemeInnerExp{\cortexPhaseFieldSym}_0]{}
            \cdot\extNormal{}
            \schemeInnerExp{\pfSpeciesOneIdx{-1}{}{}};
        \end{align*}
        hence,
        $
        	\simpleDeriv{\prime\prime}{
              	\schemeLocalFun{\pfVelocity{}{}}_{-2}
            } = 0
        $ and we may conclude 
        $
              	\schemeLocalFun{\pfVelocity{}{}}_{-2} = 0
        $ as before.

        We cannot go further now.
        However, in Section~\ref{sec:formal asymptotics:optimal profile cortex phase field}
        we show that actually 
        $ \grad{\rho}{L^2}{}\pfGenWillmoreEnergyFunct{}{} \in \landauBigo{\phaseFieldParam^{-2}} $
        and in 
        Section~\ref{sec:formal asymptotics:inner expansion:properties of species to leading order}        
        that $ \schemeInnerExp{\pfSpeciesOne{}{}} \in \landauBigo{1} $---using only 
        the results on velocity and pressure we've derived here---, which gives
        $
            \simpleDeriv{\prime}{\schemeLocalFun{\pfPressure{}{}}_{-3}}
            \concat\interfCoords[\phaseFieldParam]{}{} 
            =
            0,
        $
        and further
        \begin{align*}
            \phaseFieldParam^{-3}
            \simpleDeriv{\prime}{\schemeLocalFun{\pfPressure{}{}}_{-2}}
            \concat\interfCoords[\phaseFieldParam]{}{} 
            =
            \phaseFieldParam^{-1}
            \left(
              	\grad{\membrPhaseFieldSym}{L^2}{}\freeEnergyFunct{}{}
                \simpleDeriv{\prime}{\schemeLocalFun{\membrPhaseFieldSym}_0}
                \concat\interfCoords[\phaseFieldParam]{}{}
                +
              	\grad{\cortexPhaseFieldSym}{L^2}{}\freeEnergyFunct{}{}
                \simpleDeriv{\prime}{\schemeLocalFun{\cortexPhaseFieldSym}_0}
                \concat\interfCoords[\phaseFieldParam]{}{}
            \right)
            -
            \pDiff{\pfSpeciesOne{}{}}{}{}\pfCouplingIntMembr
            \meanCurv{\schemeInnerExp{\cortexPhaseFieldSym}_0}{}
            \normal[\schemeInnerExp{\cortexPhaseFieldSym}_0]{}
            \cdot\extNormal{}
            \schemeInnerExp{\pfSpeciesOneIdx{0}{}{}}
        \end{align*}
        resulting in 
        $
            \simpleDeriv{\prime\prime}{
                \schemeLocalFun{\pfVelocity{}{}}_{-1}
            } = 0
        $ and 
        $
                \schemeLocalFun{\pfVelocity{}{}}_{-1} = 0
        $. 
        All together, we can thus state that 
        \begin{equation} 
            \label{equ:formal asymptotics:inner expansion:velocity and pressure zeroing}
            \schemeLocalFun{\pfVelocity{}{}}_{\ell} = 0 
            \;\text{for all}\; \ell\in\{\minExpInd,\dots,-1\}, \;\text{and}\;
            \schemeLocalFun{\pfPressure{}{}}_\ell = 0 \;\text{for all}\; \ell\in\{\minExpInd,\dots,-3\}.
        \end{equation}
	\subsection{Optimal profiles of $ \schemeLocalFun{\cortexPhaseFieldSym} $
      and $ \schemeLocalFun{\membrPhaseFieldSym} $
      to leading order}
    	\label{sec:formal asymptotics:optimal profile cortex phase field}
        Leading order of $ \grad{\cortexPhaseFieldSym}{L^2}{}\ginzburgLandauEnergyFunct{}{} $ and
        $ \grad{\cortexPhaseFieldSym}{L^2}{}\pfCouplingEnergy{}{} $ is at most $ \phaseFieldParam^{-2} $.
        We consider the evolution law
        \eqref{equ:modelling:phase field:resulting PDEs:second interface evolution}:
        \begin{equation*}
           	\pDiff{t}{}{}\cortexPhaseFieldSym
            +
            \pfVelocity{}{}
            \cdot
            \grad{}{}{}\cortexPhaseFieldSym
            =
            \phaseFieldParam^{\alpha}
            \laplacian{}{}{
                \grad{\cortexPhaseFieldSym}{L^2}{}\pfGenWillmoreEnergyFunct{}{}
                +
                \grad{\cortexPhaseFieldSym}{L^2}{}\ginzburgLandauEnergyFunct{}{}
                +
                \grad{\cortexPhaseFieldSym}{L^2}{}\pfCouplingEnergy{}{}
            }.
        \end{equation*}
        The left hand side is at most of order $ \phaseFieldParam^{-2} $
        (since the velocity is at most of order $ \phaseFieldParam^{-1} $, see the previous
        Section~\ref{sec:formal asymptotics:inner expansion:properties of vel to leading order}).
        So requiring $ \alpha \leq 2 $,
        the leading order terms of 
        $ 
			\phaseFieldParam^\alpha
        	\laplacian{}{}{
                \grad{\cortexPhaseFieldSym}{L^2}{}\pfGenWillmoreEnergyFunct{}{}
        	} 
        $ are of order $ \phaseFieldParam^{-3} $ and must be zero, which 
        is equivalent to the equation
        \begin{equation*}
            \simpleDeriv{\prime\prime}{
            \left(
                \simpleDeriv{\prime\prime}{\left(
                    \simpleDeriv{\prime\prime}{\cortexPhaseFieldIFCIdx{0}{}{}}
                    - 
                    \doubleWellPot[\schemeSupscr{\prime}]{\cortexPhaseFieldIFCIdx{0}{}{}}
                \right)}
                -
                \left(
                    \simpleDeriv{\prime\prime}{\cortexPhaseFieldIFCIdx{0}{}{}}
                    - 
                    \doubleWellPot[\schemeSupscr{\prime}]{\schemeLocalFun{\cortexPhaseFieldSym}_0}
                \right)
                \doubleWellPot[\schemeSupscr{\prime\prime}]{\schemeLocalFun{\cortexPhaseFieldSym}_0}
            \right)
            }
            =
            0.
        \end{equation*}
        We pose the additional condition $ \cortexPhaseFieldIFCIdx{0}{t}{\projVar,0} = 0 $ 
        (otherwise, we had infinitely
        many solutions by shifting along the abscissa).
        Further, we set 
        $ 
        	g \colonequals 
            \simpleDeriv{\prime\prime}{\left(
                \simpleDeriv{\prime\prime}{\cortexPhaseFieldIFCIdx{0}{}{}}
                - 
                \doubleWellPot[\schemeSupscr{\prime}]{\cortexPhaseFieldIFCIdx{0}{}{}}
            \right)}
            -
            \left(
                \simpleDeriv{\prime\prime}{\cortexPhaseFieldIFCIdx{0}{}{}}
                - 
                \doubleWellPot[\schemeSupscr{\prime}]{\schemeLocalFun{\cortexPhaseFieldSym}_0}
            \right)
            \doubleWellPot[\schemeSupscr{\prime\prime}]{\schemeLocalFun{\cortexPhaseFieldSym}_0}
        $ 
        and observe that thanks to
        the counterparts of
        \eqref{equ:formal asymptotics:lim matching principle:membr phase field:outer},
        \eqref{equ:formal asymptotics:lim matching principle:membr phase field:inner} for $ \cortexPhaseFieldSym $,
        $ \lim_{\abs{z}\to\infty} g = 0 $.
        By integration, we obtain
        $$
        	0 = \simpleDeriv{\prime}{g}(z) - \simpleDeriv{\prime}{g}(0),
        $$
        and sending $\abs{z} \to \infty $ gives $ \simpleDeriv{\prime}{g}(0) = 0 $. Conclusively, 
        $ \simpleDeriv{\prime}{g}(z) = 0 $ for all $ z \in \reals $. 
        Repeating the argument, we obtain 
        $$ 
        	0 = g(z) - g(0),
        $$ 
        send $ \abs{z} \to \infty $, conclude $ g(0) = 0 $ and thus have
        $ g(z)= 0 $ for all $ z \in \reals $.
        Setting $ f \colonequals 
            \left(
              	\simpleDeriv{\prime\prime}{\cortexPhaseFieldIFCIdx{0}{}{}}
                \concat\interfCoords[\phaseFieldParam]{}{}
            	- \doubleWellPot[\schemeSupscr{\prime}]{\cortexPhaseFieldSym_0}
            \right) 
        $, a solution to 
        $$
            \simpleDeriv{\prime\prime}{\left(
                \simpleDeriv{\prime\prime}{\cortexPhaseFieldIFCIdx{0}{}{}}
                - 
                \doubleWellPot[\schemeSupscr{\prime}]{\cortexPhaseFieldIFCIdx{0}{}{}}
            \right)}
            -
            \left(
                \simpleDeriv{\prime\prime}{\cortexPhaseFieldIFCIdx{0}{}{}}
                - 
                \doubleWellPot[\schemeSupscr{\prime}]{\schemeLocalFun{\cortexPhaseFieldSym}_0}
            \right)
            \doubleWellPot[\schemeSupscr{\prime\prime}]{\schemeLocalFun{\cortexPhaseFieldSym}_0}
            =
        	g = 0
        $$
        is obviously given by $ f = 0 $. From
        \begin{equation}
            \label{equ:formal asymptotics:leading order props cortex phase field:optimal profile equ}
            f = \simpleDeriv{\prime\prime}{\cortexPhaseFieldIFCIdx{0}{}{}}
            \concat\interfCoords[\phaseFieldParam]{}{}
            - \doubleWellPot[\schemeSupscr{\prime}]{\cortexPhaseFieldSym_0}
            =
            0,
        \end{equation}
        we further conclude with the counterparts of 
        \eqref{equ:formal asymptotics:lim matching principle:membr phase field:outer},
        \eqref{equ:formal asymptotics:lim matching principle:membr phase field:inner} 
        for $ \cortexPhaseFieldSym $ 
        that
        $ 
	        \cortexPhaseFieldIFCIdx{0}{\NOARG}{\fastVar}
            =
            \tanh\left(\frac{\fastVar}{\sqrt{2}}\right).
        $

        The very same argument applies verbatim for $ \membrPhaseFieldSym $.
    \subsection{Properties of $ \schemeLocalFun{\pfSpeciesOne{}{}} $ and $
        \schemeLocalFun{\pfSpeciesTwo{}{}} $ to leading order}
      	\label{sec:formal asymptotics:inner expansion:properties of species to leading order}
        The following analysis is conducted on the example of $ \schemeLocalFun{\pfSpeciesOne{}{}} $,
        but the arguments are the same for $ \schemeLocalFun{\pfSpeciesTwo{}{}} $.
    	We consider the equation \eqref{equ:modelling:phase field:resulting PDEs:%
        reaction-diffusion of species one} on $ \tubNeighbourhood{\ifCoordsDelta}{\interfTwo{}} $:
        \begin{equation*}
            \ginzburgLandauEnergyDens{\cortexPhaseField{}{}}{}
            \pDiff{t}{}{}\pfSpeciesOne{}{}
            -
            \symVelocityNormal{\cortexPhaseField{}{}}
            \meanCurv{\cortexPhaseField{}{}}{}
            \pfSpeciesOne{}{}
            -
            \diver{}{}{
                \ginzburgLandauEnergyDens{\cortexPhaseField{}{}}{}
                \speciesOneDiffusiv{}
                \grad{}{}{}\speciesOne{}{}
            }
            +
            \diver{}{}{
              	\ginzburgLandauEnergyDens{\cortexPhaseFieldSym}{}
                \pfSpeciesOne{}{}
              	\pfVelocity{}{}_\tau
            }
            =
            \ginzburgLandauEnergyDens{\cortexPhaseField{}{}}{}
            \reactionOne{\speciesOne{}{},\speciesTwo{}{}}{\membrPhaseField{}{},
              \normal[\cortexPhaseFieldSym]{}}.
        \end{equation*}      
        Using
        \eqref{equ:formal asymptotics:expansion of the L2 gradient of the coupling energy:%
        helping expansions:ginzburg landau density},
        the results from Section~\ref{sec:formal asymptotics:inner expansion:%
        properties of vel to leading order}, \eqref{equ:formal asymptotics:inner expansion:velocity and pressure zeroing},
        the optimal profile found for $ \cortexPhaseFieldSym $ in
        Section~\ref{sec:formal asymptotics:optimal profile cortex phase field} together with
        \eqref{equ:formal asymptotics:expansion of the L2 gradient of the coupling energy:%
        helping expansions:phase field mean curv}, we have
        $$
            \ginzburgLandauEnergyDens{\cortexPhaseField{}{}}{}
            \pDiff{t}{}{}\schemeInnerExp{\pfSpeciesOne{}{}},
            \symVelocityNormal{\cortexPhaseField{}{}}
            \meanCurv{\cortexPhaseField{}{}}{}
            \schemeInnerExp{\pfSpeciesOne{}{}},
            \diver{}{}{
              	\ginzburgLandauEnergyDens{\schemeInnerExp{\cortexPhaseFieldSym}}{}
                \schemeInnerExp{\pfSpeciesOne{}{}}
              	\schemeInnerExp{\pfVelocity{}{}}_\tau
            },
            \ginzburgLandauEnergyDens{\cortexPhaseField{}{}}{}
            \reactionOne{\speciesOne{}{},\speciesTwo{}{}}{\membrPhaseField{}{},
            \normal[\cortexPhaseFieldSym]{}}
        	\in\landauBigo{\phaseFieldParam^{\minExpInd-2}},
        $$
        so to leading order only the terms 
        at $ \phaseFieldParam^{\minExpInd-3} $ of the diffusion term matter:
        \begin{align*}
            \diver{x}{}{
               	\phaseFieldParam^{\minExpInd-2}
                \left(
                    \frac{1}{2}
                    \left(
                        \simpleDeriv{\prime}{\cortexPhaseFieldIFCIdx{0}{}{}}
                    \right)^2
                    +
                    \doubleWellPot{\cortexPhaseFieldIFCIdx{0}{}{}}
                \right)
                \concat\interfCoords[\phaseFieldParam]{}{}
                \activeLinkersDiffusiv{}
                \simpleDeriv{\prime}{\schemeLocalFun{\pfSpeciesOne{}{}}_{\minExpInd}}
                \concat\interfCoords[\phaseFieldParam]{}{}
                \extNormal{}
            }
            &=
            \phaseFieldParam^{\minExpInd-3}
            \simpleDeriv{\prime}{\left(
                \left(
                    \frac{1}{2}
                    \left(
                        \simpleDeriv{\prime}{\cortexPhaseFieldIFCIdx{0}{}{}}
                    \right)^2
                    +
                    \doubleWellPot{\cortexPhaseFieldIFCIdx{0}{}{}}
                \right)
                \activeLinkersDiffusiv{}
                \simpleDeriv{\prime}{\schemeLocalFun{\pfSpeciesOne{}{}}_{\minExpInd}}
                \hat{\nu}
            \right)}
            \cdot \extNormal{}
            \\
            +
            \landauBigo{
            \phaseFieldParam^{\minExpInd-2}
            }
            &=
            0.
        \end{align*}
        Thus, 
        $
            \left(
              	\frac{1}{2}
                \left(
                    \simpleDeriv{\prime}{\cortexPhaseFieldIFCIdx{0}{}{}}
                \right)^2
                +
                \doubleWellPot{\cortexPhaseFieldIFCIdx{0}{}{}}
            \right)
            \activeLinkersDiffusiv{}
            \simpleDeriv{\prime}{\schemeLocalFun{\pfSpeciesOne{}{}}_{\minExpInd}}
        $ has to be constant in $ \fastVar $. However, 
        $
            \frac{1}{2}
            \left(
                \simpleDeriv{\prime}{\cortexPhaseFieldIFCIdx{0}{}{}}
            \right)^2
            +
            \doubleWellPot{\cortexPhaseFieldIFCIdx{0}{}{}}
        $
        decays due to \eqref{equ:formal asymptotics:lim matching principle:membr phase field:outer},
        \eqref{equ:formal asymptotics:lim matching principle:membr phase field:inner}.
        Simultaneously, 
        $ \simpleDeriv{\prime}{\schemeLocalFun{\pfSpeciesOne{}{}}_{\minExpInd}} $ decays
        as $ \abs{\fastVar} \to \infty $, see 
        \eqref{equ:formal asymptotics:inner expansion:negative order z limit}.
        Thus, it must even hold
        $$
            \left(
              	\frac{1}{2}
                \left(
                    \simpleDeriv{\prime}{\cortexPhaseFieldIFCIdx{0}{}{}}
                \right)^2
                +
                \doubleWellPot{\cortexPhaseFieldIFCIdx{0}{}{}}
            \right)
            \activeLinkersDiffusiv{}
            \simpleDeriv{\prime}{\schemeLocalFun{\pfSpeciesOne{}{}}_{\minExpInd}} = 0,
        $$
        and so
        $
            \simpleDeriv{\prime}{\schemeLocalFun{\pfSpeciesOne{}{}}_{\minExpInd}}(\projVar,\fastVar)
            = 0
        $
        for all $ \projVar,\fastVar $. Consequently,
        $ \schemeLocalFun{\pfSpeciesOne{}{}}_{\minExpInd} $ is constant in $ \fastVar $.
        Leveraging 
        \eqref{equ:formal asymptotics:inner expansion:negative order z limit} again,
        it follows
        $ \schemeLocalFun{\pfSpeciesOne{}{}}_{\minExpInd} = 0 $.
        We may repeat this argument and find
        \begin{equation}
            \label{equ:formal asymptotics:inner expansion:species to leading order:%
            negative order zero}
            \schemeLocalFun{\pfSpeciesOne{}{}}_{\ell} = 0 
            \quad\quad \text{for\;all}\;\ell \in \{\minExpInd,\dots,-1\}.
        \end{equation}

        Finally, we have to leading order:
		\begin{align*}
            \simpleDeriv{\prime}{\left(
                \left(
                    \frac{1}{2}
                    \left(
                        \simpleDeriv{\prime}{\cortexPhaseFieldIFCIdx{0}{}{}}
                    \right)^2
                    +
                    \doubleWellPot{\cortexPhaseFieldIFCIdx{0}{}{}}
                \right)
                \activeLinkersDiffusiv{}
                \simpleDeriv{\prime}{\schemeLocalFun{\pfSpeciesOne{}{}}_{0}}
                \hat{\nu}
            \right)}
            \cdot \extNormal{}
            =
            0,
		\end{align*}         
        and conclude
        \begin{equation}
            \label{equ:formal asymptotics:inner expansion:species to leading order:%
            constant in normal direction}
            \simpleDeriv{\prime}{\schemeLocalFun{\pfSpeciesOne{}{}}_{0}}(\projVar,\fastVar)
            = 0
            \quad\quad
            \text{for\;all}\;\projVar,\fastVar.
        \end{equation}

    \subsection{Further properties of $\schemeLocalFun{\membrPhaseFieldSym} $ and
      $ \schemeLocalFun{\cortexPhaseFieldSym}$}
    	\label{sec:Expanding the L2 gradient of the canham-helfrich energy}
        The expansion of the Willmore-Energy gradient in interfacial coordinates shall be
        \begin{equation*}
            \widehat{\grad{\phaseFieldProto}{L^2}{}
            \pfGenWillmoreEnergyFunct{}{}}
            =
            \sum_{k=-3}^\infty \phaseFieldParam^{k} \schemeLocalFun{e}_k(\projVar,\fastVar),
        \end{equation*}
        and in original coordinates
        \begin{equation}
            \label{equ:formal asymptotics:inner expansion energy gradient}
            \grad{\phaseFieldProto}{L^2}{}
            \pfGenWillmoreEnergyFunct{}{}
            =
            \sum_{k=-3}^\infty \phaseFieldParam^{k} 
            e_k\left(
                \orthProj{\surfaceProto}{x},\frac{\sdist{\surfaceProto}{x}}{\phaseFieldParam}
            \right).
        \end{equation}            
        We are going to show that $ \schemeLocalFun{e}_{-3} = \schemeLocalFun{e}_{-2} = \schemeLocalFun{e}_{-1} = 0 $ 
        by dint of the energy inequality.
        Afterwards, we are going to see that important properties of $ \schemeLocalFun{\phaseFieldProto}_0 $,
        $ \schemeLocalFun{\phaseFieldProto}_1 $, and $ \schemeLocalFun{\phaseFieldProto}_2 $ follow from these
        equations that
        we will use when passing to the limit in the next
        Section~\ref{sec:sharp interface limit}.
        Before going on, let us calculate
        \begin{equation*}
            \grad{\phaseFieldProto}{L^2}{}\pfGenWillmoreEnergyFunct{\phaseFieldProto}{}
            =
            -\laplacian{}{}{\genGlChemPot{\phaseFieldProto}{}}
            +
            \phaseFieldParam^{-2}
            \genGlChemPot{\phaseFieldProto}{}
            \doubleWellPot[\schemeSupscr{\prime\prime}]{\phaseFieldProto}.
        \end{equation*}
            
        Thanks to the optimal profiles at leading order for both phase fields, cf.
        Section~\ref{sec:formal asymptotics:optimal profile cortex phase field}, we have
        $ \grad{\phaseFieldProto}{L^2}{}\pfGenWillmoreEnergyFunct{\phaseFieldProto}{}
        \in \landauBigo{\phaseFieldParam^{-2}} $,
        and also
        $ \grad{\phaseFieldProto}{L^2}{}\ginzburgLandauEnergyFunct{\phaseFieldProto}{} \in \landauBigo{1} $.
        We note further
        \begin{equation*}
            \grad{\membrPhaseField{}{}}{L^2}{}\pfCouplingEnergy{}{}(y)
            =
            \glGenChemPot{\membrPhaseField{}{}}{}(y)
            \pfCouplingIntCortex(y)
            -
            \phaseFieldParam
            \integral{\domain}{}{
                \ginzburgLandauEnergyDens{\cortexPhaseField{}{}}{\phaseFieldParam}
                (x)
                \grad{y}{}{}\membrPhaseField{}{}
                \cdot
                \grad{y}{}{}\pfCouplingEnergyDens{x, y,\pfSpeciesOne{}{},
                  \normal[\cortexPhaseFieldSym]{}}{}
            }{\lebesgueM{3}(x)} \in \landauBigo{1},
        \end{equation*}
        which follows from the optimal profile of $ \membrPhaseFieldSym $ and 
        $ \cortexPhaseFieldSym $ to leading order combined with Lemma~\ref{lemma:addendum formal asymptotics:concentration}.
        (The optimal profiles allow for showing the decaying condition that is the main prerequisite of
        Lemma~\ref{lemma:addendum formal asymptotics:concentration}.)
        For
        \begin{equation*}
            \begin{split}
                \grad{\cortexPhaseField{}{}}{L^2}{}
                \pfCouplingEnergy{}{}(x)
                &=
                \genGlChemPot{\cortexPhaseField{}{}}{}(x)
                \pfCouplingIntMembr(x)
                -\integral{\domain}{}{
                    \phaseFieldParam
                    \ginzburgLandauEnergyDens{\membrPhaseField{}{}}{}(y)
                    \grad{x}{}{
                        \pfCouplingEnergyDens{x,y}{\pfSpeciesOne{}{},\normal[\cortexPhaseFieldSym]{}}
                    }
                    \cdot
                    \grad{x}{}{}\cortexPhaseField{}{}
                }{\lebesgueM{3}(y)}
                \\
                &-
                \integral{\domain}{}{
                  	\ginzburgLandauEnergyDens{\membrPhaseField{}{}}{}(y)
                    \diver{x}{}{
                      	\ginzburgLandauEnergyDens{\cortexPhaseField{}{}}{}(x)
                      	\transposed{
                        \grad{\normal{}}{}{}\pfCouplingEnergyDens{x,y}{\pfSpeciesOne{}{},
                        \normal[\cortexPhaseFieldSym]{}}
                        }
                      	\frac{1}{\abs{\grad{}{}{}\cortexPhaseFieldSym}}
                        \orthProjMat{\normal[\cortexPhaseFieldSym]{}}{}
                    }
                }{\lebesgueM{3}(y)} \in \landauBigo{1},
            \end{split}
        \end{equation*}
        we have to additionally consider
        \eqref{equ:formal asymptotics:consequ bound L2 grad coupling energy:dxdnc}, and note
        \eqref{equ:formal asymptotics:consequ bound L2 grad coupling energy:dnc with proj %
                lower order},
        as well as $ \frac{1}{\abs{\grad{}{}{}\cortexPhaseFieldSym}}
        \orthProjMat{\normal[\cortexPhaseFieldSym]{}}{} \in \landauBigo{\phaseFieldParam} $.
        We conclude 
        $ 
            \avgIntegral{\tubNeighbourhood{\ifCoordsDelta}{S}}{}{
                \grad{\phaseFieldProto}{}{}\freeEnergyFunct{}{}
            }{\lebesgueM{3}}
            \in \landauBigo{\phaseFieldParam^{-1}}
        $
        (again, leveraging Lemma~\ref{lemma:addendum formal asymptotics:concentration} and using the optimal
        profile of $ \membrPhaseField{}{} $ and $ \cortexPhaseField{}{} $ to verify the prerequisites).
        The energy inequality \eqref{equ:formal asymptotics:energy inequ:pre} gives us additionally
        \begin{equation*}
            \integral{0}{\finTime}{
                \integral{\tubNeighbourhood{\ifCoordsDelta}{S}}{}{
                    \phaseFieldParam^\alpha
                    \abs{
                        \grad{}{}{}\grad{\phaseFieldProto}{}{}\freeEnergyFunct{}{}
                    }^2
                }{\lebesgueM{3}}
            }{\lebesgueM{1}}
            \in
            \landauBigo{1}.
        \end{equation*}
        Note that we can restrict to $ \tubNeighbourhood{\ifCoordsDelta}{S} $ since
        the energies and their $L^2 $-gradients are zero outside to leading order.
        Applying the Poincar\'{e}-Wirtinger inequality, we deduce,
        \begin{equation*}
            \left(
              	\integral{0}{\finTime}{
                \integral{\tubNeighbourhood{\ifCoordsDelta}{S}}{}{
                    \phaseFieldParam^\alpha
                    \left(
                        \grad{\phaseFieldProto}{}{}\freeEnergyFunct{}{}
                        -
                        \avgIntegral{\tubNeighbourhood{\ifCoordsDelta}{S}}{}{
                        	\grad{\phaseFieldProto}{}{}\freeEnergyFunct{}{}
                        }{\lebesgueM{3}}
                    \right)^2
                }{\lebesgueM{3}}
                }{\lebesgueM{1}}
            \right)^{\frac{1}{2}}
            \leq
            \left(
              	\integral{0}{\finTime}{
                \integral{\tubNeighbourhood{\ifCoordsDelta}{S}}{}{
                    \phaseFieldParam^\alpha
                    \abs{
                        \grad{}{}{}\grad{\phaseFieldProto}{}{}\freeEnergyFunct{}{}
                    }^2
                }{\lebesgueM{3}}
                }{\lebesgueM{1}}
            \right)^{\frac{1}{2}},
        \end{equation*}
        which implies, using the reversed triangle inequality for 
        $ \LTNorm{\tubNeighbourhood{\ifCoordsDelta}{S}}{\cdot} $,
        \begin{equation}
            \label{equ:formal asymptotics:order of cont equ chem pot L2 norm}
            \begin{split}
                \left(
                    \integral{0}{\finTime}{
                    \integral{\tubNeighbourhood{\ifCoordsDelta}{S}}{}{
                        \phaseFieldParam^\alpha
                        \abs{\grad{\phaseFieldProto}{}{}\freeEnergyFunct{}{}}^2
                    }{\lebesgueM{3}}
                    }{\lebesgueM{1}}
                \right)^{\frac{1}{2}}
                -
                \sqrt{\abs{\tubNeighbourhood{\ifCoordsDelta}{S}}}
                \phaseFieldParam^{\frac{\alpha}{2}}
                \avgIntegral{\tubNeighbourhood{\ifCoordsDelta}{S}}{}{
                	\grad{\phaseFieldProto}{}{}\freeEnergyFunct{}{}
                }{\lebesgueM{3}}
                &\in 
                \landauBigo{1},
           \end{split}
		\end{equation}
        thus
        \begin{equation}
            \begin{split}
                \left(
                    \integral{0}{\finTime}{
                    \integral{\tubNeighbourhood{\ifCoordsDelta}{S}}{}{
                        \abs{\grad{\phaseFieldProto}{}{}\freeEnergyFunct{}{}}^2
                    }{\lebesgueM{3}}
                    }{\lebesgueM{1}}
                \right)^{\frac{1}{2}}
                -
                \sqrt{\abs{\tubNeighbourhood{\ifCoordsDelta}{S}}}
                \avgIntegral{\tubNeighbourhood{\ifCoordsDelta}{S}}{}{
                	\grad{\phaseFieldProto}{}{}\freeEnergyFunct{}{}
                }{\lebesgueM{3}}
                &\in 
                \landauBigo{\phaseFieldParam^{-\frac{\alpha}{2}}},
            \end{split}
        \end{equation}
        so
        \begin{equation*}
            \integral{0}{\finTime}{
            \integral{\tubNeighbourhood{\ifCoordsDelta}{S}}{}{
               	\abs{\grad{\phaseFieldProto}{}{}\freeEnergyFunct{}{}}^2
            }{\lebesgueM{3}}
            }{\lebesgueM{1}}
            \in \landauBigo{\phaseFieldParam^{-2}}
        \end{equation*}
        for $ \alpha \leq 2 $.
        By applying Young's inequality, we can deduce further
        \begin{equation*}
            \begin{split}
                \integral{\tubNeighbourhood{\ifCoordsDelta}{S}}{}{
                  	\abs{\grad{\phaseFieldProto}{}{}\freeEnergyFunct{}{}}^2
                }{\lebesgueM{3}}
                &=
                \integral{\tubNeighbourhood{\ifCoordsDelta}{S}}{}{
                    \abs{
                        \grad{\phaseFieldProto}{L^2}{}
                        \pfCouplingEnergy{}{}
                        +
                        \grad{\phaseFieldProto}{L^2}{}
                        \ginzburgLandauEnergyFunct{}{}
                    }^2
                    +
                    2
                    \grad{\phaseFieldProto}{L^2}{}
                    \pfGenWillmoreEnergyFunct{}{}
                    \cdot
                    \left(
                        \grad{\phaseFieldProto}{L^2}{}
                        \pfCouplingEnergy{}{}
                        +
                        \grad{\phaseFieldProto}{L^2}{}
                        \ginzburgLandauEnergyFunct{}{}
                    \right)
                    +
                    \abs{
                        \grad{\phaseFieldProto}{L^2}{}
                        \pfGenWillmoreEnergyFunct{}{}
                    }^2
                }{\lebesgueM{3}},
            \end{split}
        \end{equation*}
        which in turn implies
        \begin{equation*}
            \begin{split}
                \frac{1}{2}
                \integral{\tubNeighbourhood{\ifCoordsDelta}{S}}{}{
                    \abs{
                        \grad{\phaseFieldProto}{L^2}{}
                        \pfGenWillmoreEnergyFunct{}{}
                    }^2
                }{\lebesgueM{3}}                
                &\leq
                \integral{\tubNeighbourhood{\ifCoordsDelta}{S}}{}{
                  	\abs{\grad{\phaseFieldProto}{}{}\freeEnergyFunct{}{}}^2
                    +
                    3
                    \abs{
                        \grad{\phaseFieldProto}{L^2}{}
                        \pfCouplingEnergy{}{}
                        +
                        \grad{\phaseFieldProto}{L^2}{}
                        \ginzburgLandauEnergyFunct{}{}
                    }^2
                }{\lebesgueM{3}},
            \end{split}
        \end{equation*}
        so with the co-area formula, it follows
        \begin{equation}
            \label{equ:formal asymptotics:inner expansion:helfrich gradient:energy inequ}
            \phaseFieldParam
            \integral{0}{\finTime}{
                \integral{
                    -\frac{\ifCoordsDelta}{\phaseFieldParam}
                }{
                    \frac{\ifCoordsDelta}{\phaseFieldParam}
                }{
                    \integral{\surfaceProto_{\phaseFieldParam\fastVar}}{}{
                        \abs{
                          	\widehat{
                            \grad{\phaseFieldProto}{L^2}{}
                            \pfGenWillmoreEnergyFunct{}{}}
                        }^2
                    }{\hausdorffM{2}}
                }{\lebesgueM{1}(\fastVar)}
            }{\lebesgueM{1}}
            \in \landauBigo{\phaseFieldParam^{-2}}.
        \end{equation}
        From \eqref{equ:formal asymptotics:inner expansion energy gradient}, the 
        expansion
        \begin{equation*}
            \begin{split}
                \abs{
                  	\widehat{
                    \grad{\phaseFieldProto}{L^2}{}
                    \pfGenWillmoreEnergyFunct{}{}}
                }^2 
                &=
                \phaseFieldParam^{-6}
                \schemeLocalFun{e}_{-3}^2
                +
                \phaseFieldParam^{-5}
                2\schemeLocalFun{e}_{-3}\schemeLocalFun{e}_{-2}
                +
                \phaseFieldParam^{-4}
                \left(
                    \schemeLocalFun{e}_{-2}^2
                    +
                    2\schemeLocalFun{e}_{-3}\schemeLocalFun{e}_{-1}
                \right)
                +
                \phaseFieldParam^{-3}
                \left(
                    2\schemeLocalFun{e}_{-3}\schemeLocalFun{e}_0
                    +
                    2\schemeLocalFun{e}_{-2}\schemeLocalFun{e}_{-1}
                \right)
                \\
                &+
                \phaseFieldParam^{-2}
                \left(
                    \schemeLocalFun{e}_{-1}^2
                    +
                    \schemeLocalFun{e}_{-2}
                    \schemeLocalFun{e}_0
                    +
                    2\schemeLocalFun{e}_{-3}\schemeLocalFun{e}_1
                \right)
                +
                \landauBigo{\phaseFieldParam^{-1}}
                \\
                &=
                \sum_{k=-6}^{-2} \phaseFieldParam^{k} f_k(\projVar,\fastVar) 
                +
                \landauBigo{\phaseFieldParam^{-1}}
            \end{split}  
        \end{equation*}
        of the integrand follows directly.
        Equation
        \eqref{equ:formal asymptotics:inner expansion:helfrich gradient:energy inequ}
        then requires 
        \begin{equation*}
            \integral{0}{\finTime}{
                \integral{-\infty}{\infty}{
                    \integral{\surfaceProto}{}{
                        f_k
                    }{\hausdorffM{2}}
                }{\lebesgueM{1}}
            }{\lebesgueM{1}} = 0 
        \end{equation*} 
        up to $ k \leq -4 $, so
        \begin{equation*}
            \begin{split}
                \schemeLocalFun{e}_{-3}^2 = f_{-6} = 0 \;\text{a.e.}
                &\implies
                \schemeLocalFun{e}_{-3} = 0 \;\text{a.e.} \\
                &\implies
                \schemeLocalFun{e}_{-2}^2 = f_{-4} = 0 \;\text{a.e.} \\
                &\implies
                \schemeLocalFun{e}_{-2} = 0 \; \text{a.e.} 
            \end{split}
        \end{equation*}
        This in turn gives
        $ \grad{\phaseFieldProto}{}{}\freeEnergyFunct{}{}\in\landauBigo{\phaseFieldParam^{-1}} $,
        so $ \avgIntegral{\tubNeighbourhood{\ifCoordsDelta}{S}}{}{
          \grad{\phaseFieldProto}{}{}\freeEnergyFunct{}{}
        }{\lebesgueM{3}} \in \landauBigo{1} $,
        which we insert into 
        \eqref{equ:formal asymptotics:order of cont equ chem pot L2 norm},
        and choose $ \alpha < 1 $ to
        obtain            
        \begin{equation*}
            \integral{\tubNeighbourhood{\ifCoordsDelta}{S}}{}{
              	\abs{\grad{\phaseFieldProto}{}{}\freeEnergyFunct{}{}}^2
            }{\lebesgueM{3}}
            \in \landauSmallO{\phaseFieldParam^{-1}};
        \end{equation*}
        hence,
        \begin{equation*}
            \integral{0}{\finTime}{
                \integral{
                    -\frac{\ifCoordsDelta}{\phaseFieldParam}
                }{
                    \frac{\ifCoordsDelta}{\phaseFieldParam}
                }{
                    \integral{\surfaceProto_{\phaseFieldParam\fastVar}}{}{
                        \abs{
                          	\widehat{
                            \grad{\phaseFieldProto}{L^2}{}
                            \pfGenWillmoreEnergyFunct{}{}}
                        }^2
                    }{\hausdorffM{2}}
                }{\lebesgueM{1}(\fastVar)}
            }{\lebesgueM{1}}
            \in \landauSmallO{\phaseFieldParam^{-2}},
        \end{equation*}        
        so that $ \schemeLocalFun{e}_{-1} = 0 $.

        Now that we have found equations 
        $ \schemeLocalFun{e}_{-1} = 0 $, $ \schemeLocalFun{e}_{-2} = 0$, and
        $\schemeLocalFun{e}_{-3} = 0 $, we may derive information on $ \schemeLocalFun{\phaseFieldProto} $
        from them. 

        \subsubsection{Expansion of the $L^2$-gradient of the Willmore energy}
            We recall that
            \begin{equation}
                \grad{\phaseFieldProto}{L^2}{}
                \pfGenWillmoreEnergyFunct{}{}
                =
                -\laplacian{}{}{ 
                    \genGlChemPot{\phaseFieldProto}{}
                }	
                +
                \phaseFieldParam^{-2}
                \genGlChemPot{\phaseFieldProto}{}
                \doubleWellPot[\schemeSupscr{\prime\prime}]{\phaseFieldProto}{}.
            \end{equation}
            First, we expand the chemical potential,
            \begin{equation*}
                \begin{split}
                    \genGlChemPot{\phaseFieldProto_{0} + \phaseFieldParam e_1}{}
                    &=
                    -\phaseFieldParam
                    \laplacian{}{}{\phaseFieldProto_{0} + \phaseFieldParam e_1}
                    +
                    \phaseFieldParam^{-1}
                    \doubleWellPot[\schemeSupscr{\prime}]{
                      	\phaseFieldProto_{0} 
                        + \phaseFieldParam e_1
                    },
                \end{split}
            \end{equation*}
            by expanding the Laplacian term:
            \begin{equation*}
                \begin{split}
                    \phaseFieldParam
                    \laplacian{}{}{\phaseFieldProto_{0} + \phaseFieldParam e_1}
                    &=
                    \phaseFieldParam^{-1}
                    \simpleDeriv{\prime\prime}{\schemeLocalFun{\phaseFieldProto}_{0}}
                    \concat
                    \interfCoords[\phaseFieldParam]{}{}
                    -
                    \simpleDeriv{\prime}{\schemeLocalFun{\phaseFieldProto}_{0}}                    
                    \concat
                    \interfCoords[\phaseFieldParam]{}{}
                    \extMeanCurv{}
                    +
                    \phaseFieldParam
                    \laplacian{\interfOne{}_{\sdist{}{x}}}{}{}\phaseFieldProto_{0}
                    \\
                    &\quad+
                    \simpleDeriv{\prime\prime}{\schemeLocalFun{\phaseFieldProto}_{1}}
                    \concat
                    \interfCoords[\phaseFieldParam]{}{}
                    -
                    \phaseFieldParam
                    \simpleDeriv{\prime}{\schemeLocalFun{\phaseFieldProto}_{1}}                    
                    \concat
                    \interfCoords[\phaseFieldParam]{}{}
                    \extMeanCurv{}
                    +
                    \phaseFieldParam^2
                    \laplacian{\interfOne{}_{\sdist{}{x}}}{}{}\phaseFieldProto_{1}
                    \\
                    &\quad+
                    \phaseFieldParam
                    \simpleDeriv{\prime\prime}{\schemeLocalFun{\phaseFieldProto}_{2}}
                    \concat
                    \interfCoords[\phaseFieldParam]{}{}
                    -
                    \phaseFieldParam^2
                    \simpleDeriv{\prime}{\schemeLocalFun{\phaseFieldProto}_{2}}                    
                    \concat
                    \interfCoords[\phaseFieldParam]{}{}
                    \extMeanCurv{}
                    \\
                    &\quad+
                    \phaseFieldParam^2
                    \simpleDeriv{\prime\prime}{\schemeLocalFun{\phaseFieldProto}_{3}}
                    \concat
                    \interfCoords[\phaseFieldParam]{}{}
                    +
                    \landauBigo{\phaseFieldParam^3}
                \end{split}
            \end{equation*}
            and the double well potential's first derivative:
            \begin{equation*}
                \begin{split}
                    \phaseFieldParam^{-1}
                    \doubleWellPot[\schemeSupscr{\prime}]{
                      	\phaseFieldProto_{0}
                        +
                        \phaseFieldParam e_1
                    }
                    &=
                    \phaseFieldParam^{-1}
                    \left(
                        \doubleWellPot[\schemeSupscr{\prime}]{\phaseFieldProto_{0}}
                        +
                        \phaseFieldParam
                        \doubleWellPot[\schemeSupscr{\prime\prime}]{
                            \phaseFieldProto_{0}
                        }
                        e_1
                        +
                        \phaseFieldParam^2
                        \doubleWellPot[\schemeSupscr{(3)}]{
                            \phaseFieldProto_{0}
                        }                    
                        e_1^2
                        +
                        \phaseFieldParam^3
                        \doubleWellPot[\schemeSupscr{(4)}]{
                            \phaseFieldProto_{0}
                        }
                        e_1^3
                    \right)
                    \\
                    &=
                    \phaseFieldParam^{-1}
                    \doubleWellPot[\schemeSupscr{\prime}]{\phaseFieldProto_{0}}
                    +
                    \doubleWellPot[\schemeSupscr{\prime\prime}]{
                      	\phaseFieldProto_{0}
                    }
                    \left(
                        \phaseFieldProto_{1}
                        +
                        \phaseFieldParam
                        \phaseFieldProto_{2}
                        +
                        \phaseFieldParam^2
                        \phaseFieldProto_{3}
                    \right)
                    +
                    \phaseFieldParam
                    \doubleWellPot[\schemeSupscr{(3)}]{
                      	\phaseFieldProto_{0}
                    }                    
                    \left(
                        \phaseFieldProto_{1}^2
                        +
                        2
                        \phaseFieldParam
                        \phaseFieldProto_{1}                        
                        \phaseFieldProto_{2}                        
                    \right)
                    +
                    \phaseFieldParam^2
                    \doubleWellPot[\schemeSupscr{(4)}]{
                      	\phaseFieldProto_{0}
                    }
                    \phaseFieldProto_{1}^3
                    \\
                    &+
                    \landauBigo{\phaseFieldParam^3}
                    \\
                    &=
                    \phaseFieldParam^{-1}
                    \doubleWellPot[\schemeSupscr{\prime}]{\phaseFieldProto_{0}}
                    +
                    \doubleWellPot[\schemeSupscr{\prime\prime}]{
                      	\phaseFieldProto_{0}
                    }
                    \phaseFieldProto_{1}
                    +
                    \phaseFieldParam
                    \left(
                        \doubleWellPot[\schemeSupscr{\prime\prime}]{
                            \phaseFieldProto_{0}
                        }
                        \phaseFieldProto_{2}
                        +
                        \doubleWellPot[\schemeSupscr{(3)}]{
                            \phaseFieldProto_{0}
                        }                             
                        \phaseFieldProto_{1}^2
                    \right)
                    \\
                    &+
                    \phaseFieldParam^2
                    \left(
                        \doubleWellPot[\schemeSupscr{\prime\prime}]{
                            \phaseFieldProto_{0}
                        }                    
                        \phaseFieldProto_{3}
                        +
                        2
                        \doubleWellPot[\schemeSupscr{(3)}]{
                            \phaseFieldProto_{0}
                        }                    
                        \phaseFieldProto_{1}                        
                        \phaseFieldProto_{2}                                                
                        +
                        \doubleWellPot[\schemeSupscr{(4)}]{\phaseFieldProto_{0}}
                        \phaseFieldProto_{1}^3
                    \right)
                    +
                    \landauBigo{\phaseFieldParam^3}.
                \end{split}
            \end{equation*}
            The expansion of the chemical potential then reads
            \begin{equation}
                \label{equ:formal asymptotics:CH-L2 gradient expansion:chem pot expansion}
                \begin{split}
                    \genGlChemPot{\phaseFieldProto_{0} + \phaseFieldParam e_1}{}
                    =
                    \phaseFieldParam^{-1}
                    \chemPotIdx{-1}{\phaseFieldProto}{}
                    +
                    \chemPotIdx{0}{\phaseFieldProto}{}
                    +
                    \phaseFieldParam
                    \chemPotIdx{1}{\phaseFieldProto}{}
                    +
                    \phaseFieldParam^2
                    \chemPotIdx{2}{\phaseFieldProto}{}
                    +
                    \landauBigo{\phaseFieldParam^3}
                \end{split}
            \end{equation}
            with
            \begin{equation}
                \label{equ:formal asymptotics:expansion of the L2 gradient of the canham-helfrich energy:%
                chem pot expansion}
                \begin{split}
                    \chemPotIdx{-1}{\phaseFieldProto}{}
                    &=
                        -\simpleDeriv{\prime\prime}{\schemeLocalFun{\phaseFieldProto}_{0}}
                        \concat
                        \interfCoords[\phaseFieldParam]{}{}
                        +
                        \doubleWellPot[\schemeSupscr{\prime}]{
                          	\phaseFieldProto_{0}
                        },
                    \\
                    \chemPotIdx{0}{\phaseFieldProto}{}
                    &=
                        \simpleDeriv{\prime}{\schemeLocalFun{\phaseFieldProto}_{0}}
                        \concat
                        \interfCoords[\phaseFieldParam]{}{}
                        \extMeanCurv{}
                        -
                        \simpleDeriv{\prime\prime}{\schemeLocalFun{\phaseFieldProto}_{1}}
                        \concat
                        \interfCoords[\phaseFieldParam]{}{}
                        +
                        \doubleWellPot[\schemeSupscr{\prime\prime}]{\phaseFieldProto_{0}}
                        \phaseFieldProto_{1},
                    \\
                    \chemPotIdx{1}{\phaseFieldProto}{}
                    &=
                        -\laplacian{\interfOne{}_{\sdist{}{x}}}{}{}\phaseFieldProto_{0}
                        +
                        \simpleDeriv{\prime}{\schemeLocalFun{\phaseFieldProto}_{1}}
                        \concat
                        \interfCoords[\phaseFieldParam]{}{}
                        \extMeanCurv{}
                        -
                        \simpleDeriv{\prime\prime}{\schemeLocalFun{\phaseFieldProto}_{2}}
                        \concat
                        \interfCoords[\phaseFieldParam]{}{}
                        +
                        \doubleWellPot[\schemeSupscr{\prime\prime}]{
                            \phaseFieldProto_{0}
                        }
                        \phaseFieldProto_{2}
                        +
                        \doubleWellPot[\schemeSupscr{(3)}]{
                            \phaseFieldProto_{0}
                        }\phaseFieldProto_{1}^2,
                    \\
                    \chemPotIdx{2}{\phaseFieldProto}{}
                    &=
                        -\laplacian{\interfOne{}_{\sdist{}{x}}}{}{}\phaseFieldProto_{1}
                        +
                        \simpleDeriv{\prime}{\schemeLocalFun{\phaseFieldProto}_{2}}
                        \concat
                        \interfCoords[\phaseFieldParam]{}{}
                        \extMeanCurv{}
                        -
                        \simpleDeriv{\prime\prime}{\schemeLocalFun{\phaseFieldProto}_{3}}
                        \concat
                        \interfCoords[\phaseFieldParam]{}{}
                        +
                        \doubleWellPot[\schemeSupscr{\prime\prime}]{
                            \phaseFieldProto_{0}
                        }                    
                        \phaseFieldProto_{3}
                        +
                        2
                        \doubleWellPot[\schemeSupscr{(3)}]{
                            \phaseFieldProto_{0}
                        }                    
                        \phaseFieldProto_{1}                        
                        \phaseFieldProto_{2}                                                
                        +
                        \doubleWellPot[\schemeSupscr{(4)}]{\phaseFieldProto_{0}}
                        \phaseFieldProto_{1}^3.
                \end{split}
            \end{equation}

            \paragraph{Expansion of $ \laplacian{}{}{\chemPot{\phaseFieldProto}{}} $}
            We may rewrite
            $ 
            	\chemPotIdx{i}{\phaseFieldProto}{} =
                \chemPotIdxPFWD{i}{
                    \schemeLocalFun{\phaseFieldProto}
                }{}
                \concat
                \interfCoords[\phaseFieldParam]{}{} 
            $
            and treat the Laplacian terms 
            $ 
            	\laplacian{}{}{
              		\chemPotIdx{i}{\phaseFieldProto}{}
                } 
            $
            with
            \eqref{equ:appendix:formally matched asymptotics:%
            laplacian rescaled distance}:
            \begin{equation*}
                \laplacian{}{}{\chemPotIdx{i}{\phaseFieldProto}{}}                
                =
                \phaseFieldParam^{-2}
                \simpleDeriv{\prime\prime}{
                	\chemPotIdxPFWD{i}{\schemeLocalFun{\phaseFieldProto}}{}
                }
                \concat
                \interfCoords[\phaseFieldParam]{}{}
                -
                \phaseFieldParam^{-1}
                \simpleDeriv{\prime}{
                	\chemPotIdxPFWD{i}{\schemeLocalFun{\phaseFieldProto}}{}
                }
                \concat
                \interfCoords[\phaseFieldParam]{}{}
                \extMeanCurv{}
                +
                \laplacian{\interfOne{}_{\sdist{}{x}}}{}{
                  	\chemPotIdx{i}{\phaseFieldProto}{}
                }
            \end{equation*}
            giving
            \begin{equation*}
                \begin{split}
                    \laplacian{}{}{
                        \chemPot{\phaseFieldProto}{}
                    } 
                    =
                    \phaseFieldParam^{-3}&
                    \left(
                        \simpleDeriv{\prime\prime}{
                          	\chemPotIdxPFWD{-1}{\schemeLocalFun{\phaseFieldProto}}{}
                        }
                        \concat
                        \interfCoords[\phaseFieldParam]{}{}
                    \right)
                    +
                    \\
                    \phaseFieldParam^{-2}&
                    \left(
                        -
                        \simpleDeriv{\prime}{
                          	\chemPotIdxPFWD{-1}{\schemeLocalFun{\phaseFieldProto}}{}
                        }
                        \concat
                        \interfCoords[\phaseFieldParam]{}{}
                        \extMeanCurv{}
                        +
                        \simpleDeriv{\prime\prime}{
                          	\chemPotIdxPFWD{0}{\schemeLocalFun{\phaseFieldProto}}{}
                        }
                        \concat
                        \interfCoords[\phaseFieldParam]{}{}                        
                    \right)
                    +
                    \\
                    \phaseFieldParam^{-1}&
                    \left(
                        \laplacian{
                          	\interfOne{}_{\sdist{}{x}}
                        }{}{
                          	\chemPotIdx{-1}{\phaseFieldProto}{}
                        }
                        -
                        \simpleDeriv{\prime}{
                          	\chemPotIdxPFWD{0}{\schemeLocalFun{\phaseFieldProto}}{}
                        }
                        \concat
                        \interfCoords[\phaseFieldParam]{}{}
                        \extMeanCurv{}
                        +
                        \simpleDeriv{\prime\prime}{
                          	\chemPotIdxPFWD{1}{\schemeLocalFun{\phaseFieldProto}}{}
                        }
                        \concat
                        \interfCoords[\phaseFieldParam]{}{}
                    \right)
                    +
                    \\
                    &
                    \laplacian{
                      	\interfOne{}_{\sdist{}{x}}
                    }{}{
                      	\chemPotIdx{0}{\phaseFieldProto}{}
                    }
                    -
                    \simpleDeriv{\prime}{
                        \chemPotIdxPFWD{1}{\schemeLocalFun{\phaseFieldProto}}{}
                    }
                    \concat
                    \interfCoords[\phaseFieldParam]{}{}
                    \extMeanCurv{}
                    +
                    \simpleDeriv{\prime\prime}{
                      	\chemPotIdxPFWD{2}{\schemeLocalFun{\phaseFieldProto}}{}
                    }
                    \concat
                    \interfCoords[\phaseFieldParam]{}{}
                    +
                    \\
                    &\landauBigo{\phaseFieldParam}.
                \end{split}
            \end{equation*}

            \paragraph{Expansion of
                            $ \phaseFieldParam^{-2}\doubleWellPot[\schemeSupscr{\prime\prime}]{\phaseFieldProto}
                            \chemPot{\phaseFieldProto}{} $
                       }
            For obtaining the expansion of
            $ 
            	\phaseFieldParam^{-2}
                \doubleWellPot[\schemeSupscr{\prime\prime}]{\phaseFieldProto}
                \chemPot{\phaseFieldProto}{} 
            $,
            the remaining ingredient is an expansion of 
            $ \doubleWellPot[\schemeSupscr{\prime\prime}]{\phaseFieldProto} $:
            \begin{equation}
                \label{equ:formal asymptotics:CH-L2 gradient expansion:double well pp expansion}
                \begin{split}
                    \phaseFieldParam^{-2}
                    \doubleWellPot[\schemeSupscr{\prime\prime}]{\phaseFieldProto_{0}+
                    \phaseFieldParam e_1}
                    &=
                    \phaseFieldParam^{-2}
                    \doubleWellPot[\schemeSupscr{\prime\prime}]{\phaseFieldProto_{0}}
                    +
                    \phaseFieldParam^{-1}
                    \doubleWellPot[\schemeSupscr{(3)}]{
                      	\phaseFieldProto_{0}
                    }
                    \phaseFieldProto_{1}
                    +
                    \doubleWellPot[\schemeSupscr{(3)}]{
                        \phaseFieldProto_{0}
                    }
                    \phaseFieldProto_{2}
                    +
                    \doubleWellPot[\schemeSupscr{(4)}]{
                        \phaseFieldProto_{0}
                    }                             
                    \phaseFieldProto_{1}^2
                    \\
                    &\quad+
                    \phaseFieldParam
                    \left(
                        \doubleWellPot[\schemeSupscr{(3)}]{
                            \phaseFieldProto_{0}
                        }                    
                        \phaseFieldProto_{3}
                        +
                        2
                        \doubleWellPot[\schemeSupscr{(4)}]{
                            \phaseFieldProto_{0}
                        }                    
                        \phaseFieldProto_{1}                        
                        \phaseFieldProto_{2}                                                
                        +
                        \doubleWellPot[\schemeSupscr{(5)}]{\phaseFieldProto_{0}}
                        \phaseFieldProto_{1}^3
                    \right)                    
                    +
                    \landauBigo{\phaseFieldParam^2}.
                \end{split}
            \end{equation}
            Multiplication of 
            \eqref{equ:formal asymptotics:CH-L2 gradient expansion:chem pot expansion}
            and 
            \eqref{equ:formal asymptotics:CH-L2 gradient expansion:double well pp expansion} gives
            \begin{equation*}
                \begin{split}
                    \phaseFieldParam^{-2}\doubleWellPot[\schemeSupscr{\prime\prime}]{\phaseFieldProto}
                    \chemPot{\phaseFieldProto}{}
                    =
                    \phaseFieldParam^{-3}&
                    \left(
                        \chemPotIdx{-1}{\phaseFieldProto}{}
                        \doubleWellPot[\schemeSupscr{\prime\prime}]{\phaseFieldProto_{0}}
                    \right)
                    +
                    \\
                    \phaseFieldParam^{-2}&
                    \left(
                        \chemPotIdx{0}{\phaseFieldProto}{}
                        \doubleWellPot[\schemeSupscr{\prime\prime}]{\phaseFieldProto_{0}}
                        +
                        \chemPotIdx{-1}{\phaseFieldProto}{}
                        \doubleWellPot[\schemeSupscr{(3)}]{\phaseFieldProto_{0}}
                        \phaseFieldProto_{1}
                    \right)
                    +
                    \\
                    \phaseFieldParam^{-1}&
                    \left(
                        \chemPotIdx{1}{\phaseFieldProto}{}                    
                        \doubleWellPot[\schemeSupscr{\prime\prime}]{\phaseFieldProto_{0}}
                        +
                        \chemPotIdx{0}{\phaseFieldProto}{}                                            
                        \doubleWellPot[\schemeSupscr{(3)}]{\phaseFieldProto_{0}}
                        \phaseFieldProto_{1}                        
                        +
                        \chemPotIdx{-1}{\phaseFieldProto}{}
                        \left(
                            \doubleWellPot[\schemeSupscr{(3)}]{\phaseFieldProto_{0}}
                            \phaseFieldProto_{2}                                                
                            +
                            \doubleWellPot[\schemeSupscr{(4)}]{\phaseFieldProto_{0}}
                            \phaseFieldProto_{1}^2
                        \right)
                    \right)
                    +
                    \\
                    &\quad
                    \chemPotIdx{2}{\phaseFieldProto}{}                    
                    \doubleWellPot[\schemeSupscr{\prime\prime}]{\phaseFieldProto_{0}}
                    +
                    \chemPotIdx{1}{\phaseFieldProto}{}                                            
                    \doubleWellPot[\schemeSupscr{(3)}]{\phaseFieldProto_{0}}
                    \phaseFieldProto_{1}                        
                    +
                    \chemPotIdx{0}{\phaseFieldProto}{}
                    \left(
                        \doubleWellPot[\schemeSupscr{(3)}]{\phaseFieldProto_{0}}
                        \phaseFieldProto_{2}                                                
                        +
                        \doubleWellPot[\schemeSupscr{(4)}]{\phaseFieldProto_{0}}
                        \phaseFieldProto_{1}^2
                    \right)
                    +
                    \\
                    &\quad
                    \chemPotIdx{-1}{\phaseFieldProto}{}
                    \left(
                        \doubleWellPot[\schemeSupscr{(3)}]{
                            \phaseFieldProto_{0}
                        }                    
                        \phaseFieldProto_{3}
                        +
                        2
                        \doubleWellPot[\schemeSupscr{(4)}]{
                            \phaseFieldProto_{0}
                        }                    
                        \phaseFieldProto_{1}                        
                        \phaseFieldProto_{2}                                                
                        +
                        \doubleWellPot[\schemeSupscr{(5)}]{\phaseFieldProto_{0}}
                        \phaseFieldProto_{1}^3
                    \right)                    
                    +
                    \\
                    &\quad\quad\landauBigO{\phaseFieldParam}.
                \end{split}
            \end{equation*}

            Finally, we draw the following conclusions for $ \schemeLocalFun{\phaseFieldProto}_0 $,
            $ \schemeLocalFun{\phaseFieldProto}_1 $, and $ \schemeLocalFun{\phaseFieldProto}_2 $
            by evaluating the equations $ \schemeLocalFun{e}_i = 0 $, for $ i \in \{-1,-2,-3\} $:
            \begin{itemize}
            	\item $\schemeLocalFun{e}_{-3}=0$:
                    This is an equation we have already encountered in Section~\ref{sec:formal asymptotics:%
                    optimal profile cortex phase field}, and it reassures the optimal profile
                    $ \schemeLocalFun{\phaseFieldProto}_0(\projVar, \fastVar) = 
                    \tanh\left(\frac{\fastVar}{\sqrt{2}}\right) $.                  
                \item $\schemeLocalFun{e}_{-2}=0$: We use
                    \begin{equation}
                        \label{equ:formal asymtptotics:gradient expansion:em3}
                        0
                        =
                        -\simpleDeriv{\prime\prime}{
                          	\chemPotIdxPFWD{0}{\schemeLocalFun{\phaseFieldProto}}{}
                        }
                        \concat
                        \interfCoords[\phaseFieldParam]{}{}                        
                        +
                         \chemPotIdx{0}{\phaseFieldProto}{}
                        \doubleWellPot[\schemeSupscr{\prime\prime}]{\phaseFieldProto_0},        
                    \end{equation}

                    and compute 
                    \begin{equation}
                        \label{equ:formal asymptotics:inner expansion:consequ bound on L2 grad %
                        gen willm:finer scale analysis}
                        \begin{split}
                            -\simpleDeriv{\prime\prime}{
                                \chemPotIdxPFWD{0}{\phaseFieldProto}{}
                            }
                            &=
                            -\simpleDeriv{\prime\prime}{
                                \left(
                                \simpleDeriv{\prime}{\schemeLocalFun{\phaseFieldProto}_0}
                                \meanCurv[\hat]{}{}
                                \right)
                            }
                            +
                            \simpleDeriv{\prime\prime}{
                                \left(
                                    \simpleDeriv{\prime\prime}{
                                        \schemeLocalFun{\phaseFieldProto}_1
                                    }
                                    -
                                    \doubleWellPot[\schemeSupscr{\prime\prime}]{
                                        \schemeLocalFun{\phaseFieldProto}_0
                                    }
                                    \schemeLocalFun{\phaseFieldProto}_1
                                \right)
                            }
                            \\
                            &=
                            -\simpleDeriv{(3)}{\schemeLocalFun{\phaseFieldProto}_0}
                            \meanCurv[\hat]{}{}
                            -
                            2
                            \simpleDeriv{\prime\prime}{\schemeLocalFun{\phaseFieldProto}_0}
                            \simpleDeriv{\prime}{
                                \meanCurv[\hat]{}{}
                            }
                            -
                            \simpleDeriv{\prime}{\schemeLocalFun{\phaseFieldProto}_0}
                            \simpleDeriv{\prime\prime}{
                                \meanCurv[\hat]{}{}
                            }                            
                            +
                            \simpleDeriv{\prime\prime}{
                                \left(
	                              	\simpleDeriv{\prime\prime}{
                                        \schemeLocalFun{\phaseFieldProto}_1
                                    }
                                    -
                                    \doubleWellPot[\schemeSupscr{\prime\prime}]{
                                        \schemeLocalFun{\phaseFieldProto}_0
                                    }
                                    \schemeLocalFun{\phaseFieldProto}_1
                                \right)
                            }
                        \end{split}
                    \end{equation}
                    with
                    \begin{equation}
                        \label{equ:formal asymptotics:inner expansion:consequ bound on L2 grad %
                        gen willm:exp of mean curv deriv on -2}
                        \simpleDeriv{\prime}{
                          	\meanCurv[\hat]{}{}
                        }(\projVar,\fastVar)
                        =
                        \simpleDeriv{\prime}{
                            \extMeanCurv{
                                \projVar + \phaseFieldParam \fastVar \normal[S]{\projVar}
                            }
                        }    
                        =
                        \phaseFieldParam
                        \grad{}{}{}\extMeanCurv{}
                        \left(\projVar + \phaseFieldParam \fastVar \normal[S]{s}\right)
                        \cdot
                        \normal[S]{\projVar}
                    \end{equation}
                    and
                    \begin{equation}
                        \label{equ:formal asymptotics:inner expansion:consequ bound on L2 grad %
                        gen willm:exp of mean curv dderiv on -2}
                        \begin{split}
                            \simpleDeriv{\prime\prime}{
                              	\meanCurv[\hat]{}{}
                            }
                            (\projVar,\fastVar)
                            =
                            \simpleDeriv{\prime\prime}{
                                \extMeanCurv{
                                    \projVar + \phaseFieldParam \fastVar \normal[S]{\projVar}
                                }
                            }     
                            =
                            \phaseFieldParam^2
                            \grad{}{2}{}\extMeanCurv{
                            	\projVar + \phaseFieldParam \fastVar \normal[S]{s}
                            }
                            \frobProd
                            \normal[S]{\projVar}
                            \tensorProd
                            \normal[S]{\projVar}.               
                        \end{split}
                    \end{equation}
                    Passing the last two terms to lower scales, we obtain from
                    \eqref{equ:formal asymtptotics:gradient expansion:em3},
                    \begin{equation*}
                        -
                        \extMeanCurv{}
                        \simpleDeriv{(3)}{\schemeLocalFun{\phaseFieldProto}_0}
                        \concat
                        \interfCoords[\phaseFieldParam]{}{}
                        +
                        \simpleDeriv{\prime\prime}{
                            \left(
                                \simpleDeriv{\prime\prime}{
                                    \schemeLocalFun{\phaseFieldProto}_1
                                }
                                +
                                \doubleWellPot[\schemeSupscr{\prime\prime}]{
                                    \schemeLocalFun{\phaseFieldProto}_0
                                }
                                \schemeLocalFun{\phaseFieldProto}_1
                            \right)
                        }          
                        \concat
                        \interfCoords[\phaseFieldParam]{}{}
                        -
                        \left(
                            -
                            \extMeanCurv{}
                            \simpleDeriv{\prime}{\schemeLocalFun{\phaseFieldProto}_0}
                            \concat
                            \interfCoords[\phaseFieldParam]{}{}
                            +
                            \simpleDeriv{\prime\prime}{\schemeLocalFun{\phaseFieldProto}_1}
                            \concat
                            \interfCoords[\phaseFieldParam]{}{}
                            -
                            \doubleWellPot[\schemeSupscr{\prime\prime}]{
                                \phaseFieldProto_0
                            }
                            \phaseFieldProto_1
                        \right)
                        \doubleWellPot[\schemeSupscr{\prime\prime}]{
                          	\phaseFieldProto_{0}
                        }=0.
                    \end{equation*}
                    We note that from the optimal profile property
                    $
                        \simpleDeriv{\prime\prime}{\schemeLocalFun{\phaseFieldProto}_0}
                        \concat
                        \interfCoords[\phaseFieldParam]{}{}
                        -
                        \doubleWellPot[\schemeSupscr{\prime}]{\phaseFieldProto_0}
                        =0,
                    $
                    the relation
                    $
                    	\simpleDeriv{(3)}{
                            \schemeLocalFun{\phaseFieldProto}_0
                        }
                        =
                        \simpleDeriv{\prime}{\schemeLocalFun{\phaseFieldProto}_0}
                        \doubleWellPot[\schemeSupscr{\prime\prime}]{
                            \schemeLocalFun{\phaseFieldProto}_0
                        }
                    $ directly follows, so we may further simplify:
                    \begin{equation*}
                        \simpleDeriv{\prime\prime}{
                            \left(
                                \simpleDeriv{\prime\prime}{
                                    \schemeLocalFun{\phaseFieldProto}_1
                                }
                                +
                                \doubleWellPot[\schemeSupscr{\prime\prime}]{
                                    \schemeLocalFun{\phaseFieldProto}_0
                                }
                                \schemeLocalFun{\phaseFieldProto}_1
                            \right)
                        }          
                        \concat
                        \interfCoords[\phaseFieldParam]{}{}
                        -
                        \left(
                            \simpleDeriv{\prime\prime}{\schemeLocalFun{\phaseFieldProto}_1}
                            \concat
                            \interfCoords[\phaseFieldParam]{}{}
                            -
                            \doubleWellPot[\schemeSupscr{\prime\prime}]{
                                \phaseFieldProto_0
                            }
                            \phaseFieldProto_1
                        \right)
                        \doubleWellPot[\schemeSupscr{\prime\prime}]{
                            \phaseFieldProto_0
                        }=0,
                    \end{equation*}
                    which is solved by 
                    \begin{equation}
                        \label{equ:formal asymptotics:inner expansion:L2 grad gen willm:%
                        scale -2:result}
                    	\schemeLocalFun{\phaseFieldProto}_1 = 0.
                    \end{equation}
                \item $\schemeLocalFun{e}_{-1}=0$:
                	Equation
                    \begin{equation*}
                        \begin{split}
                            0
                            =
                            \extMeanCurv{}
                            \simpleDeriv{\prime}{
                                \chemPotIdxPFWD{0}{\schemeLocalFun{\phaseFieldProto}}{}
                            }
                            \concat
                            \interfCoords[\phaseFieldParam]{}{}
                            -
                            \simpleDeriv{\prime\prime}{
                                \chemPotIdxPFWD{1}{\schemeLocalFun{\phaseFieldProto}}{}
                            }
                            \concat
                            \interfCoords[\phaseFieldParam]{}{}
                            +
                            \chemPotIdx{1}{\phaseFieldProto}{}                    
                            \doubleWellPot[\schemeSupscr{\prime\prime}]{\phaseFieldProto_{0}}
                            -2
                            \simpleDeriv{\prime\prime}{\schemeLocalFun{\phaseFieldProto}_{0}}
                            \concat
                            \interfCoords[\phaseFieldParam]{}{}
                            \grad{}{}{}\extMeanCurv{}{}
                            \cdot
                            \extNormal{x}
                        \end{split}
                    \end{equation*}
                    is equivalent to
                    \begin{equation}
                        \label{equ:formal asymtptotics:inner expansion:consequ bound on L2 %
                        grad gen willm:scale -1 main}
                        \begin{split}
                            0
                            &=
                            \extMeanCurv{}{}^2
                            \simpleDeriv{\prime\prime}{\schemeLocalFun{\phaseFieldProto}_{0}}
                            \concat
                            \interfCoords[\phaseFieldParam]{}{}
                            -
                            \simpleDeriv{\prime\prime}{
                                \chemPotIdxPFWD{1}{\schemeLocalFun{\phaseFieldProto}}{}
                            }
                            \concat
                            \interfCoords[\phaseFieldParam]{}{}
                            +
                            \chemPotIdx{1}{\phaseFieldProto}{}                    
                            \doubleWellPot[\schemeSupscr{\prime\prime}]{\phaseFieldProto_{0}}
                            -2
                            \simpleDeriv{\prime\prime}{\schemeLocalFun{\phaseFieldProto}_{0}}
                            \concat
                            \interfCoords[\phaseFieldParam]{}{}
                            \grad{}{}{}\extMeanCurv{}
                            \cdot
                            \extNormal{x}
                            \\
                            &\quad+
                            \phaseFieldParam
                            \extMeanCurv{}{}
                            \simpleDeriv{\prime}{\schemeLocalFun{\phaseFieldProto}_{0}}
                            \concat
                            \interfCoords[\phaseFieldParam]{}{}   
                            \grad{}{}{}
                            \extMeanCurv{}{}
                            \cdot\extNormal{}
                        \end{split}
                    \end{equation}
                    using $ \schemeLocalFun{\phaseFieldProto}_1 = 0 $ so  
                    that $ \genGlChemPot{\phaseFieldProto}{0} = 
                    \simpleDeriv{\prime}{\schemeLocalFun{\phaseFieldProto}_0} 
                    \concat\interfCoords[\phaseFieldParam]{}{}
                    \extMeanCurv{} $.
                    We use Lemma~\ref{lemma:formal asymptotics:expansions}
                    abbreviating
                    $ \meanCurvPFWD{}{\projVar,0} \equalscolon \meanCurvPFWD{}{}\vert_{S}(\projVar) $:
                    \begin{equation}
                        \label{equ:formal asymtptotics:inner expansion:consequ bound on L2 %
                        grad gen willm:scale -1:exp of normal deriv mean curv}
                        \begin{split}
                            \grad[x]{}{}{\extMeanCurv{}{}}\cdot\extNormal{x}
                            &=
                            \left(
                                \sum_{i=1}^2
                                \pCurv[\hat]{i}{}{}^2
                                +
                                2\phaseFieldParam \fastVar
                                \pCurv[\hat]{i}{}{}^3
                                +
                                \landauBigo{\phaseFieldParam^2}
                            \right)
                            \concat
                            \interfCoords[\phaseFieldParam]{\NOARG}{\orthProj{S}{x}}
                            \\
                            &=
                            \left(
                                \meanCurvPFWD{}{}\vert_{S}^2 - 
                                2\gaussCurvPFWD{}{}\vert_{S}
                                +
                                2\phaseFieldParam\fastVar
                                \left(
                                    \meanCurvPFWD{}{}\vert_{S}^3
                                    -
                                    3\meanCurvPFWD{}{}\vert_{S}
                                    \gaussCurvPFWD{}{}\vert_{S}
                                \right)
                                +
                                \landauBigo{\phaseFieldParam^2}
                            \right)\concat\interfCoords[\phaseFieldParam]{\NOARG}{\orthProj{S}{x}}
                        \end{split}
                    \end{equation}
                    and
                    \begin{equation}
                        \label{equ:formal asymtptotics:inner expansion:consequ bound on L2 %
                        grad gen willm:scale -1:exp of mean curv squared}
                        \begin{split}
                            \extMeanCurv{x}^2
                            &=
                            \left(
                                \meanCurvPFWD{}{}\vert_{S}
                                +
                                \phaseFieldParam \fastVar
                                \left(
                                    \meanCurvPFWD{}{}\vert_{S}^2
                                    -
                                    2\gaussCurvPFWD{}{}\vert_{S}
                                \right)
                                +
                                \landauBigo{\phaseFieldParam^2}
                            \right)^2
                            \concat
                            \interfCoords[\phaseFieldParam]{\NOARG}{\orthProj{S}{x}}
                            \\
                            &=
                            \left(
                                \meanCurvPFWD{}{}\vert_{S}^2
                                +
                                2\phaseFieldParam 
                                \fastVar
                                \meanCurvPFWD{}{}\vert_{S}
                                \left(
                                    \meanCurvPFWD{}{}\vert_{S}^2
                                    -
                                    2\gaussCurvPFWD{}{}\vert_{S}
                                \right)                   
                                +
                                \landauBigo{\phaseFieldParam^2}
                            \right)
                            \concat
                            \interfCoords[\phaseFieldParam]{\NOARG}{\orthProj{S}{x}}.
                        \end{split}
                    \end{equation}
                    Passing all terms of lower order to the lower scales,
                    we obtain
                    \begin{equation}
                        \label{equ:formal asymptotics:expansion of the L2 gradient of the canham-helfrich %
                        energy:vanishing terms:order minus one}
                        \begin{split}
                            0
                            &=
                            \restrFun{\meanCurvPFWD{}{}}{S}^2
                            \simpleDeriv{\prime\prime}{\schemeLocalFun{\phaseFieldProto}_{0}}
                            -
                            \simpleDeriv{\prime\prime}{
                                \chemPotIdxPFWD{1}{\schemeLocalFun{\phaseFieldProto}}{}
                            }
                            +
                            \chemPotIdx{1}{\schemeLocalFun{\phaseFieldProto}}{}                    
                            \doubleWellPot[\schemeSupscr{\prime\prime}]{
                              	\schemeLocalFun{\phaseFieldProto}_{0}
                            }
                            -
                            2
                            \simpleDeriv{\prime\prime}{\schemeLocalFun{\phaseFieldProto}_{0}}
                            \left(
                                \meanCurvPFWD{}{}\vert_{S}^2 
                                - 
                                2
                                \gaussCurvPFWD{}{}\vert_{S}
                            \right)
                            \\
                            &=
                            -\simpleDeriv{\prime\prime}{
                                \chemPotIdxPFWD{1}{\schemeLocalFun{\phaseFieldProto}}{}
                            }
                            +
                            \chemPotIdx{1}{\schemeLocalFun{\phaseFieldProto}}{}                    
                            \doubleWellPot[\schemeSupscr{\prime\prime}]{
                              	\schemeLocalFun{\phaseFieldProto}_{0}
                            }
                            -
                            \simpleDeriv{\prime\prime}{\schemeLocalFun{\phaseFieldProto}_{0}}
                            \left(
                                \meanCurvPFWD{}{}\vert_{S}^2 
                                - 
                                4\gaussCurvPFWD{}{}\vert_{S}
                            \right).
                        \end{split}
                    \end{equation}
                    We make the ansatz
                    $ 
                    	\chemPotIdxPFWD{1}{\schemeLocalFun{\phaseFieldProto}}{}(\projVar,\fastVar) = 
                        -\left(
                            \meanCurvPFWD{}{}\vert_{S}^2 
                            - 4\gaussCurvPFWD{}{}\vert_{S}
                        \right)(\projVar)
                        s_1(\fastVar).
                    $ 
                    Then \eqref{equ:formal asymptotics:expansion of the L2 gradient of the canham-helfrich %
                    energy:vanishing terms:order minus one} becomes
                    \begin{equation*}
                        0 = 
                        \left(
                            \meanCurvPFWD{}{}\vert_{S}^2 
                            - 4\gaussCurvPFWD{}{}\vert_{S}
                        \right)
                        \left(
                            \simpleDeriv{\prime\prime}{
                   				s_1
                            }
                            -
                            s_1
                            \doubleWellPot[\schemeSupscr{\prime\prime}]{
                                \schemeLocalFun{\phaseFieldProto}_0
                            }
                            -
                            \simpleDeriv{\prime\prime}{\schemeLocalFun{\phaseFieldProto}_0}
                        \right).
                    \end{equation*}
                    Substituting $ \simpleDeriv{\prime\prime}{\schemeLocalFun{\phaseFieldProto}_0}  
                    = \doubleWellPot[\schemeSupscr{\prime}]{\schemeLocalFun{\phaseFieldProto}_0} $,
                    and solving for
                    \begin{equation*}
                        0 = 
                        \simpleDeriv{\prime\prime}{s_1}
                        -
                        s_1
                        \doubleWellPot[\schemeSupscr{\prime\prime}]{
                          \schemeLocalFun{\phaseFieldProto}_0
                        }
                        -
                        \simpleDeriv{\prime\prime}{\schemeLocalFun{\phaseFieldProto}_0}
                    \end{equation*}
                    gives
                    $ 
                    	s_1(\fastVar) 
                        = 
                        \frac{1}{2}
                        \simpleDeriv{\prime}{
                          	\schemeLocalFun{\phaseFieldProto}_{0}
                        }(\fastVar) \fastVar 
                    $ as in \cite[Theorem~2.13, Equation~(2.29)]{Wa2008},
                    so
                    \begin{equation}
                        \label{equ:formal asymptotics:expansion of the L2 gradient of the canham-helfrich %
                        energy:order minus one}
                        \chemPotIdxPFWD{1}{\schemeLocalFun{\phaseFieldProto}}{}
                        (\projVar,\fastVar) 
                        = 
                        -\frac{1}{2}
                        \left(
                            \meanCurvPFWD{}{}\vert_{S}^2 
                            - 4\gaussCurvPFWD{}{}\vert_{S}
                        \right)(\projVar)
                        \simpleDeriv{\prime}{\schemeLocalFun{\phaseFieldProto}_{0}}(\fastVar)
                        \fastVar.
                    \end{equation}
            \end{itemize}
	\subsection{Revisiting $\pfVelocityIFC{}{}$ and $\schemeLocalFun{\pfPressure{}{}}$ at leading order}
        The incompressibility \eqref{equ:modelling:phase field resulting PDEs:incompressibility}
        gives with 
        \eqref{equ:formally matched asymptotics:div rescaled distance}
        to leading order $ \phaseFieldParam^{-1} $
        \begin{equation}
            \label{equ:formal asymptotics:mass conservation navier-stokes:velocity:zero normal deriv}
            \simpleDeriv{\prime}{\left(\pfVelocityIFCIdx{0}{}{}\cdot\extNormal{}\right)} = 0.
        \end{equation}
        We have shown in Section~\ref{sec:Expanding the L2 gradient of the canham-helfrich energy}
        that $ \grad{\phaseFieldProto}{L^2}{}\freeEnergyFunct{}{} \in \landauBigo{1} $.
        This gives, by repeating the arguments used in Section~\ref{sec:formal asymptotics:%
        inner expansion:properties of vel to leading order}, $ \schemeLocalFun{\pfPressure{}{}}_{-2}
        = 0 $.
        Using 
        \eqref{equ:formally matched asymptotics:time derivative rescaled distance},
        \eqref{equ:formally matched asymptotics:jac rescaled distance}, and
        \eqref{equ:formally matched asymptotics:laplacian vec fun rescaled distance}, we compute
        for the inner expansion on $ \tubNeighbourhood{\ifCoordsDelta}{\interfOne{}}
        \cup \tubNeighbourhood{\ifCoordsDelta}{\interfTwo{}} $, $ S \in\{\interfOne{},\interfTwo{}\} $,
        \begin{align*}
            \pDiff{t}{}{}\pfVelocity{}{} 
            &= 
            \pDiff{t}{}{}\schemeInnerExp{\pfVelocity{}{}}_0
            -
            \phaseFieldParam^{-1}
            \simpleDeriv{\prime}{\pfVelocityIFCIdx{0}{}{}}
            \concat
            \interfCoords[\phaseFieldParam]{}{}
            \shapeTransfVelNormal^\surfaceProto
            +
            \simpleDeriv{\prime}{\pfVelocityIFCIdx{1}{}{}}
            \concat\interfCoords[\phaseFieldParam]{}{}
            +
            \landauBigo{\phaseFieldParam},
            \\
            \grad{}{}{}\pfVelocity{}{} 
            &=
            \phaseFieldParam^{-1}
            \simpleDeriv{\prime}{\pfVelocityIFCIdx{0}{}{}}
            \concat
            \interfCoords[\phaseFieldParam]{}{}
            \tensorProd
            \extNormal{}
            +
            \grad{\interfProto{}_{\sdist{}{}}}{}{}
            \schemeInnerExp{\pfVelocity{}{}}_0
            +
            \simpleDeriv{\prime}{\pfVelocityIFCIdx{1}{}{}}
            \concat\interfCoords[\phaseFieldParam]{}{}
            \tensorProd
            \extNormal{}
            +
            \landauBigo{\phaseFieldParam}
        \end{align*}
        resulting in
        \begin{equation*}
            (\pfVelocity{}{}\cdot\grad{}{}{})\pfVelocity{}{}
            =
            \phaseFieldParam^{-1}
            \simpleDeriv{\prime}{\pfVelocityIFCIdx{0}{}{}}
            \concat
            \interfCoords[\phaseFieldParam]{}{}
            \schemeInnerExp{\pfVelocity{}{}}_0
            \cdot
            \extNormal{}
            +
            \simpleDeriv{\prime}{\pfVelocityIFCIdx{0}{}{}}
            \concat
            \interfCoords[\phaseFieldParam]{}{}
            \schemeInnerExp{\pfVelocity{}{}}_1
            \cdot\extNormal{}
            +
            \left(
                \grad{\interfProto{}_{\sdist{}{}}}{}{}
                \schemeInnerExp{\pfVelocity{}{}}_0
                +
                \simpleDeriv{\prime}{\pfVelocityIFCIdx{1}{}{}}
                \concat\interfCoords[\phaseFieldParam]{}{}
                \tensorProd
                \extNormal{}                
            \right)
            \schemeInnerExp{\pfVelocity{}{}}_0
            +\landauBigo{\phaseFieldParam},
        \end{equation*}
        and
        \begin{equation*}
            \laplacian{}{}{}\schemeInnerExp{\pfVelocity{}{}}
            =
            \phaseFieldParam^{-2}\simpleDeriv{\prime\prime}{
            \pfVelocityIFCIdx{0}{}{}}
            \concat\interfCoords[\phaseFieldParam]{}{}
            +
            \phaseFieldParam^{-1}
            \left(
                -
                \simpleDeriv{\prime}{\pfVelocityIFCIdx{0}{}{}}
                \extMeanCurv{}
                +
                \simpleDeriv{\prime\prime}{\pfVelocityIFCIdx{1}{}{}}
            \right)
            +
            \laplacian{\surfaceProto_{\sdist{}{}}}{}{}
            \schemeInnerExp{\pfVelocity{}{}}_0
            -
            \simpleDeriv{\prime}{\pfVelocityIFCIdx{1}{}{}}
            \extMeanCurv{}
            +
            \simpleDeriv{\prime\prime}{\schemeLocalFun{\pfVelocity{}{}}_2}
            \concat
            \interfCoords[\phaseFieldParam]{}{}
            +
            \landauBigo{\phaseFieldParam}.
        \end{equation*}

        At leading order $ \phaseFieldParam^{-2} $ of
        \eqref{equ:modelling:phase field resulting PDEs:momentum balance},
        we thus find
        \begin{equation*}
            -\viscosity
            \simpleDeriv{\prime\prime}{\schemeLocalFun{\pfVelocity{}{}}_0}
            \concat \interfCoords[\phaseFieldParam]{}{}
            +
            \simpleDeriv{\prime}{\schemeLocalFun{\pfPressure{}{}}_{-1}}
            \concat \interfCoords[\phaseFieldParam]{}{}
            \extNormal{}
            =
            0.
        \end{equation*}
        Multiplication by $ \extNormal{} $ and using 
        \eqref{equ:formal asymptotics:mass conservation navier-stokes:velocity:zero normal deriv} 
        gives
        \begin{equation}
            \label{equ:formal asymptotics:composite expansion:pressure}
            \simpleDeriv{\prime}{\schemeLocalFun{\pfPressure{}{}}_{-1}}
            =
            0.
        \end{equation}
        By matching \eqref{equ:formal asymptotics:inner expansion:negative order z limit},
        $ \schemeLocalFun{\pfPressure{}{}}_{-1} = 0 $.
        Inserting back again, we obtain
        $
            \simpleDeriv{\prime\prime}{\schemeLocalFun{\pfVelocity{}{}}_0}
            =
            0.
        $
        Conclusively, $ \simpleDeriv{\prime}{\schemeLocalFun{\pfVelocity{}{}}_0} $ 
        is constant in $ \fastVar $. Matching with the outer expansion 
        $$ 
            \left(
            \lim_{\fastVar\nearrow\infty} 
            \schemeLocalFun{\pfVelocity{}{}}_0
            \right)
            \concat\interfCoords[\phaseFieldParam]{\NOARG}{x}
            = 
            \lim_{\delta\searrow0} 
            \schemeOuterExp{\pfVelocity{}{}}_0 (\orthProj{S}{x}+\delta\normal[S]{
            \orthProj{S}{x}})
        $$
        and
        $$ 
            \left(
            \lim_{\fastVar\searrow-\infty} 
            \schemeLocalFun{\pfVelocity{}{}}_0
            \right)
            \concat\interfCoords[\phaseFieldParam]{\NOARG}{x}
            = 
            \lim_{\delta\nearrow0} 
            \schemeOuterExp{\pfVelocity{}{}}_0 (\orthProj{S}{x}+\delta\normal[S]{
            \orthProj{S}{x}})
        $$
        indicates that $ \schemeLocalFun{\pfVelocity{}{}}_0 $ is bounded. Thus,
        it must hold 
        \begin{equation}
            \label{equ:formal asymptotics:sharp interface limit:composite expansion:%
            leading order normal deriv vel zero}
            \simpleDeriv{\prime}{\schemeLocalFun{\pfVelocity{}{}}_0} = 0.
        \end{equation}

\section{Sharp Interface limit}
    \label{sec:sharp interface limit}
    By inserting the expansions in interfacial coordinates 
    of the components of the solution of \eqref{equ:modelling:phase field:resulting PDEs} 
    into the systems' equations, we have managed to 
    \begin{itemize}
      \item eliminate the velocity expansion's summands
        up to (and including) order $ \phaseFieldParam^{-1} $, 
      \item eliminate the pressure expansion's summands
        up to (and including) order $ \phaseFieldParam^{-3} $,
      \item show that both phase fields assume the optimal profile
        at leading order,
      \item show that $ \phaseFieldProto_1 = 0 $, 
      \item and derive \eqref{equ:formal asymptotics:expansion of the L2 gradient of the canham-helfrich %
      energy:order minus one}.
    \end{itemize}
    Before we can make use of these findings and pass to the limit $ \phaseFieldParam \to 0 $, we compute
    the expansions of the remaining terms in $ K $ (see the right hand side of 
    \eqref{equ:modelling:phase field resulting PDEs:momentum balance}). 

    \subsection{Expansion of  $ \grad{}{L^2}{}\pfCouplingEnergy{}{}$ and remaining force terms}
        \label{sec:formal asymptotics:expanding the L2 gradient of the coupling energy}
        We compute the asymptotic expansions of
        $
            \grad{\membrPhaseField{}{}}{L^2}{}{}
            \pfCouplingEnergy{}{},
        $
        $ 
            \grad{\cortexPhaseField{}{}}{L^2}{}{}
            \pfCouplingEnergy{}{}
        $, 
        $$ 
        G_\phaseFieldParam\colonequals
        -\pDiff{\pfSpeciesOne{}{}}{}{}\pfCouplingIntMembr \meanCurv{\cortexPhaseFieldSym}{}
        \pfSpeciesOne{}{}\normal[\cortexPhaseFieldSym]{}
        =
        -
        \pfSpeciesOne{}{}
        \meanCurv{\cortexPhaseField{}{}}{}
        \integral{\domain}{}{
            \ginzburgLandauEnergyDens{\membrPhaseField{}{}}{}(y)
            \pDiff{\pfSpeciesOne{}{}}{}{}\pfCouplingEnergyDens{}{}(x,y,\pfSpeciesOne{}{},
            \normal[\cortexPhaseFieldSym]{})
        }{\lebesgueM{3}(y)}
        \normal[\cortexPhaseFieldSym]{},
        $$
        and
        $$
            H_\phaseFieldParam\colonequals
            -
            \ginzburgLandauEnergyDens{\cortexPhaseFieldSym}{}
            \orthProjMat{\normal[\cortexPhaseFieldSym]{}}{}
            \grad{}{}{}
            \pDiff{\pfSpeciesOne{}{}}{}{}\pfCouplingIntMembr
            \pfSpeciesOne{}{}
            =
            -\integral{\domain}{}{
                \ginzburgLandauEnergyDens{\membrPhaseField{}{}}{}(y)
                \orthProjMat{\normal[\cortexPhaseFieldSym]{}}{}
                \grad{x}{}{\pDiff{\pfSpeciesOne{}{}}{}{} \pfCouplingEnergyDens{}{}
                (\cdot,y,\pfSpeciesOne{}{},\normal[\cortexPhaseFieldSym]{})}
                \ginzburgLandauEnergyDens{\cortexPhaseFieldSym}{}
                \pfSpeciesOne{}{}
            }{\lebesgueM{3}(y)}.
        $$

        \paragraph{Expansion of $ \grad{\membrPhaseField{}{}}{L^2}{}\pfCouplingEnergy{}{} $}
        We recall from \eqref{equ:modelling:grad mempf coupling energy} that
        \begin{subequations}
            \label{equ:formal asymtptotics:expanding the L2 gradient of the coupling energy:%
            L2 gradient membrane}
            \begin{equation}
                \begin{split}
                    \grad{\membrPhaseField{}{}}{L^2}{}
                    \pfCouplingEnergy{}{}
                    &=
                    A_\phaseFieldParam + B_\phaseFieldParam
                \end{split}
            \end{equation}
            with
            \begin{equation}
                \begin{split}
                    A_\phaseFieldParam(y) &=
                    \left(
                        -\phaseFieldParam
                        \laplacian{y}{}{}\membrPhaseField{}{}
                        +
                        \phaseFieldParam^{-1}
                        \doubleWellPot[\schemeSupscr{\prime}]{\membrPhaseField{}{}}
                    \right)(y)
                    \integral{\tubNeighbourhood{\ifCoordsDelta}{\interfTwo{}}}{}{
                        \ginzburgLandauEnergyDens{\cortexPhaseField{}{}}{}(x)
                        \pfCouplingEnergyDens{x,y}{\pfSpeciesOne{}{},\normal[\cortexPhaseField{}{}]{}}
                    }{\lebesgueM{3}(x)}
                    +
                    \landauBigo{\phaseFieldParam},
                    \\
                    B_\phaseFieldParam(y) 
                    &=
                    -\phaseFieldParam
                    \grad{y}{}{}\membrPhaseField{}{}
                    \cdot
                    \integral{\tubNeighbourhood{\ifCoordsDelta}{\interfTwo{}}}{}{
                        \ginzburgLandauEnergyDens{\cortexPhaseField{}{}}{}(x)
                        \grad{y}{}{
                            \pfCouplingEnergyDens{x,y}{
                                \pfSpeciesOne{}{},\normal[\cortexPhaseField{}{}]{}
                            }
                        }
                    }{\lebesgueM{3}(x)}
                    +
                    \landauBigo{\phaseFieldParam}.
                \end{split}
            \end{equation}
        \end{subequations}
        We further expand
        \begin{equation*}
            \pfCouplingEnergyDens{x,y}{
                \pfSpeciesOneIdx{0}{}{} + \phaseFieldParam r_1,
                \normal[\cortexPhaseFieldIdx{0}{}{}+\phaseFieldParam s_1]{}
            }
            =
            \pfCouplingEnergyDens{x,y}{\pfSpeciesOneIdx{0}{}{},\normal[\cortexPhaseFieldIdx{0}{}{}]{}}
            +
            \phaseFieldParam
            \grad{\pfSpeciesOne{}{},\normal{}}{}{}
            \pfCouplingEnergyDens{}{}
            \cdot
            \transposed{\left(
                r_1,
                \diff[0]{\phaseFieldParam}{}{
                    \normal[\cortexPhaseFieldIdx{0}{}{}+\phaseFieldParam s_1]{}
                }
            \right)}
            +
            \landauBigo{\phaseFieldParam^2},
        \end{equation*}
        and note
        \begin{equation}
            \label{equ:formal asymptotics:inner expansion:consequ bound on coupl energy:%
            variation of normal}
            \diff[0]{\phaseFieldParam}{}{
                \normal[\cortexPhaseFieldIdx{0}{}{}+\phaseFieldParam s_1]{}
            }          
            =
            \frac{1}{\abs{\grad{}{}{}\cortexPhaseFieldIdx{0}{}{}}}
            \orthProjMat{\normal[\cortexPhaseFieldIdx{0}{}{}]{}}{} 
            \grad{}{}{}s_1
            \in \landauBigo{1}, 
        \end{equation}
        and thus
        \begin{equation*}
            \grad{\pfSpeciesOne{}{},\normal{}}{}{}
            \pfCouplingEnergyDens{}{}
            \cdot
            \transposed{
            \left(
                r_1,
                \diff[0]{\phaseFieldParam}{}{
                    \normal[\cortexPhaseFieldIdx{0}{}{}+\phaseFieldParam s_1]{}
                }
            \right)}       
            \in
            \landauBigo{1}.
        \end{equation*}
        By employing
        \eqref{equ:formal asymptotics:expansion of the L2 gradient of the coupling energy:%
        helping expansions:ginzburg landau density},
        we expand $ A_\phaseFieldParam $:
        \begin{equation*}
            \begin{split}
                &\integral{\tubNeighbourhood{\ifCoordsDelta}{\interfTwo{}}}{}{
                    \ginzburgLandauEnergyDens{\cortexPhaseField{}{}}{}(x)
                    \pfCouplingEnergyDens{x,y}{\pfSpeciesOne{}{},\normal[\cortexPhaseField{}{}]{}}
                }{\lebesgueM{3}(x)}            
                =
                \\
                &\integral{\tubNeighbourhood{\ifCoordsDelta}{\interfTwo{}}}{}{
                    \phaseFieldParam^{-1}
                    \left(
                        \frac{1}{2}
                        \left(\simpleDeriv{\prime}{\cortexPhaseFieldIFCIdx{0}{}{}}\right)^2
                        \concat
                        \interfCoords[\phaseFieldParam]{\NOARG}{x}
                        +
                        \doubleWellPot{\cortexPhaseFieldIdx{0}{\NOARG}{x}}
                    \right)
                    \pfCouplingEnergyDens{x,y}{\pfSpeciesOneIdx{0}{}{},\normal[\cortexPhaseFieldIdx{0}{}{}]{}}
                    +
                    \landauBigo{1}
                }{\lebesgueM{3}(x)}.
            \end{split}
        \end{equation*}
        Multiplication with 
        $ 
            \left(
                -\phaseFieldParam
                \laplacian{y}{}{}\membrPhaseField{}{}
                +
                \phaseFieldParam^{-1}
                \doubleWellPot[\schemeSupscr{\prime}]{\membrPhaseField{}{}}
            \right)
        $ yields
        \begin{equation*}
            \begin{split}
                A_\phaseFieldParam(y)
                &=
                \simpleDeriv{\prime}{\membrPhaseFieldIFCIdx{0}{}{}}
                \concat\interfCoords[\phaseFieldParam]{\NOARG}{y}
                \extMeanCurv{y}
                \integral{\tubNeighbourhood{\ifCoordsDelta}{\interfTwo{}}}{}{
                    \phaseFieldParam^{-1}
                    \left(
                        \frac{1}{2}
                        \left(\simpleDeriv{\prime}{\cortexPhaseFieldIFCIdx{0}{}{}}\right)^2
                        \concat
                        \interfCoords[\phaseFieldParam]{\NOARG}{x}
                        +
                        \doubleWellPot{\cortexPhaseFieldIdx{0}{\NOARG}{x}}
                    \right)
                    \pfCouplingEnergyDens{x,y}{\pfSpeciesOneIdx{0}{}{},\normal[\cortexPhaseFieldIdx{0}{}{}]{}}
                    +
                    \landauBigo{1}
                }{\lebesgueM{3}(x)}
            \end{split}
        \end{equation*}
        thanks to $ \cortexPhaseFieldIFCIdx{0}{}{} $ being the optimal profile,
        and
        \eqref{equ:formal asymptotics:inner expansion:L2 grad gen willm:scale -2:result}.

        In order to expand $ B_\phaseFieldParam $, we first compute
        \begin{equation*}
            \grad{y}{}{}\pfCouplingEnergyDens{x,y}{
                \pfSpeciesOneIdx{0}{}{}+\phaseFieldParam r_1,
                \normal[\cortexPhaseFieldIdx{0}{}{}+\phaseFieldParam s_1]{}
            }
            =
            \grad{y}{}{}\pfCouplingEnergyDens{x,y}{
                \pfSpeciesOneIdx{0}{}{},
                \normal[\cortexPhaseFieldIdx{0}{}{}]{}
            }          
            +
            \phaseFieldParam
            \grad{\pfSpeciesOne{}{},\normal{}}{}{}
            \grad{y}{}{}\pfCouplingEnergyDens{}{}
            \begin{pmatrix}
                r_1\\
                \frac{1}{\abs{\cortexPhaseFieldIdx{0}{}{}}}
                \orthProjMat{\normal[\cortexPhaseFieldIdx{0}{}{}]{}}{}
                \grad{}{}{}s_1
            \end{pmatrix}
            +
            \landauBigo{\phaseFieldParam^2}.
        \end{equation*}%
        Therefore, using 
        \eqref{equ:formal asymptotics:expansion of the L2 gradient of the coupling energy:%
            helping expansions:ginzburg landau density}, we find
        \begin{equation*}
            \begin{split}
                &\integral{\tubNeighbourhood{\ifCoordsDelta}{\interfTwo{}}}{}{
                    \ginzburgLandauEnergyDens{\cortexPhaseField{}{}}{}
                    \grad{y}{}{}
                    \pfCouplingEnergyDens{x,y}{\pfSpeciesOne{}{},\normal[\cortexPhaseField{}{}]{}}
                }{\lebesgueM{3}(x)}
                =
                \\
                &\integral{\tubNeighbourhood{\ifCoordsDelta}{\interfTwo{}}}{}{
                    \phaseFieldParam^{-1}
                    \left(
                        \frac{1}{2}
                        \left(\simpleDeriv{\prime}{\cortexPhaseFieldIFCIdx{0}{}{}}\right)^2
                        \concat
                        \interfCoords[\phaseFieldParam]{\NOARG}{x}
                        +
                        \doubleWellPot{\cortexPhaseFieldIdx{0}{\NOARG}{x}}
                    \right)
                    \grad{y}{}{}
                    \pfCouplingEnergyDens{x,y}{\pfSpeciesOneIdx{0}{}{},\normal[\cortexPhaseFieldIdx{0}{}{}]{}}
                    +
                    \landauBigo{1}
                }{\lebesgueM{3}(x)}.
            \end{split}
        \end{equation*}
        Multiplication with $ -\phaseFieldParam\grad{}{}{}\membrPhaseField{}{} $ gives
        \begin{equation*}
            B_\phaseFieldParam(y) =
            -\simpleDeriv{\prime}{\membrPhaseFieldIFCIdx{0}{}{}}
            \concat\interfCoords[\phaseFieldParam]{\NOARG}{y}
            \integral{\tubNeighbourhood{\ifCoordsDelta}{\interfTwo{}}}{}{
                \phaseFieldParam^{-1}
                \left(
                    \frac{1}{2}
                    \left(\simpleDeriv{\prime}{\cortexPhaseFieldIFCIdx{0}{}{}}\right)^2
                    \concat
                    \interfCoords[\phaseFieldParam]{\NOARG}{x}
                    +
                    \doubleWellPot{\cortexPhaseFieldIdx{0}{\NOARG}{x}}
                \right)
                \extNormal{y}\cdot
                \grad{y}{}{}
                \pfCouplingEnergyDens{x,y}{\pfSpeciesOneIdx{0}{}{},\normal[\cortexPhaseFieldIdx{0}{}{}]{}}
                +
                \landauBigo{1}
            }{\lebesgueM{3}(x)}.
        \end{equation*}
    \paragraph{Expansion of $\grad{\cortexPhaseField{}{}}{L^2}{}\pfCouplingEnergy{}{}$}
    We recall \eqref{equ:modelling:grad cortpf coupling energy},
    \begin{subequations}
        \label{equ:formal asymptotics:expanding the L2 gradient of the coupling energy:%
        cortex L2 gradient}
        \begin{equation}
            \grad{\cortexPhaseField{}{}}{L^2}{}
            \pfCouplingEnergy{}{}
            =
            C_\phaseFieldParam + D_\phaseFieldParam + E_\phaseFieldParam
        \end{equation}
        with
        \begin{equation}
            \begin{split}
                C_\phaseFieldParam(x) 
                &=
                \left(
                    -\phaseFieldParam
                    \laplacian{x}{}{}\cortexPhaseField{}{}
                    +
                    \phaseFieldParam^{-1}
                    \doubleWellPot[\schemeSupscr{\prime}]{\cortexPhaseField{}{}}
                \right)(x)
                \integral{\tubNeighbourhood{\ifCoordsDelta}{\interfOne{}}}{}{
                    \ginzburgLandauEnergyDens{\membrPhaseField{}{}}{}(y)
                    \pfCouplingEnergyDens{x,y}{\pfSpeciesOne{}{},\normal[\cortexPhaseField{}{}]{}}
                }{\lebesgueM{3}(y)},
                \\
                D_\phaseFieldParam(x) 
                &=
                -
                \phaseFieldParam
                \grad{x}{}{}\cortexPhaseField{}{}
                \cdot
                \integral{\tubNeighbourhood{\ifCoordsDelta}{\interfOne{}}}{}{
                    \ginzburgLandauEnergyDens{\membrPhaseField{}{}}{}(y)
                    \grad{x}{}{
                        \pfCouplingEnergyDens{x,y}{\pfSpeciesOne{}{},\normal[\cortexPhaseField{}{}]{}}
                    }            
                }{\lebesgueM{3}(y)},
                \\
                E_\phaseFieldParam(x) 
                &=
                -
                \integral{\tubNeighbourhood{\ifCoordsDelta}{\interfOne{}}}{}{
                    \ginzburgLandauEnergyDens{\membrPhaseField{}{}}{}(y)
                    \diver{x}{}{
                        \ginzburgLandauEnergyDens{\cortexPhaseField{}{}}{}
                        \transposed{\grad{\normal{}}{}{}\pfCouplingEnergyDens{}{}}
                        \frac{1}{\abs{\grad{}{}{}\cortexPhaseField{}{}}}
                        \orthProjMat{\normal[\cortexPhaseField{}{}]{}}{}
                    }
                }{\lebesgueM{3}(y)}.
            \end{split}
        \end{equation}
    \end{subequations}
    Before we start expanding these terms, we prove the following formulae:
    \begin{lem}
        \label{lemma:formal asymptotics:inner expansion:consequ bound on coupling energy L2 grad:%
        further expansions}
        It holds,
        \begin{equation}
            \label{equ:formal asymptotics:expansion of the L2 gradient of the coupling energy:%
            expansion of the normal}
            \frac{
                \grad{}{}{}\cortexPhaseField{}{}
            }{
                \abs{\grad{}{}{}\cortexPhaseField{}{}}
            }
            =
            \extNormal{}
            +
            \landauBigo{\phaseFieldParam^2},
        \end{equation}
        \begin{equation}
            \label{equ:formal asymptotics:expansion of the L2 gradient of the coupling energy:%
            expansion of the phase field hessian}
            \grad{}{2}{}\cortexPhaseField{}{}
            =
            \phaseFieldParam^{-2}
            \simpleDeriv{\prime\prime}{\cortexPhaseFieldIFCIdx{0}{}{}}
            \concat
            \interfCoords[\phaseFieldParam]{}{}
            \extNormal{}\tensorProd\extNormal{}
            +
            \phaseFieldParam^{-1}
            \simpleDeriv{\prime}{\cortexPhaseFieldIFCIdx{0}{}{}}
            \concat
            \interfCoords[\phaseFieldParam]{}{}
            \grad{}{}{}\extNormal{}
            +
            \simpleDeriv{\prime\prime}{\cortexPhaseFieldIFCIdx{2}{}{}}
            \concat
            \interfCoords[\phaseFieldParam]{}{}
            \extNormal{}\tensorProd\extNormal{}
            +
            \landauBigo{\phaseFieldParam}.
        \end{equation}
    \end{lem}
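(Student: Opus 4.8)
The plan is to read both identities off the coordinate-transformation formulae of Remark~\ref{lemma:formal asymptotics:transformation diff op}, feeding in the two structural facts already established for the cortex phase field: that the leading inner coefficient is the optimal profile $\cortexPhaseFieldIFCIdx{0}{}{}(\fastVar)=\tanh(\fastVar/\sqrt2)$ (Section~\ref{sec:formal asymptotics:optimal profile cortex phase field}), which depends on the fast variable $\fastVar$ alone and hence has vanishing tangential gradient, and that $\cortexPhaseFieldIFCIdx{1}{}{}=0$ by~\eqref{equ:formal asymptotics:inner expansion:L2 grad gen willm:scale -2:result}. Throughout, $\extNormal{}$ is a function of the projection point $\projVar=\orthProj{S}{x}$ only.

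For~\eqref{equ:formal asymptotics:expansion of the L2 gradient of the coupling energy:expansion of the normal} I would apply the scalar gradient formula~\eqref{equ:appendix:formally matched asymptotics:gradient rescaled distance} to $\cortexPhaseField{}{}=\schemeLocalFun{\cortexPhaseFieldSym}\concat\interfCoords[\phaseFieldParam]{}{}$, splitting $\grad{}{}{}\cortexPhaseField{}{}$ into a normal part with coefficient $\phaseFieldParam^{-1}\simpleDeriv{\prime}{\schemeLocalFun{\cortexPhaseFieldSym}}=\phaseFieldParam^{-1}\simpleDeriv{\prime}{\cortexPhaseFieldIFCIdx{0}{}{}}+\landauBigo{\phaseFieldParam}$ and a tangential surface-gradient part. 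Since $\cortexPhaseFieldIFCIdx{0}{}{}$ carries no $\projVar$-dependence and $\cortexPhaseFieldIFCIdx{1}{}{}=0$, the tangential part begins only at order $\phaseFieldParam^2$, whereas the normal coefficient is of order $\phaseFieldParam^{-1}$ and strictly positive because the profile is increasing (cf.~\eqref{equ:formal asymptotics:transformation interf coords:phase field grad}). Dividing the vector by its Euclidean norm — which, by the same order count, equals the positive normal coefficient up to a relative $\landauBigo{\phaseFieldParam^6}$ — cancels that coefficient and leaves $\extNormal{}+\landauBigo{\phaseFieldParam^2}$.

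For~\eqref{equ:formal asymptotics:expansion of the L2 gradient of the coupling energy:expansion of the phase field hessian} I would compute the Hessian as $\grad{}{}{}(\grad{}{}{}\cortexPhaseField{}{})$ by differentiating the decomposition of the previous step with the product rule, re-applying~\eqref{equ:appendix:formally matched asymptotics:gradient rescaled distance} to the scalar coefficient $\simpleDeriv{\prime}{\schemeLocalFun{\cortexPhaseFieldSym}}$ and handling the tangential summand through the Jacobian formula~\eqref{equ:formally matched asymptotics:jac rescaled distance}. Differentiating the dominant term $\phaseFieldParam^{-1}\simpleDeriv{\prime}{\schemeLocalFun{\cortexPhaseFieldSym}}\extNormal{}$ yields $\phaseFieldParam^{-2}\simpleDeriv{\prime\prime}{\schemeLocalFun{\cortexPhaseFieldSym}}\extNormal{}\tensorProd\extNormal{}$, the term $\phaseFieldParam^{-1}\simpleDeriv{\prime}{\schemeLocalFun{\cortexPhaseFieldSym}}\grad{}{}{}\extNormal{}$ from differentiating $\extNormal{}$, and a cross term $\phaseFieldParam^{-1}\extNormal{}\tensorProd\grad{S}{}{}\simpleDeriv{\prime}{\schemeLocalFun{\cortexPhaseFieldSym}}$. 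Inserting $\schemeLocalFun{\cortexPhaseFieldSym}=\cortexPhaseFieldIFCIdx{0}{}{}+\phaseFieldParam^2\cortexPhaseFieldIFCIdx{2}{}{}+\landauBigo{\phaseFieldParam^3}$, the first term produces both the $\phaseFieldParam^{-2}$ contribution and, from the $\phaseFieldParam^2\cortexPhaseFieldIFCIdx{2}{}{}$ summand, the order-$\phaseFieldParam^0$ correction $\simpleDeriv{\prime\prime}{\cortexPhaseFieldIFCIdx{2}{}{}}\extNormal{}\tensorProd\extNormal{}$, while the second gives $\phaseFieldParam^{-1}\simpleDeriv{\prime}{\cortexPhaseFieldIFCIdx{0}{}{}}\grad{}{}{}\extNormal{}$; these are exactly the three displayed terms.

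The hard part will be the order bookkeeping for the error, namely verifying that everything else is genuinely $\landauBigo{\phaseFieldParam}$. The two delicate estimates are that the cross term is $\landauBigo{\phaseFieldParam}$ rather than $\landauBigo{\phaseFieldParam^{-1}}$ — which relies on $\simpleDeriv{\prime}{\cortexPhaseFieldIFCIdx{0}{}{}}$ having no $\projVar$-dependence and on $\cortexPhaseFieldIFCIdx{1}{}{}=0$, so that $\grad{S}{}{}\simpleDeriv{\prime}{\schemeLocalFun{\cortexPhaseFieldSym}}$ starts at order $\phaseFieldParam^2$ — and that the Jacobian of the tangential part of $\grad{}{}{}\cortexPhaseField{}{}$, itself $\landauBigo{\phaseFieldParam^2}$, contributes only at order $\phaseFieldParam$ even after the $\phaseFieldParam^{-1}$ prefactor attached to the $\fastVar$-derivative in~\eqref{equ:formally matched asymptotics:jac rescaled distance}. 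With these two bounds in hand the remaining summands collect into the stated $\landauBigo{\phaseFieldParam}$ remainder and the identity follows.
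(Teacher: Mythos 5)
Your proposal is correct and follows essentially the same route as the paper: both identities are read off the interfacial-coordinate differentiation formulae, using that the leading profile depends only on the fast variable and that the first-order inner coefficient vanishes, so the gradient is purely normal to leading order and the cross terms in the Hessian drop out. The only cosmetic difference is that the paper normalises the gradient by invoking the precomputed expansion of the inverse gradient modulus rather than your direct norm estimate, which amounts to the same bookkeeping.
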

    \begin{proof}
        Ad~\eqref{equ:formal asymptotics:expansion of the L2 gradient of the coupling energy:%
        expansion of the normal}:
        Due to $ \cortexPhaseFieldIFCIdx{0}{}{} $ being the optimal profile and it thus being independent
        of the tangential variable $ \projVar $,
        and considering
        \eqref{equ:formal asymptotics:inner expansion:L2 grad gen willm:%
        scale -2:result}, we have
        \begin{equation*}
            \grad{}{}{}\cortexPhaseField{}{}
            =
            \phaseFieldParam^{-1}
            \simpleDeriv{\prime}{\cortexPhaseFieldIFCIdx{0}{}{}}\concat\interfCoords[\phaseFieldParam]{}{}
            \extNormal{}
            +
            \grad{\interfTwo{}_{\sdist{}{}}}{}{}
            \cortexPhaseFieldIFCIdx{0}{}{}
            +
            \simpleDeriv{\prime}{\cortexPhaseFieldIFCIdx{1}{}{}}\concat\interfCoords[\phaseFieldParam]{}{}
            \extNormal{}
            +
            \landauBigo{\phaseFieldParam}
            =
            \phaseFieldParam^{-1}
            \simpleDeriv{\prime}{\cortexPhaseFieldIFCIdx{0}{}{}}\concat\interfCoords[\phaseFieldParam]{}{}
            \extNormal{}                
            +
            \landauBigo{\phaseFieldParam}.
        \end{equation*}
        This observation brings the claimed expansion
        for the product of $ \grad{}{}{}\cortexPhaseField{}{} $ 
        and $ \abs{\grad{}{}{}\cortexPhaseField{}{}}^{-1} $
        using
        \eqref{equ:formal asymptotics:expanding L2 grad coupling energy cortex:inv grad phase field cortex}.

        Ad~\eqref{equ:formal asymptotics:expansion of the L2 gradient of the coupling energy:%
        expansion of the phase field hessian}:
        \begin{equation*}
            \begin{split}
                \grad{}{}{\pDiff{i}{}{}\cortexPhaseField{}{}}
                &=
                \phaseFieldParam^{-2}
                \simpleDeriv{\prime\prime}{\cortexPhaseFieldIFCIdx{0}{}{}}
                \concat\interfCoords[\phaseFieldParam]{}{}
                \extNormal{}\extNormal[i]{}
                +
                \phaseFieldParam^{-1}
                \grad{\interfTwo{}_{\sdist{}{}}}{}{
                    \simpleDeriv{\prime}{\cortexPhaseFieldIFCIdx{0}{}{}}
                    \concat\interfCoords[\phaseFieldParam]{}{}
                }
                \extNormal[i]{}
                +
                \phaseFieldParam^{-1}
                \simpleDeriv{\prime}{\cortexPhaseFieldIFCIdx{0}{}{}}                
                \concat
                \interfCoords[\phaseFieldParam]{}{}
                \grad{}{}{}\extNormal[i]{}
                \\
                &+
                \grad{\interfTwo{}_{\sdist{}{}}}{}{
                    \phaseFieldParam^{-1}
                    \simpleDeriv{\prime}{\cortexPhaseFieldIFCIdx{0}{}{}}
                    \concat
                    \interfCoords[\phaseFieldParam]{}{}
                    \extNormal[i]{}
                    +
                    \grad{\interfTwo{}_{\sdist{}{}}}{}{
                        \cortexPhaseFieldIdx{0}{}{}
                    }(i)
                }
                +
                \grad{}{}{
                    \simpleDeriv{\prime}{\cortexPhaseFieldIFCIdx{1}{}{}}
                    \concat
                    \interfCoords[\phaseFieldParam]{}{}
                    \grad{}{}{}\normal[i]{}
                    +
                    \phaseFieldParam
                    \grad{\interfTwo{}_{\sdist{}{}}}{}{}
                    \cortexPhaseFieldIFCIdx{1}{}{}(i)
                }
                \\
                &+
                \simpleDeriv{\prime\prime}{\cortexPhaseFieldIFCIdx{2}{}{}}
                \concat
                \interfCoords[\phaseFieldParam]{}{}
                \extNormal{}
                \extNormal[i]{}
                +
                \landauBigo{\phaseFieldParam}.
            \end{split}
        \end{equation*}
        We again use the optimal profile and
        \eqref{equ:formal asymptotics:inner expansion:L2 grad gen willm:scale -2:result}
        to conclude
        \begin{equation*}
            \phaseFieldParam^{-1}
            \grad{\interfTwo{}_{\sdist{}{}}}{}{
                \simpleDeriv{\prime}{\cortexPhaseFieldIFCIdx{0}{}{}}
                \concat\interfCoords[\phaseFieldParam]{}{}
            }
            \normal[i]{}
            =
            \grad{\interfTwo{}_{\sdist{}{}}}{}{
                \phaseFieldParam^{-1}
                \simpleDeriv{\prime}{\cortexPhaseFieldIFCIdx{0}{}{}}
                \concat
                \interfCoords[\phaseFieldParam]{}{}
                \normal[i]{}
                +
                \grad{\interfTwo{}_{\sdist{}{}}}{}{
                    \cortexPhaseFieldIdx{0}{}{}
                }(i)
            }
            =
            \grad{}{}{
                \simpleDeriv{\prime}{\cortexPhaseFieldIFCIdx{1}{}{}}
                \concat
                \interfCoords[\phaseFieldParam]{}{}
                \grad{}{}{}\normal[i]{}
                +
                \phaseFieldParam
                \grad{\interfTwo{}_{\sdist{}{}}}{}{}
                \cortexPhaseFieldIFCIdx{1}{}{}(i)
            }
            =
            0,
        \end{equation*}
        and the claim follows.
    \end{proof}

    $ C_\phaseFieldParam $ is expanded just like $ A_\phaseFieldParam $:
    \begin{equation*}
    \begin{split}
        C_\phaseFieldParam(x)
        &=
        \simpleDeriv{\prime}{\cortexPhaseFieldIFCIdx{0}{}{}}
        \concat\interfCoords[\phaseFieldParam]{\NOARG}{x}
        \extMeanCurv{x}
        \integral{\tubNeighbourhood{\ifCoordsDelta}{\interfTwo{}}}{}{
            \phaseFieldParam^{-1}
            \left(
                \frac{1}{2}
                \left(\simpleDeriv{\prime}{\membrPhaseFieldIFCIdx{0}{}{}}\right)^2
                \concat
                \interfCoords[\phaseFieldParam]{\NOARG}{x}
                +
                \doubleWellPot{\membrPhaseFieldIdx{0}{\NOARG}{x}}
            \right)
            \pfCouplingEnergyDens{x,y}{\pfSpeciesOneIdx{0}{}{},\normal[\cortexPhaseFieldIdx{0}{}{}]{}}
            +
            \landauBigo{1}
        }{\lebesgueM{3}(y)}.
    \end{split}
    \end{equation*}

    We continue by expanding $ D_\phaseFieldParam $: First note
    \begin{equation*}
        \begin{split}
            \phaseFieldParam
            \grad{x}{}{}\cortexPhaseField{}{}
            \cdot
            \grad{x}{}{
                \pfCouplingEnergyDens{\cdot,y}{\pfSpeciesOne{}{},\normal[\cortexPhaseField{}{}]{}}
            }
            =
            \phaseFieldParam
            \grad{x}{}{}\cortexPhaseField{}{}
            \cdot
            \left(
            \grad{x}{}{}\pfCouplingEnergyDens{}{}
            +
            \pDiff{\pfSpeciesOne{}{}}{}{}\pfCouplingEnergyDens{}{}
            \grad{x}{}{}\pfSpeciesOne{}{}
            +
            \transposed{
                \grad{}{}{}\normal[\cortexPhaseField{}{}]{}
            }
            \grad{\normal{}}{}{}\pfCouplingEnergyDens{}{}
            \right).
        \end{split}
    \end{equation*}
    Then we observe
    $
        \phaseFieldParam
        \transposed{\grad{x}{}{}\cortexPhaseField{}{}}
        \transposed{
            \grad{}{}{}\normal[\cortexPhaseField{}{}]{}
        } 
        =
        \landauBigo{\phaseFieldParam},
    $
    so
    \begin{equation*}
        \phaseFieldParam
        \grad{x}{}{}\cortexPhaseField{}{}
        \cdot
        \grad{x}{}{}\pfCouplingEnergyDens{}{}
        =
        (
            \simpleDeriv{\prime}{\cortexPhaseFieldIFCIdx{0}{}{}}
            \concat\interfCoords[\phaseFieldParam]{}{}
            \extNormal{}
            +
            \landauBigo{\phaseFieldParam}
        )
        \cdot
        \left(
            \grad{x}{}{}\pfCouplingEnergyDens{}{}
            +
            \pDiff{\pfSpeciesOne{}{}}{}{}
            \pfCouplingEnergyDens{}{}
            \grad{}{}{}\pfSpeciesOne{}{}
        \right)
        =
        \simpleDeriv{\prime}{\cortexPhaseFieldIFCIdx{0}{}{}}
        \concat\interfCoords[\phaseFieldParam]{}{}
        \extNormal{}
        \cdot
        \left(
            \grad{x}{}{}\pfCouplingEnergyDens{}{}
            +
            \pDiff{\pfSpeciesOne{}{}}{}{}
            \pfCouplingEnergyDens{}{}
            \grad{}{}{}\pfSpeciesOne{}{}
        \right)
        +
        \landauBigo{\phaseFieldParam},
    \end{equation*}
    where the last equality is justified by
    \eqref{equ:formal asymptotics:inner expansion:species to leading order:%
        constant in normal direction}.
    Then,
    \begin{equation*}
        \begin{split}
            D_\phaseFieldParam(x) 
            &=
            -
            \integral{\tubNeighbourhood{\ifCoordsDelta}{\interfOne{}}}{}{
                \phaseFieldParam^{-1}
                \left(
                    \frac{1}{2}
                    \left(\simpleDeriv{\prime}{\membrPhaseFieldIFCIdx{0}{}{}}\right)^2
                    \concat
                    \interfCoords[\phaseFieldParam]{\NOARG}{y}
                    +
                    \doubleWellPot{\membrPhaseFieldIdx{0}{\NOARG}{y}}
                \right)
                \left(
                \simpleDeriv{\prime}{\cortexPhaseFieldIFCIdx{0}{}{}}
                \concat\interfCoords[\phaseFieldParam]{}{}
                \extNormal{}
                \cdot
                \left(
                    \grad{x}{}{}\pfCouplingEnergyDens{}{}
                    +
                    \pDiff{\pfSpeciesOne{}{}}{}{}
                    \pfCouplingEnergyDens{}{}
                    \grad{}{}{}\pfSpeciesOne{}{}
                \right)
                +
                \landauBigo{\phaseFieldParam}
                \right)
                +
                \landauBigo{\phaseFieldParam}
            }{\lebesgueM{3}(y)}.
        \end{split}
    \end{equation*}

    At last, we turn to $ E_\phaseFieldParam $ and compute
    \begin{equation}
        \label{equ:formal asymptotics:consequ bound L2 grad coupling energy:dxdnc}
        \begin{split}
            \diver{x}{}{
                \ginzburgLandauEnergyDens{\cortexPhaseField{}{}}{}
                \transposed{\grad{\normal{}}{}{}\pfCouplingEnergyDens{}{}}
                \frac{1}{\abs{\grad{}{}{}\cortexPhaseField{}{}}}
                \orthProjMat{\normal[\cortexPhaseField{}{}]{}}{}
            }
            &=
            \grad{x}{}{
                \ginzburgLandauEnergyDens{\cortexPhaseField{}{}}{}
            }
            \cdot
            \frac{1}{\abs{\grad{}{}{}\cortexPhaseField{}{}}}
            \transposed{\orthProjMat{\normal[\cortexPhaseField{}{}]{}}{}}
            \grad{\normal{}}{}{}\pfCouplingEnergyDens{}{}
            +
            \ginzburgLandauEnergyDens{\cortexPhaseField{}{}}{}
            \diver{}{}{
                \frac{1}{\abs{\grad{}{}{}\cortexPhaseField{}{}}}
                \transposed{\orthProjMat{\normal[\cortexPhaseField{}{}]{}}{}}
                \grad{\normal{}}{}{}\pfCouplingEnergyDens{}{}
            }            
            \\
            &=
            \transposed{\grad{\normal{}}{}{}\pfCouplingEnergyDens{}{}}
            \frac{1}{\abs{\grad{}{}{}\cortexPhaseField{}{}}}
            \orthProjMat{\normal[\cortexPhaseField{}{}]{}}{}
            \grad{x}{}{
                \ginzburgLandauEnergyDens{\cortexPhaseField{}{}}{}
            }
            \\
            &+
            \ginzburgLandauEnergyDens{\cortexPhaseField{}{}}{}
            \left(
                \frac{1}{\abs{\grad{}{}{}\cortexPhaseField{}{}}}
                \grad{}{}{
                    \grad{\normal{}}{}{}\pfCouplingEnergyDens{}{}
                }
                \frobProd
                \orthProjMat{\normal[\cortexPhaseField{}{}]{}}{}
                +
                \grad{\normal{}}{}{}\pfCouplingEnergyDens{}{}
                \cdot
                \diver{}{}{
                    \frac{1}{\abs{\grad{}{}{}\cortexPhaseField{}{}}}
                    \orthProjMat{\normal[\cortexPhaseField{}{}]{}}{}
                }
            \right).
        \end{split}
    \end{equation}
    On the first term, we use 
    \eqref{equ:formal asymptotics:%
    expansion of the L2 gradient of the coupling energy:%
    expansion of the phase field hessian} and \eqref{equ:formal asymptotics:inner expansion:%
    L2 grad gen willm:scale -2:result} (for the expansion of the double well potential) to obtain
     \begin{equation*}
        \begin{split}
            \grad{}{}{\ginzburgLandauEnergyDens{\cortexPhaseField{}{}}{}}
            &=
            \left(
                \phaseFieldParam
                \grad{}{2}{}\cortexPhaseField{}{}
                +
                \phaseFieldParam^{-1}
                \doubleWellPot[\schemeSupscr{\prime}]{\cortexPhaseField{}{}}
            \right)
            \grad{}{}{}\cortexPhaseField{}{}
            \\
            &=
            \left(
                \phaseFieldParam^{-1}
                \simpleDeriv{\prime\prime}{
                    \cortexPhaseFieldIFCIdx{0}{}{}
                }
                \concat\interfCoords[\phaseFieldParam]{}{}
                \extNormal{}\tensorProd\extNormal{}
                +
                \simpleDeriv{\prime}{\cortexPhaseFieldIFCIdx{0}{}{}}
                \concat
                \interfCoords[\phaseFieldParam]{}{}
                \grad{}{}{}\extNormal{}
                +
                \phaseFieldParam^{-1}
                \doubleWellPot[\schemeSupscr{\prime}]{\cortexPhaseFieldIdx{0}{}{}}
                +
                \landauBigo{\phaseFieldParam}
            \right)
            \left(
                \phaseFieldParam^{-1}
                \simpleDeriv{\prime}{\cortexPhaseFieldIFCIdx{0}{}{}}
                \concat\interfCoords[\phaseFieldParam]{}{}
                \extNormal{}
                +
                \landauBigo{\phaseFieldParam}
            \right)
            \\
            &=
            \phaseFieldParam^{-2}
            \left(
                \simpleDeriv{\prime\prime}{
                    \cortexPhaseFieldIFCIdx{0}{}{}
                }
                \concat \interfCoords[\phaseFieldParam]{}{}
                \simpleDeriv{\prime}{\cortexPhaseFieldIFCIdx{0}{}{}}
                \concat \interfCoords[\phaseFieldParam]{}{}
                \extNormal{}
                +
                \doubleWellPot[\schemeSupscr{\prime}]{\cortexPhaseFieldIdx{0}{}{}}
                \simpleDeriv{\prime}{\cortexPhaseFieldIFCIdx{0}{}{}}
                \concat \interfCoords[\phaseFieldParam]{}{}
                \extNormal{}
            \right)
            +
            \landauBigo{1}.
        \end{split}
    \end{equation*}
    Since
    $ 
        \orthProjMat{\normal[\cortexPhaseField{}{}]{}}{}
        \extNormal{}
        \in
        \landauBigo{\phaseFieldParam^2}
    $ thanks to
    \eqref{equ:formal asymptotics:expansion of the L2 gradient of the coupling energy:%
        expansion of the normal}, we have 
    \begin{equation}
        \label{equ:formal asymptotics:consequ bound L2 grad coupling energy:dnc with proj %
        lower order}
        \begin{split}
            \transposed{\grad{\normal{}}{}{}\pfCouplingEnergyDens{}{}}
            \frac{1}{\abs{\grad{}{}{}\cortexPhaseField{}{}}}
            \orthProjMat{\normal[\cortexPhaseField{}{}]{}}{}
            \grad{}{}{\ginzburgLandauEnergyDens{\cortexPhaseField{}{}}{}}            
            \in \landauBigo{\phaseFieldParam}.
        \end{split}
    \end{equation}
    Second, we calculate
    \begin{equation*}
        \begin{split}
            \ginzburgLandauEnergyDens{\cortexPhaseField{}{}}{}
            \frac{1}{\abs{\grad{}{}{}\cortexPhaseField{}{}}}
            \grad{}{}{
                \grad{\normal{}}{}{}\pfCouplingEnergyDens{}{}
            }
            \frobProd
            \orthProjMat{\normal[\cortexPhaseField{}{}]{}}{}
            &=
            \left(
                \phaseFieldParam^{-1}
                \left(
                    \frac{1}{2}
                    \left(\simpleDeriv{\prime}{\cortexPhaseFieldIFCIdx{0}{}{}}\right)^2
                    \concat
                    \interfCoords[\phaseFieldParam]{}{}
                    +
                    \doubleWellPot{\cortexPhaseFieldIdx{0}{}{}}
                \right)
                +
                \landauBigo{\phaseFieldParam}
            \right)
            \frac{1}{\abs{\grad{}{}{}\cortexPhaseField{}{}}}
            \left(
                \grad{}{}{
                    \grad{\normal{}}{}{}\pfCouplingEnergyDens{}{}
                }
                \frobProd
                \orthProjMat{\extNormal{}}{}
                +
                \landauBigo{\phaseFieldParam^2}
            \right)
            \\
            &=
            \phaseFieldParam^{-1}
            \frac{1}{\abs{\grad{}{}{}\cortexPhaseField{}{}}}
            \left(
                \frac{1}{2}
                \left(\simpleDeriv{\prime}{\cortexPhaseFieldIFCIdx{0}{}{}}\right)^2
                \concat
                \interfCoords[\phaseFieldParam]{}{}
                +
                \doubleWellPot{\cortexPhaseFieldIdx{0}{}{}}
            \right)
            \diver{\interfTwo{}_{\sdist{}{x}}}{}{\grad{\normal{}}{}{}\pfCouplingEnergyDens{}{}}
            +
            \landauBigo{\phaseFieldParam^2}.
        \end{split}
    \end{equation*}

    Third, 
    \begin{equation*}
        \diver{}{}{
            \frac{1}{\abs{\grad{}{}{}\cortexPhaseField{}{}}}
            \orthProjMat{\normal[\cortexPhaseField{}{}]{}}{}
        }
        =
        \orthProjMat{\normal[\cortexPhaseField{}{}]{}}{}
        \grad{}{}{
            \frac{1}{\abs{\grad{}{}{}\cortexPhaseField{}{}}}                          	
        }
        +
        \frac{1}{\abs{\grad{}{}{}\cortexPhaseField{}{}}}            
        \diver{}{}{}
        \orthProjMat{\normal[\cortexPhaseField{}{}]{}}{}.
    \end{equation*}
    We further compute
    \begin{equation*}
        \grad{}{}{
            \frac{1}{\abs{\grad{}{}{}\cortexPhaseField{}{}}}
        }
        =
        -
        \frac{1}{\abs{\grad{}{}{}\cortexPhaseField{}{}}^3}
        \grad{}{2}{}\cortexPhaseField{}{}
        \grad{}{}{}\cortexPhaseField{}{}
        =
        -
        \frac{1}{\abs{\grad{}{}{}\cortexPhaseField{}{}}^2}
        \grad{}{2}{}\cortexPhaseField{}{}
        \extNormal{},
    \end{equation*}
    and using 
    \eqref{equ:formal asymptotics:expansion of the L2 gradient of the coupling energy:%
    expansion of the phase field hessian}, we find
    \begin{equation*}
        \grad{\normal{}}{}{}\pfCouplingEnergyDens{}{}
        \cdot
        \diver{}{}{
            \frac{1}{\abs{\grad{}{}{}\cortexPhaseField{}{}}}
            \orthProjMat{\normal[\cortexPhaseField{}{}]{}}{}
        }
        =
        \grad{\normal{}}{}{}\pfCouplingEnergyDens{}{}
        \cdot
        \orthProjMat{\normal[\cortexPhaseField{}{}]{}}{}
        \grad{}{}{
            \frac{1}{\abs{\grad{}{}{}\cortexPhaseField{}{}}}                          	
        }                
        +
        \grad{\normal{}}{}{}\pfCouplingEnergyDens{}{}
        \cdot
        \frac{1}{\abs{\grad{}{}{}\cortexPhaseField{}{}}}
        \diver{}{}{}
        \orthProjMat{\normal[\cortexPhaseField{}{}]{}}{}
        =
        \grad{\normal{}}{}{}\pfCouplingEnergyDens{}{}
        \cdot
        \frac{1}{\abs{\grad{}{}{}\cortexPhaseField{}{}}}
        \diver{}{}{}
        \orthProjMat{\normal[\cortexPhaseField{}{}]{}}{}                
        +\landauBigo{\phaseFieldParam^3}.
    \end{equation*}
    Finally,
    \begin{equation*}
        \diver{}{}{}\orthProjMat{\normal[\cortexPhaseField{}{}]{}}{}
        =
        -(
        \grad{}{}{}\extNormal{}
        \extNormal{}
        +
        \extNormal{}\diver{}{}{}\extNormal{}
        +
        \landauBigo{\phaseFieldParam}
        )
        =
        \extMeanCurv{}{}\extNormal{}
        +
        \landauBigo{\phaseFieldParam},
    \end{equation*}
    so
    \begin{equation*}
        \begin{split}
            E_\phaseFieldParam(x) &=
            -\frac{\phaseFieldParam^{-1}}{\abs{\grad{x}{}{}\cortexPhaseField{}{}}}
            \left(
                \frac{1}{2}
                \left(\simpleDeriv{\prime}{\cortexPhaseFieldIFCIdx{0}{}{}}\right)^2
                \concat
                \interfCoords[\phaseFieldParam]{\NOARG}{x}
                +
                \doubleWellPot{\cortexPhaseFieldIdx{0}{\NOARG}{x}}
            \right)
            \cdot
            \\
            &\cdot\integral{\tubNeighbourhood{\ifCoordsDelta}{\interfOne{}}}{}{
                \phaseFieldParam^{-1}
                \left(
                    \frac{1}{2}
                    \left(\simpleDeriv{\prime}{\membrPhaseFieldIFCIdx{0}{}{}}\right)^2
                    \concat
                    \interfCoords[\phaseFieldParam]{\NOARG}{y}
                    +
                    \doubleWellPot{\membrPhaseFieldIdx{0}{\NOARG}{y}}
                \right)
                \left(
                    \diver{\interfTwo{}_{\sdist{}{x}}}{}{\grad{\normal{}}{}{}\pfCouplingEnergyDens{}{}}
                    +
                    \grad{\normal{}}{}{}\pfCouplingEnergyDens{}{}
                    \cdot
                    \extMeanCurv{x}
                    \extNormal{x}
                \right)
            }{\lebesgueM{3}(y)}
            +\landauBigo{1}.
        \end{split}
    \end{equation*}

    \paragraph{Expansion of $ G_\phaseFieldParam $}
    We use 
    \eqref{equ:formal asymptotics:expansion of the L2 gradient of the coupling energy:%
    helping expansions:ginzburg landau density}, 
    \eqref{equ:formal asymptotics:expansion of the L2 gradient of the coupling energy:%
    helping expansions:phase field mean curv}, and
    \eqref{equ:formal asymptotics:expansion of the L2 gradient of the coupling energy:%
    expansion of the normal}
    in connection with \eqref{equ:formal asymptotics:inner expansion:L2 grad gen willm:%
    scale -2:result}
    to obtain
    \begin{align*}
        G_\phaseFieldParam =
        &-\integral{\domain}{}{
            \ginzburgLandauEnergyDens{\membrPhaseField{}{}}{}(y)
            \pfSpeciesOne{}{}
            \meanCurv{\cortexPhaseField{}{}}{}
            \pDiff{\pfSpeciesOne{}{}}{}{}\pfCouplingEnergyDens{
                \cdot,y,\pfSpeciesOne{}{},\normal[\cortexPhaseFieldSym]{}
            }{}
            \normal[\cortexPhaseFieldSym]{}
        }{\lebesgueM{3}(y)}
        =
        \\
        &-\phaseFieldParam^{-1}
        \pfSpeciesOneIdx{0}{}{}
        \left(
            \simpleDeriv{\prime}{\schemeLocalFun{\cortexPhaseFieldSym}_0}
        \right)^2
        \extMeanCurv{}
        \extNormal{}
        \integral{\tubNeighbourhood{\ifCoordsDelta}{\interfOne{}}}{}{
            \phaseFieldParam^{-1}
            \left(
            \frac{1}{2}\left(\simpleDeriv{\prime}{\membrPhaseFieldIFCIdx{0}{}{}}\right)^2
            \concat\interfCoords[\phaseFieldParam]{\NOARG}{y}
            +
            \doubleWellPot{\membrPhaseFieldIdx{0}{\NOARG}{y}}
            \right)
            \pDiff{\pfSpeciesOne{}{}}{}{}\pfCouplingEnergyDens{\cdot,y,\pfSpeciesOneIdx{0}{}{},
            \normal[\cortexPhaseFieldIdx{0}{}{}]{}}{}
        }{\lebesgueM{3}(y)}
        +
        \landauBigo{1}.
    \end{align*}
\paragraph{Expansion of $ H_\phaseFieldParam$}
    As before, we employ
    \eqref{equ:formal asymptotics:expansion of the L2 gradient of the coupling energy:%
    helping expansions:ginzburg landau density}, 
    \eqref{equ:formal asymptotics:expansion of the L2 gradient of the coupling energy:%
    expansion of the normal} to obtain
    \begin{align*}
        H_\phaseFieldParam =
        &-\integral{\domain}{}{
            \ginzburgLandauEnergyDens{\membrPhaseField{}{}}{}(y)
            \orthProjMat{\normal[\cortexPhaseFieldSym]{}}{}
            \grad{x}{}{\pDiff{\pfSpeciesOne{}{}}{}{} \pfCouplingEnergyDens{}{}
            (\cdot,y,\pfSpeciesOne{}{},\normal[\cortexPhaseFieldSym]{})}
            \ginzburgLandauEnergyDens{\cortexPhaseFieldSym}{}
            \pfSpeciesOne{}{}
        }{\lebesgueM{3}(y)}
        =
        -\phaseFieldParam^{-1}
        \left(
            \frac{1}{2}
            \left(
                \simpleDeriv{\prime}{\membrPhaseFieldIFCIdx{0}{}{}}
            \right)^2
            \concat\interfCoords[\phaseFieldParam]{}{}
            +
            \doubleWellPot{\cortexPhaseFieldIdx{0}{}{}}
        \right)\cdot
        \\
        &\cdot\integral{\domain}{}{
            \phaseFieldParam^{-1}
            \left(
                \frac{1}{2}
                \left(
                    \simpleDeriv{\prime}{\cortexPhaseFieldIFCIdx{0}{\NOARG}{}}
                \right)^2
                \concat\interfCoords[\phaseFieldParam]{\NOARG}{y}
                +
                \doubleWellPot{\membrPhaseFieldIdx{0}{\NOARG}{y}}
            \right)
            \grad{\interfTwo{}_{\sdist{}{}}}{}{\pDiff{\pfSpeciesOne{}{}}{}{} \pfCouplingEnergyDens{}{}
            (\cdot,y,\pfSpeciesOneIdx{0}{}{},\normal[\cortexPhaseFieldIdx{0}{}{}]{})}
            \pfSpeciesOneIdx{0}{}{}
        }{\lebesgueM{3}(y)}   
        +\landauBigo{1}.
    \end{align*}

    We now show, using the results of
    the analysis in the previous sections,
    that classical solutions of \eqref{equ:modelling:phase field:resulting PDEs}
    converge formally to solutions of \eqref{equ:modelling:sharp interface system}
    for $ \phaseFieldParam \searrow 0 $.
    In the following, we will often use that
    \begin{equation*}
        \left(\simpleDeriv{\prime}{\schemeLocalFun{\phaseFieldProto}_{0}}(\fastVar)\right)^2
        =
        \left(\simpleDeriv{\prime}{\cortexPhaseFieldIFCIdx{0}{}{}}(\fastVar)\right)^2
        =
        \left(
        \simpleDeriv{\prime}{
          	\tanh\left(
                \frac{\fastVar}{\sqrt{2}}
            \right)
        }
        \right)^2
        =
        \frac{1}{2}\left(1-\tanh\left(\frac{\fastVar}{\sqrt{2}}\right)^2\right)^2
    \end{equation*}
    is integrable, and we will abbreviate
    \begin{equation*}
        \tanhIntConst
        \colonequals
        \frac{1}{2}
        \integral{-\infty}{\infty}{
            \left(1-\tanh\left(\frac{\fastVar}{\sqrt{2}}\right)^2\right)^2
        }{\lebesgueM{1}(\fastVar)}.
    \end{equation*}
    We also partition
    all integrals over $ \domain $ into an interal over $ 
    \tubNeighbourhood{\ifCoordsDelta}{S} $ and one over $ \domain_\ifCoordsDelta = \domain \setminus \tubNeighbourhood{
      \ifCoordsDelta}{S} $.
    In the latter region, the outer expansions hold, and thus
    the integrands are all of lower order vanishing in the limit, so we can 
    neglect them.
    \subsection{Momentum balance and mass conservation}
    	\subsubsection{Outer region}
            At order $ \phaseFieldParam^0 $, we find with the results of 
            Section~\ref{sec:formal asymptotics:outer expansion:phase fields}
            (causing all the energy gradient terms
            on the right to vanish)
            \begin{equation*}
                \fluidMassDens\left(
                \pDiff{t}{}{}\pfVelocityOuter{}{}_0
                +
                \left(\pfVelocityOuter{}{}_0\cdot\grad{}{}{}\right)\pfVelocityOuter{}{}_0
                \right)
                -
                \diver{}{}{
                \viscosity
                \left(
                \grad{}{}{}\pfVelocityOuter{}{}_0
                +
                \transposed{\grad{}{}{}\pfVelocityOuter{}{}_0}
                \right)
                }
                +
                \grad{}{}{}\schemeOuterExp{\pfPressure{}{}}_0 = 0,
            \end{equation*}
            and for the incompressibility condition
            \begin{equation*}
                \diver{}{}{}\pfVelocityOuter{}{}_0
                = 0.
            \end{equation*}
            This gives \eqref{equ:modelling:sharp interface classical PDE model:momentum balance}
            and \eqref{equ:modelling:sharp interface classical PDE model:mass cons}.
        \subsubsection{Inner region}
            Let $ \interfProto{} \in \{\interfOne{},\interfTwo{}\} $.
            We note that the matching conditions for the velocity 
            state the no-jump conditions 
            \eqref{equ:modelling:sharp interface classical PDE model:no jump if one},
            \eqref{equ:modelling:sharp interface classical PDE model:no jump if two}.

            Plugging further the inner expansion into
            \eqref{equ:modelling:phase field resulting PDEs:momentum balance} and using
            \eqref{equ:formal asymptotics:composite expansion:pressure},
            \eqref{equ:formal asymptotics:sharp interface limit:composite expansion:%
            leading order normal deriv vel zero}, we find
            \begin{equation*}
                \begin{split}
                -\phaseFieldParam^{-1}\left(
                \simpleDeriv{\prime}{\left(
                    \hat{\viscosity}
                  	\simpleDeriv{\prime}{\schemeLocalFun{\pfVelocity{}{}}_{1}}
                    \right)
                }
                +
                \simpleDeriv{\prime}{
                \left(
                    \transposed{
                        \widehat{\grad{\surfaceProto_{\sdist{}{}}}{}{}
                        \pfVelocity{}{}_0}
                    }
                    \transposed{\hat{\viscosity}}
                \right)
                }
                \hat{\nu}
                +
                \hat{\nu}
                \tensorProd\simpleDeriv{\prime\prime}{\schemeLocalFun{\pfVelocity{}{}}_1}
                \hat{\nu}
                \right)
                \concat
                \interfCoords[\phaseFieldParam]{}{}
                +
                \phaseFieldParam^{-1}
                \simpleDeriv{\prime}{\schemeLocalFun{\pfPressure{}{}}_0}
                \concat\interfCoords[\phaseFieldParam]{}{}
                \extNormal{}
                +
                r
                =
                \\
                \phaseFieldParam^{-1}
                \left(
                    \grad{\membrPhaseFieldSym}{L^2}{}\freeEnergyFunct{}{}
                    \simpleDeriv{\prime}{\schemeLocalFun{\membrPhaseFieldSym}_0}
                    \concat\interfCoords[\phaseFieldParam]{}{}
                    +
                    \grad{\cortexPhaseFieldSym}{L^2}{}\freeEnergyFunct{}{}
                    \simpleDeriv{\prime}{\schemeLocalFun{\cortexPhaseFieldSym}_0}
                    \concat\interfCoords[\phaseFieldParam]{}{}
                \right)
                \extNormal{}
                -
                \pDiff{\pfSpeciesOne{}{}}{}{}\pfCouplingIntMembr
                \meanCurv{\schemeInnerExp{\cortexPhaseFieldSym}_0}{}
                \normal[\schemeInnerExp{\cortexPhaseFieldSym}_0]{}
                \schemeInnerExp{\pfSpeciesOneIdx{0}{}{}},
                \end{split}
            \end{equation*}
            where $ r = \hat{r}\concat\interfCoords[\phaseFieldParam]{}{} $ 
            with $ \schemeLocalFun{r} \in \landauBigo{1} $. 
            For understanding the limit 
            of this equation, let us consider its variational formulation
            with test functions $ \testFunVelocity{} \in [\sobolevHSet{1}{\domain}]^3 $.
            The left hand side then reads
            \begin{equation}
                \label{equ:formal asymptotics:sharp interface limit:momentum balance:%
                lhs integrated}
                \begin{split}
                    &\phaseFieldParam^{-1}
                    \integral{
                        \tubNeighbourhood{\ifCoordsDelta}{\interfOne{}}
                        \cup
                        \tubNeighbourhood{\ifCoordsDelta}{\interfTwo{}}
                    }{}{
                        \left(
                        -
                        \simpleDeriv{\prime}{\left(
                            \hat{\viscosity}
                            \simpleDeriv{\prime}{
                                \schemeLocalFun{\pfVelocity{}{}}_1
                            }
                        \right)}
                        \concat\interfCoords[\phaseFieldParam]{}{} 
                        +
                        \simpleDeriv{\prime}{
                          \left(
                            \transposed{
                              \widehat{\grad{\surfaceProto_{\sdist{}{}}}{}{}
                                \pfVelocity{}{}_0}
                            }
                            \transposed{\hat{\viscosity}}
                          \right)
                        }
                        \extNormal{}
                        \concat\interfCoords[\phaseFieldParam]{}{}\extNormal{}
                        +
                        \extNormal{}\tensorProd\simpleDeriv{\prime\prime}{\schemeLocalFun{\pfVelocity{}{}}_1}
                        \concat\interfCoords[\phaseFieldParam]{}{}\extNormal{}
                        +
                        \simpleDeriv{\prime}{\schemeLocalFun{\pfPressure{}{}}_0}
                        \concat\interfCoords[\phaseFieldParam]{}{} 
                        \extNormal{}
                        \right)\cdot\testFunVelocity{}
                    }{\lebesgueM{3}}
                    =
                    \\
                    &\integral{
                        -\frac{\ifCoordsDelta}{\phaseFieldParam}
                    }{
                        \frac{\ifCoordsDelta}{\phaseFieldParam}
                    }{
                        \integral{
                            \interfOne{}_{\phaseFieldParam\fastVar} 
                            \cup 
                            \interfTwo{}_{\phaseFieldParam\fastVar}
                        }{}{
                            \left(
                            \left(
                            -
                            \simpleDeriv{\prime}{\left(
                                \hat{\viscosity}
                                \simpleDeriv{\prime}{
                                    \schemeLocalFun{\pfVelocity{}{}}_1
                                }
                            \right)}
                            +
                            \simpleDeriv{\prime}{
                              \left(
                                \transposed{
                                  \widehat{\grad{\surfaceProto_{\phaseFieldParam\fastVar}}{}{}
                                    \pfVelocity{}{}_0}
                                }
                                \transposed{\hat{\viscosity}}
                              \right)
                            }
                            \hat{\nu}
                            +
                            \hat{\nu}
                            \tensorProd\simpleDeriv{\prime\prime}{\schemeLocalFun{\pfVelocity{}{}}_1}
                            \hat{\nu}
                            \right)
                            (\orthProj{\interfTwo{}\cup\interfOne{}}{\sigma},\fastVar)
                            +
                            \simpleDeriv{\prime}{\schemeLocalFun{\pfPressure{}{}}_0}
                            (\orthProj{\interfTwo{}\cup\interfOne{}}{\sigma},\fastVar)
                            \extNormal{\sigma}
                            \right)\cdot\testFunVelocity{}
                        }{\hausdorffM{2}(\sigma)}
                    }{\fastVar}
                    =
                    \\
                    &\integral{
                        \interfOne{}_{\phaseFieldParam\fastVar} 
                        \cup 
                        \interfTwo{}_{\phaseFieldParam\fastVar}
                    }{}{
                        \left(
                        \left[
                          	-\schemeLocalFun{\viscosity}
                          	\simpleDeriv{\prime}{\schemeLocalFun{\pfVelocity{}{}}_1}
                            +
                            \left(
                              \transposed{
                                \widehat{\grad{\surfaceProto_{\phaseFieldParam\fastVar}}{}{}
                                  \pfVelocity{}{}_0}
                              }
                              \transposed{\hat{\viscosity}}
                            \right)
                            \hat{\nu}
	                        +
                            \hat{\nu}
	                        \tensorProd
	                        \simpleDeriv{\prime}{\schemeLocalFun{\pfVelocity{}{}}_1}
                            \hat{\nu}
                        \right]_{-\frac{\ifCoordsDelta}{\phaseFieldParam}}^{\frac{\ifCoordsDelta}{\phaseFieldParam}}
                        (\orthProj{\interfTwo{}\cup\interfOne{}}{\sigma})
                        +
                        \left[
                            \schemeLocalFun{\pfPressure{}{}}_0
                        \right]_{-\frac{\ifCoordsDelta}{\phaseFieldParam}}^{\frac{\ifCoordsDelta}{\phaseFieldParam}}
                        (\orthProj{\interfTwo{}\cup\interfOne{}}{\sigma})
                        \extNormal{\sigma}
                        \right)\cdot\testFunVelocity{}
                    }{\hausdorffM{2}(\sigma)}.
                \end{split}
            \end{equation}
            We can rewrite the integral of
            $
                -\schemeLocalFun{\viscosity}
                \simpleDeriv{\prime}{\schemeLocalFun{\pfVelocity{}{}}_1}
                +
                \left(
                  \transposed{
                    \widehat{\grad{\surfaceProto_{\phaseFieldParam\fastVar}}{}{}
                      \pfVelocity{}{}_0}
                  }
                  \transposed{\hat{\viscosity}}
                \right)
                \hat{\nu}
                +
                \hat{\nu}
                \tensorProd
                \simpleDeriv{\prime}{\schemeLocalFun{\pfVelocity{}{}}_1}
                \hat{\nu}
            $ 
            in $ \fastVar $ 
            by looking at the expansions of 
            $
            	\grad{}{}{}\pfVelocity{}{}\extNormal{}
            $ 
            and of
            $
            	\transposed{\grad{}{}{}\pfVelocity{}{}}\extNormal{}
            $ 
            in interfacial coordinates:
            \begin{equation*}
                \begin{split}
            	\grad{}{}{
                  	\schemeLocalFun{\pfVelocity{}{}}\concat\interfCoords[\phaseFieldParam]{}{}
                }
                \extNormal{}
                &=
                \phaseFieldParam^{-1}\simpleDeriv{\prime}{\schemeLocalFun{\pfVelocity{}{}}_0}
                \concat\interfCoords[\phaseFieldParam]{}{}
                +
                \simpleDeriv{\prime}{\schemeLocalFun{\pfVelocity{}{}}_1}
                \concat\interfCoords[\phaseFieldParam]{}{}
                +
                \landauBigo{\phaseFieldParam}
                \\
                &=
                \simpleDeriv{\prime}{\schemeLocalFun{\pfVelocity{}{}}_1}
                \concat\interfCoords[\phaseFieldParam]{}{}
                +
                \landauBigo{\phaseFieldParam},
                \end{split}
            \end{equation*}
            and
            \begin{equation*}
                \begin{split}
                \transposed{
            	\grad{}{}{
                  	\schemeLocalFun{\pfVelocity{}{}}\concat\interfCoords[\phaseFieldParam]{}{}
                }}
                \extNormal{}
                &=
                \phaseFieldParam^{-1}
                \extNormal{}
                \tensorProd
                \simpleDeriv{\prime}{\schemeLocalFun{\pfVelocity{}{}}_0}
                \concat\interfCoords[\phaseFieldParam]{}{}
                \extNormal{}
                +
                \transposed{\grad{\surfaceProto_{\sdist{}{}}}{}{}\pfVelocity{}{}_0}
                \extNormal{}
                +
                \extNormal{}
                \tensorProd
                \simpleDeriv{\prime}{\schemeLocalFun{\pfVelocity{}{}}_1}
                \concat\interfCoords[\phaseFieldParam]{}{}
                \extNormal{}
                +
                \landauBigo{\phaseFieldParam}
                \\
                &=
                \transposed{\grad{\surfaceProto_{\sdist{}{}}}{}{}\pfVelocity{}{}_0}
                \extNormal{}
                +
                \extNormal{}
                \tensorProd
                \simpleDeriv{\prime}{\schemeLocalFun{\pfVelocity{}{}}_1}
                \concat\interfCoords[\phaseFieldParam]{}{}
                \extNormal{}
                +
                \landauBigo{\phaseFieldParam},
                \end{split}
            \end{equation*}
            respectively
            (following with 
            \eqref{equ:formally matched asymptotics:jac rescaled distance}
            and 
            \eqref{equ:formal asymptotics:sharp interface limit:composite expansion:%
            leading order normal deriv vel zero}).
            With the
            matching conditions for
            $ \grad{}{}{}\pfVelocity{}{}\extNormal{} $ and $ \transposed{\grad{}{}{}\pfVelocity{}{}}
            \extNormal{} $,
            we further obtain
            \begin{equation}
                \label{equ:formal asymptotics:sharp interface limit:momentum balance:matching normal %
                deriv velocity}
                \begin{split}
            	\lim_{\alpha\searrow0}
                (
                \grad{}{}{}\pfVelocityIdx{0}{\NOARG}{}
                \extNormal{}                
                )
                (\orthProj{S}{x}+
                \alpha\normal[S]{\orthProj{S}{x}})
                &=
                (\lim_{\fastVar\nearrow\infty}
                \simpleDeriv{\prime}{\schemeLocalFun{\pfVelocity{}{}}_1})
                \concat\interfCoords[\phaseFieldParam]{\NOARG}{x},
                \\
            	\lim_{\alpha\searrow0}
                (
                \transposed{\grad{}{}{}\pfVelocityIdx{0}{\NOARG}{}}
                \extNormal{}                
                )
                (\orthProj{S}{x}+
                \alpha\normal[S]{\orthProj{S}{x}})
                &=         
                \lim_{\fastVar\nearrow\infty}(
                \transposed{
                \widehat{
                \grad{\surfaceProto_{\sdist{}{}}}{}{}\pfVelocity{}{}_0
                }})
                \concat\interfCoords[\phaseFieldParam]{}{}
                \extNormal{}
                +
                \extNormal{}
                \tensorProd
                \lim_{\fastVar\nearrow\infty}(
                \simpleDeriv{\prime}{\schemeLocalFun{\pfVelocity{}{}}_1})
                \concat\interfCoords[\phaseFieldParam]{}{}
                \extNormal{}.
                \end{split}
            \end{equation}
            The computations go analogously for the limits $ \alpha \nearrow 0 $ and $ \fastVar \searrow -\infty $.
            Together, both limits form a jump $ \jump{\cdot} $.
            Now we pass $ \phaseFieldParam \searrow 0 $ in \eqref{equ:formal asymptotics:%
            sharp interface limit:momentum balance:lhs integrated} and insert
            the matching condition for the pressure and \eqref{equ:formal asymptotics:sharp interface limit:%
            momentum balance:matching normal deriv velocity}. This reveals
            \begin{equation}
                \label{equ:formal asymptotics:sharp interface limit:momentum balance:%
                recov interfacial jump stress tensor}
                \begin{split}
                \phaseFieldParam^{-1}
                &\integral{
                    \tubNeighbourhood{\ifCoordsDelta}{\interfOne{}}
                    \cup
                    \tubNeighbourhood{\ifCoordsDelta}{\interfTwo{}}
                }{}{
                    \left(
                    -
                    \simpleDeriv{\prime}{\left(
                        \hat{\viscosity}
                        \simpleDeriv{\prime}{
                            \schemeLocalFun{\pfVelocity{}{}}_1
                        }
                    \right)}
                    \concat\interfCoords[\phaseFieldParam]{}{} 
                    +
                    \simpleDeriv{\prime}{
                      \left(
                        \transposed{
                          \widehat{\grad{\surfaceProto_{\sdist{}{}}}{}{}
                            \pfVelocity{}{}_0}
                        }
                        \transposed{\hat{\viscosity}}
                      \right)
                    }
                    \extNormal{}
                    \concat\interfCoords[\phaseFieldParam]{}{}\extNormal{}
                    +
                    \extNormal{}\tensorProd\simpleDeriv{\prime\prime}{\schemeLocalFun{\pfVelocity{}{}}_1}
                    \concat\interfCoords[\phaseFieldParam]{}{}\extNormal{}
                    +
                    \simpleDeriv{\prime}{\schemeLocalFun{\pfPressure{}{}}_0}
                    \concat\interfCoords[\phaseFieldParam]{}{} 
                    \extNormal{}
                    \right)\cdot\testFunVelocity{}
                }{\lebesgueM{3}}
                \specConv{\phaseFieldParam\searrow0}
                \\
                &\integral{\interfOne{}\cup\interfTwo{}}{}{
                  	-\jump{
                      	\viscosity(\grad{}{}{}\pfVelocityIdx{0}{}{}+
                        \transposed{\grad{}{}{}\pfVelocityIdx{0}{}{}})
                        -
                        \pfPressureIdx{0}{}{}
                    }
                    \normal{}
                    \cdot\testFunVelocity{}
                }{\hausdorffM{2}},
                \end{split}
            \end{equation}
            where $ \normal{}\in\{\normal[\interfOne{}]{},\normal[\interfTwo{}]{}\} $ depending 
            on what surface the integrand is to be understood.
            
            The right hand side of \eqref{equ:modelling:phase field resulting PDEs:momentum balance}
            in variational form is
            \begin{equation}
                \label{equ:formal asymptotics:singular limit:navier-stokes right hand side}
                \begin{split}
                    f(\testFunVelocity{})
                    &=
                    \LTProd{\tubNeighbourhood{\ifCoordsDelta}{\interfOne{}}}{
                        \grad{\membrPhaseField{}{}}{L^2}{}\pfGenWillmoreEnergyFunct{\membrPhaseFieldSym}{}
                        +
                        \grad{\membrPhaseField{}{}}{L^2}{}\ginzburgLandauEnergyFunct{\membrPhaseFieldSym}{}
                        +
                        \grad{\membrPhaseField{}{}}{L^2}{}
                        \pfCouplingEnergy{\membrPhaseFieldSym,\cortexPhaseFieldSym,\pfSpeciesOne{}{}}{}
                    }{
                        \grad{}{}{}\membrPhaseField{}{}\cdot\testFunVelocity{}
                    }
                    \\
                    &+
                    \LTProd{\tubNeighbourhood{\ifCoordsDelta}{\interfTwo{}}}{
                        \grad{\cortexPhaseField{}{}}{L^2}{}\pfGenWillmoreEnergyFunct{\cortexPhaseFieldSym}{}
                        +
                        \grad{\cortexPhaseField{}{}}{L^2}{}\ginzburgLandauEnergyFunct{\cortexPhaseFieldSym}{}
                        +
                        \grad{\cortexPhaseField{}{}}{L^2}{}
                        \pfCouplingEnergy{\membrPhaseFieldSym,\cortexPhaseFieldSym,\pfSpeciesOne{}{}}{}
                    }{
                        \grad{}{}{}\cortexPhaseField{}{}\cdot\testFunVelocity{}
                    }
                    \\
                    &-
                    \VLTProd{\tubNeighbourhood{\ifCoordsDelta}{\interfTwo{}}}{3}{
                    \integral{\domain}{}{
                        \ginzburgLandauEnergyDens{\membrPhaseField{}{}}{}(y)
                        \orthProjMat{\normal[\cortexPhaseFieldSym]{}}{}
                        \grad{}{}{\pDiff{\pfSpeciesOne{}{}}{}{} \pfCouplingEnergyDens{}{}
                        (\cdot,y,\pfSpeciesOne{}{},\normal[\cortexPhaseFieldSym]{})
                        }
                        \ginzburgLandauEnergyDens{\cortexPhaseFieldSym}{}
                        \pfSpeciesOne{}{}
                    }{\lebesgueM{3}(y)}
                    }{\testFunVelocity{}{}}
                    \\
                    &-
                    \LTProd{\tubNeighbourhood{\ifCoordsDelta}{\interfTwo{}}}{
                        \integral{\tubNeighbourhood{\ifCoordsDelta}{\interfOne{}}}{}{
                            \ginzburgLandauEnergyDens{\membrPhaseField{}{}}{}(y)
                            \pfSpeciesOne{}{}
                            \meanCurv{\cortexPhaseField{}{}}{}
                            \pDiff{\pfSpeciesOne{}{}}{}{}\pfCouplingEnergyDens{}{}
                        }{\lebesgueM{3}(y)}
                    }{
                        \testFunVelocity{}\cdot\normal[\cortexPhaseField{}{}]{}
                    }.
                \end{split}
            \end{equation}
            Note that we can rewrite
            \begin{equation*}
                -
                \LTProd{\tubNeighbourhood{\ifCoordsDelta}{\interfTwo{}}}{
                    \integral{\tubNeighbourhood{\ifCoordsDelta}{\interfOne{}}}{}{
                        \ginzburgLandauEnergyDens{\membrPhaseField{}{}}{}(y)
                        \pfSpeciesOne{}{}
                        \meanCurv{\cortexPhaseField{}{}}{}
                        \pDiff{\pfSpeciesOne{}{}}{}{}\pfCouplingEnergyDens{}{}
                    }{\lebesgueM{3}(y)}
                }{
                    \testFunVelocity{}\cdot\normal[\cortexPhaseField{}{}]{}
                }
                =
                -
                \LTProd{\tubNeighbourhood{\ifCoordsDelta}{\interfTwo{}}}{
                    \pDiff{\pfSpeciesOne{}{}{}}{}{}\pfCouplingIntMembr 
                    \meanCurv{\cortexPhaseFieldSym}{}
                    \pfSpeciesOne{}{}
                }{
                    \testFunVelocity{}\cdot\normal[\cortexPhaseField{}{}]{}
                }
            \end{equation*}
            and
            \begin{align*}
                -
                \VLTProd{\tubNeighbourhood{\ifCoordsDelta}{\interfTwo{}}}{3}{
                    \integral{\domain}{}{
                        \ginzburgLandauEnergyDens{\membrPhaseField{}{}}{}(y)
                        \orthProjMat{\normal[\cortexPhaseFieldSym]{}}{}
                        \grad{}{}{\pDiff{\pfSpeciesOne{}{}}{}{} \pfCouplingEnergyDens{}{}
                        (\cdot,y,\pfSpeciesOne{}{},\normal[\cortexPhaseFieldSym]{})
                        }
                        \ginzburgLandauEnergyDens{\cortexPhaseFieldSym}{}
                        \pfSpeciesOne{}{}
                    }{\lebesgueM{3}(y)}
                }{\testFunVelocity{}{}}
                =
                \\
                -
                \VLTProd{\tubNeighbourhood{\ifCoordsDelta}{\interfTwo{}}}{3}{
                  	\ginzburgLandauEnergyDens{\cortexPhaseFieldSym}{}
                    \orthProjMat{\normal[\cortexPhaseFieldSym]{}}{}
                    \grad{}{}{}
                    \pDiff{\pfSpeciesOne{}{}}{}{}\pfCouplingIntMembr
                    \pfSpeciesOne{}{}
                }{
                  	\testFunVelocity{}
                },
            \end{align*}
            which we use in the following as abbreviation.

            To pass \eqref{equ:formal asymptotics:singular limit:navier-stokes right hand side}
            to the limit, we treat the gradients of the energies separately.
            The gradients of $ \pfGenWillmoreEnergyFunct{}{} $ and $ \ginzburgLandauEnergyFunct{}{} $
            have the same structure for both $ \membrPhaseFieldSym $ and $ \cortexPhaseFieldSym $, and 
            can therefore be treated verbatim.
            For $ \pfCouplingEnergy{}{} $ we distinguish
            the derivatives w.r.t. $ \membrPhaseFieldSym $ and $ \cortexPhaseFieldSym $.
            Let us start our analysis with $ \grad{\phaseFieldProto}{L^2}{}\pfGenWillmoreEnergyFunct{}{} $.

		\subsubsection{Force terms of $ \grad{\phaseFieldProto}{L^2}{}\pfGenWillmoreEnergyFunct{}{} $}
            In this section we show the following lemma:
            \begin{lem}
                \label{lemma:formal asymptotics:singular limit:canham-helfrich force terms:approximation}
                Let $ S \in \{\interfOne{},\interfTwo{}\} $ and $ \phaseFieldProto
                \in \{\membrPhaseFieldSym,\cortexPhaseFieldSym\} $ such that $ S $ 
                is the boundary layer for $ \phaseFieldProto $. The following limit holds true:
                \begin{equation*}
                    \bendRig
                    \integral{\tubNeighbourhood{\ifCoordsDelta}{\surfaceProto}}{}{
                        \left(
                            -\laplacian{}{}{ \chemPot{\phaseFieldProto}{} }	
                            +
                            \chemPot{\phaseFieldProto}{}
                            \phaseFieldParam^{-2}
                            \doubleWellPot[\schemeSupscr{\prime\prime}]{\phaseFieldProto}{}
                        \right)
                        \grad{}{}{}\phaseFieldProto
                        \cdot
                        \testFunVelocity{}
                    }{\lebesgueM{3}}            
                    \specConv{\phaseFieldParam \to 0}
                    -
                    C
                    \integral{\surfaceProto}{}{
                        \left(
                        2 \laplacian{\surfaceProto}{}{}\meanCurv{\surfaceProto}{}
                        +
                        \meanCurv{\surfaceProto}{}
                        \left(
                            \meanCurv{\surfaceProto}{}^2
                            -
                            4\gaussCurv{\surfaceProto}{}
                        \right)
                        \right)
                        \normal[S]{}\cdot\testFunVelocity{}
                    }{\hausdorffM{2}}
                \end{equation*}
                for a constant $ C $.
            \end{lem}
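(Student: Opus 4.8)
The plan is to pass to the limit in interfacial coordinates by isolating the single surviving order $\phaseFieldParam^{-1}$, applying the concentration Lemma~\ref{lemma:addendum formal asymptotics:concentration}, and then identifying the resulting normal $\fastVar$-integral with the first variation of the Willmore energy by projecting the order $\phaseFieldParam^0$ coefficient onto the kernel of the linearised profile operator.

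First I would invoke the outcome of Section~\ref{sec:Expanding the L2 gradient of the canham-helfrich energy}, namely $\schemeLocalFun{e}_{-3}=\schemeLocalFun{e}_{-2}=\schemeLocalFun{e}_{-1}=0$, so that $\grad{\phaseFieldProto}{L^2}{}\pfGenWillmoreEnergyFunct{}{}=\schemeLocalFun{e}_0\concat\interfCoords[\phaseFieldParam]{}{}+\landauBigo{\phaseFieldParam}$. Together with $\schemeLocalFun{\phaseFieldProto}_1=0$ from \eqref{equ:formal asymptotics:inner expansion:L2 grad gen willm:scale -2:result} and the gradient formula \eqref{equ:formal asymptotics:transformation interf coords:phase field grad}, which yields $\grad{}{}{}\phaseFieldProto=\phaseFieldParam^{-1}\simpleDeriv{\prime}{\schemeLocalFun{\phaseFieldProto}_0}\concat\interfCoords[\phaseFieldParam]{}{}\extNormal{}+\landauBigo{\phaseFieldParam}$, the integrand reduces to
\begin{equation*}
    \left(-\laplacian{}{}{\chemPot{\phaseFieldProto}{}}+\chemPot{\phaseFieldProto}{}\phaseFieldParam^{-2}\doubleWellPot[\schemeSupscr{\prime\prime}]{\phaseFieldProto}{}\right)\grad{}{}{}\phaseFieldProto\cdot\testFunVelocity{}
    =
    \phaseFieldParam^{-1}\schemeLocalFun{e}_0\,\simpleDeriv{\prime}{\schemeLocalFun{\phaseFieldProto}_0}\concat\interfCoords[\phaseFieldParam]{}{}\extNormal{}\cdot\testFunVelocity{}+\landauBigo{1}.
\end{equation*}
The surviving prefactor $\phaseFieldParam^{-1}$ on $\tubNeighbourhood{\ifCoordsDelta}{\surfaceProto}$ is exactly the setting of Lemma~\ref{lemma:addendum formal asymptotics:concentration}: writing $\schemeLocalFun{e}_0\,\simpleDeriv{\prime}{\schemeLocalFun{\phaseFieldProto}_0}$ as a finite sum of products of a curvature coefficient in $\projVar$ with a profile decaying in $\fastVar$ — the decay following from the exponential decay of $\simpleDeriv{\prime}{\schemeLocalFun{\phaseFieldProto}_0}(\fastVar)=\frac{1}{\sqrt{2}}(1-\tanh^2(\frac{\fastVar}{\sqrt{2}}))$ against the at most polynomial growth of the remaining factors — the limit becomes
\begin{equation*}
    \bendRig\integral{\surfaceProto}{}{\left(\integral{-\infty}{\infty}{\schemeLocalFun{e}_0(\projVar,\fastVar)\,\simpleDeriv{\prime}{\schemeLocalFun{\phaseFieldProto}_0}(\fastVar)}{\lebesgueM{1}(\fastVar)}\right)\normal[S]{}\cdot\testFunVelocity{}}{\hausdorffM{2}}.
\end{equation*}

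It then remains to evaluate the inner integral, for which I would assemble $\schemeLocalFun{e}_0$ from the order $\phaseFieldParam^0$ parts of the expansions of $\laplacian{}{}{\chemPot{\phaseFieldProto}{}}$ and $\phaseFieldParam^{-2}\doubleWellPot[\schemeSupscr{\prime\prime}]{\phaseFieldProto}{}\chemPot{\phaseFieldProto}{}$ computed in Section~\ref{sec:Expanding the L2 gradient of the canham-helfrich energy}, using $\chemPotIdx{-1}{\phaseFieldProto}{}=0$, $\schemeLocalFun{\phaseFieldProto}_1=0$, $\chemPotIdx{0}{\phaseFieldProto}{}=\simpleDeriv{\prime}{\schemeLocalFun{\phaseFieldProto}_0}\extMeanCurv{}$ and the expression \eqref{equ:formal asymptotics:expansion of the L2 gradient of the canham-helfrich energy:order minus one} for $\chemPotIdxPFWD{1}{\schemeLocalFun{\phaseFieldProto}}{}$. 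The decisive point is a projection argument. Let $\mathcal{L}v\colonequals-\simpleDeriv{\prime\prime}{v}+\doubleWellPot[\schemeSupscr{\prime\prime}]{\schemeLocalFun{\phaseFieldProto}_0}v$ be the linearisation of the profile equation; differentiating $\simpleDeriv{\prime\prime}{\schemeLocalFun{\phaseFieldProto}_0}=\doubleWellPot[\schemeSupscr{\prime}]{\schemeLocalFun{\phaseFieldProto}_0}$ gives $\mathcal{L}\simpleDeriv{\prime}{\schemeLocalFun{\phaseFieldProto}_0}=0$, and $\mathcal{L}$ is formally self-adjoint on functions decaying in $\fastVar$. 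Inserting $\chemPotIdx{1}{\phaseFieldProto}{}=\mathcal{L}\schemeLocalFun{\phaseFieldProto}_2$ and $\chemPotIdx{2}{\phaseFieldProto}{}=\simpleDeriv{\prime}{\schemeLocalFun{\phaseFieldProto}_2}\extMeanCurv{}+\mathcal{L}\schemeLocalFun{\phaseFieldProto}_3$ and regrouping, every occurrence of the undetermined profiles $\schemeLocalFun{\phaseFieldProto}_2,\schemeLocalFun{\phaseFieldProto}_3$ collects into the clusters $\mathcal{L}^2\schemeLocalFun{\phaseFieldProto}_3$ and $\extMeanCurv{}\,\simpleDeriv{\prime}{(\mathcal{L}\schemeLocalFun{\phaseFieldProto}_2)}=\extMeanCurv{}\,\simpleDeriv{\prime}{\chemPotIdxPFWD{1}{\schemeLocalFun{\phaseFieldProto}}{}}$, leaving
\begin{equation*}
    \schemeLocalFun{e}_0=-\laplacian{\surfaceProto_{\sdist{}{}}}{}{\left(\simpleDeriv{\prime}{\schemeLocalFun{\phaseFieldProto}_0}\extMeanCurv{}\right)}+2\,\extMeanCurv{}\,\simpleDeriv{\prime}{\chemPotIdxPFWD{1}{\schemeLocalFun{\phaseFieldProto}}{}}+\mathcal{L}^2\schemeLocalFun{\phaseFieldProto}_3.
\end{equation*}
Since $\int\mathcal{L}^2\schemeLocalFun{\phaseFieldProto}_3\,\simpleDeriv{\prime}{\schemeLocalFun{\phaseFieldProto}_0}=\int\mathcal{L}\schemeLocalFun{\phaseFieldProto}_3\,\mathcal{L}\simpleDeriv{\prime}{\schemeLocalFun{\phaseFieldProto}_0}=0$, the unknown profiles drop out of the $\fastVar$-integral and only the explicit curvature remainder contributes.

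Finally I would carry out the $\fastVar$-integrations. With $\extMeanCurv{}\to\meanCurv{\surfaceProto}{}$ at leading order and $\integral{-\infty}{\infty}{(\simpleDeriv{\prime}{\schemeLocalFun{\phaseFieldProto}_0})^2}{\lebesgueM{1}(\fastVar)}=\tanhIntConst$, the first term gives $-\tanhIntConst\laplacian{\surfaceProto}{}{}\meanCurv{\surfaceProto}{}$ (pulling $\simpleDeriv{\prime}{\schemeLocalFun{\phaseFieldProto}_0}$ out of the tangential Laplacian); inserting \eqref{equ:formal asymptotics:expansion of the L2 gradient of the canham-helfrich energy:order minus one} and integrating by parts, using $\integral{-\infty}{\infty}{\fastVar\,\simpleDeriv{\prime\prime}{\schemeLocalFun{\phaseFieldProto}_0}\simpleDeriv{\prime}{\schemeLocalFun{\phaseFieldProto}_0}}{\lebesgueM{1}(\fastVar)}=-\frac{1}{2}\tanhIntConst$, turns the second term into $-\frac{1}{2}\tanhIntConst\meanCurv{\surfaceProto}{}(\meanCurv{\surfaceProto}{}^2-4\gaussCurv{\surfaceProto}{})$, so that
\begin{equation*}
    \integral{-\infty}{\infty}{\schemeLocalFun{e}_0\,\simpleDeriv{\prime}{\schemeLocalFun{\phaseFieldProto}_0}}{\lebesgueM{1}(\fastVar)}=-\frac{\tanhIntConst}{2}\left(2\laplacian{\surfaceProto}{}{}\meanCurv{\surfaceProto}{}+\meanCurv{\surfaceProto}{}(\meanCurv{\surfaceProto}{}^2-4\gaussCurv{\surfaceProto}{})\right),
\end{equation*}
which is the claim with $C=\frac{1}{2}\bendRig\tanhIntConst$, matching the normal-variation computation of \cite[Thm.~2.13]{Wa2008}. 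The main obstacle is precisely this regrouping: one must assemble the complete order $\phaseFieldParam^0$ coefficient — including the cross-terms generated when the Laplacian transformation \eqref{equ:appendix:formally matched asymptotics:laplacian rescaled distance} meets the $\fastVar$-dependence of $\extMeanCurv{}$ expanded via Lemma~\ref{lemma:formal asymptotics:expansions} — and verify that every term carrying $\schemeLocalFun{\phaseFieldProto}_2,\schemeLocalFun{\phaseFieldProto}_3$ indeed collapses into an $\mathcal{L}$-cluster, while the factor $2$ multiplying $\extMeanCurv{}\,\simpleDeriv{\prime}{\chemPotIdxPFWD{1}{\schemeLocalFun{\phaseFieldProto}}{}}$ survives intact; it is exactly this factor that fixes the ratio $2:1$ between the $\laplacian{\surfaceProto}{}{}\meanCurv{\surfaceProto}{}$ and $\meanCurv{\surfaceProto}{}(\meanCurv{\surfaceProto}{}^2-4\gaussCurv{\surfaceProto}{})$ contributions.
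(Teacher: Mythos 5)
Your proposal follows essentially the same route as the paper's proof: isolate the single surviving order $\phaseFieldParam^{-1}$, pass to the limit via Lemma~\ref{lemma:addendum formal asymptotics:concentration}, eliminate the undetermined profiles through the kernel of the linearised profile operator, and evaluate the curvature remainder with $\int_{-\infty}^{\infty}\fastVar\,\simpleDeriv{\prime\prime}{\schemeLocalFun{\phaseFieldProto}_0}\simpleDeriv{\prime}{\schemeLocalFun{\phaseFieldProto}_0}=-\tfrac{1}{2}\int_{-\infty}^{\infty}(\simpleDeriv{\prime}{\schemeLocalFun{\phaseFieldProto}_0})^2$; the resulting constant $C=\tfrac{1}{2}\bendRig\tanhIntConst$ and the $2{:}1$ ratio agree with the paper. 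Your self-adjointness argument $\int\mathcal{L}^2\schemeLocalFun{\phaseFieldProto}_3\,\simpleDeriv{\prime}{\schemeLocalFun{\phaseFieldProto}_0}=\int\mathcal{L}\schemeLocalFun{\phaseFieldProto}_3\,\mathcal{L}\simpleDeriv{\prime}{\schemeLocalFun{\phaseFieldProto}_0}=0$ is a cleaner packaging of the paper's explicit double integration by parts in $\fastVar$, which additionally verifies that the $\fastVar$-derivatives of the Jacobian and of $\normal[\phaseFieldProto]{}\cdot\testFunVelocity{}$ are $\landauBigo{\phaseFieldParam}$ — a point your shortcut absorbs silently, harmlessly at the formal level. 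The one inaccuracy is your displayed formula for $\schemeLocalFun{e}_0$: the complete order-$\phaseFieldParam^{0}$ coefficient also contains the curvature cross-terms $\simpleDeriv{\prime}{\schemeLocalFun{\phaseFieldProto}_0}\,\extMeanCurv{}\big(\extMeanCurv{}^2-2\extGaussCurv{}\big)$ and $-2\simpleDeriv{\prime}{\schemeLocalFun{\phaseFieldProto}_0}\,\extMeanCurv{}\big(\extMeanCurv{}^2-3\extGaussCurv{}\big)$ together with their $\fastVar$-weighted partners produced by expanding $\extMeanCurv{}$ via Lemma~\ref{lemma:formal asymptotics:expansions}; these do not regroup into $\mathcal{L}$-clusters and do not vanish pointwise, but cancel only after the $\fastVar$-integration against $\simpleDeriv{\prime}{\schemeLocalFun{\phaseFieldProto}_0}$, by the very same integration-by-parts identity you use for the $\chemPotIdxPFWD{1}{\schemeLocalFun{\phaseFieldProto}}{}$ term. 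You correctly flag this as the main obstacle but do not carry it out; since the omitted contributions integrate to zero, the final limit is unaffected.
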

            The strategy of the proof is as follows:
            In Section~\ref{sec:Expanding the L2 gradient of the canham-helfrich energy},
            we have expanded the gradient
            $ 
            	\grad{\phaseFieldProto}{L^2}{}\pfGenWillmoreEnergyFunct{}{}
            $
            and concluded from the energy inequality that 
            all its terms up to order $ \phaseFieldParam^{-1} $ must equal zero.
            The terms remaining on order $\phaseFieldParam^0 $ are shifted to order $ \phaseFieldParam^{-1} $
            by multiplication with $ \grad{}{}{}\phaseFieldProto\cdot\testFunVelocity{} $,
            which is just the right scaling for obtaining the claimed limit using
            Lemma~\ref{lemma:addendum formal asymptotics:concentration}.
            
            \begin{proof}[Proof of~\autoref{lemma:formal asymptotics:singular limit:%
                canham-helfrich force terms:approximation}%
            ]
            Collect all the terms of
            $
                \LTProd{\tubNeighbourhood{\ifCoordsDelta}{\surfaceProto}}{
                    \grad{\phaseFieldProto}{L^2}{}\pfGenWillmoreEnergyFunct{}{}
                }{
                    \grad{}{}{}\phaseFieldProto\cdot\testFunVelocity{}
                }
            $
            on order $\phaseFieldParam^{-1}$:
            \begin{equation*}
                \begin{split}
                    \LTProd{\tubNeighbourhood{\ifCoordsDelta}{\surfaceProto}}{
                        \grad{\phaseFieldProto}{L^2}{}\pfGenWillmoreEnergyFunct{}{}
                    }{
                        \grad{}{}{}\phaseFieldProto\cdot\testFunVelocity{}
                    }
                    &=
                    \phaseFieldParam^{-1}
                    \int_{\tubNeighbourhood{\ifCoordsDelta}{\surfaceProto}}
                        \bigg(
                            -\laplacian{\surfaceProto_{\sdist{}{x}}}{}{
                              	\chemPotIdx{0}{\phaseFieldProto}{}
                            }
                            +
                            \simpleDeriv{\prime}{
                                \chemPotIdxPFWD{1}{\schemeLocalFun{\phaseFieldProto}}{}
                            }
                            \concat
                            j
                            \extMeanCurv{}{}
                            -
                            \simpleDeriv{\prime\prime}{
                                \chemPotIdxPFWD{2}{\schemeLocalFun{\phaseFieldProto}}{}
                            }
                            \concat
                            \interfCoords[\phaseFieldParam]{}{}
                    \\
                            &+
                            \chemPotIdx{2}{\phaseFieldProto}{}                    
                            \doubleWellPot[\schemeSupscr{\prime\prime}]{\phaseFieldProto_0}
                            +
                            \chemPotIdx{0}{\phaseFieldProto}{}
                            \doubleWellPot[\schemeSupscr{(3)}]{\phaseFieldProto_0}
                            \phaseFieldProto_2               
                    \\
	                    &+
                      	\simpleDeriv{\prime}{\schemeLocalFun{\phaseFieldProto}_0}
                        \concat
                        \interfCoords[\phaseFieldParam]{}{}
                        \grad{}{}{}\extMeanCurv{}{}\cdot
                        \extNormal{}
                        \extMeanCurv{}{}
                        +
                        2
                        \phaseFieldParam^{-1}
                        \sdist{}{x}
                        \simpleDeriv{\prime\prime}{
                          	\schemeLocalFun{\phaseFieldProto}_0
                        }
                        \concat
                        \interfCoords[\phaseFieldParam]{}{}
                        \restrFun{\extMeanCurv{}}{\surfaceProto}
                        \left(
                            \restrFun{\extMeanCurv{}}{\surfaceProto}^2
                            -
                            2
                            \restrFun{\extGaussCurv{}{}}{\surfaceProto}
                        \right)
                    \\
                    	&-
                            \simpleDeriv{\prime}{\schemeLocalFun{\phaseFieldProto}_0}
                            \concat
                            \interfCoords[\phaseFieldParam]{}{}
                            \grad{}{2}{}\extMeanCurv{}{}
                            \frobProd 
                            \extNormal{}\tensorProd\extNormal{}
                            -
                            4
                            \phaseFieldParam^{-1}
                            \sdist{}{x}
                            \simpleDeriv{\prime\prime}{\schemeLocalFun{\phaseFieldProto}_0}
                            \concat
                            \interfCoords[\phaseFieldParam]{}{}
                            \left(
                                \restrFun{\extMeanCurv{}}{\surfaceProto}^3
                                - 
                                3 
                                \restrFun{\extMeanCurv{}}{\surfaceProto} 
                                \restrFun{\extGaussCurv{}{}}{\surfaceProto}
                            \right)
                    \bigg)
                    \simpleDeriv{\prime}{\schemeLocalFun{\phaseFieldProto}_0}
                    \concat\interfCoords[\phaseFieldParam]{}{}
                    \normal[\phaseFieldProto]{}\cdot\testFunVelocity{}\;
                    \mathrm{d}\lebesgueM{3}(x)
                    \\
                    &+
                    \landauBigo{\phaseFieldParam}.
                \end{split}
            \end{equation*}
            The terms on the first and second line are from the expansion of the 
            chemical potential and the double well potential.
            The left summand on line three stems from \eqref{equ:formal asymtptotics:inner expansion:%
            consequ bound on L2 grad gen willm:scale -1 main}, the right one from
            \eqref{equ:formal asymtptotics:inner expansion:consequ bound on L2 %
                        grad gen willm:scale -1:exp of mean curv squared}.
            The left summand on line four is taken from 
            \eqref{equ:formal asymptotics:inner expansion:consequ bound on L2 grad %
                        gen willm:finer scale analysis}
            (note 
            \eqref{equ:formal asymptotics:inner expansion:consequ bound on L2 grad %
            gen willm:exp of mean curv dderiv on -2}); the right summand is from
            \eqref{equ:formal asymtptotics:inner expansion:%
            consequ bound on L2 grad gen willm:scale -1 main}            
            (note 
            \eqref{equ:formal asymtptotics:inner expansion:consequ bound on L2 %
            grad gen willm:scale -1:exp of normal deriv mean curv}).            
            First, we substitute some expressions using
            Lemma~\ref{lemma:formal asymptotics:expansions} and
            \eqref{equ:formal asymptotics:inner expansion:L2 grad gen willm:%
            scale -2:result}:
            \begin{equation*}
                \begin{split}
                    \LTProd{\tubNeighbourhood{\ifCoordsDelta}{\surfaceProto}}{
                        \grad{\phaseFieldProto}{L^2}{}\pfGenWillmoreEnergyFunct{}{}
                    }{
                        \grad{}{}{}\phaseFieldProto\cdot\testFunVelocity{}
                    }
                    &=
                    \phaseFieldParam^{-1}
                    \int_{\tubNeighbourhood{\ifCoordsDelta}{\surfaceProto}}
                        \bigg(
                            -\simpleDeriv{\prime}{\schemeLocalFun{\phaseFieldProto}_0}
                            \concat
                            \interfCoords[\phaseFieldParam]{}{}
                            \laplacian{\surfaceProto_{\sdist{}{x}}}{}{}
                              	\extMeanCurv{}
                            +
                            \simpleDeriv{\prime}{
                                \chemPotIdxPFWD{1}{\schemeLocalFun{\phaseFieldProto}}{}
                            }
                            \concat
                            \interfCoords[\phaseFieldParam]{}{}
                            \extMeanCurv{}{}
                            -
                            \simpleDeriv{\prime\prime}{
                                \chemPotIdxPFWD{2}{\schemeLocalFun{\phaseFieldProto}}{}
                            }
                            \concat
                            \interfCoords[\phaseFieldParam]{}{}
                    \\
                            &+
                            \chemPotIdx{2}{\phaseFieldProto}{}                    
                            \doubleWellPot[\schemeSupscr{\prime\prime}]{\phaseFieldProto_0}
                            +
                            \chemPotIdx{0}{\phaseFieldProto}{}
                            \doubleWellPot[\schemeSupscr{(3)}]{\phaseFieldProto_0}
                            \phaseFieldProto_2               
                    \\
                            &+
                            \simpleDeriv{\prime}{\schemeLocalFun{\phaseFieldProto}_0}
                            \concat
                            \interfCoords[\phaseFieldParam]{}{}
                            \extMeanCurv{}
                            \left(
                                \extMeanCurv{}^2
                                -
                                2
                                \extGaussCurv{}{}
                            \right)
                            +
                            2
                            \phaseFieldParam^{-1}
                            \sdist{}{x}
                            \simpleDeriv{\prime\prime}{
                                \schemeLocalFun{\phaseFieldProto}_0
                            }
                            \concat
                            \interfCoords[\phaseFieldParam]{}{}
                            \restrFun{\extMeanCurv{}}{\surfaceProto}
                            \left(
                                \restrFun{\extMeanCurv{}}{\surfaceProto}^2
                                -
                                2
                                \restrFun{\extGaussCurv{}{}}{\surfaceProto}
                            \right)
                    \\
                    		&-
                            2
                            \simpleDeriv{\prime}{\schemeLocalFun{\phaseFieldProto}_0}
                            \concat
                            \interfCoords[\phaseFieldParam]{}{}
                            \left(
                                \extMeanCurv{}^3
                                - 
                                3 
                                \extMeanCurv{}
                                \extGaussCurv{}{}
                            \right)
                            -
                            4
                            \phaseFieldParam^{-1}
                            \sdist{}{x}
                            \simpleDeriv{\prime\prime}{\schemeLocalFun{\phaseFieldProto}_0}
                            \concat
                            \interfCoords[\phaseFieldParam]{}{}
                            \left(
                                \restrFun{\extMeanCurv{}}{\surfaceProto}^3
                                - 
                                3 
                                \restrFun{\extMeanCurv{}}{\surfaceProto} 
                                \restrFun{\extGaussCurv{}{}}{\surfaceProto}
                            \right)
                        \bigg)                        
                        \simpleDeriv{\prime}{\schemeLocalFun{\phaseFieldProto}_0}
                        \concat
                        \interfCoords[\phaseFieldParam]{}{}
                        \normal[\phaseFieldProto]{}
                        \cdot
                        \testFunVelocity{}
                    \mathrm{d}\lebesgueM{3}(x)
                    \\
                    &+
                    \landauBigo{\phaseFieldParam}.
                \end{split}
            \end{equation*}
            Second, we transform the integral using the co-area formula. 
            We denote 
            $ 
            	j(\sigma,\fastVar) = (\orthProj{\surfaceProto}{\sigma},\fastVar)
            $. 
            \begin{equation*}
                \begin{split}
                    \LTProd{\tubNeighbourhood{\ifCoordsDelta}{\surfaceProto}}{
                        \grad{\phaseFieldProto}{L^2}{}\pfGenWillmoreEnergyFunct{}{}
                    }{
                        \grad{}{}{}\phaseFieldProto\cdot\testFunVelocity{}
                    }
                    &=
                    \int_{-\frac{\delta}{\phaseFieldParam}}^{\frac{\delta}{\phaseFieldParam}}
                        \int_{\surfaceProto_{\phaseFieldParam\fastVar}}
                            \bigg(
                                -
                                \simpleDeriv{\prime}{\schemeLocalFun{\phaseFieldProto}_0}
                                \concat j
                                \laplacian{\surfaceProto_{\phaseFieldParam\fastVar}}{}{}
                                \extMeanCurv{}
                                +
                                \simpleDeriv{\prime}{
                                    \chemPotIdxPFWD{1}{\schemeLocalFun{\phaseFieldProto}}{}
                                }
                                \concat j
                                \extMeanCurv{}
                                -
                                \simpleDeriv{\prime\prime}{
                                    \chemPotIdxPFWD{2}{\schemeLocalFun{\phaseFieldProto}}{}
                                }
                                \concat j
                    \\
                                &+
                                \chemPotIdx{2}{\phaseFieldProto}{}
                                \doubleWellPot[\schemeSupscr{\prime\prime}]{
                                    \phaseFieldProto_0
                                }
                                +
                                \chemPotIdx{0}{\phaseFieldProto}{}
                                \doubleWellPot[\schemeSupscr{(3)}]{
                                    \phaseFieldProto_0
                                }
                                \phaseFieldProto_2
                    \\
                    			&+
                                \simpleDeriv{\prime}{\schemeLocalFun{\phaseFieldProto}_0}
                                \concat j
                                \extMeanCurv{}
                                \left(
                                    \extMeanCurv{}^2
                                    -
                                    2
                                    \extGaussCurv{}
                                \right)
                                +
                                2
                                \fastVar
                                \simpleDeriv{\prime\prime}{
                                    \schemeLocalFun{\phaseFieldProto}_0
                                }
                                \concat j
                                \restrFun{\extMeanCurv{}}{\surfaceProto}
                                \left(
                                    \restrFun{\extMeanCurv{}}{\surfaceProto}^2
                                    -
                                    2
                                    \restrFun{\extGaussCurv{}}{\surfaceProto}
                                \right)
                    \\
                    			&-
                                2
                                \simpleDeriv{\prime}{\schemeLocalFun{\phaseFieldProto}_0}
                                \concat j
                                \left(
                                    \extMeanCurv{}^3
                                    - 
                                    3 
                                    \extMeanCurv{}
                                    \extGaussCurv{}
                                \right)                            
                                -
                                4
                                \fastVar
                                \simpleDeriv{\prime\prime}{\schemeLocalFun{\phaseFieldProto}_0}
                                \concat j
                                \left(
                                    \restrFun{\extMeanCurv{}}{\surfaceProto}^3
                                    - 
                                    3 
                                    \restrFun{\extMeanCurv{}}{\surfaceProto} 
                                    \restrFun{\extGaussCurv{}}{\surfaceProto}
                                \right)
                            \quad\bigg)
                        \simpleDeriv{\prime}{\schemeLocalFun{\phaseFieldProto}_0}
                        \concat j
                        \normal[\phaseFieldProto]{}\cdot\testFunVelocity{}\,
                    \mathrm{d}\hausdorffM{2}(\sigma)\,\mathrm{d}\lebesgueM{1}(\fastVar)
                    \\
                    &+
                    \landauBigo{\phaseFieldParam}.
                \end{split}
            \end{equation*}

            With integration by parts, it follows directly that
            $ 
            	\integral{-\infty}{\infty}{
                  	\fastVar
                    \simpleDeriv{\prime\prime}{\schemeLocalFun{\phaseFieldProto}_0}
                    \simpleDeriv{\prime}{\schemeLocalFun{\phaseFieldProto}_0}
                }{\fastVar}
                =
                -\frac{1}{2}
                \integral{-\infty}{\infty}{
                  	\left( 
	                    \simpleDeriv{\prime}{\schemeLocalFun{\phaseFieldProto}_0}
                    \right)^2
                }{\fastVar}.
            $
            Exploiting this property and the independence of $ \schemeLocalFun{\phaseFieldProto}_0 $
            of the first argument of $ j $,
            we obtain
            \begin{equation*}
                \begin{split}
                    &\integral{
                      	-\frac{\ifCoordsDelta}{\phaseFieldParam}
                    }{
                      	\frac{\ifCoordsDelta}{\phaseFieldParam}
                    }{
                      	\integral{\surfaceProto_{\phaseFieldParam\fastVar}}{}{
                            \left(
                            \simpleDeriv{\prime}{\schemeLocalFun{\phaseFieldProto}_0}
                            \concat j
                            \extMeanCurv{}
                            \left(
                                \extMeanCurv{}^2
                                -
                                2
                                \extGaussCurv{}
                            \right)
                            +
                            2
                            \fastVar
                            \simpleDeriv{\prime\prime}{
                                \schemeLocalFun{\phaseFieldProto}_0
                            }
                            \concat j
                            \restrFun{\extMeanCurv{}}{\surfaceProto}
                            \left(
                                \restrFun{\extMeanCurv{}}{\surfaceProto}^2
                                -
                                2
                                \restrFun{\extGaussCurv{}}{\surfaceProto}
                            \right)
                            \right)
                            \simpleDeriv{\prime}{\schemeLocalFun{\phaseFieldProto}_0}
                            \concat j
                        }{\hausdorffM{2}(\sigma)}
                    }{\lebesgueM{1}(\fastVar)}
                    =
                    \\
                    &
                    \integral{
                      	-\frac{\ifCoordsDelta}{\phaseFieldParam}
                    }{
                      	\frac{\ifCoordsDelta}{\phaseFieldParam}
                    }{
                      	\left( \simpleDeriv{\prime}{\schemeLocalFun{\phaseFieldProto}_0} \right)^2
                        \integral{\surfaceProto_{\phaseFieldParam\fastVar}}{}{
                            \extMeanCurv{}
                            \left(
                                \extMeanCurv{}{}^2
                                -
                                2
                                \extGaussCurv{}{}
                            \right)
	                    }{\hausdorffM{2}(\sigma)}
                    }{\lebesgueM{1}(\fastVar)}
                    -
                    \integral{
                      	-\frac{\ifCoordsDelta}{\phaseFieldParam}
                    }{
                      	\frac{\ifCoordsDelta}{\phaseFieldParam}
                    }{
                      	\left(
                            \simpleDeriv{\prime}{
                                \schemeLocalFun{\phaseFieldProto}_0
                            }                            
                        \right)^2
                    }{\lebesgueM{1}(\fastVar)}
                    \integral{
                      	\surfaceProto
                    }{}{
                        \restrFun{\extMeanCurv{}}{\surfaceProto}
                        \left(
                            \restrFun{\extMeanCurv{}}{\surfaceProto}^2
                            -
                            2
                            \restrFun{\extGaussCurv{}}{\surfaceProto}
                        \right)
                    }{\hausdorffM{2}(\sigma)}.
                \end{split}
            \end{equation*}
            Thanks to
            Lemma~\ref{lemma:addendum formal asymptotics:concentration}, 
            we know that this difference converges to zero as $ \phaseFieldParam \searrow 0 $.
            The same reasoning applies for
            \begin{equation*}
                \integral{
                    -\frac{\ifCoordsDelta}{\phaseFieldParam}
                }{
                    \frac{\ifCoordsDelta}{\phaseFieldParam}
                }{
                  	\integral{\surfaceProto_{\phaseFieldParam\fastVar}}{}{
                        \left(
                            -
                            2
                            \simpleDeriv{\prime}{\schemeLocalFun{\phaseFieldProto}_0}
                            \concat j
                            \left(
                                \extMeanCurv{}^3
                                - 
                                3 
                                \extMeanCurv{}
                                \extGaussCurv{}
                            \right)                            
                            -
                            4
                            \fastVar
                            \simpleDeriv{\prime\prime}{\schemeLocalFun{\phaseFieldProto}_0}
                            \concat j
                            \left(
                                \restrFun{\extMeanCurv{}}{\surfaceProto}^3
                                - 
                                3 
                                \restrFun{\extMeanCurv{}}{\surfaceProto} 
                                \restrFun{\extGaussCurv{}}{\surfaceProto}
                            \right)
                        \right)
                        \simpleDeriv{\prime}{\schemeLocalFun{\phaseFieldProto}_0}
                        \concat j
                    }{\hausdorffM{2}}
                }{\lebesgueM{1}(\fastVar)}.
            \end{equation*}

            The remaining terms are treated as follows:
            We recall $ \simpleDeriv{\prime}{\meanCurv[\hat]{}{}} \in \landauBigo{\phaseFieldParam} $
            and $ \simpleDeriv{\prime\prime}{\meanCurv[\hat]{}{}} \in \landauBigo{\phaseFieldParam^2} $
            (cf.~\eqref{equ:formal asymptotics:inner expansion:consequ bound on L2 grad %
            gen willm:exp of mean curv deriv on -2},
            \eqref{equ:formal asymptotics:inner expansion:consequ bound on L2 grad %
                        gen willm:exp of mean curv dderiv on -2})
            and
            \begin{equation*}
              	\chemPotIdxPFWD{2}{\schemeLocalFun{\phaseFieldProto}}{}
                =
                \simpleDeriv{\prime}{\schemeLocalFun{\phaseFieldProto}_2}
                \meanCurv[\hat]{}{}
                -
                \simpleDeriv{\prime\prime}{\schemeLocalFun{\phaseFieldProto}_3}
                +
                \schemeLocalFun{\phaseFieldProto}_3
                \doubleWellPot[\schemeSupscr{\prime\prime}]{
                    \schemeLocalFun{\phaseFieldProto}_0
                }                
            \end{equation*}
            (cf.~\eqref{equ:formal asymptotics:expansion of the L2 gradient of the canham-helfrich energy:%
            chem pot expansion} with
            $ \phaseFieldProto_1 = 0 $).
            Thus, the following expansion holds:
            \begin{equation*}
                \begin{split}
                    \simpleDeriv{\prime\prime}{
                        \chemPotIdxPFWD{2}{
                            \schemeLocalFun{\phaseFieldProto}
                        }{}
                    }
                    =
                    \simpleDeriv{(3)}{\schemeLocalFun{\phaseFieldProto}_2} 
                    \meanCurv[\hat]{}{}         
                    -
                    \simpleDeriv{\prime\prime}{\hat{q}}
                    +
                    \landauBigo{\phaseFieldParam},
                \end{split}
            \end{equation*}
            where
            \begin{equation*}
                \hat{q} =
                \simpleDeriv{\prime\prime}{\schemeLocalFun{\phaseFieldProto}_3}
                -
                \schemeLocalFun{\phaseFieldProto}_3
                \doubleWellPot[\schemeSupscr{\prime\prime}]{
                    \schemeLocalFun{\phaseFieldProto}_0
                }.
            \end{equation*}
            This way we see with 
            $ 
            	\chemPotIdx{0}{\phaseFieldProto}{} 
                = 
                \extMeanCurv{}
                \simpleDeriv{\prime}{\schemeLocalFun{\phaseFieldProto}_0}
                \concat
                \interfCoords[\phaseFieldParam]{}{}
            $
            and
            $
                \chemPotIdxPFWD{1}{\schemeLocalFun{\phaseFieldProto}}{}
                =
                -
                \simpleDeriv{\prime\prime}{
                    \schemeLocalFun{\phaseFieldProto}_2   
                }
                +
                \doubleWellPot[\schemeSupscr{\prime\prime}]{
                    \schemeLocalFun{\phaseFieldProto}_0
                }
                \schemeLocalFun{\phaseFieldProto}_2                         
            $
            (cf.~\eqref{equ:formal asymptotics:expansion of the L2 gradient of the canham-helfrich energy:%
            chem pot expansion} with
            $ \phaseFieldProto_1 = 0 $)
            that
            \begin{equation*}
                \begin{split}
                    -\simpleDeriv{\prime\prime}{
                        \chemPotIdxPFWD{2}{\schemeLocalFun{\phaseFieldProto}}{}
                    }
                    \concat j
                    +
                    \chemPotIdx{2}{\phaseFieldProto}{}                    
                    \doubleWellPot[\schemeSupscr{\prime\prime}]{\phaseFieldProto_0}
                    +
                    \chemPotIdx{0}{\phaseFieldProto}{}
                    \doubleWellPot[\schemeSupscr{(3)}]{\phaseFieldProto_0}
                    \phaseFieldProto_2               
                    &=
                    -
                    \extMeanCurv{}
                    \simpleDeriv{(3)}{
                        \schemeLocalFun{\phaseFieldProto}_2
                    }
                    \concat j
                    +
                    \extMeanCurv{}{}
                    \doubleWellPot[\schemeSupscr{\prime\prime}]{
                        \phaseFieldProto_0
                    }
                    \simpleDeriv{\prime}{
                        \schemeLocalFun{\phaseFieldProto}_2
                    }                  
                    \concat j
                    +
                    \chemPotIdx{0}{\phaseFieldProto}{}
                    \doubleWellPot[\schemeSupscr{(3)}]{\phaseFieldProto_0}
                    \phaseFieldProto_2               
                    \\
                    &+
                    \simpleDeriv{\prime\prime}{
                        \hat{q}
                    }
                    \concat j 
                    -
                    q
                    \doubleWellPot[\schemeSupscr{\prime\prime}]{\phaseFieldProto_0}
                    +
                    \landauBigo{\phaseFieldParam}
                    \\
                    &=
                    \extMeanCurv{}
                    \simpleDeriv{\prime}{
                    \left(
                        -\simpleDeriv{\prime\prime}{
                            \schemeLocalFun{\phaseFieldProto}_2   
                        }
                        +
                        \doubleWellPot[\schemeSupscr{\prime\prime}]{
                            \schemeLocalFun{\phaseFieldProto}_0
                        }
                        \schemeLocalFun{\phaseFieldProto}_2
                    \right)
                    } 
                    \concat j
                    +
                    \simpleDeriv{\prime\prime}{\hat{q}}
                    \concat j
                    -
                    q
                    \doubleWellPot[\schemeSupscr{\prime\prime}]{
                        \schemeLocalFun{\phaseFieldProto}_0
                    }
                    \concat j
                    +
                    \landauBigo{\phaseFieldParam}
                    \\
                    &=
                    \extMeanCurv{}
                    \simpleDeriv{\prime}{
                        \chemPotIdxPFWD{1}{\schemeLocalFun{\phaseFieldProto}}{}
                    }
                    \concat j
                    +
                    \simpleDeriv{\prime\prime}{\hat{q}}
                    \concat j 
                    -
                    q
                    \doubleWellPot[\schemeSupscr{\prime\prime}]{
                        \schemeLocalFun{\phaseFieldProto}_0
                    }
                    \concat j
                    +
                    \landauBigo{\phaseFieldParam},
                \end{split}
            \end{equation*}
            where we used
            $ 
                \chemPotIdx{0}{\phaseFieldProto}{}
            	\doubleWellPot[\schemeSupscr{(3)}]{\phaseFieldProto_0}
                \phaseFieldProto_2 
            	= 
                \simpleDeriv{\prime}{\phaseFieldProto_0}
                \concat j
                \doubleWellPot[\schemeSupscr{(3)}]{\phaseFieldProto_0}
                \phaseFieldProto_2 
                \extMeanCurv{} 
                =
                \simpleDeriv{\prime}{
                    \left(
                        \doubleWellPot[\schemeSupscr{\prime\prime}]{\schemeLocalFun{\phaseFieldProto}_0}
                    \right)
                }
                \concat j
                \phaseFieldProto_2 
                \extMeanCurv{}.
            $
            We therefore obtain,
            \begin{equation*}
                \begin{split}
                    &\int_{-\frac{\ifCoordsDelta}{\phaseFieldParam}}^{\frac{\ifCoordsDelta}{\phaseFieldParam}}
                    	\int_{\surfaceProto_{\phaseFieldParam\fastVar}}
                            \bigg(
                                -\simpleDeriv{\prime}{\schemeLocalFun{\phaseFieldProto}_0}
                                \concat j
                                \laplacian{\surfaceProto_{\phaseFieldParam\fastVar}}{}{}
                                \extMeanCurv{}
                                +
                                \extMeanCurv{}
                                \simpleDeriv{\prime}{
                                    \chemPotIdxPFWD{1}{\schemeLocalFun{\phaseFieldProto}}{}
                                }
                                \concat j
                                -
                                \simpleDeriv{\prime\prime}{
                                    \chemPotIdxPFWD{2}{\schemeLocalFun{\phaseFieldProto}}{}
                                }
                                \concat j
                                \\
                                &+
                                \chemPotIdx{2}{\phaseFieldProto}{}
                                \doubleWellPot[\schemeSupscr{\prime\prime}]{
                                    \phaseFieldProto_0
                                }
                                +
                                \chemPotIdx{0}{\phaseFieldProto}{}
                                \doubleWellPot[\schemeSupscr{(3)}]{
                                    \phaseFieldProto_0
                                }
                                \phaseFieldProto_2
                            \bigg)
		                    \simpleDeriv{\prime}{\schemeLocalFun{\phaseFieldProto}_0}
                            \concat j
        		            \normal[\phaseFieldProto]{}\cdot\testFunVelocity{}\;
                    	\mathrm{d}\hausdorffM{2}\,
                    \mathrm{d}\lebesgueM{1}(\fastVar)               
                    =
                    \\
                    &\integral{
                      	-\frac{\ifCoordsDelta}{\phaseFieldParam}
                    }{
                        \frac{\ifCoordsDelta}{\phaseFieldParam}
                    }{
                      	\integral{\surfaceProto_{\phaseFieldParam\fastVar}}{}{
                            \left(
                                -
                                \simpleDeriv{\prime}{\schemeLocalFun{\phaseFieldProto}_0}
                                \concat j
                                \laplacian{\surfaceProto_{\phaseFieldParam\fastVar}}{}{}\extMeanCurv{}
                                +
                                2
                                \extMeanCurv{}
                                \simpleDeriv{\prime}{
                                    \chemPotIdxPFWD{1}{\schemeLocalFun{\phaseFieldProto}}{}
                                }
                                \concat j
                                +
                                \simpleDeriv{\prime\prime}{\hat{q}}
                                \concat j
                                -
                                \hat{q}
                                \concat j
                                \doubleWellPot[\schemeSupscr{\prime\prime}]{
                                    \schemeLocalFun{\phaseFieldProto}_0
                                }
                                \concat j
                            \right)
		                    \simpleDeriv{\prime}{\schemeLocalFun{\phaseFieldProto}_0}
                            \concat j
        		            \normal[\phaseFieldProto]{}\cdot\testFunVelocity{}
                        }{\hausdorffM{2}(\sigma)}
                    }{\lebesgueM{1}(\fastVar)}
                    +
                    \landauBigo{\phaseFieldParam}.
                \end{split}
            \end{equation*}
            To treat
            $
                \integral{
                    -\frac{\ifCoordsDelta}{\phaseFieldParam}
                }{
                    \frac{\ifCoordsDelta}{\phaseFieldParam}
                }{
                    \simpleDeriv{\prime}{\schemeLocalFun{\phaseFieldProto}_0}
                    \integral{\surfaceProto_{\phaseFieldParam\fastVar}}{}{
                        \simpleDeriv{\prime\prime}{\schemeLocalFun{q}}\concat j
                        \normal[\phaseFieldProto]{}\cdot\testFunVelocity{}
                    }{\hausdorffM{2}(\sigma)}
                }{\lebesgueM{1}(\fastVar)}
            $,
            we take a global parametrisation $ \funSig{\gamma}{\reals^2}{\surfaceProto} $
            (this is w.l.o.g. since in case there is no global parametrisation,
            we make all the following calculations locally and patch the integrals together afterwards),
            and define
            $ 
            	\gamma_{\phaseFieldParam\fastVar}(\tangParamVar) 
                = 
                \gamma(\tangParamVar) 
                + 
                \phaseFieldParam\fastVar\normal[S]{\gamma(\tangParamVar)} 
            $.
            With the area formula, we obtain
            \begin{equation*}
                \begin{split}
                    &\integral{
                      	-\frac{\ifCoordsDelta}{\phaseFieldParam}
                    }{
                        \frac{\ifCoordsDelta}{\phaseFieldParam}
                    }{
                      	\simpleDeriv{\prime}{\schemeLocalFun{\phaseFieldProto}_0}(\fastVar)
                      	\integral{\surfaceProto_{\phaseFieldParam\fastVar}}{}{
                          	\simpleDeriv{\prime\prime}{\schemeLocalFun{q}}\concat j
        		            \normal[\phaseFieldProto]{}
                            \cdot
                            \testFunVelocity{}
                        }{\hausdorffM{2}(\sigma)}
                    }{\lebesgueM{1}(\fastVar)}
                    =
                    \\
                    &\integral{
                      	-\frac{\ifCoordsDelta}{\phaseFieldParam}
                    }{
                        \frac{\ifCoordsDelta}{\phaseFieldParam}
                    }{
                      	\simpleDeriv{\prime}{\schemeLocalFun{\phaseFieldProto}_0}(\fastVar)
                      	\integral{\reals^2}{}{
                          	\simpleDeriv{\prime\prime}{\schemeLocalFun{q}}
                            (\gamma(s),\fastVar)
        		            \normal[\phaseFieldProto]{}
                            \concat \gamma_{\phaseFieldParam\fastVar}
                            \cdot
                            \testFunVelocity{}
                            \concat
                            \gamma_{\phaseFieldParam\fastVar}
                            \jacobian{\gamma_{\phaseFieldParam\fastVar}}
                        }{\lebesgueM{2}(\tangParamVar)}
                    }{\lebesgueM{1}(\fastVar)}.
                \end{split}
            \end{equation*}
            Note that 
            $
            	j(\gamma_{\phaseFieldParam\fastVar}(\tangParamVar),\fastVar)
                =
                (\gamma(\tangParamVar),\fastVar).
            $
            We further integrate by parts
            \begin{equation*}
                \begin{split}
                    &\integral{
                      	-\frac{\ifCoordsDelta}{\phaseFieldParam}
                    }{
                        \frac{\ifCoordsDelta}{\phaseFieldParam}
                    }{
                      	\simpleDeriv{\prime}{\schemeLocalFun{\phaseFieldProto}_0}(\fastVar)
                      	\integral{\reals^2}{}{
                          	\simpleDeriv{\prime\prime}{\schemeLocalFun{q}}
                            (\gamma(s),\fastVar)
                            \jacobian{\gamma_{\phaseFieldParam\fastVar}}
                            \normal[\phaseFieldProto]{}
                            \concat
                            \gamma_{\phaseFieldParam\fastVar}
                            \cdot
                            \testFunVelocity{}
                            \concat
                            \gamma_{\phaseFieldParam\fastVar}
                        }{\lebesgueM{2}(\tangParamVar)}
                    }{\lebesgueM{1}(\fastVar)}
                    =
                    \\
                    &-
                    \integral{
                      	-\frac{\ifCoordsDelta}{\phaseFieldParam}
                    }{
                        \frac{\ifCoordsDelta}{\phaseFieldParam}
                    }{
                      	\simpleDeriv{\prime\prime}{\schemeLocalFun{\phaseFieldProto}_0}(\fastVar)
                      	\integral{\reals^2}{}{
                          	\simpleDeriv{\prime}{\schemeLocalFun{q}}
                            (\gamma(s),\fastVar)
                            \jacobian{\gamma_{\phaseFieldParam\fastVar}}
                            \normal[\phaseFieldProto]{}
                            \concat
                            \gamma_{\phaseFieldParam\fastVar}
                            \cdot
                            \testFunVelocity{}
                            \concat
                            \gamma_{\phaseFieldParam\fastVar}
                        }{\lebesgueM{2}(\tangParamVar)}
                    }{\lebesgueM{1}(\fastVar)}
                    \\
                    &+
                    \integral{\reals^2}{}{
                        \left[
                            \simpleDeriv{\prime}{\hat{q}}
                            (\gamma(s),\cdot)
                            \simpleDeriv{\prime}{\schemeLocalFun{\phaseFieldProto}_0}
                            \jacobian{\gamma_{\phaseFieldParam\fastVar}}
                            \normal[\phaseFieldProto]{}
                            \concat
                            \gamma_{\phaseFieldParam\fastVar}
                            \cdot
                            \testFunVelocity{}
                            \concat
                            \gamma_{\phaseFieldParam\fastVar}
                        \right]_{-\delta/\phaseFieldParam}^{\delta/\phaseFieldParam}
                    }{\lebesgueM{2}(\tangParamVar)}
                    \\
                    &+
                    \integral{
                      	-\frac{\ifCoordsDelta}{\phaseFieldParam}
                    }{
                      	\frac{\ifCoordsDelta}{\phaseFieldParam}
                    }{
                        \simpleDeriv{\prime\prime}{\schemeLocalFun{\phaseFieldProto}_0}
                      	\integral{\reals^2}{}{
                            \simpleDeriv{\prime}{\hat{q}}
                            (\gamma(s),\fastVar)
                            \jacobian{\gamma_{\phaseFieldParam\fastVar}}
                          	\simpleDeriv{\prime}{\left(
                                \normal[\phaseFieldProto]{}
                                \concat
                                \gamma_{\phaseFieldParam\fastVar}
                                \cdot
                                \testFunVelocity{}
                                \concat
                                \gamma_{\phaseFieldParam\fastVar}
                          	\right)}
                        }{\lebesgueM{2}(\tangParamVar)}
                    }{\lebesgueM{1}(\fastVar)}
                    +
                    \landauBigo{\phaseFieldParam}.
                \end{split}
            \end{equation*}
            We observe
            \begin{equation*}
                \begin{split}
                    \simpleDeriv{\prime}{\left(
                        \normal[\phaseFieldProto]{}
                        \concat
                        \gamma_{\phaseFieldParam\fastVar}
                        \cdot
                        \testFunVelocity{}
                        \concat
                        \gamma_{\phaseFieldParam\fastVar}
                    \right)}
                  	&=
                    \phaseFieldParam
                    \transposed{\grad{}{}{}\normal[\phaseFieldProto]{}}
                    \concat\gamma_{\phaseFieldParam\fastVar}
                    \normal[\surfaceProto]{}
                    \concat\gamma
                    \cdot
                    \testFunVelocity{}
                    \concat
                    \gamma_{\phaseFieldParam\fastVar}
                    +
                    \phaseFieldParam
                    \normal[\phaseFieldProto]{}
                    \concat
                    \gamma_{\phaseFieldParam\fastVar}
                    \cdot
                    \transposed{\grad{}{}{}\testFunVelocity{}}
                    \concat
                    \gamma_{\phaseFieldParam\fastVar}
                    \normal[\surfaceProto]{}\concat\gamma
                    \\
                    &=
                    \phaseFieldParam
                    \left(
                    \frac{1}{\abs{\phaseFieldProto}}
                    \transposed{\grad{}{2}{}\phaseFieldProto}
                    \orthProjMat{\normal[\phaseFieldProto]{}}{}
                    \right)\concat\gamma_{\phaseFieldParam\fastVar}
                    \normal[\surfaceProto]{}
                    \concat\gamma
                    \cdot
                    \testFunVelocity{}
                    \concat
                    \gamma_{\phaseFieldParam\fastVar}                    
                    +
                    \landauBigo{\phaseFieldParam},
            	\end{split}
            \end{equation*}
            and we have 
            $
            	\orthProjMat{\normal[\phaseFieldProto]{}}{}
                \concat\gamma_{\phaseFieldParam\fastVar}
                \normal[\surfaceProto]{} \in \landauBigo{\phaseFieldParam^2}
                \concat\gamma
            $
            (see \eqref{equ:formal asymptotics:expansion of the L2 gradient of the coupling energy:%
                expansion of the normal}),
            so
            $$
                \integral{
                    -\frac{\ifCoordsDelta}{\phaseFieldParam}
                }{
                    \frac{\ifCoordsDelta}{\phaseFieldParam}
                }{
                    \simpleDeriv{\prime\prime}{\schemeLocalFun{\phaseFieldProto}_0}
                    \integral{\reals^2}{}{
                        \simpleDeriv{\prime}{\hat{q}}
                        \jacobian{\gamma_{\phaseFieldParam\fastVar}}
                        \simpleDeriv{\prime}{\left(
                            \normal[\phaseFieldProto]{}
                            \concat
                            \gamma_{\phaseFieldParam\fastVar}
                            \cdot
                            \testFunVelocity{}
                            \concat
                            \gamma_{\phaseFieldParam\fastVar}
                        \right)}
                    }{\lebesgueM{2}(\tangParamVar)}
                }{\lebesgueM{1}(\fastVar)}
                \in \landauBigo{\phaseFieldParam}.
            $$
            With 
            Jacobi's formula for derivatives of determinants, it can be seen 
            that the derivative of the Jacobian w.r.t $ \fastVar $ is also in $ \landauBigo{\phaseFieldParam} $.
            Integrating by parts one more time leads us therefore to
            \begin{equation*}
                \begin{split}
                    &\integral{
                      	-\frac{\ifCoordsDelta}{\phaseFieldParam}
                    }{
                        \frac{\ifCoordsDelta}{\phaseFieldParam}
                    }{
                      	\simpleDeriv{\prime}{\schemeLocalFun{\phaseFieldProto}_0}(\fastVar)
                      	\integral{\reals^2}{}{
                          	\simpleDeriv{\prime\prime}{\schemeLocalFun{q}}
                            (\gamma(\tangParamVar),\fastVar)
                            \jacobian{\gamma_{\phaseFieldParam\fastVar}}
                            \normal[\phaseFieldProto]{}
                            \concat
                            \gamma_{\phaseFieldParam\fastVar}
                            \cdot
                            \testFunVelocity{}
                            \concat
                            \gamma_{\phaseFieldParam\fastVar}
                        }{\lebesgueM{2}(\tangParamVar)}
                    }{\lebesgueM{1}(\fastVar)}
                    =
                    \\
                    &
                    \integral{
                      	-\frac{\ifCoordsDelta}{\phaseFieldParam}
                    }{
                        \frac{\ifCoordsDelta}{\phaseFieldParam}
                    }{
                        \simpleDeriv{(3)}{\schemeLocalFun{\phaseFieldProto}_0}(\fastVar)
                      	\integral{\reals^2}{}{
                            \schemeLocalFun{q}
                            (\gamma(\tangParamVar),\fastVar)
                            \normal[\phaseFieldProto]{}
                            \concat
                            \gamma_{\phaseFieldParam\fastVar}
                            \cdot
                            \testFunVelocity{}
                            \concat
                            \gamma_{\phaseFieldParam\fastVar}
                            \jacobian{\gamma_{\phaseFieldParam\fastVar}}
                        }{\lebesgueM{2}(\tangParamVar)}
                    }{\lebesgueM{1}(\fastVar)}
                    \\
                    &+
                    \integral{\reals^2}{}{
                        \left[
                            \simpleDeriv{\prime}{\hat{q}}
                            (\gamma(s),\cdot)
                            \simpleDeriv{\prime}{\schemeLocalFun{\phaseFieldProto}_0}
                            \normal[\phaseFieldProto]{}
                            \concat
                            \gamma_{\phaseFieldParam\fastVar}
                            \cdot
                            \testFunVelocity{}
                            \concat
                            \gamma_{\phaseFieldParam\fastVar}
                            \jacobian{\gamma_{\phaseFieldParam\fastVar}}                            
                        \right]_{-\delta/\phaseFieldParam}^{\delta/\phaseFieldParam}
                    }{\lebesgueM{2}(\tangParamVar)}
                    \\
                    &-
                    \integral{\reals^2}{}{
                        \left[
                            \hat{q}(\gamma(s),\cdot)
                            \simpleDeriv{\prime\prime}{\schemeLocalFun{\phaseFieldProto}_0}
                            \normal[\phaseFieldProto]{}
                            \concat
                            \gamma_{\phaseFieldParam\fastVar}
                            \cdot
                            \testFunVelocity{}
                            \concat
                            \gamma_{\phaseFieldParam\fastVar}
                            \jacobian{\gamma_{\phaseFieldParam\fastVar}}
                        \right]_{-\delta/\phaseFieldParam}^{\delta/\phaseFieldParam}
                    }{\lebesgueM{2}(\tangParamVar)}
                    +
                    \landauBigo{\phaseFieldParam}.
                \end{split}
            \end{equation*}
            The last integrals vanish for $ \phaseFieldParam \searrow 0 $ since
            $ \simpleDeriv{\prime}{\schemeLocalFun{\phaseFieldProto}_0} $
            and
            $ \simpleDeriv{\prime\prime}{\schemeLocalFun{\phaseFieldProto}_0} $
            vanish for $ \fastVar\to\pm\infty$ (see 
            \eqref{equ:formal asymptotics:lim matching principle:membr phase field:outer},
            \eqref{equ:formal asymptotics:lim matching principle:membr phase field:inner}). 
            Hence,
            \begin{equation*}
                \begin{split}
                    \integral{
                      	-\frac{\ifCoordsDelta}{\phaseFieldParam}
                    }{
                        \frac{\ifCoordsDelta}{\phaseFieldParam}
                    }{
                      	\integral{\surfaceProto_{\phaseFieldParam}}{}{
                            \left(
                                \simpleDeriv{\prime\prime}{\hat{q}}
                                -
                                \hat{q}
                                \doubleWellPot[\schemeSupscr{\prime\prime}]{
                                    \schemeLocalFun{\phaseFieldProto}_0
                                }
                            \right)
                            \simpleDeriv{\prime}{\schemeLocalFun{\phaseFieldProto}_0}
                            \normal[\phaseFieldProto]{}
                            \cdot
                            \testFunVelocity{}
	                    }{\hausdorffM{2}(\sigma)}
                    }{\lebesgueM{1}(\fastVar)}
                    \specConv{\phaseFieldParam\searrow0}
                    \integral{-\infty}{\infty}{
                        \simpleDeriv{(3)}{\schemeLocalFun{\phaseFieldProto}_0}
                        -
                        \simpleDeriv{\prime}{\schemeLocalFun{\phaseFieldProto}_0}                        
                        \doubleWellPot[\schemeSupscr{\prime\prime}]{
                            \schemeLocalFun{\phaseFieldProto}_0
                        }                      	
                    }{\lebesgueM{1}(\fastVar)}
                    \integral{\surfaceProto}{}{
                        q
                        \normal[\phaseFieldProto]{}
                        \cdot
                        \testFunVelocity{}
                    }{\hausdorffM{2}(\tangParamVar)}.
                \end{split}
            \end{equation*}
            Note that
            $
                \simpleDeriv{(3)}{\schemeLocalFun{\phaseFieldProto}_0}
                -
                \simpleDeriv{\prime}{\schemeLocalFun{\phaseFieldProto}_0}                        
                \doubleWellPot[\schemeSupscr{\prime\prime}]{
                    \schemeLocalFun{\phaseFieldProto}_0
                }
                =
                0
            $
            (property of the optimal profile),
            so the whole integral vanishes in the limit.

            The last term we need to investigate is
            $
            	2\meanCurv[\hat]{}{}\simpleDeriv{\prime}{
                  	\chemPotIdxPFWD{1}{\schemeLocalFun{\phaseFieldProto}}{}
                },
            $ 
            and we already know (see 
            \eqref{equ:formal asymptotics:expansion of the L2 gradient of the canham-helfrich %
            energy:order minus one})
            \begin{equation*}
            	2\meanCurv[\hat]{}{}
                \simpleDeriv{\prime}{
                  	\chemPotIdxPFWD{1}{\schemeLocalFun{\phaseFieldProto}}{}
                }
                =
                -
                \meanCurv[\hat]{}{}
                \left(
                    \restrFun{\meanCurv[\hat]{}{}}{\surfaceProto}^2
                    -
                    4
                    \restrFun{\gaussCurv[\hat]{}{}}{\surfaceProto}
                \right)
                \simpleDeriv{\prime}{
                    \left(
                        \simpleDeriv{\prime}{\schemeLocalFun{\phaseFieldProto}_0}
                        \fastVar
                    \right)
                }
                =
                -\meanCurv[\hat]{}{}
                \left(
                    \restrFun{\meanCurv[\hat]{}{}}{\surfaceProto}^2
                    -
                    4
                    \restrFun{\gaussCurv[\hat]{}{}}{\surfaceProto}
                \right)
                \left(
                    \simpleDeriv{\prime\prime}{\schemeLocalFun{\phaseFieldProto}_0}
                    \fastVar
                    +
                    \simpleDeriv{\prime}{\schemeLocalFun{\phaseFieldProto}_0}
                \right).
            \end{equation*}
            Now we observe
            \begin{equation*}
                \begin{split}
                    &\integral{
                      	-\frac{\ifCoordsDelta}{\phaseFieldParam}
                    }{
                        \frac{\ifCoordsDelta}{\phaseFieldParam}
                    }{
                      	\integral{\surfaceProto_{\phaseFieldParam\fastVar}}{}{
                            -\extMeanCurv{}
                            \left(
                                \restrFun{\extMeanCurv{}}{\surfaceProto}^2
                                -
                                4
                                \restrFun{\extGaussCurv{}}{\surfaceProto}
                            \right)
                            \left(
                                \simpleDeriv{\prime\prime}{\schemeLocalFun{\phaseFieldProto}_0}
                                \concat j
                                \fastVar
                                +
                                \simpleDeriv{\prime}{\schemeLocalFun{\phaseFieldProto}_0}
                                \concat j
                            \right)
                            \simpleDeriv{\prime}{\schemeLocalFun{\phaseFieldProto}_0}
                            \concat j
                            \normal[\phaseFieldProto]{}\cdot\testFunVelocity{}
                        }{\hausdorffM{2}(\sigma)}
                    }{\lebesgueM{1}(\fastVar)}
                    \specConv{\phaseFieldParam\searrow0}
                    \\
                    -
                    &\integral{-\infty}{\infty}{
                        \simpleDeriv{\prime\prime}{\schemeLocalFun{\phaseFieldProto}_0}
                        \simpleDeriv{\prime}{\schemeLocalFun{\phaseFieldProto}_0}
                        \fastVar                      	
                        +
                        \left(\simpleDeriv{\prime}{\schemeLocalFun{\phaseFieldProto}_0}\right)^2
                    }{\lebesgueM{1}(\fastVar)}
                    \integral{\surfaceProto}{}{
                        \meanCurv[\hat]{}{}
                        \left(
                            \restrFun{\meanCurv[\hat]{}{}}{\surfaceProto}^2
                            -
                            4
                            \restrFun{\gaussCurv[\hat]{}{}}{\surfaceProto}
                        \right)
                        \normal[\phaseFieldProto]{}\cdot\testFunVelocity{}
                    }{\hausdorffM{2}(\sigma)},
                \end{split}
            \end{equation*}
            and finally use 
            $
                \integral{-\infty}{\infty}{
                    \simpleDeriv{\prime\prime}{\schemeLocalFun{\phaseFieldProto}_0}
                    \simpleDeriv{\prime}{\schemeLocalFun{\phaseFieldProto}_0}
                    \fastVar                      	
                    +
                    \left(\simpleDeriv{\prime}{\schemeLocalFun{\phaseFieldProto}_0}\right)^2
                }{\lebesgueM{1}(\fastVar)}
                =
                \frac{1}{2}
                \integral{-\infty}{\infty}{
                    \left(\simpleDeriv{\prime}{\schemeLocalFun{\phaseFieldProto}_0}\right)^2
                }{\lebesgueM{1}(\fastVar)}
                .
            $
            \end{proof}
        \subsubsection{Force terms of $ \grad{\phaseFieldProto}{L^2}{}\ginzburgLandauEnergyFunct{}{}$}
        	A small calculation reveals
            $$
            	\grad{\phaseFieldProto}{L^2}{}\ginzburgLandauEnergyFunct{\phaseFieldProto}{}
                =
                \elComprMod
                \glGenChemPot{\phaseFieldProto}{},
            $$
            and from \eqref{equ:formal asymptotics:CH-L2 gradient expansion:chem pot expansion}
            we have with 
            $
            \simpleDeriv{\prime\prime}{\schemeLocalFun{\phaseFieldProto}_0}
            \concat
            \interfCoords[\phaseFieldParam]{}{}
            -
            \doubleWellPot[\schemeSupscr{\prime}]{\phaseFieldProto_0}
            =0
            $
            and \eqref{equ:formal asymptotics:inner expansion:L2 grad gen willm:scale -2:result}
            $$
                \glGenChemPot{\phaseFieldProto}{}
                =
                \elComprMod
                \simpleDeriv{\prime}{\schemeLocalFun{\phaseFieldProto}_0}
                \concat\interfCoords[\phaseFieldParam]{}{}
                \extMeanCurv{}
                +
                \landauBigo{\phaseFieldParam}.
            $$
            So convergence under the integral follows by 
            Lemma~\ref{lemma:addendum formal asymptotics:concentration}:
            $$
            	\integral{\tubNeighbourhood{\ifCoordsDelta}{\surfaceProto}}{}{
                    \left(\simpleDeriv{\prime}{\schemeLocalFun{\phaseFieldProto}_0}\right)^2
                    \concat\interfCoords[\phaseFieldParam]{}{}
                    \elComprMod
                    \extMeanCurv{}                  	
                    \normal[\phaseFieldProto]{}
                    \cdot
                    \testFunVelocity{}
                }{\lebesgueM{3}}
                +
                \landauBigo{\phaseFieldParam}
                \specConv{\phaseFieldParam\searrow0}
                \tanhIntConst
                \integral{\surfaceProto}{}{
                    \elComprMod
                    \extMeanCurv{}                  	
                    \normal[\surfaceProto]{}
                    \cdot
                    \testFunVelocity{}                  	
                }{\hausdorffM{2}}.
            $$
	  	\subsubsection{Coupling energy force terms}
            We now come to the limit of the terms
            \begin{align*}
            	&\LTProd{\tubNeighbourhood{\ifCoordsDelta}{\interfOne{}}}{
                  	\grad{\membrPhaseField{}{}}{L^2}{}
                    \pfCouplingEnergy{}{}
                }{
                  	\grad{}{}{}\membrPhaseField{}{}
                    \cdot
                    \testFunVelocity{}
                },
                \\
            	&\LTProd{\tubNeighbourhood{\ifCoordsDelta}{\interfTwo{}}}{
                  	\grad{\cortexPhaseField{}{}}{L^2}{}
                    \pfCouplingEnergy{}{}
                }{
                  	\grad{}{}{}\cortexPhaseField{}{}
                    \cdot
                    \testFunVelocity{}
                },
                \\
                \bar{G}_\phaseFieldParam \colonequals
                &-
                \LTProd{\tubNeighbourhood{\ifCoordsDelta}{\interfTwo{}}}{
                  \pDiff{\pfSpeciesOne{}{}}{}{}\pfCouplingIntMembr \meanCurv{\cortexPhaseFieldSym}{}
                  \pfSpeciesOne{}{}
                }{
                    \testFunVelocity{}\cdot\normal[\cortexPhaseField{}{}]{}
                }, 
                \text{\;and\;}
                \\
                \bar{H}_\phaseFieldParam\colonequals
                &-
                \VLTProd{\tubNeighbourhood{\ifCoordsDelta}{\interfTwo{}}}{3}{
                  	\ginzburgLandauEnergyDens{\cortexPhaseFieldSym}{}
                    \orthProjMat{\normal[\cortexPhaseFieldSym]{}}{}
                    \grad{}{}{}
                    \pDiff{\pfSpeciesOne{}{}}{}{}\pfCouplingIntMembr
                    \pfSpeciesOne{}{}
                }{
                  	\testFunVelocity{}
                }
            \end{align*}
            in
            \eqref{equ:formal asymptotics:singular limit:navier-stokes right hand side}.
            \paragraph{Limit of 
                            $ 
                                \LTProd{\tubNeighbourhood{\ifCoordsDelta}{\Gamma}}{
                                    \grad{\membrPhaseField{}{}}{L^2}{}
                                    \pfCouplingEnergy{}{}
                                }{
                                    \grad{}{}{}\membrPhaseField{}{}
                                    \cdot
                                    \testFunVelocity{}
                                }
                            $
                        } 
            Recall \eqref{equ:formal asymtptotics:expanding the L2 gradient of the coupling energy:%
            L2 gradient membrane} with the term abbreviations introduced therein.
            Then
            \begin{equation}
                \label{equ:formal asymptotics:singular limits:L2 gradient membrane coupling energy}
                \LTProd{\tubNeighbourhood{\ifCoordsDelta}{\interfOne{}}}{
                    \grad{\membrPhaseField{}{}}{L^2}{}
                    \pfCouplingEnergy{}{}
                }{
                    \grad{}{}{}\membrPhaseField{}{}
                    \cdot
                    \testFunVelocity{}
                }
                =
                \LTProd{\tubNeighbourhood{\ifCoordsDelta}{\interfOne{}}}{
                    \abs{\grad{}{}{}\membrPhaseField{}{}}  
                  	\left(
                        A_\phaseFieldParam + B_\phaseFieldParam
                    \right)
                }{
                    \normal[\membrPhaseField{}{}]{}
                    \cdot
                    \testFunVelocity{}
                }
                \equalscolon
                \bar{A}_\phaseFieldParam+\bar{B}_\phaseFieldParam.
            \end{equation}
            We further define 
            \begin{equation*}
                \tilde{A}_\phaseFieldParam=
                \integral{\tubNeighbourhood{\ifCoordsDelta}{\interfTwo{}}}{}{
                    \phaseFieldParam^{-1}
                    \left(
                        \frac{1}{2}
                        \left(\simpleDeriv{\prime}{\cortexPhaseFieldIFCIdx{0}{}{}}(\interfCoords{\NOARG}{x})
                        \right)^2
                        +
                        \doubleWellPot{\cortexPhaseFieldIdx{0}{\NOARG}{x}}
                    \right)
                    \pfCouplingEnergyDens{x,\cdot}{
                      	\pfSpeciesOneIdx{0}{}{},\normal[\cortexPhaseFieldIdx{0}{}{}]{}
                    }
                    +
                    \landauBigo{1}
                }{\lebesgueM{3}(x)},
            \end{equation*}
            so that
        	\begin{equation}
                \label{equ:formal asymptotics:singular limit:coupling energy L2 gradient:membr pf grad:first}
                A_\phaseFieldParam = 
                \simpleDeriv{\prime}{\membrPhaseFieldIFCIdx{0}{}{}}
                \concat\interfCoords[\phaseFieldParam]{}{}
                \extMeanCurv{}{}
                \tilde{A}_\phaseFieldParam.
            \end{equation}
            Due to the optimal profile for $ \cortexPhaseFieldIdx{0}{}{} $,
            it holds
            $ 
            	\doubleWellPot{\cortexPhaseFieldIdx{0}{}{}} 
                = 
                \left(
                    \simpleDeriv{\prime}{\cortexPhaseFieldIFCIdx{0}{}{}}
                \right)^2
            $, and applying 
            Lemma~\ref{lemma:addendum formal asymptotics:concentration}, we have as 
            $ \phaseFieldParam\searrow0 $,
            \begin{equation*}
                \begin{split}
                    \tilde{A}_\phaseFieldParam
                    &=
                    \integral{\tubNeighbourhood{\ifCoordsDelta}{\interfTwo{}}}{}{
                        \phaseFieldParam^{-1}
                        \frac{3}{2}
                        \left(
                            \simpleDeriv{\prime}{\cortexPhaseFieldIFCIdx{0}{}{}}
                            (\interfCoords{\NOARG}{x})
                        \right)^2
                        \pfCouplingEnergyDens{x,\cdot}{
                            \pfSpeciesOneIdx{0}{}{},
                            \normal[\cortexPhaseFieldIdx{0}{}{}]{}
                        }
                        +
                        \landauBigo{1}
                    }{\lebesgueM{3}(x)}
                    \\
                    &\rightarrow
                    \frac{3\tanhIntConst}{2}
                    \integral{\interfTwo{}}{}{
                        \pfCouplingEnergyDens{x,\cdot}{
                            \pfSpeciesOneIdx{0}{}{},\normal[\interfTwo{}]{}
                        }
                    }{\hausdorffM{2}(x)}.
                \end{split}
            \end{equation*}
            In order to apply 
            Lemma~\ref{lemma:addendum formal asymptotics:concentration} on $ \bar{A}_\phaseFieldParam $,
            we shall show that for $ y_\phaseFieldParam $ converging to $ y $, 
            $ \tilde{A}_\phaseFieldParam(y_\phaseFieldParam) $ converges.
            To this purpose, we
            write
            $ 
            	\tilde{A}_\phaseFieldParam(y)
                = 
                F_\phaseFieldParam[c(\cdot,y,\pfSpeciesOneIdx{0}{}{},\cortexPhaseFieldIdx{0}{}{})] 
            $
            for all $ y \in \domain $,
            where
            $
                \funSig{F_\phaseFieldParam}{\lebesgueSet{2}{\domain}}{\reals}
            $ 
            is linear and continuous.
            $ \tilde{A}_\phaseFieldParam(y_\phaseFieldParam) $ converging is then equivalent to
            $ F_\phaseFieldParam[c_\phaseFieldParam] = \tilde{A}_\phaseFieldParam(y_\phaseFieldParam) $ converging
            for
            $ c_\phaseFieldParam = c(\cdot,y_\phaseFieldParam,\pfSpeciesOneIdx{0}{}{},\cortexPhaseFieldIdx{0}{}{}) $.
            We calculate
            \begin{equation}
                \label{equ:formal asymptotics:singular limit:momentum balance:banach-steinhaus argument}
                \abs{F_\phaseFieldParam[c_\phaseFieldParam]-F_0[c_0]}
                \leq
                \abs{F_\phaseFieldParam[c_\phaseFieldParam-c_0]}
                +
                \abs{F_\phaseFieldParam[c_0]-F_0[c_0]}.
            \end{equation}
            The previous calculations directly show $ \abs{F_\phaseFieldParam[c_0]-F_0[c_0]} \specConv{}0 $.
            For the other summand, it holds
            $
                \abs{F_\phaseFieldParam[c_\phaseFieldParam-c_0]}
                \leq
                \norm{F_\phaseFieldParam}\LTNorm{\domain}{c_\phaseFieldParam-c_0}.
            $
            Since we have convergence of $ F_\phaseFieldParam[f] $ for every $ f \in \lebesgueSet{2}{\domain} $,
            the 
            Banach-Steinhaus theorem 
            implies $ \norm{F_\phaseFieldParam} < \infty $. Also, the last term converges
            to zero ($ \pfCouplingEnergyDens{}{} $ is continuous in $ y $), and so the left hand side of
            \eqref{equ:formal asymptotics:singular limit:momentum balance:banach-steinhaus argument}
            converges to zero.
            Then, as $ \phaseFieldParam\searrow0 $,
            \begin{equation*}
                \begin{split}
                    \bar{A}_\phaseFieldParam
                    &=
                    \integral{\tubNeighbourhood{\ifCoordsDelta}{\interfOne{}}}{}{
                        \abs{\grad{y}{}{}\membrPhaseField{\NOARG}{y}}
                        A_\phaseFieldParam(y)
                        \normal[\membrPhaseField{}{}]{y}
                        \cdot
                        \testFunVelocity{y}
                    }{\lebesgueM{3}(y)}
                    \\
                    &=
                    \phaseFieldParam^{-1}
                    \integral{\tubNeighbourhood{\ifCoordsDelta}{\interfOne{}}}{}{
                        \left(
                            \simpleDeriv{\prime}{\membrPhaseFieldIFCIdx{0}{}{}}
                            (\interfCoords{\NOARG}{y})
                        \right)^2
                        \extMeanCurv{y}
                        \tilde{A}_\phaseFieldParam(y)
                        \normal[\membrPhaseField{}{}]{y}
                        \cdot
                        \testFunVelocity{y}
                    }{\lebesgueM{3}(y)}
                    +
                    \landauBigo{\phaseFieldParam}
                    \\
                    &\rightarrow
                    \tanhIntConst
                    \integral{\interfOne{}}{}{
                        \meanCurv{\interfOne{}}{y}
                        \tilde{A}_0(y)   	
                        \normal[\interfOne{}]{y}
                        \cdot
                        \testFunVelocity{y}
                    }{\hausdorffM{2}(y)}
                    \\
                    &=
                    \frac{3\tanhIntConst^2}{2}
                    \integral{\interfOne{}}{}{
                        \meanCurv{\interfOne{}}{y}
                        \integral{\interfTwo{}}{}{
                            \pfCouplingEnergyDens{x,y}{
                                \pfSpeciesOneIdx{0}{}{},
                                \normal[\interfTwo{}]{}
                            }
                        }{\hausdorffM{2}(x)}
                        \normal[\interfOne{}]{y}
                        \cdot
                        \testFunVelocity{y}
                    }{\hausdorffM{2}(y)}.
                \end{split}
            \end{equation*}

           Concerning $ \bar{B}_\phaseFieldParam $ one argues analogously, as 
           $ \phaseFieldParam\searrow0 $,
           \begin{equation*}
               \begin{split}
                    \bar{B}_\phaseFieldParam
                    &=
                    \integral{\tubNeighbourhood{\ifCoordsDelta}{\interfOne{}}}{}{
                        \abs{\grad{y}{}{}\membrPhaseField{\NOARG}{y}}
                        B_\phaseFieldParam(y)
                      	\normal[\membrPhaseField{}{}]{y}
                        \cdot
                        \testFunVelocity{y}
                    }{\lebesgueM{3}(y)}
                    \\
                    &=
                    -
                    \phaseFieldParam^{-1}
               	    \int_{\tubNeighbourhood{\ifCoordsDelta}{\interfOne{}}}
                        \left(\simpleDeriv{\prime}{\membrPhaseFieldIFCIdx{0}{}{}}
                        \concat\interfCoords[\phaseFieldParam]{}{}\right)^2
                        \int_{\tubNeighbourhood{\ifCoordsDelta}{\interfTwo{}}}
                            \phaseFieldParam^{-1}
                            \frac{3}{2}
                            \left(
                                \simpleDeriv{\prime}{\cortexPhaseFieldIFCIdx{0}{}{}}
                                (\interfCoords[\phaseFieldParam]{\NOARG}{x})
                            \right)^2
                            \extNormal{y}\cdot
                            \grad{y}{}{}
                            \pfCouplingEnergyDens{x,y}{
                              	\pfSpeciesOneIdx{0}{}{},\normal[\cortexPhaseFieldIdx{0}{}{}]{}
                            }
                            +
                            \landauBigo{1}\;
                        \mathrm{d}\lebesgueM{3}(x)
                        \normal[\membrPhaseField{}{}]{}
                        \cdot
                        \testFunVelocity{}\;
                    \mathrm{d}\lebesgueM{3}(y)
                    \\
                    &\rightarrow
                    -
                    \frac{3\tanhIntConst^2}{2}
                    \integral{\interfOne{}}{}{
                        \normal[\interfOne{}]{}\cdot
                        \integral{\interfTwo{}}{}{
                            \grad{y}{}{}
                            \pfCouplingEnergyDens{x,y}{
                              	\pfSpeciesOneIdx{0}{}{},
                            	\normal[\interfTwo{}]{}
                            }
                        }{\hausdorffM{2}(x)}
                        \normal[\interfOne{}]{}
                        \cdot
                        \testFunVelocity{}
                    }{\hausdorffM{2}(y)}.
                \end{split}
            \end{equation*}
            
            \paragraph{Limit of 
                            $ 
                                \LTProd{\tubNeighbourhood{\ifCoordsDelta }{\Sigma}}{
                                    \grad{\cortexPhaseField{}{}}{L^2}{}
                                    \pfCouplingEnergy{}{}
                                }{
                                  	\grad{}{}{}\cortexPhaseField{}{}\cdot\testFunVelocity{}
                                }
                            $
                        }
            We recall
            \eqref{equ:formal asymptotics:expanding the L2 gradient of the coupling energy:%
            cortex L2 gradient} 
            and write
            \begin{equation}
                \label{equ:formal asymptotics:singular limits:L2 gradient cortex coupling energy}
                \begin{split}
                    \LTProd{\tubNeighbourhood{\ifCoordsDelta}{\interfTwo{}}}{
                        \grad{\cortexPhaseField{}{}}{\lebesgueSet{2}{\domain}}{}
                        \pfCouplingEnergy{}{}
                    }{
                        \grad{}{}{}\cortexPhaseField{}{}\cdot\testFunVelocity{}
                    }
                    =
                    \LTProd{\tubNeighbourhood{\ifCoordsDelta}{\interfTwo{}}}{
                        \abs{\grad{}{}{}\cortexPhaseField{}{}}
                        \left(C_\phaseFieldParam + D_\phaseFieldParam + E_\phaseFieldParam\right)
                    }{
                        \normal[\cortexPhaseField{}{}]{}\cdot\testFunVelocity{}
                    }       
                    +
                    \landauBigo{\phaseFieldParam}
                    =
                    \bar{C}_\phaseFieldParam+\bar{D}_\phaseFieldParam+\bar{E}_\phaseFieldParam
                    +
                    \landauBigo{\phaseFieldParam}.
                \end{split}
            \end{equation}
            The term $ \bar{C}_\phaseFieldParam $ is treated just like $ \bar{A}_\phaseFieldParam $, so we 
            obtain in the limit $ \phaseFieldParam\searrow0 $
            \begin{equation*}
                \begin{split}
                    \bar{C}_\phaseFieldParam
                    &=
                    \integral{\tubNeighbourhood{\ifCoordsDelta}{\interfTwo{}}}{}{
                        \left(
                          	-\phaseFieldParam\laplacian{}{}{}\cortexPhaseFieldSym
                            +
                            \phaseFieldParam^{-1}\doubleWellPot[\schemeSupscr{\prime}]{
                              	\cortexPhaseFieldSym
                            }
                        \right)
                        \integral{\tubNeighbourhood{\ifCoordsDelta}{\interfOne{}}}{}{
                            \ginzburgLandauEnergyDens{\membrPhaseField{}{}}{}(y)
                            \pfCouplingEnergyDens{x,y}{
                              	\pfSpeciesOne{}{},\normal[\cortexPhaseField{}{}]{}
                            }
                        }{\lebesgueM{3}(y)}
                        \normal[\cortexPhaseField{}{}]{x}
                        \cdot
                        \testFunVelocity{x}
                    }{\lebesgueM{3}(x)}
                    \\
                    &\rightarrow
                    \frac{3\tanhIntConst^2}{2}
                    \integral{\interfTwo{}}{}{
                        \meanCurv{\interfTwo{}}{x}
                        \integral{\interfOne{}}{}{
                            \pfCouplingEnergyDens{x,y}{\pfSpeciesOneIdx{0}{}{},\normal[\interfTwo{}]{}}
                        }{\hausdorffM{2}(y)}
                        \normal[\interfTwo{}]{x}
                        \cdot
                        \testFunVelocity{x}
                    }{\hausdorffM{2}(x)}.
                \end{split}
            \end{equation*}
            Using the expansion of $ D_\phaseFieldParam $, we compute further
            \begin{equation*}
                \begin{split}
                    \bar{D}_\phaseFieldParam 
                    &=
                    -
                    \integral{\tubNeighbourhood{\ifCoordsDelta}{\interfTwo{}}}{}{
                        \abs{\grad{}{}{}\cortexPhaseField{\NOARG}{}}
                        \phaseFieldParam
                        \grad{}{}{}\cortexPhaseField{}{}
                        \cdot
                        \integral{\tubNeighbourhood{\ifCoordsDelta}{\interfOne{}}}{}{
                            \ginzburgLandauEnergyDens{\membrPhaseField{}{}}{}(y)
                            \grad{x}{}{
	                            \pfCouplingEnergyDens{x,y}{
                                  	\pfSpeciesOne{}{},\normal[\cortexPhaseField{}{}]{}
                                }
                            }
                        }{\lebesgueM{3}(y)}
                        \normal[\cortexPhaseField{}{}]{}
                        \cdot
                        \testFunVelocity{}                  	
                    }{\lebesgueM{3}(x)}
                    \\
                    &=
                    -\integral{\tubNeighbourhood{\ifCoordsDelta}{\interfTwo{}}}{}{
                        \abs{\grad{}{}{}\cortexPhaseField{}{}}
                        \simpleDeriv{\prime}{\cortexPhaseFieldIFCIdx{0}{}{}}
                        \concat \interfCoords[\phaseFieldParam]{}{}
                        \extNormal{}
                        \cdot
                        \integral{\tubNeighbourhood{\ifCoordsDelta}{\interfOne{}}}{}{
                            \ginzburgLandauEnergyDens{\membrPhaseField{}{}}{}(y)
                            \grad{x}{}{}\pfCouplingEnergyDens{x,y}{
                                \pfSpeciesOne{}{},
                                \normal[\cortexPhaseField{}{}]{}
                            }
                        }{\lebesgueM{3}(y)}
                        \normal[\cortexPhaseField{}{}]{}
                        \cdot
                        \testFunVelocity{}                        
                    }{\lebesgueM{3}(x)}
                    \\
                    &-
                    \integral{\tubNeighbourhood{\ifCoordsDelta}{\interfTwo{}}}{}{
                        \abs{\grad{}{}{}\cortexPhaseField{}{}}
                        \simpleDeriv{\prime}{\cortexPhaseFieldIFCIdx{0}{}{}}
                        \concat \interfCoords[\phaseFieldParam]{}{}
                        \extNormal{}
                        \cdot
                        \integral{\tubNeighbourhood{\ifCoordsDelta}{\interfOne{}}}{}{
                            \ginzburgLandauEnergyDens{\membrPhaseField{}{}}{}(y)
                            \pDiff{\pfSpeciesOne{}{}}{}{}
                            \pfCouplingEnergyDens{x,y}{
                                \pfSpeciesOne{}{},
                                \normal[\cortexPhaseField{}{}]{}
                            }
                            \grad{x}{}{}\pfSpeciesOneIdx{0}{}{}
                        }{\lebesgueM{3}(y)}
                        \normal[\cortexPhaseField{}{}]{}
                        \cdot
                        \testFunVelocity{}                        
                    }{\lebesgueM{3}(x)}
                    +
                    \landauBigo{\phaseFieldParam}
                    \\
                    &=
                    -\integral{\tubNeighbourhood{\ifCoordsDelta}{\interfTwo{}}}{}{
                        \phaseFieldParam^{-1}
                        \left(
                            \simpleDeriv{\prime}{\cortexPhaseFieldIFCIdx{0}{}{}}
                        \right)^2
                        \concat \interfCoords[\phaseFieldParam]{}{}
                        \extNormal{}
                        \cdot
                        \integral{\tubNeighbourhood{\ifCoordsDelta}{\interfOne{}}}{}{
                            \phaseFieldParam^{-1}
                                \frac{3}{2}
                                \left(
                                    \simpleDeriv{\prime}{\membrPhaseFieldIFCIdx{0}{}{}}
                                    (\interfCoords[\phaseFieldParam]{\NOARG}{y})
                                \right)^2
                            \grad{x}{}{}\pfCouplingEnergyDens{x,y}{
                                \pfSpeciesOneIdx{0}{}{},
                                \normal[\cortexPhaseFieldIdx{0}{}{}]{}
                            }
                        }{\lebesgueM{3}(y)}
                        \normal[\cortexPhaseField{}{}]{}
                        \cdot
                        \testFunVelocity{}                        
                    }{\lebesgueM{3}(x)}
                    \\
                    &-
                    \integral{\tubNeighbourhood{\ifCoordsDelta}{\interfTwo{}}}{}{
                        \phaseFieldParam^{-1}
                        \left(
                            \simpleDeriv{\prime}{\cortexPhaseFieldIFCIdx{0}{}{}}
                        \right)^2
                        \concat \interfCoords[\phaseFieldParam]{}{}
                        \extNormal{}
                        \cdot
                        \integral{\tubNeighbourhood{\ifCoordsDelta}{\interfOne{}}}{}{
                            \phaseFieldParam^{-1}
                                \frac{3}{2}
                                \left(
                                    \simpleDeriv{\prime}{\membrPhaseFieldIFCIdx{0}{}{}}
                                    (\interfCoords[\phaseFieldParam]{\NOARG}{y})
                                \right)^2
                            \pDiff{\pfSpeciesOne{}{}}{}{}
                            \pfCouplingEnergyDens{x,y}{
                                \pfSpeciesOneIdx{0}{}{},
                                \normal[\cortexPhaseFieldIdx{0}{}{}]{}
                            }
                            \grad{x}{}{}\pfSpeciesOneIdx{0}{}{}
                        }{\lebesgueM{3}(y)}
                        \normal[\cortexPhaseField{}{}]{}
                        \cdot
                        \testFunVelocity{}                        
                    }{\lebesgueM{3}(x)}
                    +
                    \landauBigo{\phaseFieldParam}
                    \\
                    &\overset{(1)}{=}
                    -\integral{\tubNeighbourhood{\ifCoordsDelta}{\interfTwo{}}}{}{
                        \phaseFieldParam^{-1}
                        \left(
                            \simpleDeriv{\prime}{\cortexPhaseFieldIFCIdx{0}{}{}}
                        \right)^2
                        \concat \interfCoords[\phaseFieldParam]{}{}
                        \extNormal{}
                        \cdot
                        \integral{\tubNeighbourhood{\ifCoordsDelta}{\interfOne{}}}{}{
                            \phaseFieldParam^{-1}
                                \frac{3}{2}
                                \left(
                                    \simpleDeriv{\prime}{\membrPhaseFieldIFCIdx{0}{}{}}
                                    (\interfCoords[\phaseFieldParam]{\NOARG}{y})
                                \right)^2
                            \grad{x}{}{}\pfCouplingEnergyDens{x,y}{
                                \pfSpeciesOneIdx{0}{}{},
                                \normal[\cortexPhaseFieldIdx{0}{}{}]{}
                            }
                        }{\lebesgueM{3}(y)}
                        \normal[\cortexPhaseField{}{}]{}
                        \cdot
                        \testFunVelocity{}                        
                    }{\lebesgueM{3}(x)}
                    +
                    \landauBigo{\phaseFieldParam}
                    \\
                    &\rightarrow
                    -\frac{3\tanhIntConst^2}{2}
                    \integral{\interfTwo{}}{}{
                      	\normal[\interfTwo{}]{}
                        \cdot
                        \integral{\interfOne{}}{}{
                          	\grad{x}{}{}
                            \pfCouplingEnergyDens{x,y}{
                              	\pfSpeciesOneIdx{0}{}{},
                                \normal[\interfTwo{}]{}
                            }
                        }{\hausdorffM{2}(y)}
                        \normal[\interfTwo{}]{}
                        \cdot
                        \testFunVelocity{}
                    }{\hausdorffM{2}(x)}\;\text{as}\;\phaseFieldParam\searrow 0,
                \end{split}
            \end{equation*}
            where for $ (1) $, we observe that due to
            \eqref{equ:formal asymptotics:inner expansion:species to leading order:%
            constant in normal direction}, 
            $ \grad{x}{}{}\pfSpeciesOneIdx{0}{}{}
            = \grad{\interfTwo{}_{\sdist{}{}}}{}{}\pfSpeciesOneIdx{0}{}{} $, thus
            $ \extNormal{}\cdot\grad{x}{}{}\pfSpeciesOneIdx{0}{}{} = 0 $.

            At last, we turn to $ \bar{E}_\phaseFieldParam $:
            \begin{equation*}
                \begin{split}
                    \bar{E}_\phaseFieldParam
                    &=
                    -
                    \integral{\domain}{}{
                        \phaseFieldParam^{-1}
                        \frac{3}{2}
                        \left(\simpleDeriv{\prime}{\cortexPhaseFieldIFCIdx{0}{}{}}\right)^2
                        \concat
                        \interfCoords[\phaseFieldParam]{}{}
                        \integral{\domain}{}{
                            \phaseFieldParam^{-1}
                            \frac{3}{2}
                            \left(
                                \simpleDeriv{\prime}{\membrPhaseFieldIFCIdx{0}{}{}}
                                (\interfCoords[\phaseFieldParam]{\NOARG}{y})
                            \right)^2
                            \diver{\interfTwo{}}{}{\grad{\normal{}}{}{}\pfCouplingEnergyDens{}{}}
                            (x,y,\pfSpeciesOneIdx{0}{}{},\normal[\cortexPhaseFieldIdx{0}{}{}]{})
                        }{\lebesgueM{3}(y)}
                        \normal[\cortexPhaseField{}{}]{}
                        \cdot
                        \testFunVelocity{}
                    }{\lebesgueM{3}(x)}
                    \\
                    &-
                    \integral{\domain}{}{
                        \phaseFieldParam^{-1}
                        \frac{3}{2}
                        \left(\simpleDeriv{\prime}{\cortexPhaseFieldIFCIdx{0}{}{}}\right)^2
                        \concat
                        \interfCoords[\phaseFieldParam]{}{}
                        \integral{\domain}{}{
                            \phaseFieldParam^{-1}
                            \frac{3}{2}
                            \left(
                                \simpleDeriv{\prime}{\membrPhaseFieldIFCIdx{0}{}{}}
                                (\interfCoords[\phaseFieldParam]{\NOARG}{y})
                            \right)^2
                            \grad{\normal{}}{}{}\pfCouplingEnergyDens{x,y}{
                              	\pfSpeciesOneIdx{0}{}{},\normal[\cortexPhaseFieldIdx{0}{}{}]{}
                            }
                            \cdot
                            \extMeanCurv{x}
                            \extNormal{x}
                        }{\lebesgueM{3}(y)}
                        \normal[\cortexPhaseField{}{}]{}
                        \cdot
                        \testFunVelocity{}
                    }{\lebesgueM{3}(x)}
                    \\
                    &\rightarrow
                    -\left(\frac{3\tanhIntConst}{2}\right)^2
                    \integral{\interfTwo{}}{}{
                      	\integral{\interfOne{}}{}{
                            \diver{\interfTwo{}}{}{\grad{\normal{}}{}{}\pfCouplingEnergyDens{}{}}
                            (x,y,\pfSpeciesOneIdx{0}{\NOARG}{},\normal[\interfTwo{}]{})
                            +
                            \grad{\normal{}}{}{}
                            \pfCouplingEnergyDens{x,y}{
                              	\pfSpeciesOneIdx{0}{}{},\normal[\interfTwo{}]{}
                            }
                            \cdot
                            \meanCurv{\interfTwo{}}{x}
                            \normal[\interfTwo{}]{x}                            
                        }{\hausdorffM{2}(y)}
                        \normal[\interfTwo{}]{}
                        \cdot
                        \testFunVelocity{}
                    }{\hausdorffM{2}(x)}\;\text{as}\;\phaseFieldParam\searrow 0.
                \end{split}
            \end{equation*}

        \paragraph{Limit of $ \bar{G}_\phaseFieldParam$}
        Using the expansion we computed before, we obtain
        \begin{equation*}
            \begin{split}
                \bar{G}_\phaseFieldParam 
                &=-\integral{\tubNeighbourhood{\ifCoordsDelta}{\interfTwo{}}}{}{
                    \pfSpeciesOneIdx{0}{}{}
                    \phaseFieldParam^{-1}
                    \left(
                        \simpleDeriv{\prime}{\cortexPhaseFieldIFCIdx{0}{}{}}
                        \concat\interfCoords[\phaseFieldParam]{}{}
                    \right)^2
                    \extMeanCurv{}
                    \extNormal{}
                    \cdot
                    \testFunVelocity{}
                    \integral{\tubNeighbourhood{\ifCoordsDelta}{\interfOne{}}}{}{
                        \phaseFieldParam^{-1}
                        \frac{3}{2}
                        \left(
                            \simpleDeriv{\prime}{\membrPhaseFieldIFCIdx{0}{}{}}
                            \concat
                            \interfCoords[\phaseFieldParam]{}{}
                        \right)^2
                        \pDiff{\pfSpeciesOneIdx{}{}{}}{}{}
                        \pfCouplingEnergyDens{x,y}{
                          	\pfSpeciesOneIdx{0}{}{},
                            \normal[\cortexPhaseFieldIdx{0}{}{}]{}
                        }
                    }{\lebesgueM{3}(y)}
                }{\lebesgueM{3}(x)}
                +
                \landauBigo{\phaseFieldParam}
                \\
                &\rightarrow
                -\frac{3\tanhIntConst^2}{2}
                \integral{\interfTwo{}}{}{
                  	\pfSpeciesOneIdx{0}{}{}
                    \meanCurv{\interfTwo{}}{}
                    \normal[\interfTwo{}]{}
                    \cdot
                    \testFunVelocity{}
                    \integral{\interfOne{}}{}{
                        \pDiff{\pfSpeciesOne{}{}}{}{}
                        \pfCouplingEnergyDens{x,y}{
                          	\pfSpeciesOneIdx{0}{}{},
                            \normal[\interfTwo{}]{}
                        }
                    }{\hausdorffM{2}(y)}
                }{\hausdorffM{2}(x)}\;\text{as}\;\phaseFieldParam\searrow 0.
            \end{split}
        \end{equation*}

        \paragraph{Limit of $ \bar{H}_\phaseFieldParam $}
            On the expansion derived before, we use
            $ 
            	\doubleWellPot{\cortexPhaseFieldIdx{0}{}{}} 
                = 
                \left(
                    \simpleDeriv{\prime}{\cortexPhaseFieldIFCIdx{0}{}{}}
                \right)^2
            $,
            and then pass to the limit $\phaseFieldParam\searrow 0$:
            \begin{align*}
                \bar{H}_\phaseFieldParam
                &=
                -\int_{\tubNeighbourhood{\ifCoordsDelta}{\interfTwo{}}}
                \phaseFieldParam^{-1}
                \frac{3}{2}
                \left(
                    \simpleDeriv{\prime}{\cortexPhaseFieldIFCIdx{0}{}{}}
                \right)^2
                \concat\interfCoords[\phaseFieldParam]{\NOARG}{x}
                \integral{\tubNeighbourhood{\ifCoordsDelta}{\interfOne{}}}{}{
                    \phaseFieldParam^{-1}
                    \frac{3}{2}
                    \left(
                        \simpleDeriv{\prime}{\membrPhaseFieldIFCIdx{0}{\NOARG}{}}
                    \right)^2
                    \concat\interfCoords[\phaseFieldParam]{\NOARG}{y}
                    \grad{\interfTwo{}_{\sdist{}{}}}{}{\pDiff{\pfSpeciesOne{}{}}{}{} \pfCouplingEnergyDens{}{}
                    (\cdot,y,\pfSpeciesOneIdx{0}{}{},\normal[\cortexPhaseFieldIdx{0}{}{}]{})}
                    \cdot\testFunVelocity{}
                    \pfSpeciesOneIdx{0}{}{}
                }{\lebesgueM{3}(y)}   
                \mathrm{d}\lebesgueM{3}(x)
                \\
                &+\landauBigo{\phaseFieldParam}
                \\
                &\rightarrow
                -
                \left(\frac{3\tanhIntConst}{2}\right)^2
                \integral{\interfTwo{}}{}{
                  	\integral{\interfOne{}}{}{
                        \grad{\interfTwo{}_{\sdist{}{}}}{}{\pDiff{\pfSpeciesOne{}{}}{}{} \pfCouplingEnergyDens{}{}
                        (\cdot,y,\pfSpeciesOneIdx{0}{}{},\normal[\cortexPhaseFieldIdx{0}{}{}]{})}
                        \cdot\testFunVelocity{}
                        \pfSpeciesOneIdx{0}{}{}                      	
                    }{\hausdorffM{2}}
                }{\hausdorffM{2}}.
            \end{align*}

        \subsubsection{Deriving the jump conditions}
        We now observe
        that all the previously computed limits have equal, corresponding terms in the right hand sides of
        \eqref{equ:modelling:sharp interface classical PDE model:normal stress jump if one}
        and
        \eqref{equ:modelling:sharp interface classical PDE model:normal stress jump if two}.
        Note that 
        these appear with a minus on the right hand side of the momentum balance, so
        we negate them here accordingly.
        We start by labelling them (here in variational form):
        \begin{equation*}
            \begin{split}
                \bar{I}_0
                &=
                \integral{\interfOne{}}{}{
                (
                    -
                    \grad{y}{}{}\couplingIntCortex^0\cdot\normal[\interfOne{}]{}
                    +
                    \meanCurv{\interfOne{}}{}
                    \couplingIntCortex^0
                )
                \testFunVelocity{}
                \cdot
                \normal[\interfOne{}]{}
                }{\hausdorffM{2}},
                \\
                \bar{J}_0
                &=
                \integral{\interfTwo{}}{}{
                    \left(
	                -
                    \grad{x}{}{}\couplingIntMembr^0\cdot\normal[\interfTwo{}]{}
                    +
                    \meanCurv{\interfTwo{}}{}
                    \couplingIntMembr^0
                    \right)
                    \testFunVelocity{}\cdot
                    \normal[\interfTwo{}]{}
                }{\hausdorffM{2}},
                \\
                \bar{K}_0
                &=
                -
                \integral{\interfTwo{}}{}{
                    \pDiff{\speciesOne{}{}}{}{}
                    \couplingIntMembr^0
                    \meanCurv{\interfTwo{}}{}
                    \speciesOne{}{}
                    \testFunVelocity{}\cdot
                    \normal[\interfTwo{}]{}
                }{\hausdorffM{2}},
                \\
                \bar{L}_0
                &=
                -
                \integral{\interfTwo{}}{}{
                    \grad{\interfTwo{}}{}{
                    \pDiff{\pfSpeciesOne{}{}}{}{}\couplingIntMembr^0
                    }
                    \cdot
                    \pfSpeciesOne{}{}
                    \testFunVelocity{}
                }{\hausdorffM{2}},
                \\
                \bar{M}_0
                &=
                -
                \integral{\interfTwo{}}{}{
                    \left(
                        \diver{\interfTwo{}}{}{
                            \grad{\normal{}}{}{}\couplingIntMembr^0
                        }
                        +
                        \meanCurv{\interfTwo{}}{}
                        \left(
                        \grad{\normal{}}{}{}\couplingIntMembr^0
                        \cdot
                        \normal[\interfTwo{}]{}
                        \right)
                    \right)
                    \testFunVelocity{}\cdot
                    \normal[\interfTwo{}]{}
                }{\hausdorffM{2}}.                
            \end{split}
        \end{equation*}

        We see immediately that $ \bar{B}_0 + \bar{A}_0 = \frac{3Z^2}{2}\bar{I}_0 $.
        Further,
        \begin{equation*}
            \begin{split}
		        \bar{D}_0 + \bar{C}_0
                &=
                -
                \frac{3\tanhIntConst^2}{2}
                \integral{\interfTwo{}}{}{
                    \integral{\interfOne{}}{}{
                        \normal[\interfTwo{}]{}
                        \cdot
                        \grad{x}{}{}\pfCouplingEnergyDens{}{}
	                    \normal[\interfTwo{}]{}\cdot\testFunVelocity{}
                    }{\hausdorffM{2}}
                }{\hausdorffM{2}}
                +
                \frac{3\tanhIntConst^2}{2}
                \integral{\interfTwo{}}{}{
                    \meanCurv{\interfTwo{}}{}
                    \integral{\interfOne{}}{}{
                        \pfCouplingEnergyDens{}{}
                    }{\hausdorffM{2}}
                    \normal[\interfTwo{}]{}
                    \cdot
                    \testFunVelocity{}
                }{\hausdorffM{2}}
                = \frac{3Z^2}{2}\bar{J}_0.
            \end{split}
        \end{equation*}
        Clearly, $ \bar{G}_0 = \frac{3Z^2}{2}\bar{K}_0 $, 
        $\bar{H}_0 = \left( \frac{3\tanhIntConst}{2} \right)^2\bar{L}_0 $, and 
        $ \bar{E}_0 = \left(\frac{3Z}{2}\right)^2\bar{M}_0$.

        Equating \eqref{equ:formal asymptotics:sharp interface limit:momentum balance:%
        recov interfacial jump stress tensor} on the left and the limit of
        Lemma~\ref{lemma:formal asymptotics:singular limit:canham-helfrich force terms:approximation}, 
        as well as the terms for the Ginzburg-Landau energy gradient and
        $ \bar{A}_0 + \bar{B}_0 + \bar{C}_0 + \bar{D}_0 + \bar{E}_0 + \bar{G}_0 + \bar{H}_0 $
        on the right, we find 
        \eqref{equ:modelling:sharp interface classical PDE model:normal stress jump if one}
        and
        \eqref{equ:modelling:sharp interface classical PDE model:normal stress jump if two}.
    \subsection{Phase field evolution equations}
        The equations
        \eqref{equ:modelling:phase field resulting PDEs:first interface evolution}
        and 
        \eqref{equ:modelling:phase field:resulting PDEs:second interface evolution}
        are just the tautologies $ 0  = 0 $ in the outer region, so we are only concerned
        with them close to the boundary layers.
        On their left hand sides, we find at leading order $ \phaseFieldParam^{-1} $
        \begin{equation}
            \label{equ:formal asymptotics:sharp interface limit:pf evol equations:
            lhs}
            \begin{split}
                -\shapeTransfVelNormal^{\surfaceProto}
                \simpleDeriv{\prime}{\schemeLocalFun{\phaseFieldProto}_0}
                \concat\interfCoords[\phaseFieldParam]{}{}
                +
                \pfVelocityIdx{0}{}{}
                \cdot
                \extNormal{}
                \simpleDeriv{\prime}{\schemeLocalFun{\phaseFieldProto}_0}
                \concat\interfCoords[\phaseFieldParam]{}{}
            \end{split}
        \end{equation}
        for $ \phaseFieldProto \in \{\membrPhaseFieldSym,\cortexPhaseFieldSym\} $ 
        with corresponding boundary layer $ \surfaceProto\in\{\interfOne{},\interfTwo{}\} $.

    	From the energy inequality, the following bound holds,
        \begin{equation}
            \label{equ:formal asymptotics:sharp interface limit:pf evol equations:%
            bound on energy gradient}
            \phaseFieldParam^{\alpha}
            \integral{0}{\finTime}{
              	\integral{\tubNeighbourhood{\ifCoordsDelta}{\surfaceProto}}{}{
	            	\abs{
                      	\grad{}{}{}\grad{\phaseFieldProto}{L^2}{}\freeEnergyFunct{}{}
                    }^2
                }{\lebesgueM{3}}
            }{t}
            \in \landauBigo{1}.
        \end{equation}
        We have found in Section~\ref{sec:Expanding the L2 gradient of the canham-helfrich energy} that
        $ \grad{\phaseFieldProto}{L^2}{}\freeEnergyFunct{\phaseFieldProto}{} \in
        \landauBigo{1} $, so
        $$
        	\grad{}{}{}\grad{\phaseFieldProto}{L^2}{}\freeEnergyFunct{}{}
            =
            \sum_{i=-1}^2 \phaseFieldParam^i \hat{f}_i\concat\interfCoords[\phaseFieldParam]{}{}
            +
            \landauBigo{\phaseFieldParam^3}.
        $$
        Thus
        $$
        	\abs{
            \widehat{
        	\grad{}{}{}\grad{\phaseFieldProto}{L^2}{}\freeEnergyFunct{}{}}
            }^2
            =
            \phaseFieldParam^{-2}
            \hat{f}_{-1}^2
            +
            2\phaseFieldParam^{-1}\hat{f}_{-1}\hat{f}_0
            +
            \left(\hat{f}_0^2 + \hat{f}_{-1}\hat{f}_1\right)
            +
            \phaseFieldParam\left(2\hat{f}_{-1}\hat{f}_2 + 2\hat{f}_0\hat{f}_1\right)
            +
            \phaseFieldParam^2\left(\hat{f}_1^2 + 2\hat{f}_{-1}\hat{f}_3 + 2\hat{f}_0\hat{f}_2
            \right)
            +
            \landauBigo{1}.
        $$
        Equation \eqref{equ:formal asymptotics:sharp interface limit:pf evol equations:%
            bound on energy gradient} directly implies
        \begin{equation*}
            \integral{0}{\finTime}{
              	\integral{
                  	-\frac{\ifCoordsDelta}{\phaseFieldParam}
                }{
                  	\frac{\ifCoordsDelta}{\phaseFieldParam}
                }{
                  	\integral{\surfaceProto_{\phaseFieldParam\fastVar}}{}{
		            	\abs{
                          	\widehat{\grad{}{}{}\grad{\phaseFieldProto}{L^2}{}\freeEnergyFunct{}{}}
                        }^2
                        (\orthProj{\surfaceProto}{\sigma},\fastVar)
                    }{\hausdorffM{2}(\sigma)}
                }{\fastVar}
            }{t}
            \in \landauBigo{\phaseFieldParam^{-\alpha-1}}.
        \end{equation*}
        Since $ \alpha < 1 $ (required in Section~\ref{sec:Expanding the L2 gradient of the canham-helfrich energy}), 
        $ \hat{f}_{-1} = 0 $, so $ \widehat{\grad{}{}{}\grad{\phaseFieldProto}{L^2}{}
        \freeEnergyFunct{}{}} \in \landauBigo{1} $.
        Further, $ \diver{}{}{\phaseFieldParam^\alpha\grad{}{}{\grad{\phaseFieldProto}{L^2}{}
          \freeEnergyFunct{}{}}} \in \landauBigo{\phaseFieldParam^{-1+\alpha}} $,
        which is of lower order than the left hand side 
        \eqref{equ:formal asymptotics:sharp interface limit:pf evol equations:
        lhs} for $ \alpha > 0 $, so we obtain from the phase field evolution to leading order
        \begin{equation}
            \pfVelocityIdx{0}{}{}
            \cdot
            \extNormal{}
            =
            \shapeTransfVelNormal^{\surfaceProto}
        \end{equation}
        meaning that the interface $ \surfaceProto $ is driven purely by the 
        fluid's velocity in normal direction, and this is equivalent to the 
        Hamilton-Jacobi equations 
        \eqref{equ:modelling:sharp interface classical PDE: HJE membr}, and
        \eqref{equ:modelling:sharp interface classical PDE: HJE cortex}.
    \subsection{Species subsystem}
    	Like the phase field equations, the species subsystem 
        \eqref{equ:modelling:phase field:resulting PDEs:reaction-diffusion of species one}
        \eqref{equ:modelling:phase field:resulting PDEs:reaction-diffusion of species two}
        is meaningless in the outer region. 
        For reasons of symmetry, it suffices to conduct the asymptotic analysis for the 
        equation of $ \pfSpeciesOne{}{} $: it carries over to $ \pfSpeciesTwo{}{} $ 
        verbatim.
        
        We start our analysis by expanding the term 
        $
        	\diver{}{}{
              	\ginzburgLandauEnergyDens{\cortexPhaseFieldSym}{}
              	\speciesOneDiffusiv{}\grad{}{}{}\pfSpeciesOne{}{}
            }
        $. W.l.o.g., we assume that $ \simpleDeriv{\prime}{\schemeLocalFun{\speciesOneDiffusiv{}}}=0 $.
        First we derive from
        \eqref{equ:appendix:formally matched asymptotics:%
        gradient rescaled distance} and using \eqref{equ:formal asymptotics:inner expansion:%
        species to leading order:%
        constant in normal direction}
        \begin{align*}
        	\speciesOneDiffusiv{}\grad{}{}{}\pfSpeciesOne{}{}
            &=
            \phaseFieldParam^{-1}\speciesOneDiffusiv{}
            \simpleDeriv{\prime}{
            	\pfSpeciesOneIFCIdx{0}{}{}
            }
            \concat\interfCoords[\phaseFieldParam]{}{}
            \extNormal{}
            +
            \speciesOneDiffusiv{}{}\grad{\interfTwo{}_{\sdist{}{}}}{}{}
            \pfSpeciesOneIdx{0}{}{}
            +
            \speciesOneDiffusiv{}\simpleDeriv{\prime}{\pfSpeciesOneIFCIdx{1}{}{}}
            \concat\interfCoords[\phaseFieldParam]{}{}
            \extNormal{}
            +
            \phaseFieldParam\speciesOneDiffusiv{}
            \grad{\interfTwo{}_{\sdist{}{}}}{}{}\pfSpeciesOneIdx{1}{}{}
            +
            \landauBigo{\phaseFieldParam^2}
            \\
            &=
            \speciesOneDiffusiv{}{}\grad{\interfTwo{}_{\sdist{}{}}}{}{}
            \pfSpeciesOneIdx{0}{}{}
            +
            \speciesOneDiffusiv{}\simpleDeriv{\prime}{\pfSpeciesOneIFCIdx{1}{}{}}
            \concat\interfCoords[\phaseFieldParam]{}{}
            \extNormal{}
            +
            \phaseFieldParam\speciesOneDiffusiv{}
            \grad{\interfTwo{}_{\sdist{}{}}}{}{}\pfSpeciesOneIdx{1}{}{}
            +
            \landauBigo{\phaseFieldParam^2}.
        \end{align*}
        With 
        \eqref{equ:formal asymptotics:expansion of the L2 gradient of the coupling energy:%
            helping expansions:ginzburg landau density}
        and thanks to
        \eqref{equ:formal asymptotics:inner expansion:L2 grad gen willm:%
        scale -2:result} and $ \cortexPhaseFieldIFCIdx{0}{}{} $ being the optimal profile,
        we have also
        \begin{align*}
            \ginzburgLandauEnergyDens{\cortexPhaseFieldSym}{}
            =
            \phaseFieldParam^{-1}\frac{3}{2}\left(\simpleDeriv{\prime}{\cortexPhaseFieldIFCIdx{0}{}{}}\right)^2
            \concat\interfCoords[\phaseFieldParam]{}{}
            +
            \landauBigo{\phaseFieldParam}.
        \end{align*}
        Multiplying both equations gives
        \begin{equation}
            \label{equ:formal asymptotics:sharp interface limit:species subsystem:%
            expansion of flux}
            \ginzburgLandauEnergyDens{\cortexPhaseFieldSym}{}
            \speciesOneDiffusiv{}\grad{}{}{}\pfSpeciesOne{}{}
            =
            \phaseFieldParam^{-1}\frac{3}{2}
            \left(\simpleDeriv{\prime}{\cortexPhaseFieldIFCIdx{0}{}{}}\right)^2
            \concat\interfCoords[\phaseFieldParam]{}{}
            \speciesOneDiffusiv{}
            \left(
              	\grad{\interfTwo{}_{\sdist{}{}}}{}{}\pfSpeciesOneIdx{0}{}{}
                +
                \simpleDeriv{\prime}{\pfSpeciesOneIFCIdx{1}{}{}}
                \concat\interfCoords[\phaseFieldParam]{}{}
                \extNormal{}
            \right)
            +
            \frac{3}{2}\left(\simpleDeriv{\prime}{\cortexPhaseFieldIFCIdx{0}{}{}}\right)^2
            \concat\interfCoords[\phaseFieldParam]{}{}
            \speciesOneDiffusiv{}
            \grad{\interfTwo{}_{\sdist{}{}}}{}{}\pfSpeciesOneIdx{1}{}{}
            +
            \landauBigo{\phaseFieldParam}.
        \end{equation}
        We also expand 
        \begin{equation}
            \label{equ:formal asymptotics:sharp interface limit:species subsystem:%
            expansion of add flux}
            \begin{split}
            \diver{}{}{
              	\ginzburgLandauEnergyDens{\cortexPhaseFieldSym}{}
                \pfSpeciesOne{}{}
                \pfVelocity{}{}_\tau
            }
            &=
            \diver{}{}{
                \phaseFieldParam^{-1}
                \frac{3}{2}
                \left(\simpleDeriv{\prime}{\cortexPhaseFieldIFCIdx{0}{}{}}\right)^2
                \concat\interfCoords[\phaseFieldParam]{}{}
                \pfSpeciesOneIdx{0}{}{}
                (\pfVelocityIdx{0}{}{})_\tau
                +
                \frac{3}{2}
                \left(\simpleDeriv{\prime}{\cortexPhaseFieldIFCIdx{0}{}{}}\right)^2
                \concat\interfCoords[\phaseFieldParam]{}{}
                \left(
                    \pfSpeciesOneIdx{1}{}{}
                    (\pfVelocityIdx{0}{}{})_\tau                
                    +
                    \pfSpeciesOneIdx{0}{}{}
                    (\pfVelocityIdx{1}{}{})_\tau
                \right)
            	+
                \landauBigo{\phaseFieldParam}
            }
            \\
            &=
            \phaseFieldParam^{-2}
            \simpleDeriv{\prime}{\left(
                \frac{3}{2}
                \left(\simpleDeriv{\prime}{\cortexPhaseFieldIFCIdx{0}{}{}}\right)^2
                \pfSpeciesOneIFCIdx{0}{}{}
                (\pfVelocityIFCIdx{0}{}{})_\tau
            \right)
            }
            \concat\interfCoords[\phaseFieldParam]{}{}
            \cdot\extNormal{}
            +
            \phaseFieldParam^{-1}
            \simpleDeriv{\prime}{\left(
                \frac{3}{2}
                \left(\simpleDeriv{\prime}{\cortexPhaseFieldIFCIdx{0}{}{}}\right)^2
                \left(
                    \pfSpeciesOneIFCIdx{1}{}{}
                    (\pfVelocityIFCIdx{0}{}{})_\tau                
                    +
                    \pfSpeciesOneIFCIdx{0}{}{}
                    (\pfVelocityIFCIdx{1}{}{})_\tau
                \right)
            \right)}
            \concat\interfCoords[\phaseFieldParam]{}{}
            \cdot\extNormal{}
            \\
            &+
            \phaseFieldParam^{-1}
            \diver{\interfTwo{}_{\sdist{}{}}}{}{
                \frac{3}{2}
                \left(\simpleDeriv{\prime}{\cortexPhaseFieldIFCIdx{0}{}{}}\right)^2
                \concat\interfCoords[\phaseFieldParam]{}{}
                \pfSpeciesOneIdx{0}{}{}
                (\pfVelocityIdx{0}{}{})_\tau
            }
            +\landauBigo{1}.
            \end{split}
        \end{equation}
        The first and second summands vanish since 
        $ (\pfVelocityIFCIdx{i}{}{})_\tau \cdot \hat{\nu} =
        \transposed{\pfVelocityIFCIdx{i}{}{}}\orthProjMat{\normal[\cortexPhaseFieldIFC{}{}]{}}{}
        \hat{\nu} = \landauBigo{\phaseFieldParam^2} $, $ i \in\{1,2\} $, thanks to 
        \eqref{equ:formal asymptotics:expansion of the L2 gradient of the coupling energy:%
        expansion of the normal}.
        
        Second, we compute (making again use of \eqref{equ:formal asymptotics:inner expansion:%
        species to leading order:constant in normal direction})
        \begin{align*}
            \diver{}{}{
              	\ginzburgLandauEnergyDens{\cortexPhaseFieldSym}{}
              	\speciesOneDiffusiv{}\grad{}{}{}\pfSpeciesOne{}{}
            }
            &=
            \phaseFieldParam^{-2}
            \frac{3}{2}
            \simpleDeriv{\prime}{
                \left(
              	\left(
                  	\simpleDeriv{\prime}{
                      	\cortexPhaseFieldIFCIdx{0}{}{}
                    }
                \right)^2
                \right)
            }
            \concat\interfCoords[\phaseFieldParam]{}{}
            \speciesOneDiffusiv{}
            \left(
              	\grad{\interfTwo{}_{\sdist{}{}}}{}{}\pfSpeciesOneIdx{0}{}{}
                +
                \simpleDeriv{\prime}{\pfSpeciesOneIFCIdx{1}{}{}}
                \concat\interfCoords[\phaseFieldParam]{}{}
                \extNormal{}
            \right)
            \cdot\extNormal{}
            +\phaseFieldParam^{-2}
            \frac{3}{2}
            \left(
                \simpleDeriv{\prime}{
                    \cortexPhaseFieldIFCIdx{0}{}{}
                }
            \right)^2
            \concat\interfCoords[\phaseFieldParam]{}{}
            \speciesOneDiffusiv{}
            \simpleDeriv{\prime\prime}{\pfSpeciesOneIFCIdx{1}{}{}}
            \concat\interfCoords[\phaseFieldParam]{}{}
            \\
            &+
            \landauBigo{\phaseFieldParam^{-1}}
            \\
            &=
            \phaseFieldParam^{-2}
            \frac{3}{2}
            \left(
            \simpleDeriv{\prime}{
                \left(
              	\left(
                  	\simpleDeriv{\prime}{
                      	\cortexPhaseFieldIFCIdx{0}{}{}
                    }
                \right)^2
                \right)
            }
            \concat\interfCoords[\phaseFieldParam]{}{}
            \simpleDeriv{\prime}{\pfSpeciesOneIFCIdx{1}{}{}}
            \concat\interfCoords[\phaseFieldParam]{}{}
            +
            \left(
                \simpleDeriv{\prime}{
                    \cortexPhaseFieldIFCIdx{0}{}{}
                }
            \right)^2
            \concat\interfCoords[\phaseFieldParam]{}{}
            \speciesOneDiffusiv{}
            \simpleDeriv{\prime\prime}{\pfSpeciesOneIFCIdx{1}{}{}}
            \concat\interfCoords[\phaseFieldParam]{}{}            
            \right)
            \\
            &=
            \phaseFieldParam^{-2}
            \frac{3}{2}
            \speciesOneDiffusiv{}
            \simpleDeriv{\prime}{
            \left(
                \left(
                    \simpleDeriv{\prime}{
                        \cortexPhaseFieldIFCIdx{0}{}{}
                    }
                \right)^2
                \simpleDeriv{\prime}{\pfSpeciesOneIFCIdx{1}{}{}}
            \right)}
            \concat\interfCoords[\phaseFieldParam]{}{}
            +
            \landauBigo{\phaseFieldParam^{-1}}.
        \end{align*}
        All other terms in \eqref{equ:modelling:phase field:resulting PDEs:reaction-diffusion of species one}
        are in $ \landauBigo{\phaseFieldParam^{-1}} $. Thus,
        $
            \left(
                \simpleDeriv{\prime}{
                    \cortexPhaseFieldIFCIdx{0}{}{}
                }
            \right)^2
            \simpleDeriv{\prime}{\pfSpeciesOneIFCIdx{1}{}{}}
        $
        is constant in $ \fastVar $. We observe that 
        $
            \left(
                \simpleDeriv{\prime}{
                    \cortexPhaseFieldIFCIdx{0}{}{}
                }
            \right)^2
        $ decays for large $ \abs{\fastVar} $, and for the expression to remain constant,
        $
            \simpleDeriv{\prime}{\pfSpeciesOneIFCIdx{1}{}{}}
        $ must either blow up or be zero constantly. We can exclude the former case by 
        matching, so 
        $$
        	\simpleDeriv{\prime}{\pfSpeciesOneIFCIdx{1}{}{}} = 0.
        $$
        Then, \eqref{equ:formal asymptotics:sharp interface limit:species subsystem:%
            expansion of flux} simplifies 
        and we obtain 
        \begin{equation*}
            \begin{split}
            \diver{}{}{
              	\ginzburgLandauEnergyDens{\cortexPhaseFieldSym}{}
              	\speciesOneDiffusiv{}\grad{}{}{}\pfSpeciesOne{}{}
            }
            &=
            \phaseFieldParam^{-1}
            \frac{3}{2}
            \bigg(
                \simpleDeriv{\prime}{
                    \left(
                    \left(
                        \simpleDeriv{\prime}{
                            \cortexPhaseFieldIFCIdx{0}{}{}
                        }
                    \right)^2
                    \right)
                }
                \concat\interfCoords[\phaseFieldParam]{}{}
                \speciesOneDiffusiv{}
                \grad{\interfTwo{}_{\sdist{}{}}}{}{}
                \pfSpeciesOneIdx{1}{}{}
                \cdot\extNormal{}
                \\
                &+
                \left(
                    \simpleDeriv{\prime}{
                        \cortexPhaseFieldIFCIdx{0}{}{}
                    }
                \right)^2
                \concat\interfCoords[\phaseFieldParam]{}{}                
                \left(
                \diver{\interfTwo{}_{\sdist{}{}}}{}{
	                \speciesOneDiffusiv{}        
                    \grad{\interfTwo{}_{\sdist{}{}}}{}{}\pfSpeciesOneIdx{0}{}{}
                }
                +
                \speciesOneDiffusiv{}
                \simpleDeriv{\prime}{\left(
                    \widehat{
                      	\grad{\interfTwo{}_{\sdist{}{}}}{}{}
                        \pfSpeciesOneIdx{1}{}{}
                    }
                \right)}
                \concat\interfCoords[\phaseFieldParam]{}{}
                \cdot\extNormal{}
                \right)
            \bigg)
            +
            \landauBigo{1}
            \\
            &=
            \phaseFieldParam^{-1}
            \frac{3}{2}
            \left(
                \simpleDeriv{\prime}{
                    \cortexPhaseFieldIFCIdx{0}{}{}
                }
            \right)^2
            \concat\interfCoords[\phaseFieldParam]{}{}                
            \diver{\interfTwo{}_{\sdist{}{}}}{}{
                \speciesOneDiffusiv{}        
                \grad{\interfTwo{}_{\sdist{}{}}}{}{}\pfSpeciesOneIdx{0}{}{}
            }.
            \end{split}
        \end{equation*}

        Finally, we find in 
        \eqref{equ:modelling:phase field:resulting PDEs:reaction-diffusion of species one} (note
        \eqref{equ:formal asymptotics:sharp interface limit:species subsystem:%
            expansion of add flux}, \eqref{equ:formal asymptotics:%
            expansion of the L2 gradient of the coupling energy:%
                helping expansions:phase field mean curv}, and the
        independence of $ \cortexPhaseFieldIFCIdx{0}{}{} $ from the tangential variable
        due to its being the optimal profile)
        to leading order $ \phaseFieldParam^{-1} $
        \begin{align*}
            \frac{3}{2}
            \left(\simpleDeriv{\prime}{\schemeLocalFun{\cortexPhaseFieldSym}_0}\right)^2
            \concat\interfCoords[\phaseFieldParam]{}{}
            \left(
                \pDiff{t}{}{}\pfSpeciesOneIdx{0}{}{}
                -
                \diver{\interfTwo{}_{\sdist{}{}}}{}{
                  	\speciesOneDiffusiv{}
                	\grad{\interfTwo{}_{\sdist{}{}}}{}{}\pfSpeciesOneIdx{0}{}{}
                }
            \right)
            -
            (\simpleDeriv{\prime}{\schemeLocalFun{\cortexPhaseFieldSym}_0})^2
            \concat\interfCoords[\phaseFieldParam]{}{}
            \pfSpeciesOneIdx{0}{}{}
            \extMeanCurv{}
            \pfVelocityIdx{0}{}{}\cdot\extNormal{}
            +
            \frac{3}{2}
            \left(\simpleDeriv{\prime}{\cortexPhaseFieldIFCIdx{0}{}{}}\right)^2
            \concat\interfCoords[\phaseFieldParam]{}{}
            \diver{\interfTwo{}_{\sdist{}{}}}{}{
                \pfSpeciesOneIdx{0}{}{}
                (\pfVelocityIdx{0}{}{})_\tau
            }
            =
            \\
            \frac{3}{2}\left(\simpleDeriv{\prime}{\cortexPhaseFieldIFCIdx{0}{}{}}\right)^2
            \concat\interfCoords[\phaseFieldParam]{}{}
            \reactionOne{\pfSpeciesOneIdx{0}{}{},\pfSpeciesTwoIdx{0}{}{}}{
              	\membrPhaseFieldIdx{0}{}{},
                \normal[\cortexPhaseFieldIdx{0}{}{}]{}
            },
        \end{align*}
        which is 
        \eqref{equ:modelling:sharp interface classical PDE:species one evoluation}
        up to constants.
        On the right hand side, we used the expansion
        $$
            \reactionOne{\pfSpeciesOne{}{},\pfSpeciesTwo{}{}}{
              	\membrPhaseField{}{},
                \normal[\cortexPhaseField{}{}]{}
            }
            =
            \reactionOne{\pfSpeciesOneIdx{0}{}{},\pfSpeciesTwoIdx{0}{}{}}{
              	\membrPhaseFieldIdx{0}{}{},
                \normal[\cortexPhaseFieldIdx{0}{}{}]{}
            }
            +\landauBigo{\phaseFieldParam},
        $$
        which holds with \eqref{equ:formal asymptotics:inner expansion:consequ bound on coupl energy:%
        variation of normal}
        and
        \begin{align*}
            \grad{\pfSpeciesOne{}{}}{L^2}{}\reactionOne{}{},
            \grad{\pfSpeciesTwo{}{}}{L^2}{}\reactionOne{}{}, 
            \grad{\membrPhaseFieldSym}{L^2}{}\reactionOne{}{},
            \grad{\normal{}}{L^2}{}\reactionOne{}{}
            \in \landauBigo{1}.
        \end{align*}
        
    So altogether, we could argue with the help of formally matched asymptotic expansions   that solutions of the PDE system
    \eqref{equ:modelling:phase field:resulting PDEs}, under suitable 
    Assumptions~\ref{it:formal asymptotics:ass:existence of solutions:ex}--%
    \ref{it:formal asymptotics:inner expansion struct:cond for phase fields}, converge to solutions
    of \eqref{equ:modelling:sharp interface system} as $ \phaseFieldParam \searrow 0 $.
        
\section{Conclusion}
We have made plausible that both the diffuse and sharp interface modelling approaches are compatible in the
sense that their solutions are approximations of each other. To arrive at this conclusion, we leveraged
the method of formal asymptotic analysis.

From a mathematical perspective, it is desirable to prove this result rigorously like \cite{AL2018} or
\cite{FL2021} did for related PDE systems. 
The main problems to deal with will likely be analysing the leading order terms in the expansion of
the Canham-Helfrich energy and controlling the pressure, showing it does not blow up near the diffuse layers.
An excellent stock of techniques for analysing the Canham-Helfrich energy is already provided in 
\cite{FL2021}. However, they analyse the pure Willmore flow problem, and so there is no coupling with a 
fluid, nor with a species subsystem like in the PDE system \eqref{equ:modelling:phase field:resulting PDEs} 
investigated here, which poses additional problems like possible pressure blow-ups. 
Controlling the pressure for a phase-field-Navier-Stockes coupling is investigated
in \cite{AL2018}. A step towards a rigorous analysis of \eqref{equ:modelling:phase field:resulting PDEs}
might therefore be possible by uniting the results of both works and leaving the species subsystem aside.

From a modelling perspective, our results increase the confidence that qualitatively both abstractions---%
the sharp interface or diffuse layer abstraction---are equivalent and the focus can now be more on 
other aspects like numerical feasibility. 

\subsection{Acknowledgements}
The authors gratefully acknowledge the support by the Graduiertenkolleg 2339
IntComSin of the Deutsche Forschungsgemeinschaft (DFG, German Research Foundation)
-- Project-ID 321821685.
We thank \ack Helmut Abels for insightful discussions.

\providecommand{\bysame}{\leavevmode\hbox to3em{\hrulefill}\thinspace}
\providecommand{\MR}{\relax\ifhmode\unskip\space\fi MR }
\providecommand{\MRhref}[2]{%
  \href{http://www.ams.org/mathscinet-getitem?mr=#1}{#2}
}
\providecommand{\href}[2]{#2}


\begin{thebibliography}{99}
\bibitem{AL2018}
H.~Abels and Y.~Liu, \emph{Sharp interface limit for a {Stokes/Allen-Cahn}
  system}, Archive for Rational Mechanics and Analysis \textbf{229} (2018),
  417--502.

\bibitem{AC2016}
R.~Alert and J.~Casademunt, \emph{Bleb nucleation through membrane peeling},
  Physical Review Letters \textbf{116} (2016), Article~068101.

\bibitem{ACBS2015}
R.~Alert, J.~Casademunt, J.~Brugués, and P.~Sens, \emph{Model for probing
  membrane-cortex adhesion by micropipette aspiration and fluctuation
  spectroscopy}, Biophysical Journal \textbf{108} (2015), 1878--1886.

\bibitem{CF1986}
G.~Caginalp and P.~Fife, \emph{Phase-field methods for interfacial boundaries},
  Physical Review B \textbf{33} (1986), Article~7792.

\bibitem{C1970}
P.~B. Canham, \emph{The minimum energy of bending as a possible explanation of
  the bioconcave shape of the human red blood cell}, Journal of Theoretical
  Biology \textbf{26} (1970), 61--81.

\bibitem{CP2008}
G.~T. Charras and E.~Paluch, \emph{Blebs lead the way: how to migrate without
  lamellipodia}, Nature Reviews Molecular Cell Biology \textbf{9} (2008),
  730--736.

\bibitem{QLRW2009}
Q.~Du, C.~Liu, R.~Ryham, and X.~Wang, \emph{Energetic variational approaches in
  modeling vesicle and fluid interactions}, Physica D \textbf{238} (2009),
  923--930.

\bibitem{E1979}
W.~Eckhaus, \emph{Asymptotic analysis of singular perturbations}, Studies in
  Mathematics and its Applications, vol.~9, North-Holland Publishing Company,
  1979.

\bibitem{FL2021}
M.~Fei and Y.~Liu, \emph{Phase-field approximation of the {Willmore Flow}},
  Archive for Rational Mechanics and Analysis \textbf{241} (2021), 1655--1706.

\bibitem{GT2001}
D.~Gilbarg and N.~S. Trudinger, \emph{Elliptic partial differential equations
  of second order}, 3 ed., Grundlehren der Mathematischen Wissenschaften, vol.
  224, Springer, 2010.

\bibitem{H1973}
W.~Helfrich, \emph{Elastic properties of lipid bilayers: Theory and possible
  experiments}, Zeitung f\"ur Naturforschung \textbf{28} (1973), 693--703.

\bibitem{Ho1995}
M.~J. Holmes, \emph{Introduction to perturbation methods}, Texts in Applied
  Mathematics, vol.~20, Springer, 1995.

\bibitem{LGR2019}
I.~Lavi, M.~Goudarzi, E.~Raz, N.~S. Gov, R.~Voituriez, and P.~Sens,
  \emph{Cellular blebs and membrane invaginations are coupled through membrane
  tension buffering}, Biophysical Journal \textbf{117} (2019), 1485--1495.

\bibitem{LCM2012}
F.~Y. Lim, K.-H. Chiam, and L.~Mahadevan, \emph{The size, shape, and dynamics
  of cellular blebs}, Europhysics Letters \textbf{100} (2012), Article~28004.

\bibitem{O1931}
L.~Onsager, \emph{Reciprocal relations in irreversible processes i}, Physical
  Review \textbf{37} (1931), 405--426.

\bibitem{Othm2018}
H.~G. Othmer, \emph{Eukaryotic cell dynamics from crawlers to swimmers}, WIREs
  Computational Molecular Science \textbf{9} (2018), e1376.

\bibitem{PR2013}
E.~K. Paluch and E.~Raz, \emph{The role and regulation of blebs in cell
  migration}, Current Opinion in Cell Biology \textbf{25} (2013), 582--590.

\bibitem{QWS2006}
T.~Qian, X.-P. Wang, and P.~Shen, \emph{A variational approach to moving
  contact line hydrodynamics}, Journal of Fluid Mechanics \textbf{564} (2006),
  333--360.

\bibitem{SDN2020}
B.~Stinner, A.~Dedner, and A.~Nixon, \emph{A finite element method for a fourth
  order surface equation with application to the onset of cell blebbing},
  Frontiers in Applied Mathematics and Statistics \textbf{6} (2020),
  Article~21.

\bibitem{S2021}
W.~Strychalski, \emph{3d computational modeling of bleb initiation dynamics},
  Front. Phys. Sec. Biophysics \textbf{9} (2021), Article~775465.

\bibitem{SCLG2015}
W.~Strychalski, C.~A. Copos, O.~L. Lewis, and R.~D. Guy, \emph{A poroelastic
  immersed boundary method with applications to cell biology}, Journal of
  Computational Physics \textbf{282} (2015), 77--97.

\bibitem{SG2013}
W.~Strychalski and R.~D. Guy, \emph{A computational model of bleb formation},
  Mathematical Medicine and Biology \textbf{30} (2013), 115--130.

\bibitem{Wa2008}
X.~Wang, \emph{Asymptotics analysis of phase field formulations of bending
  elasticity models}, SIAM Journal on Mathematical Analysis \textbf{39} (2008),
  1367--1401.

\bibitem{W2021}
P.~Werner, \emph{Sharp and diffuse interface models for the evolution of
  surfaces that are immersed in fluids and coupled through surfactants}, Ph.D.
  thesis, Friedrich-Alexander-Universit\"at Erlangen-N\"urnberg, 2021.

\bibitem{WBFG2021}
P.~Werner, M.~Burger, F.~Frank, and H.~Garcke, \emph{A diffuse interface model
  for cell blebbing including membrane-cortex coupling with linker dynamics},
  SIAM Journal on Applied Mathematics \textbf{82} (2022), 1091--1112.

\bibitem{WBP2020}
P.~Werner, M.~Burger, and J.~Pietschmann, \emph{A {PDE} model for bleb
  formation and interaction with linker proteins}, Transactions of Mathematics
  and its Applications \textbf{4} (2020), Article~1.

\end{thebibliography}

\end{document}